\documentclass[10pt]{article}

\usepackage{amsthm,amsfonts,amsmath,amscd,amssymb,times,epsfig,verbatim}
\usepackage[all]{xy}
\usepackage{enumerate,array}

\parindent=0pt
\parskip=4pt

\newtheorem{thm}{Theorem}[section]
\newtheorem{prop}[thm]{Proposition}
\newtheorem{lem}[thm]{Lemma}
\newtheorem{cor}[thm]{Corollary}

\theoremstyle{remark}
\newtheorem{rem}[thm]{Remark}

\theoremstyle{definition}
\newtheorem{defn}[thm]{Definition}
\newtheorem{ex}[thm]{Example}

\newcommand{\C}{\mathbb{ C}}

\newcommand{\lra}{\longrightarrow}

\newcommand{\LLL}{\mathcal{ L}}
\newcommand{\MMM}{\mathcal{ M}}

\newcommand{\Q}{\mathbb{ Q}}
\newcommand{\R}{\mathbb{ R}}
\newcommand{\HH}{\mathbb{ H}}
\newcommand{\Ker}{\operatorname{Ker}}
\newcommand{\rk}{\operatorname{rk}}
\newcommand{\ind}{\operatorname{ind}}

\newcommand{\T}{\operatorname{T}}

\newcommand{\Z}{\mathbb{ Z}}

\newcommand{\sg}{\mathrm{sign}}
\newcommand{\rw}{\mathrm{w}}
\newcommand{\ru}{\mathrm{u}}
\newcommand{\CC}{\mathcal{ C}}
\newcommand{\Ha}{\mathcal{ H}}

\DeclareMathOperator{\dummygg}{\mathfrak{g}}
\renewcommand{\gg}{\dummygg}

\DeclareMathOperator{\hh}{\mathfrak{h}}
\DeclareMathOperator{\TT}{\mathfrak{t}}

\newcommand{\dr}[3]{\ensuremath{#1\stackrel{#2}
{\longrightarrow}#3}}

\title{Toric genera of homogeneous spaces and their fibrations}
\author{Victor M.~Buchstaber and Svjetlana Terzi\'c}

\begin{document}
\maketitle

\begin{abstract}
The  aim of this paper is to study further the universal toric genus 
of compact homogeneous spaces and their
homogeneous fibrations. We consider the homogeneous spaces with positive Euler characteristic. It is well known that such spaces carry many stable complex structures equivariant under the canonical action of the maximal torus $T^k$. As the torus action in this case has only isolated fixed points it is possible to effectively apply localization formula for the universal toric genus.  Using this we prove that the famous topological results  related to rigidity and multiplicativity of a Hirzebruch genus can be obtained on homogeneous spaces just using representation theory. In that context for homogeneous $SU$-spaces we prove the well known result about rigidity of the Krichever genus. We also prove that for a large class of stable complex homogeneous spaces any $T^k$-equivariant Hirzebruch genus given by an odd power series vanishes. Related to the problem of multiplicativity we provide construction of stable complex $T^k$-fibrations for which the universal toric genus is twistedly multiplicative. We prove that it is always twistedly multiplicative for almost complex homogeneous fibrations and describe those fibrations for which it is multiplicative. As a consequence for such fibrations  the strong relations between rigidity and multiplicativity for an equivariant  Hirzebruch genus is established.  The universal toric genus of the fibrations for which the base does not admit any stable complex structure is also considered. The main examples here for which we compute the universal toric genus are the homogeneous fibrations  over  quaternionic projective spaces.  \footnote{MSC 2000: 57R77, 22F30, 55N22, 22E60}
\end{abstract}

\tableofcontents

\section{Introduction}
The toric genus is an invariant of a stable complex manifold with an action of the torus $T^k$, $k\geq 1$. It plays an important role in studying  such manifolds, especially in those cases when the torus action has isolated fixed points~\cite{BPR}.  The wide class of these manifolds has appeared recently in toric geometry and toric topology. The other class of well known classical  manifolds with such action are compact homogeneous spaces $G/H$ of positive Euler characteristic with the action of the maximal torus for $G$. In this case  due to representation and Lie group theories it is possible  to effectively  describe invariant almost complex structures as well as the corresponding weights at the fixed points for the canonical action of a maximal torus~\cite{Buch_Terz}. It allows us to, using toric genera,  solve an outstanding problem of description of complex cobordism classes of homogeneous spaces~\cite{Buch_Terz}. In particular, we  effectively  computed complex cobordism  classes of  homogeneous spaces such as flag and Grassmann manifolds related to the standard invariant complex structure. 

In this paper  we further describe complex cobordism classes of  flag manifolds and generalized Grassmann manifolds related to an arbitrary invariant almost complex structure. As a consequence we obtain the results on top Chern numbers $s_{n}$ for these spaces.  The problem of the numbers $s_{n}$ is well known as the problem of multiplicative generators in the complex cobordism ring ~\cite{NM}.

Within the questions of  toric genus of the equivariant stable complex fibration that we consider the two cases can be recognized.  
The first one is when the total space,  base and  the fiber are equivariant stable complex manifolds whose  structures are compatible.  In the theory of Hirzebruch genera  the problem of multiplicativity of genus for fibrations is well known. We consider this problem for toric genera  and prove the twisted product formula in this case.

The second case is related to the notion, introduced in~\cite{BPR}, of toric genera for the family of the stable complex manifolds with torus action. This family consists of the bundles whose total space admits equivariant stable complex structure which also induces such a structure on the fiber and projection to the base is equivariant map. The analogous localization theorems are obtained for this generalization of toric genera.  In our paper we study generalization of toric genera in the case of corresponding bundles of homogeneous spaces. We pay  special attention to the bundles for which the base does not admit any stable complex structure such as  the bundles over quaternionic projective spaces.

The Hirzebruch genera appeared in algebraic geometry and became widely known due to the famous Atiyah-Singer theorem on index of differential operators on manifolds. They play a fundamental role in the theory of complex cobordisms and its applications. In all these areas in the focus of interest are classical genera such as the Todd genus, the signature and an arithmetic genus~\cite{HBJ} as well as modern genera such as an elliptic genus~\cite{OCH}, the Krichever genus~\cite{KR} and the general Krichever genus~\cite{B-10},~\cite{B-Bun-10}. The toric genus of a manifold has its values in the complex cobordism ring of the classifying space of the torus, while the toric genus for the families has its values in the complex cobordism ring of  the Borel construction for the torus action on the base. Taking the composition of the toric genus and mappings from the complex cobordism ring given by the Hirzebruch genera we obtain specialized toric genera. This opens an approach to the problems which are analogous to the well known problems such as rigidity~\cite{HBJ}. One of the most important result in this direction is the famous Krichever rigidity theorem for the Krichever genus on $SU$-manifolds~\cite{KR}. In our paper we obtain the results based on the deep connection between the rigidity problem of specialized genera and the theory of functional equations.

The context of the paper is as follows.

In Section 2 we recall the notion of a tangentially stable complex $T^k$-manifold and for the case when the action
has isolated fixed points  the formula from~\cite{BPR} which computes toric genera in terms of weights and signs of fixed points is given. We also recall~\cite{Krichever-74} the notion of $G$-genus for any smooth $G$-manifold admitting a $G$-invariant stable complex structure, where $G$ is  a compact connected Lie group. This genus takes the values in the complex cobordism ring of the classifying space $BG$.  Using the  embedding of the torus $T^k$ in the group $G$ we describe the relation among the corresponding genera. 

Section 3 is devoted to the universal toric genus on compact homogeneous spaces of positive Euler characteristic with an invariant almost complex structure and the canonical action of a maximal torus. We generalize our result from~\cite{Buch_Terz} to the flag manifolds endowed with an arbitrary invariant almost complex structure as well as to the generalized Grassmann manifolds. We compute  the top Chern numbers $s
_n$ for these manifolds and note that most of them vanish. The class of homogeneous spaces admitting an invariant $SU$-structure is also considered and   such known examples among symmetric and $3$-symmetric spaces are recalled. We  describe the flag manifolds and some $k$-symmetric spaces admitting a $SU$-structure.

Section 4 deals with the Krichever genus and  the Hirzebruch genus given by an odd power series. The main tool we use is the representation theory of Lie groups and the localization  criterion for $T^k$-rigidity of Hirzebruch genus~\cite{BPR} based on the localization formula for the universal toric genus. In this way we prove the result of~\cite{KR} that the Krichever genus is $T^k$-rigid on homogeneous $SU$-spaces. We also prove that any equivariant Hirzebruch genus defined by an odd series vanishes on flag manifolds and some $k$-symmetric spaces. It, in particular, implies that an elliptic genus as well as an arithmetic genus vanish on these manifolds.  Moreover, we prove that in the case of consideration for an arbitrary $T^n$-stable complex manifold its equivariant genus is, up to sign, independent of the given equivariant stable complex structure. 

In Section 5 we deduce the explicit formula for the $\chi _{y}$ - Hirzebruch genus in terms of weights and signs at a fixed point for  homogeneous spaces of positive Euler characteristic endowed with the action of a maximal torus and an arbitrary equivariant stable complex structure. As the consequence, we deduce the known formulas~\cite{BHII},~\cite{HS} for the signature as well as for the Todd genus of homogeneous spaces related to an invariant almost complex structure. 

In Section 6 we consider homogeneous fibrations endowed with compatible invariant almost complex structures and prove that for the universal toric genus of such fibrations the twisted product formula holds.  We further describe those fibrations for which the universal toric genus is multiplicative.  The twisted product formula enables us  to establish in this case  the strong relations between the notion of $T$-rigidity and muliplicativity for an arbitrary Hirzebruch genus. This is another way to obtain a large class of homogeneous spaces having a trivial signature, an arithmetic genus and an elliptic genus.

In Section 7 we generalize the notion of compatible almost complex structures on homogeneous fibrations to  arbitrary fibrations. Moreover supposing the base and the fiber are $T^k$-equivariant stable complex manifolds, we provide  a construction of the corresponding fibration for whose universal toric genus the twisted product formula holds.

Section 8 deals with the universal toric genus of the bundles whose total space and  base are endowed with the smooth actions of the torus $T^k$, related to which the projection is an equivariant map. It is also assumed that the total space admits a $T^k$-equivariant stable complex structure which induces a stable complex structure on the tangent bundle along the fibers. We explicitly compute the universal toric genus for some examples of such bundles over quaternionic projective spaces, for which it is  known that they do not admit any stable complex structure.

\section{Toric genera}
We refer to~\cite{BPR} and~\cite{Buch_Terz} for detailed account regarding the notions of universal toric genus.  

\subsection{Tangentially stable complex $T^k$-manifolds.} 
\numberwithin{thm}{subsection}
We  consider a tangentially stable complex $T^k$-manifold $(M^{2n}, c_{\tau},\theta )$. It means that on a compact  manifold $M^{2n}$ are given  the stable complex structure $c_{\tau}$ and the action $\theta$ of the torus $T^k$ which are compatible meaning that
\[
c_{\tau} : \tau (M^{2n}) \oplus \R ^{2l} \to \xi
\]
is a real isomorphism for some trivial bundle $\R ^{2l}$  and some complex vector bundle $\xi$ on $M^{2n}$ and the transformation given by
\begin{equation}\label{scd}
r(t)\colon\xi\stackrel{c_\tau^{-1}}{\longrightarrow}\tau(M^{2n})\oplus
\R ^{2l}\stackrel{d\theta(t)\oplus I}{\longrightarrow}
\tau(M^{2n})\oplus \R ^{2l}\stackrel{c_\tau}{\longrightarrow}\xi 
\end{equation}
is a complex transformation for any $t\in T^k$.
 
For the triple $(M^{2n}, c_{\tau}, \theta)$  it is defined the universal toric genus as follows. The Borel construction provides  fibration
\[
M^{2n}\to ET^{k}\times _{T^{k}} \dr{M^{2n}}{p} BT^k \ , 
\] 
whose tangent bundle along the fibers is endowed with the stable complex structure.
Consider the corresponding Gysin homomorphism  
\[
p_{!} : U^{*}(ET^{k}\times _{T^k}M^{2n})\to U^{*}(BT^{k}), 
\]
where $U^{*}(X)$ denotes the theory of unitary cobordisms of the space $X$. {\it The universal toric genus} of the triple $(M^{2n}, c_{\tau}, \theta)$ is defined by
\[
\Phi (M^{2n}, c_{\tau}, \theta)=p_{!}(1) \ .
\]
Recall that $U^{*}(BT^{k}) = \Omega _{U}^{*}[[u_1,\ldots ,u_k]]$ is the algebra of formal power series over $\Omega_{U}^{*}$, where by  Milnor-Novikov theorem~\cite{NM} we have that $U^{*}(pt)=\Omega _{U}^{*}=\Z [a_1,a_2,\ldots ]$ and $\deg a_i=-2i$. It is proved in~\cite{BR} that the universal toric genus  can be expressed in the following way
\begin{equation}\label{tg_geo}
\Phi (M^{2n},c_{\tau},\theta) = [M^{2n}]+\sum\limits_{|\omega |>0}[G_{\omega}(M^{2n})]{\bf u}^{\omega} \ ,
\end{equation}
where $\omega = (i_1,\ldots ,i_k)$ and ${\bf u}^{\omega}=u_1^{i_1}\cdots u_{k}^{i_k}$. Here $[M^{2n}]$ denotes the complex cobordism class of the manifold $M^{2n}$ with the stable complex structure $c_{\tau}$  and $G_{\omega}(M^{2n})$ is the stable complex manifold which is obtained as the total space of the fibration over the base $B_{\omega}$ with the fiber $M^{2n}$.  The base $B_{\omega}$ is obtained using Bott tower and it is cobordant to zero for any $\omega$ such that $|\omega |>0$. 

Under some additional assumptions the universal toric genus can be expressed in terms of the  formal group in complex cobordisms. {\it The formal group in complex cobordisms} is studied in detail in~\cite{Novikov-67} and it is defined by
\begin{equation}\label{formalgroup}
F(u,v)=c_{1}^{U}(\eta_{1}\otimes \eta_{2})\in U^{2}(BT^{2}) \ ,
\end{equation}
where by $\eta_1$ and $\eta _{2}$ we denote  the universal complex line bundle over $\C P^{\infty}$. After putting  $c_{1}^{U}(\eta _{1})=u, c_{1}^{U}(\eta _2)=v \in U^{2}(\C P^{\infty})$  we have that
\[
F(u,v)=u+v+\sum\alpha _{ij}u^iv^j 
\]
for some $\alpha _{ij} \in \Omega _{U}^{-2(i+j-1)},\; i\geqslant 1,\; j\geqslant 1$. This group is, by the result of~\cite{Q} isomorphic to Lazard's universal formal group.

For the formal group $F(u,v)$ it can be uniquely defined the corresponding power system $\{ [n](u)\in \Omega_{U}^{*}[[u]],n\in \Z \}$ by the following rule:
\[
[0](u)=0,\; [n](u)=F(u,[n-1](u))\;\mbox{for}\; n\in \Z \ .
\]
For ${\bf n} =(n_1,\ldots ,n_k)\in \Z ^{k}$ and ${\bf u}=(u_1,\ldots, u_k)$ one defines $[{\bf n}]({\bf u})$ inductively with $[{\bf n}]({\bf u})=[n](u)$ for $k=1$ and
\[
[{\bf n}]({\bf u})=F_{q=1}^{k}[n_{q}](u_{q})=F(F_{q=1}^{k-1}[n_{q}](u_{q}), [n_{k}](u_k)) \ 
\] 
for $k\geq 2$.
Using~\eqref{formalgroup} this also can be presented in terms of the first Chern class for the universal complex line bundle $\eta _{i}$ over $\C P^{\infty}$ as  
\[
[{\bf n}]({\bf u}) = c_{1}^{U}(\eta _{1}^{n_1}\otimes \cdots \otimes \eta _{k}^{n_k})\;\;  \text{for}\;\;  {\bf n}=(n_1,\ldots ,n_k)\in \Z _{\geq 0}^{k} \ .
\]
If all fixed points for the action $\theta$ on $M^{2n}$ are isolated, it is proved in~\cite{BPR},~\cite{BR} that,  appealing to the formal group, the  universal toric genus for $(M^{2n}, c_{\tau}, \theta )$  can be localized meaning that it can be expressed in terms of the local data at the fixed points  for the action $\theta$ related to $c_{\tau}$.  

Let $p$ be an isolated fixed point. It is defined 
the  $\sg (p)$ to be equal $+1$ if the map
\[
\tau_p(M^{2n})\stackrel{I\oplus\hspace{.1ex}0}{\longrightarrow}
\tau_p(M^{2n})\oplus \R^{2l} \stackrel{c_{\tau,p}}
{\longrightarrow}\xi_p \cong\C ^n \oplus\C ^{l}
\stackrel{\pi}{\longrightarrow} \C ^n \; 
\]
preserves the orientation, otherwise it is equal $-1$. Recall that here the orientation in $\tau _{p}(M^{2n})$ comes from the stable complex structure $c_{\tau}$ and it is not related to the action of the torus $T^k$, while the orientation of $\C ^{n}$ on the right hand side coincides with the orientation in $\tau _{p}(M^{2n})$ which comes from the torus action.
The action $\theta$ at a fixed point $p$ gives rise to the representation $r_{p}$ of $T^k$ in $GL(n, \C)$ for which is known from representation theory to be determined by its weights vector $\{ \Lambda _{1}(p),\ldots ,\Lambda _{n}(p)\}$. Namely, the representation $r_{p}$ decomposes into the sum of $n$ non-trivial one-dimensional representations  $r_{p,j}$. Each of $r_{p,j}$ can be written as $r_{p,j}(e^{2\pi ix_1},\ldots ,e^{2\pi ix_k})v = e^{2\pi i\left\langle \Lambda _{j}(p),{\bf x}\right\rangle}v$, for some $\Lambda _{j}(p) = (\Lambda_{j}^{1}(p),\ldots ,\Lambda_{j}^{k}(p))\in \Z ^{k}$ where ${\bf x}=(x_1,\ldots ,x_k)\in \R ^k$ and $\left\langle \Lambda _{j}(p), {\bf x}\right\rangle = \sum_{l=1}^{k}\Lambda _{j}^{l}(p)$. Then the localization formula for the universal toric genus of  $(M^{2n}, c_{\tau}, \theta)$ is given by the following Theorem~\cite{BPR}.
\begin{thm}\label{main}
If all fixed points for the action $\theta$ are isolated  then
\begin{equation}\label{TG-weights}
\Phi (M^{2n},c_{\tau},\theta)=\sum\limits_{p\in P}\sg (p)\prod\limits_{j=1}^{n}\frac{1}{[\Lambda _{j}(p)]({\bf u})} \ ,
\end{equation}
where $P$ denotes the set of fixed points and $\{ \Lambda _{j}(p)$, $1\leq j\leq n\}$ are  the corresponding weight vectors at $p\in P$.    
\end{thm}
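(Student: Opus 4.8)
The plan is to recognize \eqref{TG-weights} as the complex-cobordism incarnation of the Atiyah--Bott--Berline--Vergne localization formula and to derive it from the localization theorem for $T^k$-equivariant unitary cobordism. Starting from the definition $\Phi(M^{2n},c_\tau,\theta)=p_!(1)$, where $p\colon ET^k\times_{T^k}M^{2n}\to BT^k$ is the Borel fibration with fiber $M^{2n}$, I would compute the Gysin image $p_!(1)\in U^*(BT^k)$ by restricting everything to the fixed-point data. First I would invoke the tom Dieck--Quillen localization theorem: after inverting the cobordism Euler classes of the weight line bundles, restriction to the fixed locus $M^{T^k}$ becomes an isomorphism of $U^*(BT^k)$-modules. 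Since $\theta$ has only isolated fixed points, $M^{T^k}=P$ is finite and the Borel construction over each $\{p\}$ is $BT^k$ itself, so the formula collapses the pushforward into a finite sum
\[
p_!(1)=\sum_{p\in P}\frac{i_p^*(1)}{e^U(\nu_p)},
\]
where $i_p\colon BT^k\hookrightarrow ET^k\times_{T^k}M^{2n}$ is induced by the fixed point and $e^U(\nu_p)$ is the equivariant cobordism Euler class of its normal bundle. For an isolated fixed point the normal bundle is the whole tangent space $\tau_p(M^{2n})$ and $i_p^*(1)=1$.

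Next I would identify the Euler class with the denominator in \eqref{TG-weights}. At $p$ the representation $r_p$ of $T^k$ on $\tau_p(M^{2n})\cong\C^n$ splits into one-dimensional pieces with weight vectors $\Lambda_1(p),\ldots,\Lambda_n(p)$, so the bundle associated to $\tau_p(M^{2n})$ over $BT^k$ decomposes as a sum of line bundles $\eta_1^{\Lambda_j^1(p)}\otimes\cdots\otimes\eta_k^{\Lambda_j^k(p)}$. By the definition of the power system $[\,\cdot\,]({\bf u})$ recalled above, the cobordism first Chern class of the $j$-th summand is exactly $[\Lambda_j(p)]({\bf u})$, and the top cobordism Chern (Euler) class of the sum is the product $e^U(\tau_p(M^{2n}))=\prod_{j=1}^n[\Lambda_j(p)]({\bf u})$. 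Substituting this into the localized sum turns each summand into $\prod_{j=1}^n 1/[\Lambda_j(p)]({\bf u})$.

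Finally I would account for the factor $\sg(p)$. The Gysin map $p_!$ is built from the stable complex orientation coming from $c_\tau$, whereas the weight decomposition, and hence the Euler class computed above, uses the orientation of $\tau_p(M^{2n})$ induced by the torus action. These two orientations agree precisely up to the sign $\sg(p)=\pm1$ defined above, so comparing them inserts the factor $\sg(p)$ in front of the $p$-th term and yields \eqref{TG-weights}.

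The main obstacle, and the step that genuinely requires the machinery of~\cite{BPR},~\cite{BR}, is the localization theorem itself in unitary cobordism: unlike in ordinary cohomology or $K$-theory one must work over the Lazard formal group $F(u,v)$, verify that the relevant localized $U^*(BT^k)$-module is supported on the fixed locus, and justify that the pushforward is computed fixed-point-wise with the cobordism Euler class in the denominator. The orientation bookkeeping leading to $\sg(p)$ is routine but must be handled with care, since it is exactly the place where the genuinely \emph{stable} complex structure $c_\tau$, rather than an honest almost complex structure, enters.
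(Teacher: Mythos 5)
First, a point of calibration: the paper does not prove Theorem~\ref{main} at all --- it quotes it from~\cite{BPR} (see also~\cite{BR}) and uses it as a black box, so your proposal can only be measured against the argument of those sources. In outline it reconstructs exactly that argument: express $\Phi(M^{2n},c_\tau,\theta)=p_!(1)$ via the Borel fibration, invoke localization for $U^{*}(ET^k\times_{T^k}-)$ after inverting Euler classes, identify the Euler class of the normal space at an isolated fixed point with $\prod_{j=1}^{n}[\Lambda_j(p)](\mathbf{u})$ through $[\mathbf{n}](\mathbf{u})=c_1^U(\eta_1^{n_1}\otimes\cdots\otimes\eta_k^{n_k})$ (extended from $\Z^{k}_{\geq 0}$ to $\Z^{k}$ by the formal inverse), and extract $\sg(p)$ from an orientation comparison. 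One omission is easily repaired: to pass from an identity in the localized module back to $U^{*}(BT^k)$ you need each $[\Lambda_j(p)](\mathbf{u})$ to be a non-zero-divisor, which holds because $\Omega_U^{*}[[u_1,\ldots,u_k]]$ is an integral domain and the weights at an isolated fixed point are non-zero. You are also right, and honest, that the localization theorem itself (for the Borel-type theory on a compact $T^k$-manifold, not tom Dieck's homotopy-theoretic $MU_{T^k}$) is the heavy input; citing it is no worse than what the paper itself does.

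The one genuine gap is the sign. Your reasoning --- the Gysin map is oriented by $c_\tau$, the Euler class by the torus action, the two orientations of $\tau_p(M^{2n})$ differ by $\sg(p)$, ``so comparing them inserts the factor $\sg(p)$'' --- is the correct mechanism in ordinary cohomology, where reversing orientation negates the Euler class. In complex cobordism it is not automatic: changing the complex structure on a weight line replaces $[\Lambda](\mathbf{u})$ by $[-\Lambda](\mathbf{u})$, which equals $-[\Lambda](\mathbf{u})$ only up to a unit power series, so orientation bookkeeping done naively at the level of Euler classes proves the formula only up to unspecified units at each fixed point. The argument must be organized so that no such ambiguity arises: the weights $\Lambda_j(p)$ are already pinned down by $c_\tau$ (they are the non-trivial weights of the complex $T^k$-representation $\xi_p$), the Gysin map $(i_p)_!$ must be formed with precisely that complex structure on the normal space, and one then shows that the stable complex structure $c_\tau$ restricted to $p$ differs from it, up to $T^k$-equivariant homotopy, only by a constant complex structure on the trivial stable summand $\R^{2l}$ --- whose effect on the Thom class, and hence on the composite $p_!\circ(i_p)_!$, is multiplication by exactly $\pm 1$, namely $\sg(p)$. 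This step carries the real content of ``stably complex rather than almost complex,'' and it is where~\cite{BPR} does work that your sketch only gestures at; with it supplied, your argument is a faithful reconstruction of the proof the paper delegates to that reference.
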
    
\subsection{The notion of $G$-genus.}
Assume that $(M^{2n}, c_{\tau})$ is a stable complex manifold and it is given on $M^{2n}$ the  smooth action $\Gamma$ of a compact connected  Lie group $G$  such that $(M^{2n}, c_{\tau}, \Gamma)$ is a tangentially stable complex $G$-manifold. It means that the composition
\begin{equation}
\xi\stackrel{c_{\tau }^{-1}}{\longrightarrow}\tau(M^{2n})\oplus
\R ^{2l}\stackrel{dg\oplus I}{\longrightarrow}
\tau(M^{2n})\oplus \R ^{2l}\stackrel{c_{\tau }}{\longrightarrow}\xi 
\end{equation}
is a complex transformation for any $g\in G$. Using  Borel construction one obtains fibration
\[  
M^{2n}\to EG\times _{G} \dr{M^{2n}}{p} BG \ . 
\]
The corresponding Gysin homomorphism
 \[
p_{!} : U^{*}(EG\times _{G}M^{2n})\to U^{*}(BG) 
\] 
defines the $G$-genus for $M^{2n}$ to be an element in $U^{*}(BG)$ given with
\[
\Phi _{G}(M^{2n}, c_{\tau}, \Gamma)=p_{!}(1) \ .
\]
\begin{ex}
Let  $M^{2n}=G/H$, where $G$ is a compact connected Lie group, $H$ is its closed connected subgroup and $J$ is an invariant almost complex structure on $G/H$, see Subsection~\ref{iacs}. We can take $\Gamma$ to be given by the canonical action of $G$ on $G/H$. The action $\Gamma$ commutes with the structure $J$, so
it is defined $\Phi _{G}(G/H, J)$.
\end{ex}
Let $T^k$ be the maximal torus in $G$. The action $\Gamma$ restricted to $T^k$ defines the smooth action $\theta$ of $T^k$ on $M^{2n}$ such that the triple $(M^{2n}, c_{\tau}, \theta)$ is a tangentially stable complex $T^k$-manifold. Since the universal toric genus $\Phi (M^{2n}, c_{\tau}, \theta)$ is an element in $U^{*}(BT^k)$ it naturally arises the question how the genera $\Phi _{G}$ and $\Phi$ are related. We already considered the similar question in~\cite{Buch_Terz} for homogeneous spaces $G/H$  of positive Euler characteristic endowed with the  canonical action of the maximal torus and a $G$-equivariant stable complex structure.
\begin{lem}\label{G-genus}
Let $Bj^{*} : U^{*}(BG)\to U^{*}(BT^k)$ be the homomorphism induced by the embedding $j: T^k\to  G$. Then 
\begin{equation}
Bj^{*}(\Phi _{G}(M^{2n}, c_{\tau}, \Gamma)) = \Phi (M^{2n}, c_{\tau}, \theta) \ .
\end{equation}
\end{lem}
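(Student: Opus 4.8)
The plan is to exhibit the Borel fibration defining $\Phi$ as the pullback of the Borel fibration defining $\Phi_G$ along $Bj$, and then to invoke the base change property of the Gysin homomorphism in the complex oriented theory $U^*$. First I would fix compatible models for the universal bundles. Since $T^k$ is a closed subgroup of $G$, the contractible total space $EG$ carries a free $T^k$-action, so we may take $ET^k=EG$. With this choice $BT^k=EG/T^k$ and $BG=EG/G$, and $Bj\colon BT^k\to BG$ is the canonical projection $eT^k\mapsto eG$.

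Next I would assemble the commutative square
\[
\begin{CD}
EG\times_{T^k}M^{2n} @>{\tilde{j}}>> EG\times_{G}M^{2n}\\
@VV{p'}V @VV{p}V\\
BT^k @>{Bj}>> BG,
\end{CD}
\]
where $\tilde{j}[e,m]_{T^k}=[e,m]_{G}$ is induced by passing to the coarser orbit space and $p'$, $p$ are the projections. The key point is that this square is a pullback: the assignment $[e,m]_{T^k}\mapsto (eT^k,[e,m]_G)$ identifies $EG\times_{T^k}M^{2n}$ with the fiber product $BT^k\times_{BG}(EG\times_G M^{2n})$, its inverse being obtained by writing a representative $e'=eg$ and replacing $m'$ by $g\cdot m'$. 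Hence both fibrations have fiber $M^{2n}$ and $\tilde{j}$ restricts to the identity on each fiber.

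I would then verify that the fiberwise stable complex structures correspond under $\tilde{j}$. The $G$-equivariant structure $c_\tau$ equips the tangent bundle along the fibers of $p$ with a stable complex structure; because $\theta$ is simply the restriction of $\Gamma$ to $T^k\subset G$, the structure along the fibers of $p'$ is precisely the $\tilde{j}$-pullback of the one along the fibers of $p$. Thus $\tilde{j}$ is a morphism of fibrations respecting the complex orientations that enter the definitions of the two Gysin homomorphisms.

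With this in hand the result follows from the base change identity $Bj^*\circ p_!=p'_!\circ \tilde{j}^{\,*}$, valid for the pullback square above in the complex oriented theory $U^*$. Applying both sides to $1$ and using $\tilde{j}^{\,*}(1)=1$ yields
\[
Bj^*\bigl(\Phi_G(M^{2n},c_\tau,\Gamma)\bigr)=Bj^*\bigl(p_!(1)\bigr)=p'_!\bigl(\tilde{j}^{\,*}(1)\bigr)=p'_!(1)=\Phi(M^{2n},c_\tau,\theta).
\]
The main obstacle is the justification of this base change formula: one must confirm that the square is transverse, which here is automatic since it is a genuine pullback of fibrations, and, more delicately, that the two fiberwise complex orientations match under $\tilde{j}^{\,*}$. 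Once this compatibility of the stable complex structures along the fibers is established, the base change property of the complex oriented Gysin homomorphism is standard.
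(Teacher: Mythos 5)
Your proof is correct and takes essentially the same approach as the paper: the paper exhibits the same commutative square relating the two Borel constructions and invokes functoriality of the Gysin homomorphism, which is precisely your base change identity $Bj^{*}\circ p_{!}=p'_{!}\circ \tilde{j}^{\,*}$ applied to $1$. The additional details you supply --- taking $ET^{k}=EG$, verifying that the square is a genuine pullback, and checking that the fiberwise stable complex structures correspond under $\tilde{j}$ --- are exactly the points the paper's one-line argument leaves implicit.
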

\begin{proof}
The embedding $j: T^k\subset G$ produces  the commutative diagram
\[
\xymatrix{ ET^{k}\times _{T^k}M^{2n}\ar[r]\ar[d] & EG\times _{G}M^{2n}\ar[d] \\
BT^k \ar[r] & BG ,}
\]
what implies the statement since the Gysin homomorphism is functorial for bundles connected by the commutative diagram.
\end{proof}
Note that  if $H^{*}(G, \Z )$ has no torsion, since $\Omega _{U}^{*}$ has no torsion,  the Atiyah-Hirzebruch spectral sequence gives that  $U^{*}(BG)$ is a free $\Omega_{U}^{*}$-module and the natural map $U^{*}(BG)\otimes _{\Omega_{U}^{*}} \Z \to H^{*}(BG, \Z )$ is an isomorphism. It implies that in this case $U^{*}(BG)$ is the algebra of formal power series over the variables that correspond to the integer generators of the Weyl invariant polynomial algebra $\R [x_1,\ldots ,x_k]^{W_{G}}$. Namely, recall a classical result~\cite{B} that $H^{*}(BG,\R )\cong \R [x_1,\ldots x_k]^{W_{G}}$ and it has $k$  generators $P_1,\ldots ,P_k$ whose degrees are given by the twice of the  exponents of the group $G$. Then  $Bj^{*}U^{*}(BG)=\Omega _{U}^{*}[[P_1,\ldots ,P_k]]$. Together with previous Lemma it implies:
\begin{lem}\label{G-invariance}
If the action $\theta$ of the torus $T^k$ on $M^{2n}$ can be extended to the stable complex action of a compact connected Lie group $G$ having $T^k$ as a maximal torus, such that $H^{*}(G,\Z )$ has no torsion then the universal toric genus $\Phi (M^{2n}, c_{\tau}, \theta)\in \Omega _{U}^{*}[[u_1,\ldots ,u_k]]$ is invariant under the action of the Weyl group $W_{G}$ meaning that  $\Phi (M^{2n}, c_{\tau}, \theta)\in \Omega _{U}^{*}[[P_1,\ldots ,P_k]]$ where $P_1,\ldots P_k$ are the integer generators of the Weyl invariant polynomial algebra $\R [u_1,\ldots ,u_k]^{W_{G}}$.
\end{lem}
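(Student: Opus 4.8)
The plan is to read off the statement from \lemref{G-genus} combined with the structure of $U^{*}(BG)$ under the torsion-free hypothesis, which is exactly what the discussion preceding the statement supplies. By \lemref{G-genus},
\[
\Phi(M^{2n}, c_{\tau}, \theta) = Bj^{*}\bigl(\Phi_{G}(M^{2n}, c_{\tau}, \Gamma)\bigr),
\]
so $\Phi$ lies in the image of the restriction homomorphism $Bj^{*} : U^{*}(BG) \to U^{*}(BT^{k})$. Thus it suffices to understand this image: I want to show it is contained in the Weyl-invariant part of $\Omega_{U}^{*}[[u_1, \ldots, u_k]]$ and, more precisely, that it equals $\Omega_{U}^{*}[[P_1, \ldots, P_k]]$.

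First I would check that $\im Bj^{*}$ consists of $W_{G}$-invariant elements by a conjugacy argument. An element $w \in W_{G}$ acts on $U^{*}(BT^{k})$ through the automorphism of $T^{k}$ induced by conjugation by a representative $n \in N(T^{k}) \subset G$; denote this induced action by $w^{*}$. The homomorphisms $j$ and $j \circ w : T^{k} \to G$ then differ by the inner automorphism $c_{n}$ of $G$, i.e. $j \circ w = c_{n} \circ j$. Since inner automorphisms induce maps homotopic to the identity on the classifying space, $B(j\circ w) = Bc_{n}\circ Bj \simeq Bj$, while functoriality gives $B(j\circ w) = Bj\circ Bw$. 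Passing to cobordism yields $w^{*}\circ Bj^{*} = Bj^{*}$, so every element of $\im Bj^{*}$ is fixed by each $w \in W_{G}$. In particular $\Phi(M^{2n}, c_{\tau}, \theta)$ is Weyl-invariant, which already gives the first assertion.

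To promote this to membership in $\Omega_{U}^{*}[[P_1, \ldots, P_k]]$, I would invoke the computation of the image recalled just before the statement: because $H^{*}(G, \Z)$ is torsion-free (and hence concentrated in even degrees, with $H^{*}(BG, \Z)$ polynomial), the Atiyah-Hirzebruch spectral sequence identifies $U^{*}(BG)$ as a free $\Omega_{U}^{*}$-module lifting $H^{*}(BG, \Z) \cong H^{*}(BT^{k}, \Z)^{W_{G}}$, whence $Bj^{*}U^{*}(BG) = \Omega_{U}^{*}[[P_1, \ldots, P_k]]$, where the $P_i$ are the integer generators of the Weyl-invariant algebra. Since the $P_i$ are themselves Weyl-invariant, this subalgebra is automatically contained in the invariants, consistently with the previous step, and since $\Phi$ lies in $\im Bj^{*}$ it is a formal power series in $P_1, \ldots, P_k$ over $\Omega_{U}^{*}$, as claimed.

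The genuinely technical point is the precise identification $\im Bj^{*} = \Omega_{U}^{*}[[P_1, \ldots, P_k]]$, that is, lifting the classical integral statement $H^{*}(BG, \Z) \cong H^{*}(BT^{k}, \Z)^{W_{G}}$ from ordinary cohomology to complex cobordism. This is precisely the work carried out by the torsion-free hypothesis and the Atiyah-Hirzebruch argument in the paragraph above the lemma; granting it, the lemma is an immediate consequence of \lemref{G-genus}, and the only remaining content is the bookkeeping of the two steps above.
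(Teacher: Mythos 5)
Your proposal is correct and takes essentially the same route as the paper: the paper gives no separate proof, deriving the lemma immediately from \lemref{G-genus} together with the identification $Bj^{*}U^{*}(BG)=\Omega _{U}^{*}[[P_1,\ldots ,P_k]]$ established via the Atiyah-Hirzebruch spectral sequence in the paragraph preceding the statement. Your additional conjugacy argument (inner automorphisms acting trivially on $BG$ up to homotopy, hence $w^{*}\circ Bj^{*}=Bj^{*}$) merely makes explicit the Weyl-invariance that the paper treats as implicit in that identification of the image.
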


\section{The universal toric genus  of homogeneous spaces}\label{HF}
We consider compact homogeneous spaces $G/H$ meaning that $G$ is a compact connected Lie group and $H$ is its closed connected subgroup.
\subsection{On invariant almost complex and complex structures on homogeneous spaces.}\label{iacs}
In~\cite{Buch_Terz} we studied cobordism classes of compact homogeneous spaces $G/H$ of positive Euler characteristic related to the stable complex structures which are equivariant under the natural action of the maximal torus $T$ for $H$ and $G$. We devoted special attention to the almost complex structures which are furthermore invariant under the natural action of the group $G$. Recall that an almost complex structure $J$ on a connected manifold $M^{2n}$ is given by the smooth field of endomorphisms $J_{x}$, $x\in M^{2n}$ of the tangent bundle $\tau (M^{2n})$ such that for any $x\in M^{2n}$ the identity $J_{x}^2=-I_{d}$ is satisfied. It follows from~\eqref{scd} that $J$ is a stable complex structure as well where we take $\xi = \tau (M^{2n})$ and $c_{\tau}=J$.  An almost complex structure $J$ on a homogeneous space $G/H$   is said to be invariant under the canonical action of the group $G$ on $G/H$ if $J_{T_{g}(G/H)}=g^{*}J_{T_{e}(G/H)}$  for any $g\in G$, where $T_{g}(G/H)$ denotes the tangent space for $G/H$ at the point $g\cdot H$.

We refer to~\cite{BH} as the  seminal work where the problems of the  existence and the description of invariant almost complex structures on homogeneous spaces are studied. It is proved that:
\begin{itemize}
\item any such space for which subgroup $H$ is the centralizer of an element of odd order of the group $G$ admits an invariant almost complex structure; example of such space is sphere $S^6$.
\item any such space $G/H$ for which $H$ is the centralizer of a toral subgroup in $G$ admits an invariant complex structure;
\item if $G/H$ admits an invariant almost complex structure then it admits exactly $2^s$ invariant almost complex structures, where $s$ is the number of irreducible summands for the isotropy representation of $H$ at $T_{e}(G/H)$.
\end{itemize}
The quaternionic projective spaces $\HH P^{n}$, $n\geq 1$ provide examples of compact homogeneous spaces of positive Euler characteristic which admit no almost complex structure~\cite{H},~\cite{M}.

Regarding the question of the existence of invariant  complex structures on arbitrary compact homogeneous spaces we recall the results from~\cite{Wang}. A homogeneous space $G/H$ of a compact Lie group $G$ is said to be a $M$-space if $H$ is the semisimple part of the centralizer of a toral subgroup for $G$.  
Among  examples of $M$-manifolds are the compact simple Lie groups, the complex Stiefel manifolds, the quaternionic Stiefel manifolds and the ordinary Stiefel manifolds of the form $V_{n,2k}$. A homogeneous space $G/H$ is said to be a $C$-space if the semisimple part of $H$ coincides with the semisimple part of the centralizer of a toral subgroup of $G$. Then in~\cite{Wang} it is proved:
\begin{itemize}
\item Any even dimensional $M$-space admits infinitely many non-equivalent invariant complex structures and so does the product of two odd-dimensional ones. They have vanishing the second Betti number and hence they are not K\"ahler.
\item Any even dimensional $C$-space admits an invariant complex structure.
\item Any homogeneous space which admits an invariant complex structure is homeomorphic to some $C$-space. 
\end{itemize}   
We want to point that homogeneous spaces $G/H$ for which $\rk H <\rk G$ or what is equivalent with saying that Euler characteristic for $G/H$ is zero, are not interesting from the point of view of equivariant cobordism theory.
Namely, let $G/H$ be a $C$-space such that $\rk H< \rk G$ and let $L$ be the centralizer of a toral subgroup in $G$ whose semisimple  part coincides with the semisimple part of  $H$. Since $\rk L=\rk G$, we have that the dimension of the center of $H$ is strictly less than the dimension of the center of $L$, what implies that there exists a toral subgroup $T^l$, $l\geq 1$ in $G$ such that the canonical action of $T^l$ on $G/H$ is free. Therefore  we have the existence of the circle $S^1\subseteq T^l$ acting freely on $G/H$.  Consider the associated disc bundle $W=G/H\times _{S^{1}} D^{2}$. The boundary of $W$ is diffeomorphic to $G/H$ by $gH\to [(gH, 1)]$ what means that $G/H$ is equivariantly cobordant to zero.

\subsection{Generalities.}\label{Generalities}
Following~\cite{BH} we shortly remind on  the description of  invariant almost complex structures on compact homogeneous spaces of positive Euler characteristic. Any such structure  $J$ can be identified with a complex structure on $T_{e}(G/H)$ which commutes with the isotropy representation for $H$ at $T_{e}(G/H)$.  Denote  by $\gg$, $\hh$ and $\TT$ the Lie algebras for $G$, $H$ and $T^k$ respectively, where $k=\rk G=\rk H$ and $T^k$ is the maximal torus for $G$ and $H$. 
Let $\alpha_{1},\ldots, \alpha _{m}$ be the roots for $\gg$ related to $\TT$,
where $\dim G=2m+k$.  One can always choose these roots   such that $\alpha
_{n+1},\ldots, \alpha _{m}$ give the roots for $\hh$ related to
$\TT$, where $\dim H=2(m-n)+k$. The roots $\alpha _{1},\ldots,
\alpha  _{n}$ are called the {\it complementary} roots for $\gg$
related to $\hh$. Using root decomposition for $\gg$ and $\hh$ it
follows that $T_{e}(G/H) \cong \gg _{\alpha _{1}}^{\C}\oplus \ldots
\oplus \gg _{\alpha _{n}}^{\C}$, where by $\gg _{\alpha _{i}}$ is
denoted the root subspace defined with the root $\alpha _{i}$.

Since $J$ is invariant under the isotropy representation for $H$ and, thus for $T^k$, it induces 
the complex structure on each complementary root subspace $\gg _{\alpha
_{1}},\ldots ,\gg _{\alpha _{n}}$. Therefore, $J$ can be completely
described by the root system $\varepsilon _{1} \alpha _{1},\ldots
,\varepsilon _{n}\alpha _{n}$, where $\varepsilon _{i}= \pm
1$ depending  if $J$ and the adjoint representation $Ad_{T}$ define the
same orientation on $\gg _{\alpha _{i}}$ or not, where $1\leqslant
i\leqslant n$. The roots $\epsilon _{i}\alpha _{i}$ are called
{\it the roots of the almost complex structure} $J$.
 
We consider a homogeneous space $G/H$ with the canonical action of the maximal torus $T^k$ and an invariant almost complex structure $J$. It is proved in~\cite{Buch_Terz} that the weights for the canonical action of the maximal torus $T^k$ on $G/H$ at the fixed point $\rw \in W_{G}/W_{H}$ for this action related to the structure $J$ are given with $\rw (\epsilon _i\alpha _i)$, $1\leq i\leq n$. Consequently it is deduced in~\cite{Buch_Terz} that  the universal toric genus for $(G/H, J)$ is given by the formula
\begin{equation}\label{utg}
\Phi(G/H, J) = \sum_{\rw \in W_{G}/W_{H}}\prod_{i=1}^{n}\frac{1}{[\rw (\epsilon _{i}\alpha_{i})]({\bf u})}.
\end{equation}
For the Chern-Dold character~\cite{BCH} of the universal toric genus for $(G/H, J)$ it is proved in~\cite{Buch_Terz} to be given by the formula
\begin{equation}\label{Ch-Do}
ch_{U}\Phi(G/H, J) = \sum_{\rw \in W_{G}/W_{H}}\prod_{i=1}^{n}\frac{f(\left\langle \rw (\epsilon_{i}\alpha_{i}), {\bf x}\right\rangle)}{\left\langle \rw (\epsilon_{k}\alpha_{k}), {\bf x}\right\rangle} \ ,
\end{equation}
where $W_{G}$ and $W_{H}$ are the Weyl groups for $G$ and $H$, and $f(x)=1+a_1x+a_2x^2+\ldots +a_{n}x^{n}+\ldots$ for $a_{i}\in \Omega _{U}^{-2i}(\Z)$. Here $x$ denotes the first Chern class of the Hopf line bundle over $\C P^{\infty}$ and $x=ch_{U}g(u)$, where $g(u)$ is the logarithm of the formal group in complex cobordisms~\cite{Novikov-67}.
\begin{rem}
Recall~\cite{BCH} that $\Omega ^{*}_{U}(\Z )$ is the subring of $\Omega _{U}^{*}\otimes \Q$  generated by the elements having  integer Chern numbers. More precisely, $\Omega _{U}^{*}(\Z )= \sum\limits_{n\geq 0} \Omega _{U}^{-2n}(\Z )$ where $\Omega _{U}^{-2n}(\Z )=\{ a\in \Omega _{U}^{2n}\otimes \Q : s_{\omega}a\in \Omega _{U}^{0}=\Z\;\text{for}\; s_{\omega}\in S_{2n} \}$ and $S=\sum\limits_{n\geq 0} S_{2n}$ is the Landweber-Novikov algebra. 
It is proved in~\cite{BCH} that $\Omega _{U}^{*}(\Z ) = \Z [b_1,b_2,\ldots ]$, where $b_{n}=\frac{[\C P^n]}{n+1}$, $n\geq 1$.
Recall also~\cite{BCH} that the Chern-Dold character in complex cobordisms  splits into the composition
$
\dr{U^{*}(X)}{\hat{ch}_{U}}H^{*}(X, \Omega ^{*}_{U}(\Z ))\to H^{*}(X,\Omega _{U}^{*}\otimes \Q )
$
and $\hat{ch} _{U}(g(u))=x$, $\hat{ch}_{U}(u)=g^{-1}(x)$, where $u=c_{1}^{U}(\eta)$, $x=c_{1}^{H}(\eta )$ are the cobordism and the cohomology first Chern classes for  the Hopf line $\eta $ bundle over $\C P^{\infty}$. The series $g(u)=u+\sum\limits_{n\geq 1}\frac{[\C P^{n}]}{n+1} u^{n+1}$ is the logarithm of the formal group low in complex cobordisms. For its functional inverse series it is proved in~\cite{BCH} to be given by $g^{-1}(x)=x+\sum\limits_{n\geq 1}\frac{[M^{2n}]}{(n+1)!} x^{n+1}$ where $\frac{[M^{2n}]}{(n+1)!}\in \Omega _{U}^{-2n}(\Z )$. Since  $\hat{ch}_{U}u=\frac{x}{f(x)}=g^{-1}(x)$, it implies  that  $f(x)=1+a_1x+a_2x^2+\ldots$, where $a_i\in \Omega _{U}^{-2i}(\Z )$.
\end{rem}
It follows~\cite{Buch_Terz} that the complex cobordism class for $(G/H, J)$ can be obtained as the coefficient in $t^{n}$ in the polynomial   
\begin{equation}\label{cob_class}
\sum _{\rw \in W_{G}/W_{H}}\prod_{i=1}^{n}\frac{f(t\langle \rw (\varepsilon _{i}\alpha _{i}),\bf{x}\rangle)}{\langle \rw(\varepsilon _{i}\alpha _{i}),\bf{x}\rangle} \ .
\end{equation}

It is also proved in~\cite{Buch_Terz} that the tangential  characteristic numbers $s_{\omega}$  in cobordisms  for $(\tau (G/H),J)$, where $\omega=(i_1,\ldots i_n)$ and $\|\omega\|=\sum\limits_{j=1}^{n}ji_{j}=n$,  can be computed by the following formula
\begin{equation}\label{charact_numbers}
s_{\omega}(\tau (G/H),J)=\sum _{\rw \in W_{G}/W_{H}}\rw \Big(
\frac{f_\omega(t_1, \ldots, t_n)}{t_1 \cdots t_n}\Big),
\end{equation}
where $t_j=\langle  \varepsilon _{j}\alpha_j,{\bf x} \rangle$ and $f_{\omega}(t_1,\ldots t_n)$ is given by the expression 
\[
\prod_{i=1}^{n}f(t_i)=1+\sum f_\omega(t_1, \ldots, t_n)a^\omega.
\]

In~\cite{Buch_Terz} are obtained  the formulae for the cobordism classes and computed some characteristic numbers $s_{\omega}$ of the flag manifolds and the Grassmann manifolds related to the standard invariant complex structure.  We generalize further  these  formulas to  an arbitrary invariant almost complex structure on flag manifolds and generalized Grassmann manifolds. 

\subsection{Flag manifolds $U(n)/T^n$.}
Recall that as $U(n)/T^n$ admits $2^m$ invariant almost complex structures where $m=\frac{n(n-1)}{2}$, as it is totally reducible. Let $J$ be an arbitrary invariant almost complex structure on $U(n)/T^n$ and let $\alpha _{ij}=x_i-x_j$, $1\leq i<j\leq n$ be the roots for $U(n)$. The structure $J$ is described by its roots $\epsilon _{ij}\alpha _{ij}$, where $\epsilon _{ij}=\pm 1$. The Weyl group for $U(n)$ is the symmetric group $S_n$. Using~\eqref{Ch-Do}, in analougos way to that in~\cite{Buch_Terz}, we straightforwardly deduce the following statement.
\begin{thm}\label{chern-flag}
The Chern-Dold character of the toric genus for the flag manifold $(U(n)/T^n, J)$ is given by the formula
\begin{equation}\label{ChDflag}
ch_{U}\Phi (U(n)/T^n, J) = \frac{\lambda}{\Delta _{n}}\sum\limits_{\sigma \in S_{n}}\sg (\sigma)\sigma \Big( \prod\limits_{1\leq i<j\leq n}f(\epsilon _{ij}(x_i-x_j)\Big ) \ ,
\end{equation}
where $\lambda = \prod\limits_{1\leq i<j\leq n}\epsilon _{ij}$, then $\Delta _{n}=\prod\limits_{1\leq i<j\leq n}(x_i-x_j)$ and  $f(t)=1+\sum\limits_{i\geq 1}a_it^i$, while $\sg (\sigma)$ is the sign of the permutation $\sigma$.
\end{thm}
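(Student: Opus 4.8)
The plan is to specialize the general Chern--Dold character formula \eqref{Ch-Do} to the pair $G = U(n)$, $H = T^n$ and then reorganize the resulting sum into the antisymmetrized shape of \eqref{ChDflag}. First I would record the structure data. Since $H = T^n$ is itself the maximal torus, $\hh$ contributes no roots of its own, so the complementary roots of $\gg$ relative to $\hh$ are exactly the positive roots $\alpha_{ij} = x_i - x_j$, $1 \le i < j \le n$, and their number $m = n(n-1)/2$ equals the complex dimension of the flag manifold. The Weyl group $W_G = S_n$ acts on the weights by $\sigma(x_i) = x_{\sigma(i)}$, while $W_H$ is trivial, so the index set $W_G/W_H$ of \eqref{Ch-Do} is all of $S_n$. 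With $J$ described by its roots $\epsilon_{ij}\alpha_{ij}$, and using $\langle \sigma(\alpha_{ij}), {\bf x}\rangle = x_{\sigma(i)} - x_{\sigma(j)}$, formula \eqref{Ch-Do} reads
\[
ch_U \Phi(U(n)/T^n, J) = \sum_{\sigma \in S_n}\prod_{1 \le i < j \le n}\frac{f(\epsilon_{ij}(x_{\sigma(i)} - x_{\sigma(j)}))}{\epsilon_{ij}(x_{\sigma(i)} - x_{\sigma(j)})}.
\]

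Next I would split off the denominator and exploit antisymmetry. The product of denominators factors as $\big(\prod_{i<j}\epsilon_{ij}\big)\prod_{i<j}(x_{\sigma(i)} - x_{\sigma(j)}) = \lambda\cdot \sigma(\Delta_n)$, and because $\Delta_n = \prod_{i<j}(x_i - x_j)$ is an alternating polynomial one has $\sigma(\Delta_n) = \sg(\sigma)\,\Delta_n$. The numerator, meanwhile, is by definition $\sigma$ applied to $\prod_{i<j}f(\epsilon_{ij}(x_i - x_j))$, since $\sigma$ acts on functions of the $x_i$ by substituting $x_i \mapsto x_{\sigma(i)}$ and the constants $\epsilon_{ij}$ are untouched. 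Collecting these and using $\lambda^{-1} = \lambda$ together with $\sg(\sigma)^{-1} = \sg(\sigma)$, both being $\pm 1$, pulls the scalar $\lambda/\Delta_n$ outside the sum and produces exactly \eqref{ChDflag}.

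There is no deep obstacle here; the argument is a direct unwinding of \eqref{Ch-Do}, exactly analogous to the computation of \cite{Buch_Terz} for the standard invariant structure, now carried out with the signs $\epsilon_{ij}$ in place. The only point demanding genuine care is the bookkeeping of the Weyl action: one must check that the \emph{same} permutation $\sigma$ governs the numerator factors $f(\epsilon_{ij}(x_i - x_j))$ and the denominator $\Delta_n$, that the fixed data $\epsilon_{ij}$ of the structure $J$ are not permuted and hence factor out as the global constant $\lambda$, and that the sign $\sg(\sigma)$ arises solely from the antisymmetry of $\Delta_n$. Once this is settled the formula assembles immediately.
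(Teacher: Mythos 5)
Your proposal is correct and follows essentially the same route as the paper, which likewise obtains \eqref{ChDflag} by specializing \eqref{Ch-Do} to $G=U(n)$, $H=T^n$ (so $W_G/W_H=S_n$ and the complementary roots are the $\alpha_{ij}=x_i-x_j$) and then factoring the denominator as $\lambda\,\sg(\sigma)\Delta_n$ via the antisymmetry of the Vandermonde product. Your care with the bookkeeping --- that the $\epsilon_{ij}$ stay attached to the root indices while $\sigma$ permutes only the variables $x_i$ --- is exactly the point the paper relies on, so nothing is missing.
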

Following~\cite{Buch_Terz}, we simplify this formula using divided difference operator $L$. Recall~\cite{MC} that $L$ is defined by
\[
L{\bf x}^{\xi} = \frac{1}{\Delta _{n}}\sum\limits_{\sigma \in S_n}\sg (\sigma)\sigma({\bf x}^{\xi}) \ ,
\]
where $\xi = (j_1,\ldots ,j_n)$ and ${\bf x}^{\xi}=x_1^{j_1}\cdots x_n^{j_n}$. We make a use of the following properties of the operator $L$:
\begin{itemize}
\item $L{\bf x}^{\delta}=1$ for $\delta = (n-1,n-2,\ldots,1,0)$;
\item $L{\bf x}^{\xi}=\sg (\sigma)L\sigma({\bf x}^{\xi})$ for $\sigma \in S_n$;
\item $L{\bf x}^{\xi}=0$ for $\xi=(j_1\geq\cdots j_n\geq 0)$ and $\xi\neq \delta+\mu$ for some $\mu=(\mu_1\geq\cdots\mu_{n}\geq 0)$.
\end{itemize} 
Let
\begin{equation}
\prod\limits_{1\leq i<j\leq n}f(\epsilon_{ij}t(x_i-x_j)) = 1+\sum _{|\xi |>0}P_{\xi}(a_1,\ldots ,a_n,\ldots)t^{|\xi |}{\bf x}^{\xi} \ ,
\end{equation}
where $\xi =(j_1,\ldots ,j_{n})$ and  $|\xi |=\sum\limits_{q=1}^{n}j_{q}$. It follows from~\eqref{ChDflag} that
\[
ch_{U}\Phi(U(n)/T^n, J) = \lambda \cdot \sum\limits_{|\xi |\geq m}P_{\xi}L{\bf x}^{\xi}. 
\]  
In particular, we obtain the formula for the corresponding cobordism class:
\begin{cor}
\[
[U(n)/T^n, J]=\lambda \cdot \sum\limits_{\sigma\in S_{n}}\sg (\sigma)P_{\sigma (\delta )}(a_1,\ldots ,a_n,\ldots ) \ ,
\]
where $\delta =(n-1,n-2,\ldots ,1,0)$.
\end{cor}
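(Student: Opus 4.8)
The plan is to read off $[U(n)/T^n, J]$ from the Chern--Dold character as the part of $ch_{U}\Phi$ of degree zero in the variables $x_{1},\ldots,x_{n}$, and then to invoke the three listed properties of the divided difference operator $L$ to see that only the permutations of $\delta$ survive. I take as given the formula deduced just above the statement, namely
\[
ch_{U}\Phi(U(n)/T^n, J)=\lambda\sum_{|\xi|\ge m}P_{\xi}\,L{\bf x}^{\xi},\qquad m=\tfrac{n(n-1)}{2}.
\]

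First I would identify the cobordism class with the degree-zero part of $ch_{U}\Phi$. By~\eqref{tg_geo} the class $[U(n)/T^n, J]$ is the coefficient of ${\bf u}^{0}$ in $\Phi$; since $\hat{ch}_{U}(u_{i})=g^{-1}(x_{i})=x_{i}+\cdots$ begins in degree one, every monomial ${\bf u}^{\omega}$ with $|\omega|>0$ contributes only to strictly positive $x$-degree, while $\hat{ch}_{U}$ restricts to the inclusion $\Omega_{U}^{*}\hra\Omega_{U}^{*}\otimes\Q$ on coefficients. Hence the part of $ch_{U}\Phi$ of $x$-degree $0$ is exactly $[U(n)/T^n, J]$. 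Equivalently, in the bookkeeping of~\eqref{cob_class} this is the coefficient of $t^{m}$, the terms $P_{\xi}{\bf x}^{\xi}$ with $|\xi|=m$ being precisely those lying in $\Omega_{U}^{-2m}$.

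Next I would observe that $L{\bf x}^{\xi}=\frac{1}{\Delta_{n}}\sum_{\sigma}\sg(\sigma)\sigma({\bf x}^{\xi})$ is homogeneous of degree $|\xi|-m$, since its alternating numerator is divisible by the Vandermonde $\Delta_{n}$, which has degree $m$. Consequently only the summands with $|\xi|=m$ can contribute a term of $x$-degree $0$ (those with $|\xi|>m$ give polynomials of strictly positive degree), so that
\[
[U(n)/T^n, J]=\lambda\sum_{|\xi|=m}P_{\xi}\,L{\bf x}^{\xi}.
\]

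Finally I would pin down which $\xi$ with $|\xi|=m$ survive. If $\xi$ had two equal entries, the transposition exchanging them fixes ${\bf x}^{\xi}$ while carrying sign $-1$, so the second property gives $L{\bf x}^{\xi}=-L{\bf x}^{\xi}=0$; thus $\xi$ has distinct entries. Sorting $\xi$ into weakly decreasing order, the third property forces it to equal $\delta+\mu$ for a partition $\mu$, and $|\xi|=|\delta|=m$ leaves only $\mu=0$. Therefore $\xi$ runs exactly over the $n!$ distinct permutations $\sigma(\delta)$, and the first two properties yield $L{\bf x}^{\sigma(\delta)}=\sg(\sigma)L{\bf x}^{\delta}=\sg(\sigma)$. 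Substituting gives the asserted
\[
[U(n)/T^n, J]=\lambda\sum_{\sigma\in S_{n}}\sg(\sigma)P_{\sigma(\delta)}(a_{1},\ldots,a_{n},\ldots).
\]
The step that needs the most care is the first one, the clean identification of the cobordism class with the degree-zero part of the Chern--Dold character; once that is secured the remainder is a direct bookkeeping with the operator $L$.
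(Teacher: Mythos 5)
Your proof is correct and takes essentially the same route as the paper, which presents the corollary as an immediate consequence of the displayed formula $ch_{U}\Phi(U(n)/T^n,J)=\lambda\sum_{|\xi|\geq m}P_{\xi}L{\bf x}^{\xi}$ together with the three listed properties of the divided difference operator $L$: isolate the $x$-degree-zero part, note $L{\bf x}^{\xi}$ is homogeneous of degree $|\xi|-m$ so only $|\xi|=m$ contributes, and conclude that only the permutations $\sigma(\delta)$ survive with $L{\bf x}^{\sigma(\delta)}=\sg(\sigma)$. Your explicit justification of the identification of $[U(n)/T^n,J]$ with the constant term of the Chern--Dold character, via $\hat{ch}_{U}(u_i)=g^{-1}(x_i)=x_i+\cdots$, merely makes precise what the paper leaves tacit.
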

Using~\eqref{charact_numbers} we derive that the characteristic number $s_{m}(U(n)/T^n, J)$, where $m=\frac{n(n-1)}{2}$ can be expressed in terms of the operator $L$ as:
\begin{equation}
s_{m}(U(n)/T^n, J)= \lambda \cdot \sum\limits_{1\leq i<j\leq n}L(\epsilon_{ij}(x_i-x_j))^{m}\ .
\end{equation}
For $n>3$ this will be polynomial in more then three variables, whose each monomial contains at most two variables.  Therefore, taking into account the given  properties of $L$ we obtain :
\begin{cor}\label{Fl-top-Chern}
\begin{equation}
s_{m}(U(n)/T^n, J)=0 \; \text{for} \; n > 3, \; \text{where}\;  m=\frac{n(n-1)}{2} \ .
\end{equation}
\end{cor}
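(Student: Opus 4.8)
The plan is to start from the displayed expression
\[
s_m(U(n)/T^n, J) = \lambda \cdot \sum_{1\le i<j\le n} L(\epsilon_{ij}(x_i-x_j))^m
\]
and show that every summand vanishes once $n>3$. Since $\epsilon_{ij}=\pm 1$, we have $(\epsilon_{ij}(x_i-x_j))^m = \epsilon_{ij}^m (x_i-x_j)^m$ with $\epsilon_{ij}^m\in\{\pm 1\}$, and $L$ is linear; so the overall sign $\lambda$ plays no role, and it suffices to prove $L((x_i-x_j)^m)=0$ for each fixed pair $i<j$.

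Next I would expand $(x_i-x_j)^m$ by the binomial theorem as $\sum_{a+b=m}\binom{m}{a}(-1)^b x_i^a x_j^b$. The crucial structural observation---already flagged just before the statement---is that each monomial $x_i^a x_j^b$ involves at most the two variables $x_i,x_j$: its exponent vector $\xi=(\xi_1,\dots,\xi_n)$ has $\xi_i=a$, $\xi_j=b$, and $\xi_l=0$ for all $l\notin\{i,j\}$. When $n>3$ there are at least $n-2\ge 2$ indices $l$ outside $\{i,j\}$, so $\xi$ has at least two components equal to $0$; in particular $\xi$ has a repeated entry. This is exactly where the hypothesis $n>3$ enters.

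The final step uses the antisymmetry property $L{\bf x}^\xi = \sg(\sigma)\,L\sigma({\bf x}^\xi)$. Choosing $\sigma$ to be the transposition that swaps two positions carrying the equal (zero) exponents, we have $\sigma({\bf x}^\xi)={\bf x}^\xi$ while $\sg(\sigma)=-1$, whence $L{\bf x}^\xi = -L{\bf x}^\xi$ and therefore $L{\bf x}^\xi=0$. Applying this to every monomial of $(x_i-x_j)^m$ and summing gives $L((x_i-x_j)^m)=0$ for each pair $i<j$, and hence $s_m(U(n)/T^n, J)=0$.

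I do not expect a genuine obstacle: once the two-variable structure of the monomials is noticed, the vanishing is forced purely by the antisymmetry of $L$, and the only point to watch is the numerical bound $n-2\ge 2$, equivalently $n>3$. For $n=3$ a monomial $x_i^a x_j^b$ can occupy all but one slot, so $\xi$ need not have repeated entries and $L$ need not annihilate it; thus the restriction $n>3$ is sharp, which is consistent with the $s_m$ being a cobordism-theoretic generator only in low dimensions.
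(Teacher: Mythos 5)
Your proof is correct and is essentially the paper's own argument: both start from $s_{m}(U(n)/T^n, J)= \lambda \cdot \sum_{1\leq i<j\leq n}L(\epsilon_{ij}(x_i-x_j))^{m}$ and observe that for $n>3$ every monomial in the expansion of $(x_i-x_j)^m$ involves at most two of the $n$ variables, so the listed properties of the divided difference operator $L$ force each term to vanish. The only cosmetic difference is that you invoke the antisymmetry property $L{\bf x}^{\xi}=\sg (\sigma)L\sigma({\bf x}^{\xi})$ with a transposition fixing the monomial (two equal zero exponents), whereas the paper appeals to the property that $L{\bf x}^{\xi}=0$ unless the sorted exponent vector is of the form $\delta+\mu$; the two are interchangeable here.
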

In general, to describe an arbitrary $s_{\omega}$, $\omega =(i_1,\ldots ,i_m)$, $\sum _{l=1}^{m}li_{l}=m$ we can proceed as in~\cite{Buch_Terz}. For $(u_1,\ldots ,u_m) = (\epsilon _{ij}(x_i-x_j), i<j)$ we consider the orbit $O_{\omega}$  of the monomial $(u_1\cdots u_{i_1})(u_{i_1+1}^2\cdots u_{i_1+i_2}^2)\cdots (u_{i_1+\cdots +i_{m-1}+1}^{m}\cdots u_{i_1+\cdots +i_{m}}^{m})$ under the action of the symmetric group $S_m$. Recall that the orbit of a monomial ${\bf u}^{\eta}$ is defined by $O({\bf u}^{\eta})=\sum {\bf u}^{\eta ^{'}}$ where the sum goes over the orbit $\{ \eta ^{'} = \sigma \eta, \sigma \in S_m \}$ and ${\bf u}^{\eta}=u_1^{\eta _1}\cdots u_{m}^{\eta _{m}}$ for $\eta = (\eta _1,\ldots ,\eta _{m})$. The orbit $O_{\omega}$ can be written as  
\[
O_{\omega}=\sum\limits_{|\xi |=m}\alpha_{\omega ,\xi}{\bf x}^{\xi} \ ,
\]
for $\alpha _{\omega, \xi }\in \Z$. Therefore we obtain  that
\[
s_{\omega}(U(n)/T^n) =\lambda \cdot \sum\limits _{|\xi |=m}\alpha _{\omega ,\xi}L{\bf x}^{\xi} = \lambda\cdot \sum\limits_{\sigma \in S_n}\sg (\sigma )\alpha _{\omega ,\sigma (\delta )} \ .
\]    
It can be also proved in the same way as it was done in~\cite{Buch_Terz} for the standard invariant complex structure, that for $n\geq 4$ the cobordism class of the flag manifold $U(n)/T^n$ related to an arbitrary invariant almost complex structure $J$ is given as the coefficient at $t^{\frac{n(n-1)}{2}}$ in the series 
\[
 L(\tilde{f}(t\epsilon _{12}(x_1-x_2))\tilde{f}(t\epsilon _{n-1n}(x_{n-1}-x_{n}))\prod\limits_{\underset{(i,j)\neq (1,2),(n-1,n)}{1\leq i<j\leq n}}f(t\epsilon _{ij}(x_i-x_j)) \ ,
\]
where $\tilde{f}(t)=\sum\limits_{l\geq 1}a_{2l-1}t^{2l-1}$. It is the same  as the coefficient at $t^{\frac{n(n-1)}{2}}$
in the series
\begin{equation}\label{fl_numb}
\epsilon _{12}\epsilon _{n-1n}\Big ( L(\tilde{f}(t(x_1-x_2))\tilde{f}(t(x_{n-1}-x_{n}))\prod\limits_{\underset{(i,j)\neq (1,2),(n-1,n)}{1\leq i<j\leq n}}f(t\epsilon _{ij}(x_i-x_j))\Big ) \ .
\end{equation}
 
\begin{ex}\label{nondecomposable}
Analogously  to~\cite{Buch_Terz}, using~\eqref{fl_numb} we compute $s_{(1,0,0,0,1,0)}(U(4)/T^4, J)=\epsilon _{12}\epsilon _{34}80$ for an arbitrary invariant almost complex structure $J$.  It is also easy to compute 
\begin{align*}
s_{(0,0,2,0,0,0)}(U(4)/T^4, J)& = \epsilon _{12}\epsilon _{34}L((x_1-x_2)^3(x_3-x_4)^3)\\ &=3\epsilon _{12}\epsilon _{34}L((x_1^3x_3x_4^2-x_1^3x_3^2x_4)+(x_1x_2^2x_3^3-x_1^2x_2x_3^3)+\\ &+(x_1^2x_2x_4^3-x_1x_2^2x_4^3)+(x_2^3x_3^2x_4-x_2^3x_3x_4^2))\\& =6\epsilon _{12}\epsilon _{34}L(x_1^3x_3x_4^2+x_1x_2^2x_3^3+x_1^2x_2x_4^3+x_2^3x_3x_4)\\& =-24\epsilon _{12}\epsilon _{34}.
\end{align*}
The fact that the characteristic numbers $s_{(1,0,0,0,1,0)}$ and $s_{(0,0,2,0,0,0)}$ for $(U(4)/T^4, J)$ are nontrivial implies that the cobordism class for $(U(4)/T^4, J)$ is multiplicatively indecomposable in $\Omega _{U}^{*}$.   Otherwise, it can be represented as the product of two factors one of which should be multiple of $a_1$ and the other should be from $\Z [a_1,\ldots ,a_5]$. This further would imply that  $s_{(0,0,2,0,0,0)}(U(4)/T^4, J)=0$ what is not the case.
\end{ex}
\subsection{Generalized Grassmann manifolds} They are defined with $G_{q_1+\ldots +q_k,q_1,\ldots ,q_{k-1}}=U(q_1+\ldots+q_k)/U(q_1)\times \ldots \times U(q_k)$, where $q_1,\ldots ,q_k\geq 2$ and $k\geq 2$.
In~\cite{Buch_Terz} we computed the  cobordism classes of the  classical Grassmann manifolds $G_{q_1+q_2,q_1}$, $q_1,q_2\geq 2$, related to the unique, up to conjugation,  invariant complex structure. We push up further these  results calculating  the top Chern numbers $s_{q_1q_2}$ for $G_{q_1+q_2, q_1}$.
\begin{prop}\label{Ch_Gr}
\begin{equation}
s_{q_1q_2}(G_{q_1+q_2,q_1}) = 0\; \text{for}\; q_1,q_2\geq 3 \ .
\end{equation}
\end{prop}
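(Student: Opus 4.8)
The plan is to specialize the localization formula~\eqref{charact_numbers} to the top power-sum number and then mimic the argument used for the flag manifolds (Corollary~\ref{Fl-top-Chern}), reducing everything to the vanishing properties of the divided difference operator $L$. Since the Grassmannian $G_{q_1+q_2,q_1}=U(n)/(U(q_1)\times U(q_2))$ carries an essentially unique invariant complex structure (up to conjugation), I may take the standard one, so that with $n=q_1+q_2$ variables and $N=q_1q_2$ complementary roots the latter are $x_i-x_j$ for $1\le i\le q_1<j\le n$, while $W_G=S_n$ and $W_H=S_{q_1}\times S_{q_2}$. For the top number $\|\omega\|=N$ the polynomial $f_\omega$ of~\eqref{charact_numbers} is the $N$-th power sum $\sum_l t_l^{N}$, and since the resulting summand is $W_H$-invariant I would first record
\[
s_{q_1q_2}(G_{q_1+q_2,q_1})=\sum_{\rw\in W_G/W_H}\rw\Big(\frac{\sum_{1\le i\le q_1<j\le n}(x_i-x_j)^{N}}{\prod_{1\le i\le q_1<j\le n}(x_i-x_j)}\Big).
\]

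The key reduction is to recognize this coset sum as an application of $L$. For a $q_1$-subset $A\subset\{1,\dots,n\}$ with complement $B$ the Vandermonde factors as $\Delta_n=\pm\,\Delta_A\Delta_B\prod_{i\in A,\,j\in B}(x_i-x_j)$, where $\Delta_A,\Delta_B$ are the Vandermonde determinants in the variables indexed by $A$ and $B$; tracking the sign shows it is exactly $\sg(\rw)$ for the shuffle $\rw$ carrying $\{1,\dots,q_1\}$ to $A$. Substituting this for the denominator and collecting the $q_1!\,q_2!$ representatives of each coset, the sum assembles into the full antisymmetrization over $S_n$ divided by $\Delta_n$, that is
\[
s_{q_1q_2}(G_{q_1+q_2,q_1})=\frac{1}{q_1!\,q_2!}\,L\Big(\Delta_{A_0}\Delta_{B_0}\sum_{1\le i\le q_1<j\le n}(x_i-x_j)^{N}\Big),
\]
with $A_0=\{1,\dots,q_1\}$ and $B_0=\{q_1+1,\dots,n\}$. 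By linearity it then suffices to prove $L\big((x_i-x_j)^{N}\Delta_{A_0}\Delta_{B_0}\big)=0$ for each complementary pair $i\le q_1<j$.

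For the final step I would invoke the property that $L$ annihilates every homogeneous polynomial of degree $\binom{n}{2}$ none of whose monomials has exponent vector a permutation of $\delta=(n-1,\dots,1,0)$; equivalently, a monomial $\mathbf{x}^{\xi}$ survives only if its exponents are the distinct values $0,1,\dots,n-1$. Now in any monomial of $\Delta_{A_0}$ the exponents of $x_1,\dots,x_{q_1}$ run through $\{0,\dots,q_1-1\}$, in any monomial of $\Delta_{B_0}$ the exponents of $x_{q_1+1},\dots,x_n$ run through $\{0,\dots,q_2-1\}$, and the factor $(x_i-x_j)^{N}$ raises only the exponents of the two variables $x_i,x_j$. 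Hence every variable except $x_i$ and $x_j$ carries exponent at most $\max(q_1,q_2)-1$, so at most two exponents can attain the $\min(q_1,q_2)$ values lying in $[\max(q_1,q_2),\,n-1]$. When $q_1,q_2\ge 3$ we have $\min(q_1,q_2)\ge 3>2$, so no monomial of $(x_i-x_j)^{N}\Delta_{A_0}\Delta_{B_0}$ can have exponent multiset $\{0,1,\dots,n-1\}$; thus $L$ kills each term and $s_{q_1q_2}=0$.

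The main obstacle is the bookkeeping in the second paragraph: verifying that the sign in the Vandermonde factorization is precisely $\sg(\rw)$, so the coset sum collapses cleanly onto $L$, and stating the nonvanishing criterion for $L$ on a general homogeneous polynomial rather than on a single monomial. Both are routine given the template of~\cite{Buch_Terz}, and the counting argument makes transparent why $q_1,q_2\ge 3$ is exactly the right hypothesis: for $\min(q_1,q_2)\le 2$ the two boosted variables already suffice to fill the top exponents, and the number need not vanish.
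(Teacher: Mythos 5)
Your proof is correct and follows essentially the same route as the paper: both reduce the top Chern number to $\frac{1}{q_1!q_2!}L\big(\Delta_{q_1}\Delta_{q_1+1,q_1+q_2}\sum_{i,j}(x_i-x_j)^{q_1q_2}\big)$ and then kill every monomial via the vanishing property of the divided difference operator $L$, the point in both cases being that only the two variables boosted by $(x_i-x_j)^{q_1q_2}$ can carry exponents larger than $\max(q_1,q_2)-1$, while at least three such exponents would be needed when $q_1,q_2\geq 3$. The only difference is that you re-derive this $L$-formula from the localization formula~\eqref{charact_numbers} by explicit Vandermonde and sign bookkeeping, whereas the paper extracts it directly from the cobordism-class formula cited from~\cite{Buch_Terz}.
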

\begin{proof}
It is proved in~\cite{Buch_Terz} that the cobordism class for $G_{q_1+q_2,q_1}$ is given as the coefficient in $t^{q_1q_2}$ in the polynomial 
\[
\frac{1}{q_{1}!q_{2}!}L\Big( \Delta _{q_1}\Delta _{q_{1}+1, q_1+q_2}\prod\limits_{\underset{q_1+1\leq j\leq q_1+q_2}{1\leq i\leq q_1}} f(t(x_i-x_j))\Big ),
\]
where $\Delta _{p,q}=\prod\limits_{1\leq i<j\leq q}(x_i-x_j)$. It implies that the top Chern number $s_{q_1q_2}$ is given by
\[
s_{q_1q_2}(G_{q_1+q_2,q_1})=\frac{1}{q_{1}!q_{2}!}L\Big (\Delta _{q_1}\Delta _{q_{1}+1,q_1+q_2}\sum\limits_{\underset{ q_1+1\leq j\leq q_1+q_2}{1\leq i\leq q_1}} (x_i-x_j)^{q_1q_2}\Big ) \ .
\]
In order to see when this number is non-trivial, because of the properties of the operator $L$,  we need to consider on the right hand side only  monomials having $q_1+q_2-1$ variables with the degrees $q_1+q_2-1,q_1+q_2-2,\ldots ,1$ in some order. Therefore we must have a variable in such monomial which comes from $\Delta _{q_1}$ or $\Delta _{q_{1}+1, q_1+q_2}$ and whose degree is $\geq q_1+q_2-3$. The degrees of variables in the monomials $\Delta _{q_1}$ or $\Delta _{q_{1}+1, q_1+q_2}$ are $\leq q_1-1, q_2-1$ respectively. It follows that $q_1-1\geq q_1+q_2-3$ or $q_2-1\geq q_1+q_2-3$, what implies that one of $q_1$ or $q_2$ is equal to $2$.
\end{proof}

\begin{rem}\label{Rem-Gr-Ch}
Because of  Proposition~\ref{Ch_Gr}  classical Grassmann manifolds that might have non-trivial top Chern numbers are $G_{q+2,2}=U(q+2)/U(2)\times U(q)$. In~\cite{Buch_Terz} we computed $s_{4}(G_{4,2})=-20$. For $q=3$ we obtain
\[
s_{6}(G_{5,2})=\frac{1}{12}L\Big( (x_1-x_2)(x_3-x_4)(x_3-x_5)(x_4-x_5)(\sum_{i=3}^{5}(x_1-x_i)^6+\sum_{i=3}^{5}(x_2-x_i)^6\Big ) = 70 \ .
\]
\end{rem}
Let us consider now an arbitrary generalized Grassmann manifold $G_{q_1+\ldots +q_k,q_1,\ldots q_{k-1}}$, where $q_1,\ldots ,q_k\geq 2$ and $k\geq 2$. 
Recall that any invariant almost complex structure on a homogeneous space $G/H$ is determined by its roots which can be divided into the groups that correspond to the irreducible summands in $T_{e}(G/H)$ related to the isotropy representation for $H$.
\begin{lem}
The generalized Grassmann manifold $G_{q_1+\ldots +q_k,q_1,\ldots q_{k-1}}$ has $2^{\frac{k(k-1)}{2}}$ invariant almost complex structures.
\end{lem}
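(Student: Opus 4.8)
The plan is to reduce the count to the dichotomy from \cite{BH} recalled in Subsection~\ref{iacs}: once $G/H$ is known to carry \emph{at least one} invariant almost complex structure, the total number of such structures is $2^s$, where $s$ is the number of irreducible summands of the isotropy representation of $H$ on $T_{e}(G/H)$. Thus the proof naturally splits into an existence step and a counting step, and the whole content is the determination of $s$.

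For existence, set $n=q_1+\cdots+q_k$ and realize $H=U(q_1)\times\cdots\times U(q_k)$ inside $G=U(n)$ as block-diagonal matrices. Then $H$ is precisely the centralizer in $U(n)$ of the toral subgroup of block-scalar matrices $\mathrm{diag}(\lambda_1 I_{q_1},\ldots,\lambda_k I_{q_k})$ with $|\lambda_i|=1$. By the second item recalled from \cite{BH} in Subsection~\ref{iacs}, any $G/H$ for which $H$ is the centralizer of a toral subgroup admits an invariant complex structure, hence in particular an invariant almost complex structure, so the hypothesis of the $2^s$ formula is met.

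For the count of $s$ I would use the root-space description of Subsection~\ref{Generalities}. Label $\{1,\ldots,n\}$ by consecutive blocks $B_1,\ldots,B_k$ of sizes $q_1,\ldots,q_k$. The roots of $U(n)$ are $x_a-x_b$ with $a\neq b$; those lying in $\hh$ are exactly the ones with $a,b$ in the same block, so the complementary roots are the $x_a-x_b$ with $a\in B_i$, $b\in B_j$ and $i\neq j$. Grouping these by the unordered block-pair $\{i,j\}$, each group spans a subspace of $T_{e}(G/H)$ which, under the isotropy action (with all factors other than $U(q_i)\times U(q_j)$ acting trivially), is isomorphic to $\Hom_{\C}(\C^{q_i},\C^{q_j})$, i.e.\ the outer tensor product of the standard representation of one factor with the dual of the standard representation of the other. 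This is a single irreducible summand, and summands attached to different block-pairs are distinct. Hence $s$ equals the number of pairs $\{i,j\}$ with $1\le i<j\le k$, namely $\binom{k}{2}=\frac{k(k-1)}{2}$, and the number of invariant almost complex structures is $2^{s}=2^{\frac{k(k-1)}{2}}$. As a sanity check, the case $q_1=\cdots=q_k=1$, $k=n$ recovers the flag manifold $U(n)/T^n$ with its $2^{n(n-1)/2}$ structures noted earlier.

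The step requiring genuine care is the irreducibility claim in the counting argument: one must verify both that each block-pair subspace is a single irreducible $H$-module rather than splitting further, and that no irreducible summand mixes complementary roots coming from two different block-pairs. The first follows because the outer tensor product of irreducibles of the factors of a direct product group is irreducible, and the second because complementary roots attached to distinct block-pairs carry inequivalent $T^k$-weights and so cannot lie in a common irreducible submodule; pinning down the isotropy module structure precisely is the only nontrivial obstacle.
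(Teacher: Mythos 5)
Your proof is correct and takes essentially the same route as the paper: the paper's own proof likewise groups the complementary roots $x_a-x_b$ by unordered block pairs $\{i,j\}$ into the families $R_{ij}$, observes that the sum of root subspaces over each $R_{ij}$ is invariant under the isotropy representation of $H$, and concludes the count $2^{\frac{k(k-1)}{2}}$ via the $2^s$ result of Borel--Hirzebruch. In fact you are somewhat more complete than the paper, which leaves both the existence step and the irreducibility of each block-pair summand (your identification with $\Hom_{\C}(\C^{q_i},\C^{q_j})$, plus the distinct central weights ruling out mixing) implicit.
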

\begin{proof}
The complementary roots for $U(q_1+\ldots +q_k)$ related to $U(q_1)\times \ldots \times U(q_k)$ are, up to sign, $x_i-x_j$, where $q_{1}+\ldots +q_{l-1}+1\leq i\leq q_{1}+\ldots +q_{l}$ and $q_{1}+\ldots +q_{l}+1\leq j\leq q_{1}+\ldots +q_{k}$ for $1\leq l\leq k-1$. Therefore they can be divided into the groups $R_{12},\ldots ,R_{1k},\ldots ,R_{23},\ldots ,R_{2k},\ldots ,R_{k-1k}$ where $R_{ij}=\{ x_{l_{i}}-x_{l_{j}}, q_1+\ldots q_{i-1}+1 \leq l_{i}\leq q_1+\ldots + q_{i}, q_1+\ldots +q_{j-1}+1\leq l_{j}\leq q_1+\ldots + q_{j}\}$. The sum of the root subspaces corresponding to each $R_{ij}$ will be invariant under the isotropy representation for $H$ what implies that on $G_{q_1+\ldots +q_k,q_1,\ldots ,q_{k-1}}$ we have $2^{\frac{k(k-1)}{2}}$ invariant almost complex structures.   
\end{proof}

Let now $J$ be an invariant almost complex structure on $G_{q_1+\ldots +q_k,q_1,\ldots ,q_{k-1}}$. It is defined by the roots $\epsilon_{ij}(x_i-x_j)$, where $q_{1}+\ldots +q_{l-1}+1\leq i\leq q_{1}+\ldots +q_{l}$ and $q_{1}+\ldots +q_{l}+1\leq j\leq q_{1}+\ldots +q_{k}$ for $1\leq l\leq k-1$,  and $\epsilon_{i}\pm 1$. Note that for a fixed $l$ the corresponding $\epsilon _{ij}$ are equal for $q_{1}+\ldots +q_{s}+1\leq j\leq q_{1}+\ldots +q_{s+1}$, where $l\leq s\leq k-1$. 
\begin{thm}
The cobordims class of the generalized Grassmann manifold $G_{q_1+\ldots +q_k,q_1,\ldots ,q_{k-1}}$ related to an invariant almost complex structure $J$ is given as the coefficient in $t^{m}$, $m=\sum\limits_{1\leq i<j\leq k}q_iq_j$ in the series in $t$
\begin{equation}\label{Co-Gr}
\frac{\lambda}{q_1!\cdots q_{k}!}L\Big( \Delta_{q_1}\times \Delta_{q_1+1,q_1+q_2}\times \ldots \times \Delta_{q_1+\ldots q_{k-1}+1,q_{1}+\ldots +q_{k}}\prod f(t\epsilon_{ij}(x_i-x_j))\Big ) \ ,
\end{equation}
where $\lambda = \prod \epsilon_{ij}$ and 
$\Delta_{p,q}=\prod\limits_{p\leq i<j\leq q}(x_i-x_j)$.
\end{thm}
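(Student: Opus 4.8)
The plan is to specialize the cobordism–class formula~\eqref{cob_class} to $G=U(N)$ and $H=U(q_1)\times\ldots\times U(q_k)$, where $N=q_1+\ldots+q_k$, and then to promote the sum over the coset space $W_G/W_H$ to a genuine sum over the full Weyl group $W_G=S_N$, so that the divided difference operator $L$ (here the $N$-variable version) applies exactly as in the flag case. Write $F=\prod f(t\epsilon_{ij}(x_i-x_j))$ for the product over the complementary roots, $D=\prod(x_i-x_j)$ for the product of the same unsigned cross-block differences, and $\Delta=\Delta_{q_1}\cdot\Delta_{q_1+1,q_1+q_2}\cdots\Delta_{q_1+\ldots+q_{k-1}+1,N}$ for the product of the block Vandermonde determinants. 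The single algebraic identity driving everything is the factorization $\Delta_N=\Delta\cdot D$ of the full Vandermonde $\Delta_N=\prod_{1\le i<j\le N}(x_i-x_j)$ into its intra-block and inter-block parts.

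First I would rewrite the $\rw$-summand of~\eqref{cob_class}. At the fixed point $\rw$ the numerator is $\rw(F)$ and the denominator is $\prod\rw(\epsilon_{ij}(x_i-x_j))=\lambda\,\rw(D)$, where $\lambda=\prod\epsilon_{ij}$. Using $\Delta_N=\Delta\cdot D$ together with $\rw(\Delta_N)=\sg(\rw)\Delta_N$ gives $\rw(D)=\sg(\rw)\Delta_N/\rw(\Delta)$, so each summand equals $\sg(\rw)\,\rw(F\Delta)/(\lambda\Delta_N)$ (using $\sg(\rw)^{-1}=\sg(\rw)$). Thus the cobordism class is the coefficient of $t^m$, $m=\sum_{i<j}q_iq_j$, in $\frac{1}{\lambda\Delta_N}\sum_{\rw\in W_G/W_H}\sg(\rw)\,\rw(F\Delta)$.

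The crucial step is to convert this coset sum into a full $S_N$-sum. I would verify that the function $\sigma\mapsto\sg(\sigma)\,\sigma(F\Delta)$ on $S_N$ is constant on each $W_H$-coset. This rests on two facts. First, $F$ is $W_H$-invariant: by the remark preceding the theorem the sign $\epsilon_{ij}$ depends only on the blocks containing $i$ and $j$, and $W_H$ permutes indices within blocks, hence permutes the complementary roots among themselves. Second, for $\tau\in W_H=S_{q_1}\times\ldots\times S_{q_k}$ one has $\tau(\Delta)=\sg(\tau)\Delta$, since $\tau$ acts within each block and $\sg(\tau)=\prod_l\sg(\tau|_l)$. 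Combining, $\tau(F\Delta)=\sg(\tau)F\Delta$, and this extra sign is cancelled by $\sg(\sigma\tau)=\sg(\sigma)\sg(\tau)$, so $\sg(\sigma\tau)(\sigma\tau)(F\Delta)=\sg(\sigma)\,\sigma(F\Delta)$. Hence $\sum_{\rw\in W_G/W_H}\sg(\rw)\,\rw(F\Delta)=\frac{1}{q_1!\cdots q_k!}\sum_{\sigma\in S_N}\sg(\sigma)\,\sigma(F\Delta)$, and dividing by $\Delta_N$ recognizes this, by the definition of $L$, as $\frac{1}{q_1!\cdots q_k!}L(F\Delta)$. Since $1/\lambda=\lambda$, the cobordism class is the coefficient of $t^m$ in $\frac{\lambda}{q_1!\cdots q_k!}L(\Delta\cdot F)$, which is exactly~\eqref{Co-Gr}.

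I expect the only real obstacle to be the sign bookkeeping and the verification of the $W_H$-invariance of the summand; the latter genuinely uses the block-constancy of the $\epsilon_{ij}$, without which neither the original genus formula would descend to $W_G/W_H$ nor would the normalizing factor $1/(q_1!\cdots q_k!)$ emerge cleanly. Everything else is the same mechanism already used for the flag manifolds in Theorem~\ref{chern-flag}, now with the torus replaced by a product of unitary groups and the trivial $W_H$ replaced by $S_{q_1}\times\ldots\times S_{q_k}$.
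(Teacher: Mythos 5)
Your proof is correct and follows exactly the mechanism the paper relies on (stated for the flag case in Theorem~\ref{chern-flag} and cited from~\cite{Buch_Terz} for the classical Grassmannians, the present theorem being left without a printed proof): specialize the localization formula~\eqref{cob_class}, use the factorization of the full Vandermonde $\Delta_N$ into intra-block and cross-block parts, and promote the $W_G/W_H$-sum to a full $S_N$-sum via the $W_H$-antisymmetry of $\Delta$ and $W_H$-invariance of the signed root product, which produces both the divided difference operator $L$ and the normalizing factor $1/(q_1!\cdots q_k!)$. Your sign bookkeeping ($\rw(D)=\sg(\rw)\Delta_N/\rw(\Delta)$, $\tau(F\Delta)=\sg(\tau)F\Delta$, $1/\lambda=\lambda$) is accurate, so the argument is complete.
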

\begin{cor}\label{GR-top-Chern}
\begin{equation}
s_{m}(G_{q_1+\ldots +q_k,q_1,\ldots ,q_{k-1}}, J)=0\;\; \text{for}\; k\geq 3,\;\; \text{where}\;\; m=\sum\limits_{1\leq i<j\leq k}q_iq_j.
\end{equation}
\end{cor}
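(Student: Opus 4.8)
The plan is to read off the top Chern number from the cobordism class formula \eqref{Co-Gr} and then apply the vanishing properties of the divided difference operator $L$, exactly in the spirit of Proposition~\ref{Ch_Gr}. Write $N=q_1+\cdots+q_k$, let $D=\Delta_{q_1}\Delta_{q_1+1,q_1+q_2}\cdots\Delta_{q_1+\cdots+q_{k-1}+1,N}$ denote the product of the block Vandermonde factors, and recall from \eqref{charact_numbers} that $s_m$ is dual to the power sum: it is obtained by replacing $\prod f(t\epsilon_{ij}(x_i-x_j))$ in \eqref{Co-Gr} with the $m$-th power sum of its roots. Hence
\[
s_m(G_{q_1+\cdots+q_k,q_1,\ldots,q_{k-1}},J)=\frac{\lambda}{q_1!\cdots q_k!}\,L\Big(D\cdot\sum_{ij}(\epsilon_{ij}(x_i-x_j))^m\Big),
\]
where the sum runs over the complementary roots and each sign $\epsilon_{ij}^m=\pm1$ (as well as $\lambda$) is irrelevant to the vanishing question.

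Next I would track degrees. Each summand $D\cdot(\epsilon_{ij}(x_i-x_j))^m$ has total degree $\sum_s\binom{q_s}{2}+m=\binom{N}{2}$, which coincides with the degree $|\delta|$ of the staircase $\delta=(N-1,N-2,\ldots,1,0)$. By the listed properties of $L$, we have $L\mathbf{x}^\xi=0$ unless $\xi$ is a permutation of $\delta+\mu$ for some partition $\mu$; since the degree is forced to equal $\binom{N}{2}=|\delta|$, only $\mu=0$ can occur, so the sole monomials that can contribute are those whose exponent multiset is exactly $\{0,1,\ldots,N-1\}$.

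Then comes the key counting step. In a fixed term $D\cdot(x_i-x_j)^m$, any variable $x_l$ other than $x_i,x_j$ receives its entire degree from the single block Vandermonde containing it, so $\deg x_l\le\max_s q_s-1$; only the two variables $x_i,x_j$ touched by $(x_i-x_j)^m$ can exceed this bound. To realize a staircase monomial, every exponent strictly above $\max_s q_s-1$, namely the values $\max_s q_s,\ldots,N-1$ (there are $N-\max_s q_s$ of them), must be carried by $x_i$ or $x_j$. This forces $N-\max_s q_s\le2$, i.e. $\max_s q_s\ge N-2$.

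Finally I would conclude by contradiction for $k\ge3$. If $q_{s_0}=\max_s q_s$, then $N-q_{s_0}=\sum_{s\ne s_0}q_s\ge2(k-1)\ge4$, since there are $k-1\ge2$ remaining blocks each of size $\ge2$; hence $\max_s q_s\le N-4<N-2$, contradicting the necessary condition above. Therefore no staircase monomial survives, every $L(\cdots)$ vanishes, and $s_m=0$. The \emph{main obstacle} is the counting step: one must argue cleanly that at most two variables can carry exponents above $\max_s q_s-1$ and pin down the exact threshold $\max_s q_s\ge N-2$; once this is in place the conclusion for $k\ge3$ is immediate, and the $k=2$ case recovers Proposition~\ref{Ch_Gr}.
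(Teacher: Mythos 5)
Your proposal is correct and follows essentially the same route as the paper: read $s_m$ off from the formula \eqref{Co-Gr} via \eqref{charact_numbers}, use the vanishing properties of $L$ to reduce to staircase monomials of exponent multiset $\{0,1,\ldots,N-1\}$, and observe that since only the two variables $x_i,x_j$ touched by $(x_i-x_j)^m$ can have degree above $\max_s q_s-1$, a contributing monomial forces $\max_s q_s\geq N-2$, i.e.\ the remaining $k-1$ blocks sum to at most $2$, which is impossible for $k\geq 3$ as each $q_s\geq 2$. This is precisely the paper's argument (stated there as requiring a $D$-only variable of degree $\geq q_1+\cdots+q_k-3$), so there is nothing to add.
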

\begin{proof}
We obtain from the formula~\eqref{Co-Gr} that the  number $s_{m}$, $m=\sum\limits_{1\leq i<j\leq k}q_iq_j$  is  given by
\begin{equation}\label{charact-Gr}
s_{m}(G_{q_1+\ldots +q_k,q_1,\ldots ,q_{k-1}}, J)=
\end{equation}
\[
\frac{\lambda}{q_1!\cdots q_k!}L\Big( \Delta_{q_1}\Delta_{q_1+1,q_1+q_2}\cdots \Delta_{q_1+\ldots +q_{k-1}+1,q_1+\ldots +q_k}(\epsilon_{ij}(x_i-x_j))^{m}\Big ) \ ,
\]
where $q_{1}+\ldots +q_{l}+1\leq i\leq q_{1}+\ldots +q_{l+1}$ and $q_{1}+\ldots +q_{l+1}+1\leq j\leq q_{1}+\ldots + q_{k}$ for $0\leq l\leq k-2$. If assume this number to be non-trivial, the properties of the operator $L$ as in the case of the proof of Proposition~\ref{Ch_Gr}, would imply that we must have a monomial in the right hand side of~\eqref{charact-Gr} in which the variable coming from  $\Delta_{q_1}\Delta_{q_1+1,q_1+q_2}\cdots \Delta_{q_1+\ldots +q_{k-1}+1,q_1+\ldots +q_{k}}$ has degree $\geq q_1+\ldots +q_{k}-3$. Since any such variable has degree $\leq \mbox{max} \{q_1-1,\ldots ,q_{k}-1\}$ it would imply that the sum of some $k-1$ numbers among $q_1,\ldots ,q_{k}$ is $\leq 2$ what is impossible for $k\geq 3$.
\end{proof}
\begin{rem}
The top Chern numbers $s_n$ have a special role in the complex cobordism theory as they determine the multiplicative generators in the complex cobordism ring~\cite{NM}. Therefore  when considering homogeneous spaces  equipped with an invariant almost complex structure, the following questions arise to be important to consider: compute their top Chern numbers, determine those homogeneous spaces having non-trivial top Chern numbers and find homogeneous space of a given dimension having the minimal non-trivial top Chern number.  Related to these questions, the Grassmann and the generalized  Grassman manifolds reduce to the case $G_{q+2,2}$, as   Proposition~\ref{Ch_Gr}, Remark~\ref{Rem-Gr-Ch} and Corollary~\ref{GR-top-Chern} show. We want to point that the question of stable complex structures on the Grassmann and the generalized Grassmann manifolds with non-trivial top Chern numbers we consider to be open.  Note that although most of the Grassmann manifolds have trivial top Chern number, they do not fiber in the class of homogeneous spaces. On the other hand flag manifolds fiber, but as Example~\ref{nondecomposable} show they still do not have to be decomposable in the complex cobordism ring. Recall also~\cite{Buch_Terz} that the top Chern number for $\C P^n$ related to the standard complex structure is $n+1$ and it is minimal for $n=p-1$, where $p$ is a prime number. For example, it is minimal for $n=1$ and  $n=2$, but it is not minimal for $n=3$. On the other hand, as it is showed in~\cite{Buch_Terz}, there exists on $\C P^3$ the equivariant stable complex structure whose top Chern number is $2$ and thus minimal. Note that in the same way one can show  that  $\C P^{2n+1}$ for any $n\geq 0$  admits equivariant stable complex structure whose top Chern number is equal to $2$, but for $n\geq 2$ it is not minimal. We also want to recall that among $6$-dimensional stable complex $SU$-manifolds the minimal value for the top Chern number is $6$ and it is realized by $S^6$ endowed with the invariant almost complex structure.    
\end{rem}  
\subsection{Homogeneous $SU$-manifolds}
We say that an even-dimensional stable complex manifold $(M^{2n}, J)$ is a $SU$-manifold if it's first Chern class vanishes. In this paper we consider homogeneous $SU$-manifolds $(G/H, J)$ of positive Euler characteristic, meaning that $J$ is an invariant almost complex $SU$-structure. 
It follows from~\cite{BH} that the total Chern class for $(G/H, J)$ can be obtained using root description  for $J$. If $\varepsilon _{1} \alpha _{1},\ldots ,\varepsilon _{n}\alpha _{n}$ are the roots for $J$ then  the total Chern class for $(G/H, J)$ is given by
\[
c(G/H,J)=\prod_{i=1}^{n}(1+\varepsilon _{i}\alpha _{i}).
\]
It implies that the first Chern class is 
\[
c_{1}(G/H,J)=\sum _{i=1}^{n}\varepsilon _{i}\alpha _{i}.
\]
Therefore for $(G/H,J)$ being a $SU$-manifold the roots for $J$ satisfy the condition
\[
\sum_{i=1}^{n}\varepsilon _{i}\alpha _{i}=0.
\]
\begin{rem}\label{int-not-SU}
If we assume $J$ to be integrable, it follows from~\cite{BH} that it can be
chosen an ordering on the canonical coordinates for $\TT$ such that
the roots  $\varepsilon _{1} \alpha _{1},\ldots ,\varepsilon _{n}
\alpha _{n}$ which define $J$ form the closed system of positive
roots. Note that this implies that for an integrable $J$  a homogeneous complex manifold 
$(G/H,J)$ can not be a $SU$-manifold. 
\end{rem}

\subsubsection{Examples of symmetric and 3-symmetric  $SU$-manifolds.} 
\numberwithin{thm}{subsubsection}If the second Betti number for a manifold $M^{2n}$ is zero, then $(M^{2n}, J)$ is a $SU$-manifold for any almost complex structure $J$ on $M^{2n}$. 
The list of all  symmetric spaces which admit an invariant almost complex structure, but which do not admit any invariant complex structure since  their second Betti number is zero is given in~\cite{BH}: $S^6$, $F_4/A_2\times A_2$, $E_6/A_2\times A_2\times A_2$, $E_7/A_2\times A_5$, $E/_8/A_8$ and 
$E_8/A_2\times A_6$. The existence of an invariant almost complex structure on these spaces, as it is shown in~\cite{BH}, follows from the fact that for any of them the stationary subgroup is the centralizer of an element of order $3$ or $5$ of the group. These spaces  provide  examples of $SU$-manifolds.  
 
If $G/H$ is a $3$-symmetric space then $H$ is the fixed point subgroup of some automorphism $\theta$ of the group $G$ of order $3$.  It is well known that any such space admits the canonical almost complex structure $J$ defined by
\[
\Theta=\frac{1}{2}Id+\frac{\sqrt{3}}{2}J,
\]
where $Id$ is the identity transformation of $T(G/H)$ and $\Theta = d\theta$.

It is proved in~\cite{K} that the first Chern class that corresponds to the canonical almost complex structure vanishes for exactly the following compact irreducible $3$-symmetric spaces of the classical Lie groups:
\begin{align*}
&SU(3m)/S(U(m)\times U(m)\times U(m)),\;\; m\geqslant 1,\\
&SO(3m-1)/U(m)\times SO(m-1),\;\; m\geqslant 2,\\
&Sp(3m-1)/U(2m-1)\times Sp(m),\;\; m\geqslant 1.
\end{align*}
Therefore, these spaces provide examples of homogeneous $SU$-manifolds.
    
\begin{rem}
The question of  vanishing of the first Chern class  for the canonical almost complex structure on $3$-symmetric spaces  is considered in~\cite{K} and in a connection with the existence of Einstein metric.   It is known~\cite{G} that the canonical almost complex structure $J$ on a $3$-symmetric space is nearly K\"ahler. From the
classification  of $3$-symmetric spaces done in~\cite{WG}  it follows that the canonical almost complex structure is not K\"ahler only for the following these spaces: $SU(n+1)/S(U(k)\times U(m-k)\times U(n-m+1))$, $1\leqslant k < m\leqslant n$, $SO(2n+1)/U(m)\times SO(2n-2m+1)$, $2\leqslant m\leqslant n$, $Sp(n)/U(n)\times Sp(n-m)$, $1\leqslant m\leqslant n-1$, $SO(2n)/U(m)\times SO(2n-2m)$, $2\leqslant m\leqslant n-1$, $n\geqslant 4$.  As it is proved in~\cite{WT} the vanishing of the first Chern class for a nearly K\"ahler structure   that is not a K\"ahler gives the  sufficient condition for the existence of a compatible with that structure Einstein metric. 
Within the nearly K\"ahler and not K\"ahler $3$-symmetric spaces $G/H$ the listed ones have vanishing the first Chern class and it gives the sufficient condition for the normal homogeneous metrics on $G/H$ to be Einstein. This is proved in~\cite{K} using classification of irreducible compact simply connected $3$-symmetric spaces  from~\cite{WG} and the results of~\cite{BH}.   
\end{rem}

We generalize the first series of the above $3$-symmetric spaces to  analogous $k$-symmetric spaces~\cite{TF}.  
\begin{prop}\label{k-SU}
The $k$-symmetric space $M=U(km)/(\underbrace{U(m)\times \cdots \times U(m))}_{k}$ admits an invariant almost complex $SU$-structure for odd $k$. 
\end{prop}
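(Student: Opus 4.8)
The plan is to exhibit an explicit invariant almost complex structure with vanishing first Chern class, using the sign-choice description of invariant structures on generalized Grassmannians together with the formula $c_1(G/H,J)=\sum_{i=1}^{n}\varepsilon_i\alpha_i$ recalled in this subsection. First I would fix notation: label the torus coordinates $x_1,\ldots,x_{km}$ and partition $\{1,\ldots,km\}$ into $k$ consecutive blocks $B_1,\ldots,B_k$ of size $m$, with $B_p$ corresponding to the $p$-th factor $U(m)$. Exactly as in the generalized Grassmann case treated above (with all $q_p=m$), the complementary roots are the $x_i-x_j$ with $i\in B_p$, $j\in B_q$, $p<q$, and they fall into $\binom{k}{2}$ irreducible isotropy summands $R_{pq}$. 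Hence an invariant almost complex structure amounts to a choice of signs $\epsilon_{pq}\in\{\pm1\}$, one per pair $p<q$, constant along each $R_{pq}$, and the roots of $J$ are $\epsilon_{pq}(x_i-x_j)$.

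Next I would compute the first Chern class. Writing $S_p=\sum_{i\in B_p}x_i$, the inner sum over a block pair telescopes, $\sum_{i\in B_p,\,j\in B_q}(x_i-x_j)=m(S_p-S_q)$, so
\[
c_1(M,J)=\sum_{p<q}\epsilon_{pq}\!\!\sum_{i\in B_p,\,j\in B_q}\!\!(x_i-x_j)=m\sum_{p<q}\epsilon_{pq}(S_p-S_q).
\]
Since $S_1,\ldots,S_k$ involve disjoint variables they are linearly independent in $\TT^*$, so $c_1=0$ if and only if the coefficient of each $S_r$ vanishes. Extending the signs to an antisymmetric array $\eta$ by $\eta_{pq}=\epsilon_{pq}$ for $p<q$ and $\eta_{qp}=-\epsilon_{pq}$, the coefficient of $S_r$ is exactly the row sum $\sum_{q\neq r}\eta_{rq}$. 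Thus the problem reduces to finding a $\pm1$ antisymmetric $k\times k$ array all of whose row sums vanish.

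Finally I would make the decisive parity observation and construct the array. Each row of $\eta$ has $k-1$ entries equal to $\pm1$, so a zero row sum forces $k-1$ to be even; this is precisely why odd $k$ is required (and it shows the hypothesis is necessary, not merely sufficient, since for even $k$ each row sum is a sum of an odd number of $\pm1$'s and cannot be zero). For odd $k$, identify the blocks with $\Z/k$ and use the rotational tournament: choose $\epsilon_{pq}$ so that $\eta_{pq}=+1$ exactly when $(q-p)\bmod k\in\{1,\ldots,\tfrac{k-1}{2}\}$. Antisymmetry is immediate because $(p-q)\bmod k=k-((q-p)\bmod k)$ lands in the complementary range, and each vertex then has exactly $\tfrac{k-1}{2}$ outgoing and $\tfrac{k-1}{2}$ incoming $+1$-edges, giving zero row sums. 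The corresponding signs define an invariant almost complex structure on $M$ with $c_1=0$, i.e.\ an invariant $SU$-structure.

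The only genuinely substantive point is this parity count; everything else is a direct computation in the root system of $U(km)$, and the tournament construction is standard. I would also add one clarifying remark: because $c_1$ is automatically a combination of the differences $S_p-S_q$, it is orthogonal to the single degree-two $W_{U(km)}$-invariant class $\sum_i x_i=\sum_p S_p$, so the literal vanishing $\sum_i\varepsilon_i\alpha_i=0$ and the cohomological vanishing of $c_1$ in $H^2(M)$ coincide in this case; there is therefore no gap between the two formulations of the $SU$ condition.
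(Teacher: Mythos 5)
Your proof is correct and follows essentially the same route as the paper's: both describe an invariant almost complex structure by one sign $\epsilon_{pq}$ per isotropy summand $R_{pq}$, compute $c_1(M,J)=m\sum_{p<q}\epsilon_{pq}(S_p-S_q)$, and reduce the $SU$ condition to producing an antisymmetric $\pm1$ array with vanishing row sums. The only differences are the explicit witness --- the paper takes $\epsilon_{pq}=(-1)^{p+q+1}$, while you take the circulant tournament $\eta_{pq}=+1$ iff $1\le (q-p)\bmod k\le (k-1)/2$, and both indeed have zero row sums for odd $k$ --- and your two correct bonus observations not in the paper's proof: the parity count showing no $SU$-structure exists for even $k$, and the remark that literal vanishing of $\sum_i\varepsilon_i\alpha_i$ and vanishing of $c_1$ in $H^2(M)$ coincide here because the sum of roots is automatically a combination of the differences $S_p-S_q$.
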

\begin{proof}
The complementary roots for $U(km)$ related to $(U(m))^{k}$ can be, up to sign, divided into the groups $R_{12},\ldots ,R_{1k},R_{23},\ldots ,R_{2k},\ldots ,R_{k-1k}$, where $R_{ij}=\{ x_{(i-1)m+s}-x_{(j-1)m+l}, 1\leq l,s\leq m\}$.  The roots of an arbitrary invariant almost complex structure $J$ are given with $\epsilon _{ij}R_{ij}$ where $\epsilon _{ij}=\pm 1$ can be chosen arbitrarily  and $1\leq i<j\leq k$. It implies that 
\[
c_{1}(M, J) = m\sum\limits_{i=1}^{km}(-\sum\limits_{j=1}^{i-1}\epsilon _{ji}+\sum\limits_{j=i+1}^{k}\epsilon _{ij})x_i.
\]
Let $J$ be defined by putting $\epsilon _{ij}=(-1)^{i+j+1}$. Then
\[
c_{1}(M, J) = m\sum\limits_{i=1}^{km}(-1)^{i+1}(-\sum_{j=1}^{i-1}(-1)^j+\sum_{j=i+1}^{k}(-1)^j)x_i = 0,
\]
since by assumption $k$ is odd.  
\end{proof} 
For $k=3$ proceeding as in the proof of Proposition~\ref{k-SU} we obtain:
\begin{cor}\label{canonical-roots}
The canonical almost complex structure  is unique, up to conjugation, invariant almost complex $SU$-structure on $U(3m)/(U(m)\times U(m)\times U(m))$ and its roots are given with 
\begin{align*}
&x_i-x_j,\;\; 1\leq i\leq m,\; m+1\leq j\leq 2m,\\
&x_j-x_i,\;\; 1\leq i\leq m,\; 2m+1\leq j\leq 3m,\\
&x_i-x_j,\;\; m+1\leq i\leq 2m,\; 2m+1\leq j\leq 3m. 
\end{align*}
\end{cor}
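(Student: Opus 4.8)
The plan is to specialize the first Chern class computation carried out in the proof of Proposition~\ref{k-SU} to the case $k=3$ and then solve the resulting sign constraints. By that Proposition the space $M=U(3m)/(U(m)\times U(m)\times U(m))$ carries $2^{3}=8$ invariant almost complex structures, indexed by the signs $\epsilon_{12},\epsilon_{13},\epsilon_{23}=\pm 1$ attached to the three complementary root groups $R_{12},R_{13},R_{23}$. First I would expand $c_{1}(M,J)$ as the signed sum of all roots and collect the coefficient of each coordinate according to which $U(m)$-block it belongs to: a coordinate in the first block occurs with total weight $m(\epsilon_{12}+\epsilon_{13})$, one in the second block with $m(-\epsilon_{12}+\epsilon_{23})$, and one in the third block with $m(-\epsilon_{13}-\epsilon_{23})$, since each block contains $m$ coordinates and each coordinate meets $m$ roots in every group it enters.

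Second, because $x_{1},\ldots,x_{3m}$ are linearly independent characters of the maximal torus $T^{3m}$, the $SU$-condition $c_{1}(M,J)=0$ is equivalent to the three equations $\epsilon_{12}+\epsilon_{13}=0$, $-\epsilon_{12}+\epsilon_{23}=0$, $-\epsilon_{13}-\epsilon_{23}=0$. The first two give $\epsilon_{13}=-\epsilon_{12}$ and $\epsilon_{23}=\epsilon_{12}$, after which the third holds automatically; hence among the eight sign patterns exactly two are admissible, namely $(\epsilon_{12},\epsilon_{13},\epsilon_{23})=(1,-1,1)$ and its global sign reversal $(-1,1,-1)$.

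Third, I would note that replacing an invariant almost complex structure by its conjugate reverses the orientation on every complementary root subspace and therefore flips all three $\epsilon_{ij}$ simultaneously; thus the two admissible patterns are conjugate to one another, which establishes uniqueness up to conjugation. Finally, evaluating the canonical choice $\epsilon_{ij}=(-1)^{i+j+1}$ from Proposition~\ref{k-SU} yields $\epsilon_{12}=1$, $\epsilon_{13}=-1$, $\epsilon_{23}=1$, i.e.\ precisely the pattern $(1,-1,1)$; reading off $\epsilon_{12}R_{12}$, $\epsilon_{13}R_{13}$ and $\epsilon_{23}R_{23}$ then reproduces the three listed families of roots, the minus sign on $R_{13}$ turning $x_{i}-x_{j}$ into $x_{j}-x_{i}$.

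The actual work is elementary bookkeeping, so the only genuinely substantive point is the uniqueness claim: one must verify both that no sign pattern outside these two can satisfy all three block equations and that the two surviving patterns are related by conjugation rather than being two distinct $SU$-structures. I expect that verification, rather than the Chern-class arithmetic, to be where the argument needs to be stated with care.
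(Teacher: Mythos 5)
Your proposal is correct and follows essentially the same route as the paper, which proves this corollary simply by specializing the first Chern class computation of Proposition~\ref{k-SU} to $k=3$ and solving the resulting sign constraints; your block-coefficient equations, the two admissible patterns $(1,-1,1)$ and $(-1,1,-1)$, and the conjugation argument are exactly the intended content. The only point worth stating explicitly is that the structure with pattern $(1,-1,1)$ is identified with the \emph{canonical} almost complex structure of the $3$-symmetric space via the result of~\cite{K} quoted earlier (the canonical structure has $c_{1}=0$, hence must coincide, up to conjugation, with the unique $SU$-structure you found).
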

\subsubsection{Examples of $SU$-flag manifolds.}
Let us consider the flag manifolds $U(n)/T^n$, $n\geqslant 1$. The roots of any almost complex structure are given with $\varepsilon _{ij}\alpha _{ij}$, where $\alpha _{ij}=(x_i-x_j)$, $1\leqslant i<j\leqslant n$,  are the roots for $U(n)$.   
\begin{lem}\label{flag-even}
For $n$ even the first Chern class of any invariant almost complex structure on the flag manifold $U(n)/T^n$ is not trivial.
\end{lem}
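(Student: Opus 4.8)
The plan is to compute $c_1(U(n)/T^n,J)$ as an explicit integer combination of the weights $x_1,\dots,x_n$ and to defeat the vanishing $c_1=0$ by a parity count. By the root description recalled above, $J$ is encoded by signs $\varepsilon_{ij}=\pm1$, and the formula $c_1(G/H,J)=\sum_i\varepsilon_i\alpha_i$ specializes to
\[
c_1(U(n)/T^n,J)=\sum_{1\le i<j\le n}\varepsilon_{ij}(x_i-x_j)=\sum_{i=1}^n c_i\,x_i,\qquad c_i=\sum_{j>i}\varepsilon_{ij}-\sum_{j<i}\varepsilon_{ji},
\]
exactly as in the proof of Proposition~\ref{k-SU} specialized to blocks of size one. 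Thus it suffices to show that the coefficients $c_i$ cannot all vanish when $n$ is even.

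First I would reduce the statement ``$c_1=0$ in $H^2(U(n)/T^n)$'' to ``$c_i=0$ for all $i$''. In $H^2$ the weights obey the single relation $x_1+\cdots+x_n=0$, so a combination $\sum_i c_i x_i$ vanishes there precisely when all $c_i$ equal a common value $\lambda$. But each root $\varepsilon_{ij}(x_i-x_j)$ contributes $+\varepsilon_{ij}$ to $c_i$ and $-\varepsilon_{ij}$ to $c_j$, whence $\sum_{i=1}^n c_i=0$; if all $c_i=\lambda$ this forces $n\lambda=0$, i.e.\ $\lambda=0$. Hence vanishing of $c_1$ is equivalent to $c_i=0$ for every $i$, and the cohomological relation turns out to be harmless here.

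It then remains to record the parity obstruction, which is the heart of the matter. For each fixed $i$ the index $i$ is paired with exactly $n-1$ other indices, so $c_i$ is a sum of exactly $n-1$ signs, each $\pm1$; therefore $c_i\equiv n-1\pmod 2$ regardless of how the $\varepsilon_{ij}$ are chosen. When $n$ is even, $n-1$ is odd, so every $c_i$ is odd and in particular nonzero, giving $c_1(U(n)/T^n,J)\neq0$. I do not anticipate any genuine difficulty: once one sees that each coefficient $c_i$ is a sum of $n-1$ terms $\pm1$, the whole lemma collapses to the single observation that this parity is forced by $n-1$ and cannot be cancelled by any choice of signs.
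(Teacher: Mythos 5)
Your proof is correct and rests on exactly the same parity obstruction as the paper's: each coefficient is a sum of exactly $n-1$ signs $\pm 1$, hence is odd (and in particular nonzero) when $n$ is even. The only difference is bookkeeping --- the paper expresses $c_1$ in the linearly independent set $\alpha_{1j}=x_1-x_j$, $2\leq j\leq n$, and equates those coefficients to zero, whereas you keep all $n$ generators $x_i$ and explicitly dispose of the relation $x_1+\cdots+x_n=0$ in $H^2$ (a point the paper glosses over by choosing that basis); both routes produce the same system of sign equations~\eqref{cond_vanish}.
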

\begin{proof}
 Note that $\alpha _{ij}=\alpha _{1j}-\alpha _{1i}$ for $2\leqslant i<j\leqslant n$, where $\alpha _{12},\ldots ,\alpha _{1n}$ are linearly independent. Let $J$ be an invariant almost complex structure on $U(n)/T^n$ with roots  $\varepsilon _{ij}\alpha _{ij}$, $1\leqslant i <j\leqslant n$.  If $c_{1}(U(n)/T^n, J)=0$ we would have that
\[
\sum_{1\leqslant i<j\leqslant n}\varepsilon _{ij}\alpha _{ij}=0,
\]
what can be written as 
\[
\sum _{j=2}^{n}(\sum _{i=1}^{j-1}\varepsilon_{ij} -\sum_{i=j+1}^{n}\varepsilon _{ji})\alpha_{1j}=0.
\]
This would imply that
\begin{equation}\label{cond_vanish}
\sum_{i=1}^{j-1}\varepsilon _{ij}-\sum_{i=j+1}^{n}\varepsilon _{ji}=0\;\; \text{for}\;\; 2\leqslant j\leqslant n.
\end{equation}
For $n$ even there are no choices for  $\varepsilon_{ij}=\pm 1$ which satisfy these relations as each of these sums has $n-1$ summands which is an odd number.
\end{proof}

\begin{lem}\label{U_SU}
If $n$ is odd then the flag manifold $U(n)/T^n$ admits an invariant almost complex structure whose first Chern class vanishes.
\end{lem}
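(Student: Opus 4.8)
The plan is to produce an explicit choice of signs, mirroring the construction in the proof of Proposition~\ref{k-SU}. Writing the candidate structure $J$ through its roots $\varepsilon_{ij}(x_i-x_j)$ with $\varepsilon_{ij}=\pm 1$, the first Chern class is $c_1(U(n)/T^n,J)=\sum_{1\le i<j\le n}\varepsilon_{ij}(x_i-x_j)$. Collecting the coefficient of each $x_i$ and using that $x_1,\dots,x_n$ are linearly independent, the condition $c_1=0$ is equivalent to the $n$ scalar equations
\[
\sum_{j=i+1}^{n}\varepsilon_{ij}-\sum_{j=1}^{i-1}\varepsilon_{ji}=0,\qquad 1\le i\le n.
\]
Each left-hand side is a sum of exactly $n-1$ terms equal to $\pm 1$; since $n$ is odd, $n-1$ is even, so vanishing is not ruled out by parity. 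This is precisely the point where the hypothesis ``$n$ odd'' enters, and it is the mirror image of the obstruction found in Lemma~\ref{flag-even}.

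First I would try the simplest parity-based assignment, $\varepsilon_{ij}=(-1)^{i+j}$ for $1\le i<j\le n$, exactly analogous to the choice $\varepsilon_{ij}=(-1)^{i+j+1}$ used in Proposition~\ref{k-SU}. To verify it, I factor $(-1)^{i+j}=(-1)^i(-1)^j$ so that the $i$-th equation becomes $(-1)^i\big(\sum_{j=i+1}^n(-1)^j-\sum_{j=1}^{i-1}(-1)^j\big)=0$, reducing everything to the single identity $\sum_{j=i+1}^{n}(-1)^j=\sum_{j=1}^{i-1}(-1)^j$. This I would settle by a short parity count: an alternating sum $\sum_{j=a}^{b}(-1)^j$ vanishes when the number of terms $b-a+1$ is even and equals its first term otherwise. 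Because $n$ is odd, the two index blocks $\{1,\dots,i-1\}$ and $\{i+1,\dots,n\}$ have cardinalities of the same parity, and a case split on $i$ even/odd shows both sides equal $0$ (for $i$ odd) or $-1$ (for $i$ even). Hence every coefficient of $x_i$ vanishes and $c_1(U(n)/T^n,J)=0$.

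The computation is entirely elementary, so there is no genuine analytic or structural obstacle; the only work is the bookkeeping of the alternating sums, and the only real input is the parity of $n-1$. For a more conceptual packaging I would note that the signs $\varepsilon_{ij}$ amount to orienting each edge of the complete graph $K_n$ (reading $\varepsilon_{ij}(x_i-x_j)$ as a ``tail minus head'' contribution), under which the coefficient of $x_i$ is the out-degree minus the in-degree of vertex $i$; thus $c_1=0$ is exactly the existence of a balanced (Eulerian) orientation of $K_n$, which is possible precisely when every vertex has even degree $n-1$, i.e.\ when $n$ is odd. The rotational tournament, orienting $i\to j$ whenever $(j-i)\bmod n\in\{1,\dots,(n-1)/2\}$, gives such an orientation directly and can serve as an alternative explicit $J$.
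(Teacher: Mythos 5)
Your proposal is correct and follows essentially the same route as the paper: the paper also reduces $c_1=0$ to the linear conditions~\eqref{cond_vanish} and verifies them for the explicit assignment $\varepsilon_{ij}=(-1)^{i+j+1}$, which is just the conjugate of your choice $\varepsilon_{ij}=(-1)^{i+j}$, so the two verifications are identical up to a global sign. Your parity bookkeeping is accurate (and more detailed than the paper's ``it checks directly''), and the Eulerian-orientation reformulation of~\eqref{cond_vanish} on $K_n$ is a pleasant extra, but it does not change the underlying argument.
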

\begin{proof}
Let us consider an invariant almost complex structure $J$ defined with the roots $\varepsilon _{ij}\alpha _{ij}$, $1\leqslant i<j\leqslant n$, where $\epsilon _{ij} = (-1)^{i+j+1}$.
Taking into account that $n$ is an odd number, it checks directly that $\varepsilon_{ij}$ defined in this way satisfy
equation~\eqref{cond_vanish}, and therefore the first Chern class for $(U(n)/T^n, J)$ is trivial. 
\end{proof}  

\begin{defn}
Flag manifolds $U(2n+1)/T^{2n+1}$ with an invariant almost complex $SU$-structure will be called $SU$-flag manifolds.
\end{defn}

The similar statement is not true any more for generalized  flag manifolds. 

\begin{prop}
No generalized flag manifold $U(n)/(T^k\times U(n-k)$), $1\leq k\leq n-2$  admits an invariant almost complex  $SU$-structure.
\end{prop}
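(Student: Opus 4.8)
The plan is to compute the first Chern class $c_1$ of an arbitrary invariant almost complex structure directly from its roots and to show it can never vanish. First I would lay out the root data. Taking coordinates so that $T^k$ sits in the first $k$ slots and $U(n-k)$ in the last $n-k$, the complementary roots of $U(n)$ relative to $H=T^k\times U(n-k)$ split, up to sign, into those $x_i-x_j$ with $1\le i<j\le k$ and those $x_i-x_j$ with $1\le i\le k<j\le n$, while the roots $x_i-x_j$ with $k<i,j\le n$ lie in $H$. Under the isotropy representation the first family gives $\binom{k}{2}$ one-dimensional irreducible summands, and the second gives $k$ summands, one for each $i\le k$ consisting of $\{x_i-x_j: k<j\le n\}$ (the standard representation of $U(n-k)$ twisted by a character of $T^k$). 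By the $2^s$-description recalled from~\cite{BH}, with $s=\binom{k}{2}+k$, an invariant almost complex structure $J$ is therefore specified by independent signs $\varepsilon_{ij}=\pm1$ ($1\le i<j\le k$) and $\eta_i=\pm1$ ($1\le i\le k$), the latter being constant on each block.

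Using the formula $c_1(G/H,J)=\sum_i\varepsilon_i\alpha_i$ from the excerpt, I would write
\[
c_1=\sum_{1\le i<j\le k}\varepsilon_{ij}(x_i-x_j)+\sum_{i=1}^k\eta_i\sum_{j=k+1}^n(x_i-x_j)
\]
and read off coefficients. For $l>k$ every coefficient of $x_l$ equals $-\sum_{i=1}^k\eta_i$, so $c_1=0$ forces $\sum_i\eta_i=0$; as this is a sum of $k$ signs it already fails, on parity grounds, when $k$ is odd. For $l\le k$ the coefficient is $A_l+(n-k)\eta_l$, where $A_l=\sum_{j>l}\varepsilon_{lj}-\sum_{i<l}\varepsilon_{il}$ is a sum of $k-1$ signs, hence $|A_l|\le k-1$; vanishing demands $|A_l|=n-k$, which is impossible once $n-k>k-1$, i.e.\ $n\ge 2k$. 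Sharpening the parity remark, each $l\le k$ coefficient is $\equiv n-1\pmod 2$ and each $l>k$ coefficient is $\equiv k\pmod 2$, so parity alone eliminates every case with $n$ even or $k$ odd, and the magnitude bound eliminates $n\ge 2k$.

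The main obstacle is the residual regime that survives both arguments, namely $k$ even, $n$ odd, and $k+2\le n\le 2k-1$. There the vanishing conditions $A_l=-(n-k)\eta_l$ become the prescription of a two-valued score sequence for the tournament on $\{1,\dots,k\}$ that orients the pair $\{i,j\}$ according to $\varepsilon_{ij}$: out-degree $\tfrac{n-1}{2}$ at the $k/2$ vertices with $\eta_l=-1$ and out-degree $\tfrac{2k-n-1}{2}$ at the remaining $k/2$. Ruling out an $SU$-structure thus amounts to showing this score sequence is not realizable, which I would test via Landau's inequalities; the binding one reduces to $\tfrac{2k-n-1}{2}\ge\tfrac{k-2}{4}$, equivalently $2n\le 3k$. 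This inequality is the crux, and it is precisely the step that must be handled with care, since in this window the balance between the codimension $n-k$ and the torus rank $k$ is tight and the outcome is genuinely sensitive to it; establishing non-realizability here is the essential content of the statement.
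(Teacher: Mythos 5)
Your setup is correct and is in fact more careful than the paper's own proof. The paper's argument lists the complementary roots only as $x_i-x_j$ with $1\le i\le k<j\le n$, invokes $W_{U(n-k)}$-invariance to make the signs constant on each block, and concludes in one line that the coefficient of $x_i$ in $c_1$ is $(n-k)\eta_i\neq 0$. In doing so it silently discards the $\binom{k}{2}$ complementary roots $x_i-x_j$ with $1\le i<j\le k$ (the fibre directions of $U(n)/(T^k\times U(n-k))\to G_{n,k}$); the term you call $A_l$ is exactly what the paper throws away. Keeping it, your parity constraints ($c_1=0$ forces $k$ even and $n$ odd) and the magnitude bound ($n\le 2k-1$) are correct, and they reproduce and strengthen what the paper's one-line argument actually establishes.

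The genuine problem is the residual window $k$ even, $n$ odd, $k+2\le n\le 2k-1$, which you leave open --- and it cannot be closed in the direction you propose, because the statement is false there. When $2n\le 3k$ the single Landau inequality you identified holds, and it implies all the others: for the two-valued score sequence the function $m\mapsto\sum_{i\le m}s_i-\binom{m}{2}$ is concave on each of the ranges $[0,k/2]$ and $[k/2,k]$, vanishes at $m=0$ and $m=k$, so positivity at $m=k/2$ suffices. Hence the tournament exists and produces an invariant almost complex structure with vanishing first Chern class. Concretely, on $U(9)/(T^6\times U(3))$ take $\eta_1=\eta_2=\eta_3=+1$, $\eta_4=\eta_5=\eta_6=-1$, and the tournament given by the $3$-cycles $1\to2\to3\to1$ and $4\to5\to6\to4$ with every vertex of $\{4,5,6\}$ beating every vertex of $\{1,2,3\}$, i.e.\ $\varepsilon_{12}=\varepsilon_{23}=+1$, $\varepsilon_{13}=-1$, $\varepsilon_{45}=\varepsilon_{56}=+1$, $\varepsilon_{46}=-1$, and $\varepsilon_{ij}=-1$ for $i\le 3<j\le 6$: every coefficient of $c_1$ then vanishes (for $i\le 6$ one gets $\mp3\pm3=0$, and for $j\ge 7$ one gets $-\sum_i\eta_i=0$), so this is an invariant almost complex $SU$-structure. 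So the step you flag as ``the crux to be handled with care'' is not a missing lemma but the precise point where both the proposition and the paper's proof break down: your analysis, completed via Landau's theorem, shows the statement holds exactly when $k$ is odd, or $n$ is even, or $2n>3k$, and fails otherwise (minimal counterexample $(n,k)=(9,6)$).
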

\begin{proof}
The complementary roots for $U(n)$ related to $T^{k}\times U(n-k)$  are given, up to sign, with $x_i-x_j$, where $1\leq i\leq k$, $k+1\leq j\leq n-k$. If $J$ is an invariant almost complex structure then its roots must be invariant under the isotropy representation of $U(n-k)$ and thus under the action of the Weyl group $W_{U(n-k)}$. Therefore for a fixed $i$, $1\leq i\leq k$ all the roots for $J$ of the form $\pm (x_i-x_j)$ have to be of the same sign. It follows that $x_i$ may not vanish in the sum of the roots for $J$. 
\end{proof}

\begin{rem}  The result which in terms of the  Koszul form gives the necessary and the sufficient condition for a homogeneous complex space $G/H$ to have vanishing the first Chern class is obtained recently in~\cite{GG}. Being complex a such space according to Remark~\ref{int-not-SU} is of zero Euler characteristic meaning that $\rk H <\rk G$ what further implies that it is cobordant to zero.  Appealing to this result,  several series of homogeneous spaces whose first Chern class vanishes are provided in~\cite{GG}. For example among these are the spaces $M=SU(n)/(SU(n_1)\times \cdots \times SU(n_k))$, where $n_1+\ldots +n_k=n$ and  $k$ is odd. They admit a free action of the torus $T^{k-1}$ implying that they are null cobordant.   
\end{rem}
\subsubsection{Cobordism classes of homogeneous $SU$-manifolds.}
Using~\eqref{charact_numbers} we straightforwardly obtain:
\begin{prop}\label{SU-numb}
If $(G/H,J)$ is a homogeneous $SU$-manifold of  positive Euler characteristic then
\[
s_{(1,0,\ldots,0,1,0)}(G/H,J)=-s_{(0,\ldots ,0,1)}(G/H,J)= - \sum _{\rw \in
W_{G}/W_{H}} \rw \Big( \frac{\sum\limits_{j=1}^n t_j^n}{t_1 \cdots
t_n}\Big).
\]
\end{prop}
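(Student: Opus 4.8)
The plan is to read off both characteristic numbers from the general formula~\eqref{charact_numbers} and then exploit the $SU$-condition $\sum_{i=1}^n \varepsilon_i\alpha_i = 0$ to identify them. Recall that for $(G/H,J)$ the number $s_\omega$ is computed from the power series $\prod_{i=1}^n f(t_i) = 1 + \sum f_\omega(t_1,\ldots,t_n)a^\omega$, where $t_j = \langle \varepsilon_j\alpha_j, {\bf x}\rangle$ and $f(t) = 1 + a_1 t + a_2 t^2 + \cdots$. So the first step is to isolate the two coefficients $f_\omega$ relevant to the two multi-indices in the statement. For $\omega = (0,\ldots,0,1)$ (the index selecting a single $a_n$), the coefficient $f_\omega$ is the part of $\prod f(t_i)$ linear in $a_n$ and of total degree $n$ in the $t_j$; since each factor contributes $a_n t_i^n$ at the $a_n$-level, we get $f_{(0,\ldots,0,1)}(t_1,\ldots,t_n) = \sum_{j=1}^n t_j^n$. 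For $\omega = (1,0,\ldots,0,1)$ (selecting one $a_1$ and one $a_{n-1}$), the analogous extraction picks out $\sum_{i\neq j} t_i\, t_j^{\,n-1}$ as its $f_\omega$. This is the bookkeeping step, routine but worth doing carefully.

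Next I would substitute these into~\eqref{charact_numbers}, giving
\[
s_{(0,\ldots,0,1)}(G/H,J) = \sum_{\rw\in W_G/W_H}\rw\Big(\frac{\sum_{j=1}^n t_j^n}{t_1\cdots t_n}\Big),
\]
\[
s_{(1,0,\ldots,0,1)}(G/H,J) = \sum_{\rw\in W_G/W_H}\rw\Big(\frac{\sum_{i\neq j} t_i\, t_j^{\,n-1}}{t_1\cdots t_n}\Big).
\]
The crux is then to relate the two numerators using the $SU$-hypothesis. The first Chern class being zero means $\sum_{i=1}^n t_i = \langle c_1, {\bf x}\rangle = 0$ identically in ${\bf x}$ (equivalently as a linear form, since $c_1(G/H,J)=\sum_i \varepsilon_i\alpha_i = 0$). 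Fixing a summand index $j$ and writing $s = t_j$, the inner sum $\sum_{i\neq j} t_i\, t_j^{\,n-1} = t_j^{\,n-1}\sum_{i\neq j} t_i = t_j^{\,n-1}(-t_j) = -t_j^{\,n}$, using $\sum_{i\neq j} t_i = -t_j$. Summing over $j$ gives $\sum_{i\neq j} t_i\, t_j^{\,n-1} = -\sum_{j=1}^n t_j^{\,n}$, so the two numerators differ exactly by a sign, and the claimed identity follows after dividing by $t_1\cdots t_n$ and applying each $\rw$.

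The main obstacle, such as it is, is the first step: correctly matching the multi-index notation $\omega=(i_1,\ldots,i_n)$ with the monomials in the $a_i$'s so that the extracted symmetric functions $f_\omega$ are right, and confirming that $\|\omega\|=\sum_l l\,i_l = n$ in both cases (indeed $1\cdot 1 + (n-1)\cdot 1 = n$ and $n\cdot 1 = n$) so that~\eqref{charact_numbers} applies. Once the numerators are identified, the $SU$-condition collapses the computation immediately, and no nontrivial manipulation of the Weyl-group sum is needed since the identity holds termwise before applying $\rw$. I expect the whole argument to be short; the phrase ``straightforwardly obtain'' in the statement signals exactly this kind of direct substitution plus the one-line use of $\sum_i t_i = 0$.
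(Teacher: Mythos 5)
Your proposal is correct and is precisely the computation the paper intends: the paper gives no written proof beyond ``Using~\eqref{charact_numbers} we straightforwardly obtain,'' and your argument fills in exactly that route — extracting $f_{(0,\ldots,0,1)}=\sum_j t_j^n$ and $f_{(1,0,\ldots,0,1,0)}=\sum_{i\neq j}t_i t_j^{\,n-1}$ from $\prod_i f(t_i)$, then using the $SU$-condition $\sum_i t_i=0$ (which is preserved under each $\rw$, as $\rw$ acts linearly) to get $\sum_{i\neq j}t_i t_j^{\,n-1}=-\sum_j t_j^n$ termwise. The only cosmetic slip is writing the first multi-index as $(1,0,\ldots,0,1)$ instead of $(1,0,\ldots,0,1,0)$, but your identification of it with $a_1a_{n-1}$ and the check $1\cdot 1+(n-1)\cdot 1=n$ show you mean the right thing.
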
 

\begin{ex}\label{S6}
It is shown in~\cite{Buch_Terz} that the roots for the unique invariant almost complex structure $J$ on $S^{6}=G_2/SU(3)$  are $\alpha _{1}=x_1$, $\alpha _{2}=x_2$ and $\alpha _{3}=x_3=-(x_1+x_2)$. Note that $(S^6, J)$ is a $SU$-manifold, its cobordism class is  $[S^6,J]=2(a_1^3-3a_1a_2+3a_3)$ and it is computed in~\cite{Buch_Terz}.
\end{ex}
\begin{lem}
The cobordism class of a  $6$-dimensional homogeneous $SU$-manifold $(G/H, J)$ is given by 
\[
[G/H, J] = \frac{\chi (G/H)}{2}\cdot [S^6, J].
\]
\end{lem}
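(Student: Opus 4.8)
The plan is to exploit the fact that a six–dimensional stable complex cobordism class is completely determined by its Chern numbers. Writing $2n=6$, so $n=3$, the rational cobordism group $\Omega_U^{-6}\otimes\Q$ is three–dimensional, and the functionals $c_1^3[\,\cdot\,]$, $c_1c_2[\,\cdot\,]$, $c_3[\,\cdot\,]$ form a basis of its dual; equivalently one may use the three characteristic numbers $s_{(3,0,0)}$, $s_{(1,1,0)}$, $s_{(0,0,1)}$ of \eqref{charact_numbers}. Thus $[M]\mapsto\big(c_1^3[M],\,c_1c_2[M],\,c_3[M]\big)$ is a rational isomorphism onto $\Q^3$, and it suffices to compare the Chern numbers of $(G/H,J)$ with those of $(S^6,J)$.

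First I would record what these numbers become on a six–dimensional $SU$–manifold. By hypothesis $c_1(G/H,J)=0$ as a cohomology class, so the two mixed numbers vanish, $c_1^3[G/H]=c_1c_2[G/H]=0$, while $c_3[G/H]=\chi(G/H)$ by the top Chern relation. In terms of the formal Chern roots $t_1,t_2,t_3$ this reads $s_{(3,0,0)}=c_3=\chi$, together with $s_{(0,0,1)}=t_1^3+t_2^3+t_3^3=c_1^3-3c_1c_2+3c_3=3\chi$ and $s_{(1,1,0)}=c_1c_2-3c_3=-3\chi$. The last two are exactly the equal–and–opposite pair supplied by Proposition~\ref{SU-numb}, which is precisely the assertion that on the $SU$–locus in dimension six the three characteristic numbers collapse to a single parameter. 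Hence the Chern–number vector of an arbitrary six–dimensional homogeneous $SU$–manifold is $(0,0,\chi(G/H))$, a scalar multiple of the fixed vector $(0,0,1)$.

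Applying the inverse isomorphism, there is therefore a single class $w\in\Omega_U^{-6}\otimes\Q$, independent of the manifold, with $[G/H,J]=\chi(G/H)\cdot w$ for every such space. To pin down $w$ I would calibrate against the example computed earlier: for $S^6=G_2/SU(3)$ one has $\chi(S^6)=|W_{G_2}|/|W_{SU(3)}|=12/6=2$, and by Example~\ref{S6} its class is $[S^6,J]=2(a_1^3-3a_1a_2+3a_3)$. The same proportionality gives $[S^6,J]=2w$, whence $w=\tfrac12[S^6,J]$ and
\[
[G/H,J]=\chi(G/H)\cdot w=\frac{\chi(G/H)}{2}\,[S^6,J],
\]
as claimed; the right–hand side is integral because the left–hand side is the class of an actual manifold and $\Omega_U^{*}$ is torsion–free.

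The computation is soft, and the two points deserving care are both standing inputs. One is the structural fact that the Chern numbers (equivalently the $s_\omega$ with $\|\omega\|=3$) form a complete cobordism invariant in this fixed dimension, so that equality of all of them forces equality of cobordism classes — this is where I would be most explicit. The other is that $c_1^3=c_1c_2=0$ is a genuine identity of cohomology classes forced by $c_1=0$, and that $s_{(3,0,0)}=c_3=\chi$; once these are in place, the relation of Proposition~\ref{SU-numb} is exactly what guarantees that the $SU$–condition leaves only $\chi$ free, so no further homogeneous–space input is required and the scale is fixed once and for all by $S^6$.
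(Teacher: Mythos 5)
Your proof is correct, but it runs along a genuinely different track from the paper's. The paper works directly with the localization formula~\eqref{cob_class}: writing $[G/H,J]$ as the coefficient of $t^3$ in $\sum_{\rw}\rw\bigl(\prod_{i=1}^{3}(1+a_1\alpha_i t+a_2\alpha_i^2t^2+a_3\alpha_i^3t^3)/(\alpha_1\alpha_2\alpha_3)\bigr)$, it uses the $SU$-relation $\alpha_1+\alpha_2+\alpha_3=0$ to see that the $t^3$-coefficient of the numerator is exactly $(a_1^3-3a_1a_2+3a_3)\alpha_1\alpha_2\alpha_3$, so the $\rw$-sum collapses to $\chi(G/H)(a_1^3-3a_1a_2+3a_3)$, which is then matched against the previously computed $[S^6,J]=2(a_1^3-3a_1a_2+3a_3)$. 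You instead invoke the Milnor--Novikov completeness of Chern numbers in the fixed dimension $6$: the $SU$-condition kills $c_1^3$ and $c_1c_2$, the top Chern relation gives $c_3[M]=\chi(M)$ (legitimate here since $J$ is an honest almost complex structure, not merely stable complex), and calibration against $S^6$ with $\chi(S^6)=2$ fixes the scale. Your computations of the $s$-numbers ($s_{(3,0,0)}=c_3$, $s_{(1,1,0)}=c_1c_2-3c_3$, $s_{(0,0,1)}=c_1^3-3c_1c_2+3c_3$) are right, and your observation that Proposition~\ref{SU-numb} specializes to $s_{(1,1,0)}=-s_{(0,0,1)}$ for $n=3$ is consistent, though not actually needed. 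What each approach buys: yours is more general --- it shows that \emph{any} $6$-dimensional almost complex $SU$-manifold, homogeneous or not, has rational cobordism class $\chi\cdot w$ with $w=\tfrac12[S^6,J]$, at the cost of importing the external (if standard) fact that Chern numbers detect rational cobordism; the paper's computation is self-contained within its toric-genus machinery, uses no completeness theorem, and produces the explicit polynomial $\chi(G/H)(a_1^3-3a_1a_2+3a_3)$ as a byproduct. One point you gloss slightly: the paper explicitly asserts that $\chi(G/H)$ is even before dividing by $2$, whereas your argument establishes the displayed identity in $\Omega_U^*\otimes\Q$ without needing evenness (the right-hand side is integral anyway because $\tfrac{\chi}{2}[S^6,J]=\chi(a_1^3-3a_1a_2+3a_3)$); your closing integrality remark, as phrased, is a little loose but harmless.
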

\begin{proof}
Let $(G/H,J)$ be a $6$-dimensional homogeneous $SU$-manifold and let  $\alpha _1,\alpha _2,\alpha_3$ be the roots for $J$. The cobordism class $[G/H, J]$ is by~\eqref{cob_class} given as the coefficient in the polynomial
\begin{equation}\label{cobord_3}
\sum_{\rw \in W_{G}/W_{H}}\rw \Big( \frac{\prod_{i=1}^{3}(1+a_1\alpha _{i}t+a_2\alpha_{i}^2t^2+a_3\alpha_{i}^3t^3)}{\alpha_1\alpha_2\alpha_3}\Big).
\end{equation}
Since $\alpha _1+\alpha _2+\alpha_3=0$ it follows that the coefficient in $t^3$  in the polynomial $\prod\limits_{i=1}^{3}(1+a_1\alpha_{i}t+a_2\alpha_{i}^2t^2+a_3\alpha_{i}^3t^3)$ is 
\[
(a_1^3-3a_1a_2+3a_3)\alpha_1\alpha_2\alpha_3.
\]
It follows then from~\eqref{cobord_3} that 
\[
[G/H, J] = \chi (G/H)(a_1^3-3a_1a_2+3a_3).
\]

As $\chi (G/H)$ is an even number together with Example~\ref{S6} we obtain that the cobordism class for any  $6$-dimensional homogeneous $SU$-manifold $(G/H,J)$ is given with $[G/H, J]= c\cdot [S^6, J]$, where $c=\frac{\chi (G/H)}{2}$.
\end{proof}
\begin{ex}
The flag manifold $U(3)/T^3$  with the invariant almost complex structure $\hat{J}$ defined as in the proof of Lemma~\ref{U_SU} is a homogeneous $SU$-space. The roots for $\hat{J}$ are: $x_1-x_2, -x_1+x_3, x_2-x_3$. Being a $6$-dimensional manifold its cobordism class is 
\[
[U(3)/T^3, \hat{J}]=3[S^6, J].
\]
\end{ex}

\begin{ex}
Let us consider the flag manifold $U(n)/T^n$ where $n$ is an odd number and assume that it is  endowed with the invariant almost complex structure $J$ defined in the proof of Lemma~\ref{U_SU}. The roots for $J$ are
\begin{equation}\label{SU-roots}
(-1)^{i+j+1}(x_i-x_j),\;\;\text{for}\;\; 1\leqslant i< j\leqslant n.
\end{equation} 
Using Theorem~\ref{chern-flag} we deduce that the cobordism class for $(U(n)/T^n,J)$ is given as the coefficient at $t^{\frac{n(n-1)}{2}}$ in the polynomial
\[
\sum _{\sigma \in S_{n}}\sigma \Big(\prod_{1\leqslant i<j\leqslant n}\frac{f((-1)^{i+j-1}t(x_i-x_j))}{(-1)^{i+j-1}(x_i-x_j)}\Big),
\]
where $f(x)=1+a_1x+\ldots+a_{m}t^m$ for $m=\frac{n(n-1)}{2}$. Corollary~\ref{Fl-top-Chern} and Proposition~\ref{SU-numb} imply that
\[
s_{(1,0,\ldots,1,0)}(U(n)/T^n,J)=s_{(0,\ldots,0,1)}(U(n)/T^n,J)=0\;\; \text{for}\;\; n>3.
\]
\end{ex} 

\section{Rigidity of a Hirzebruch genus on homogeneous spaces} 
\numberwithin{thm}{section}
We refer to~\cite{HTM} and~\cite{HBJ} as the comprehensive background for the topics related to a Hirzebruch genus.

Let $A$ be a torsion-free ring. Recall the notion of a Hirzebruch genus. Assume we are given a power series 
\[
f(u)=u+\sum\limits_{k\geq 1}f_ku^{k+1},
\]
where $f_k\in A\otimes \Q$. The formal series 
\[
\prod\limits_{i=1}^n\frac{u_i}{f(u_i)}
\] 
is a symmetric function in variables $u_1,\ldots ,u_n$ what implies that it can be represented in the form $\LLL_{f}(\sigma _1,\ldots ,\sigma _n)$, where $\sigma _k$ is the $k$-th elementary symmetric function in variables $u_1,\ldots ,u_n$. The Hirzebruch genus $\LLL_{f}(M^{2n})$ of a stable complex manifold $M^{2n}$ is defined to be the value of $\LLL_{f}(c_1,\ldots ,c_n)$ on the fundamental class $[M^{2n}]$, where $c_i$ are the Chern classes of the tangent bundle for $M^{2n}$.  
Any Hirzebruch genus defines the ring homomorphism $\LLL_{f} : \Omega _{U}^{*}\to A\otimes \Q$. The vice versa is also true: for any ring homomorphism $\phi : \Omega _{U}^{*}\to A\otimes \Q$ there exists  series $f(u)\in A\otimes \Q [[u]]$, $f(0)=0$ and $f^{'}(0)=1$ such that $\phi =\LLL_{f}$. 

A Hirzebruch genus is said to be $A$-integer if $\LLL _{f}(M)\in A$. Among the examples of $A$-integer genera are the signature and the Todd genus. Without loss of generality we assume further that $A$ is a $\Q$-algebra.

For any Hirzerbruch genus
$\LLL _{f}: \Omega _{U}^{*}\to A$ there is  $T^k$-equivariant extension $\LLL _{f}^{T^k} : \Omega _{U:T^k}^{*}\to A[[u_1,\ldots ,u_k]]$ defined by the composition $\LLL _{f}\circ \Phi$, where $\Omega _{U:T^k}^{*}$ denotes the cobordism classes of $T^k$-equivariant stable complex manifolds.  It follows from~\eqref{tg_geo} that 
\[
\LLL _{f} ^{T^k}(M, c_{\tau}) = \LLL _{f}(M) + \sum\limits_{|\omega |>0}\LLL _{f} (G_{\omega}(M)){\bf u}^{\omega}.  
\]
The genus $\LLL _{f}$ is said to be $T^k$-rigid on $M$ if $\LLL _{f}^{T^k}(M, c_{\tau})=\LLL _{f}(M)$. If the action of the torus $T^k$ on $M$ has isolated  fixed points the formula~\eqref{TG-weights} implies that the conditions for the Hirzebruch genus $L_{f}$ to be $T^k$-rigid can be described in terms of functional equations  in signs and weights at fixed points  of the given action~\cite{BPR}.
\begin{prop}\label{prop-rigidity}
For any series $f$ over a $\Q$-algebra $A$, the genus $\LLL _{f}$  
is $T^k$-rigid on $M^{2n}$ if and only if the functional equation
\begin{equation}\label{rigidity}
\sum\limits_{x\in Fix(M)}\sg (x)\prod _{j=1}^{n}\frac{1}{f(\omega _{j}(x)\cdot u)}=c
\end{equation}
is satisfied in $A[[u_1,\ldots,u_k]]$, for the constant  $c = \LLL _{f}(M^{2n})$.
\end{prop}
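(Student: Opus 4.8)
The plan is to read off the claim directly from the localization formula \eqref{TG-weights} by applying the ring homomorphism $\LLL_f$ to it. By definition $\LLL_f$ is $T^k$-rigid on $M^{2n}$ exactly when $\LLL_f^{T^k}(M^{2n},c_\tau)=\LLL_f\circ\Phi(M^{2n},c_\tau,\theta)$ is the constant power series $\LLL_f(M^{2n})\in A$, so the first step is to compute $\LLL_f\circ\Phi$ explicitly. Since all fixed points are isolated, \thmref{main} gives
\[
\Phi(M^{2n},c_\tau,\theta)=\sum_{x\in Fix(M)}\sg(x)\prod_{j=1}^{n}\frac{1}{[\Lambda_j(x)]({\bf u})},
\]
where $\Lambda_j(x)=\omega_j(x)$ are the weight vectors at $x$. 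Because $\LLL_f\colon\Omega_U^*\to A$ extends continuously to a ring homomorphism $\Omega_U^*[[u_1,\ldots,u_k]]\to A[[u_1,\ldots,u_k]]$, it commutes with the finite sum over fixed points, with the finite products over $j$, and with the inversion of series having invertible linear term; hence it suffices to compute the image of each factor $1/[\Lambda_j(x)]({\bf u})$.

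The key step is the formal-group identity $\LLL_f([{\bf n}]({\bf u}))=f(\langle{\bf n},{\bf u}\rangle)$ for ${\bf n}\in\Z^k$. Recall from \eqref{formalgroup} that $[{\bf n}]({\bf u})=c_1^U(\eta_1^{n_1}\otimes\cdots\otimes\eta_k^{n_k})$ is the $\mathbf n$-series of the cobordism formal group law $F$, and that $\LLL_f$ is precisely the genus whose associated formal group law $\LLL_f(F)$ has exponential $f$ and logarithm $f^{-1}$ (for the Todd genus, $f(u)=1-e^{-u}$ and one recovers the multiplicative law, as a sanity check). Thus $\LLL_f$ intertwines $F$ with the additive group via $f$, and in the logarithmic coordinate the $\mathbf n$-series is carried to $f$ evaluated on the linear form $\langle{\bf n},{\bf u}\rangle=\sum_l n^l u_l$. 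Applying this with ${\bf n}=\omega_j(x)$ yields $\LLL_f\bigl(1/[\Lambda_j(x)]({\bf u})\bigr)=1/f(\omega_j(x)\cdot u)$, whence
\[
\LLL_f^{T^k}(M^{2n},c_\tau)=\sum_{x\in Fix(M)}\sg(x)\prod_{j=1}^{n}\frac{1}{f(\omega_j(x)\cdot u)}.
\]

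It then remains to match this with the two assertions in the statement. Rigidity means by definition that the element above is a constant $c\in A$. Applying $\LLL_f$ to the geometric expansion \eqref{tg_geo} shows that the constant term of $\LLL_f^{T^k}(M^{2n},c_\tau)$ is always $\LLL_f(M^{2n})$; therefore, if the power series is constant, that constant must be $c=\LLL_f(M^{2n})$, and conversely if the displayed sum equals $c=\LLL_f(M^{2n})$ then $\LLL_f^{T^k}(M^{2n},c_\tau)=\LLL_f(M^{2n})$ is rigidity. This is exactly the functional equation \eqref{rigidity}, so both implications follow simultaneously from the single computation of $\LLL_f\circ\Phi$.

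The hard part will be the second step, the identity $\LLL_f([{\bf n}]({\bf u}))=f(\langle{\bf n},{\bf u}\rangle)$: one must keep careful track of which coordinate on $U^*(BT^k)$ is in play, since the cobordism $\mathbf n$-series is linear in the $u_l$ only after passing to the logarithm $f^{-1}$ of the genus's formal group law, and it is precisely in that coordinate that the fixed-point contribution assumes the required product form $\prod_j 1/f(\omega_j(x)\cdot u)$. Once this normalization is fixed, the remainder of the argument is purely formal, using only that $\LLL_f$ is a continuous $\Q$-algebra homomorphism and the localization formula of \thmref{main}.
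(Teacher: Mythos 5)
Your overall strategy is the right one, and it is in fact the derivation the paper intends: the paper offers no independent proof of Proposition~\ref{prop-rigidity}, presenting it as a direct consequence of the localization formula \eqref{TG-weights} with details cited to \cite{BPR}, exactly as you do. Your first paragraph (apply $\LLL_f$ coefficientwise to \thmref{main}) and your third paragraph (pin down the constant via \eqref{tg_geo}, so that rigidity is equivalent to the displayed sum being the constant $\LLL_f(M^{2n})$) are both sound.

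The gap is your ``key identity'' $\LLL_f([\mathbf{n}](\mathbf{u}))=f(\langle\mathbf{n},\mathbf{u}\rangle)$, which is false as written. Applying $\LLL_f$ to the universal formal group law yields the law over $A$ with exponential $f$ and logarithm $f^{-1}$, so the correct statement is $\LLL_f([\mathbf{n}](\mathbf{u}))=f\big(\sum_{l}n_{l}\,f^{-1}(u_{l})\big)$. Your own sanity check refutes the version you state: for the Todd genus $f(u)=1-e^{-u}$ one computes $\LLL_f([2](u))=1-(1-u)^2=2u-u^2$, whereas $f(2u)=1-e^{-2u}$. Consequently the fixed-point computation gives directly
\begin{equation*}
\LLL_f^{T^k}(M^{2n},c_{\tau})=\sum_{x\in Fix(M)}\sg(x)\prod_{j=1}^{n}\frac{1}{f\big(\sum_{l}\omega_{j}^{l}(x)\,f^{-1}(u_{l})\big)}\,,
\end{equation*}
and the sum in \eqref{rigidity} is this series only after the substitution $u_{l}\mapsto f(u_{l})$. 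You flag this normalization issue in your closing paragraph but never discharge it, and without it the equivalence in your third paragraph does not follow: rigidity is constancy of $\LLL_f^{T^k}(M^{2n},c_\tau)$ in the original coordinate, while \eqref{rigidity} asserts constancy in the transformed one. The missing step is short but necessary: since $A$ is a $\Q$-algebra and $f(u)=u+O(u^{2})$, the substitution $u_{l}\mapsto f(u_{l})$ is an invertible endomorphism of $A[[u_1,\ldots,u_k]]$ preserving constant terms, so one series is the constant $\LLL_f(M^{2n})$ if and only if the other is. (A further point worth a sentence in a careful writeup: each individual summand $1/[\Lambda_j(x)](\mathbf{u})$ has a pole, so \eqref{TG-weights} and its image under $\LLL_f$ are identities in the localization of the power series ring obtained by inverting the classes $[\Lambda_j(x)](\mathbf{u})$, whose linear terms $\langle\Lambda_j(x),u\rangle$ are nonzero because the fixed points are isolated; only the total sum is a genuine power series, which is what makes evaluation of the constant term via \eqref{tg_geo} legitimate.) With these two insertions your argument closes.
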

The genus $\LLL _{f}$ is said to be $T^k$-rigid on a  given class of $T^k$-manifolds $\MMM$ if it is $T^k$-rigid for any manifold from the class $\MMM$. 
\subsection{Rigidity of the Krichever genus}
\numberwithin{thm}{subsection}
Recall that the Krichever genus~\cite{KR}, also known as the generalized elliptic genus, is a Hirzebruch genus defined by the power series 
\begin{equation}\label{KR-defn}
f(u)=\frac{exp(\mu u)}{B (u,v)}.
\end{equation}
Here $B(u,v)$ is the Baker-Akhiezer function defined by
\[
B(u,v)= B(u,v;\omega _1,\omega _2)=\frac{\sigma (v-u)}{\sigma (u)\sigma (v)}e^{\zeta (v)u} ,
\]
where $\sigma (u)$ and $\zeta (u)$ are Weierstrass sigma and zeta functions and $\omega _1$, $\omega _2$ are half-periods for an elliptic curve $\Gamma$  such that $Im\frac{\omega _{2}}{\omega _{1}}>0$.  Related to the periods $\omega _1$ and $\omega _{2}$ the function $\sigma (u)$ behaves as follows:
\begin{equation}\label{sigma-period}
\sigma (u+2\omega _k ) = -e^{2\eta _{k}(u+\omega _{k})}\sigma (u),\;\; \text{where}\; \eta _{k}=\zeta (\omega _k),\; k=1,2.
\end{equation}

In the case when  $\MMM$ is the class of $SU$-manifolds with an equivariant circle action, it is proved in~\cite{KR} that the Krichever genus is $S^1$-rigid on this class. Note that it implies  that the Krichever genus is $G$-rigid on $\MMM$  for any compact connected Lie group $G$. One can see it by passing to a generic circle subgroup in the maximal torus for $G$, see~\cite{KR}. 

For the class of homogeneous $SU$-manifolds of positive Euler characteristic endowed with the canonical $T^k$-action we can prove that the Krichever genus is $T^k$-rigid  just using Proposition~\ref{prop-rigidity} and the representation theory of Lie groups.  It will  imply that the Krichever genus  is also $S^1$-rigid on these spaces related to the canonical $S^1$-action.   

\begin{rem}\label{pointing}
For the sake of further clearness we want to point that all  weights $\rw (\alpha)$ at an arbitrary fixed point $\rw \in W_{G}/W_{H}$ for the canonical action of the maximal torus $T^k$ on $G/H$ related to an invariant almost complex structure,  are of multiplicity $1$. It follows from the well known fact   that $k\alpha$ may not be the root for $G$   for $k\neq \pm 1$, where  $\alpha$ is an arbitrary root for $G$.  
\end{rem}
 
\begin{thm}\label{Krichever-rigidity}
The Krichever genus is $T^k$-rigid on homogeneous $SU$-manifolds  of positive Euler characteristic endowed with the canonical action of the maximal torus.
\end{thm}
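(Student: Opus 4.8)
The plan is to verify the functional equation of Proposition~\ref{prop-rigidity} directly from the explicit fixed-point data~\eqref{utg}, rather than invoking the general $SU$-rigidity theorem of~\cite{KR}. For a genuine invariant almost complex structure the sign $\sg$ equals $+1$ at every fixed point (this is already built into~\eqref{utg}, which carries no signs), the fixed points are indexed by $\rw\in W_{G}/W_{H}$, and the weights at $\rw$ are $\rw(\epsilon_{i}\alpha_{i})$, $1\le i\le n$. Thus $T^k$-rigidity of the Krichever genus $\LLL_{f}$ amounts to showing that
\[
\sum_{\rw\in W_{G}/W_{H}}\prod_{i=1}^{n}\frac{1}{f(\langle\rw(\epsilon_{i}\alpha_{i}),{\bf u}\rangle)}
\]
is independent of ${\bf u}$ and equals $\LLL_{f}(G/H,J)$.

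Substituting the Krichever series~\eqref{KR-defn}, each factor becomes $f(t)^{-1}=e^{-\mu t}B(t,v)$, so the $\rw$-th summand equals $\exp\!\big(-\mu\langle\rw(\sum_{i}\epsilon_{i}\alpha_{i}),{\bf u}\rangle\big)\prod_{i}B(\langle\rw(\epsilon_{i}\alpha_{i}),{\bf u}\rangle,v)$. The $SU$-condition $\sum_{i}\epsilon_{i}\alpha_{i}=0$ is Weyl invariant, so $\rw(\sum_{i}\epsilon_{i}\alpha_{i})=0$ and the exponential prefactor disappears. Restricting the action to a generic integral circle $S^1\subset T^k$, i.e. putting ${\bf u}=s\,t$ for generic $t\in\Z^{k}$, the weights become nonzero integers $n_{i}^{\rw}=\langle\rw(\epsilon_{i}\alpha_{i}),t\rangle$ with $\sum_{i}n_{i}^{\rw}=0$, and the task reduces to proving that the one-variable function $Z(s)=\sum_{\rw}\prod_{i}B(n_{i}^{\rw}s,v)$ is constant. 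Since $\LLL_{f}^{T^k}(G/H,J)$ is a power series in ${\bf u}$, whose restrictions to generic lines through the origin determine it, constancy of every such $Z$ is enough.

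Next I would show that $Z$ is elliptic. Iterating the quasi-periodicity~\eqref{sigma-period} gives $B(u+2\omega_{k},v)=e^{2(\omega_{k}\zeta(v)-\eta_{k}v)}B(u,v)$, whence $B(n_{i}^{\rw}(s+2\omega_{k}),v)=e^{\,2n_{i}^{\rw}(\omega_{k}\zeta(v)-\eta_{k}v)}B(n_{i}^{\rw}s,v)$; because $\sum_{i}n_{i}^{\rw}=0$, each summand, and hence $Z$, is invariant under $s\mapsto s+2\omega_{k}$ for $k=1,2$. Therefore $Z$ descends to a meromorphic function on the elliptic curve $\Gamma=\C/\langle 2\omega_{1},2\omega_{2}\rangle$. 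By Liouville it then remains only to prove that $Z$ has no poles; once this is known, $Z$ is constant, equal to its value $Z(0)=\LLL_{f}(G/H,J)$.

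The main obstacle is exactly this holomorphicity. At $s=0$ each summand has a pole of order $n$, yet $Z$ is regular there since $\LLL_{f}^{T^k}$ is a genuine power series; equivalently the principal parts, whose leading coefficient is $\sum_{\rw}\prod_{i}(n_{i}^{\rw})^{-1}$, cancel in the sum. The remaining poles lie at the nonzero torsion points $s_{0}$ with $n_{i}^{\rw}s_{0}\in\langle 2\omega_{1},2\omega_{2}\rangle$ for some $i$, i.e. at the fixed points of a finite cyclic subgroup $\Z_{m}\subset S^1$. Here the representation theory enters: the fixed submanifold of such a subgroup is again a union of homogeneous spaces (centralizers of the corresponding torsion element), each inheriting an invariant almost complex $SU$-structure, so the residue of $Z$ at $s_{0}$ reduces, up to a nonzero factor, to the same type of equivariant Krichever expression for these submanifolds and vanishes by their $SU$-condition. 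Concretely I expect the contributions of the fixed points $\rw$ and $\rw\,r_{\alpha_{i}}$ related by the reflection $r_{\alpha_{i}}$ in the singular root to cancel in pairs, using the multiplicity-one property of the weights from Remark~\ref{pointing}. With all residues shown to vanish, $Z$ is holomorphic and elliptic, hence constant, which establishes the functional equation and the theorem.
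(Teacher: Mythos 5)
Your opening moves (the rigidity equation of Proposition~\ref{prop-rigidity}, the $SU$-condition killing the exponential factor, double periodicity, Liouville) coincide with the paper's, but your reduction to a generic circle creates a gap that the paper's proof is specifically engineered to avoid. The paper keeps the function~\eqref{kr-rig} of $k$ \emph{independent} variables $y_i=\rw (z_i)$ and proves it is two-periodic in each $y_i$ separately; consequently every potential pole --- which by Remark~\ref{pointing} is simple and sits where some weight, an integral linear form in the $y_i$, lies in the period lattice $L=2\Z \omega_1+2\Z \omega_2$ --- is translated by period shifts of the $y_i$ to a pole at the origin, and zero poles are excluded because~\eqref{tg_geo} shows $\LLL_f^{T^k}$ is an honest power series. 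After your substitution ${\bf u}=st$, the pole locus of $Z(s)=\sum_{\rw}\prod_i B(n_i^{\rw}s,v)$ consists of the torsion points $s_0$ with $n_i^{\rw}s_0\in L$, and the periodicity $s\mapsto s+2\omega_l$ cannot move a nonzero torsion point to $s=0$. So ellipticity plus regularity at the origin no longer finishes the argument, and the whole burden falls on your last step. That step is a sketch, not a proof: you do not show that the fixed submanifolds of $\Z_m\subset S^1$ inherit invariant almost complex $SU$-structures, nor that the residue of $Z$ at $s_0$ equals (up to a nonzero factor) the analogous Krichever sum for those submanifolds, nor that the contributions of $\rw$ and $\rw r_{\alpha_i}$ cancel; ``I expect \dots to cancel in pairs'' is exactly the difficult core of Krichever's original $S^1$-argument, which this theorem is meant to re-derive by elementary means.

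There is also a concrete obstruction to your cancellation scheme as stated. Remark~\ref{pointing} gives multiplicity one for the weights as integer \emph{vectors}, but after pairing with a single $t\in\Z^k$ two distinct weights at the same fixed point can become integers $n_i^{\rw}$, $n_j^{\rw}$ sharing a common divisor $m\geq 2$ (this is a congruence condition, not a linear one, so genericity of $t$ does not remove it; e.g.\ among integers of the form $a$, $b$, $a+b$, $a+2b$ two are always even). Then a single summand of $Z$ has a pole of order $\geq 2$ at an $m$-torsion point, and pairwise cancellation of simple residues, even if established, would not control such principal parts. The paper does contain a cancellation argument of the kind you envision --- Theorems~\ref{Kr-no-zero-poles} and~\ref{SUKr-no-arbitrary-poles}, resting on the root-system analysis of Propositions~\ref{weights-pairs} and~\ref{weights-arbitrary} --- but it is carried out for the multivariable function, where multiplicity one survives, and it requires the case-by-case study of how reflections act on complementary roots that your proposal does not supply. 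The direct repair is simply to drop the circle restriction: your quasi-periodicity computation for $B$, applied verbatim to the $k$-variable function~\eqref{p-function} using $1+\sum_{j>k}c_i^j=0$, combined with regularity at the origin and Remark~\ref{pointing}, is precisely the paper's proof.
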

\begin{proof}
In the case of a homogeneous space with an invariant almost complex structure the  left hand side of the expression~\eqref{rigidity} for the Krichever genus obtains the form
\begin{equation}\label{Kr-homogeneous}
\sum_{\rw \in W_{G}/W_{H}}e^{(\zeta (v)-\mu)\sum\limits_{j=1}^{n}\rw (\alpha_{j}\cdot u)}\prod_{j=1}^{n}\frac{\sigma (v-\rw (\alpha_{j}\cdot u))}{\sigma (v)\sigma(\rw (\alpha_{j}\cdot u))} \ ,
\end{equation}
where $\alpha _{j}$, $1\leq j\leq n$  are the roots for  $J$. For $J$ being a $SU$-structure we have  $\sum_{j=1}^{n}\alpha _j=0$ what implies that $\sum_{j=1}^{n}\rw (\alpha _{j}\cdot u)=0$ for any $\rw \in W_{g}/W_{H}$  and the expression~\eqref{Kr-homogeneous} becomes
\[
\sum_{\rw \in W_{G}/W_{H}}\prod_{j=1}^{n}\frac{\sigma (v-\rw (\alpha_{j}\cdot u)}{\sigma (\rw (\alpha_{j}\cdot u))\sigma (v)}.
\]
Choose a maximal system  $\alpha _1,\ldots ,\alpha _k$ of linearly independent roots for $J$. Then for any $k+1\leq j\leq n$ we have $\alpha _j=\sum_{i=1}^{k}c_{i}^{j}\alpha _{i}$, where $c_{i}^{j}\in \Z$. If denote $z_{i}=\alpha _{i}\cdot u$, $1\leq i\leq k$, the above expression becomes
\begin{equation}\label{kr-rig}
\sum_{\rw \in W_{G}/W_{H}}\prod_{j=1}^{k}\frac{\sigma (v-\rw (z_j))}{\sigma (\rw (z_j))\sigma (v)}\prod_{j=k+1}^{n}
\frac{\sigma (v-\sum _{i=1}^{k}c_{i}^{j}\rw (z_i))}{\sigma (\sum_{i=1}^{k}c_{i}^{j}\rw (z_i))\sigma (v)}.
\end{equation}
Set $\rw (z_i)=y_i$, where $1\leq i\leq k$, and consider the function
\begin{equation}\label{p-function}
p(y_1,\ldots, y_k)=\prod_{j=1}^{k}\frac{\sigma (v-y_j)}{\sigma (y_j))\sigma (v)}\prod_{j=k+1}^{n}
\frac{\sigma (v-\sum _{i=1}^{k}c_{i}^{j}y_i)}{\sigma (\sum_{i=1}^{k}c_{i}^{j}y_i)\sigma (v)},
\end{equation}
where $\sum _{i=1}^{k}y_i + \sum _{j=k+1}^{n}\sum _{i=1}^{k}c_{i}^{j}y_i = 0.$ This implies that $1+\sum_{j=k+1}^{n}c_{i}^j=0$ for $1\leq i\leq k$.
This function is  two-periodic in each of variables $y_1,\ldots ,y_k$ with periods equal to $2\omega _1$ and $2\omega _2$. We prove it here for $y_1$, using~\eqref{sigma-period}. We first note the following  
\[
\sigma (c_{1}^{j}(y_1+2\omega _{l})+\sum _{i=2}^{k}c_{i}^{j}y_i) = -e^{2\eta_{l}(c_{1}^{j}(  \sum_{i=1}^{k}c_{i}^{j}y_i+c_{1}^{j}\omega _{l}))}\sigma (\sum _{i=1}^{k}c_{i}^{j}y_i),
\]
\[
\sigma (v-c_{1}^{j}(y_1+2\omega _{l})-\sum _{i=2}^{k}c_{i}^{j}y_i) = -e^{2\eta_{l}(c_{1}^{j}(  \sum_{i=1}^{k}c_{i}^{j}y_i-v+c_{1}^{j}\omega _{l}))}\sigma (v-\sum _{i=1}^{k}c_{i}^{j}y_i),
\]
what implies 
\[
\frac{\sigma (v-c_{1}^{j}(y_1+2\omega _{l})-\sum _{i=2}^{k}c_{i}^{j}y_i)}{\sigma (c_{1}^{j}(y_1+2\omega _{l})+\sum _{i=2}^{k}c_{i}^{j}y_i)} = e^{-2c_{1}^{j}\eta_{l}v}\frac{\sigma (v-\sum _{i=1}^{k}c_{i}^{j}y_i)}{\sigma (\sum _{i=1}^{k}c_{i}^{j}y_i)}.
\]
In this way we  obtain
\begin{align*}
p(y_1+2\omega _l,\ldots ,y_k) &= e^{-2\eta _{l}v}\frac{\sigma (v-y_1)}{\sigma (y_1)\sigma (v)}\prod_{j=2}^{k}\frac{\sigma (v-y_j)}{\sigma (y_j)\sigma (v)}\prod _{j=k+1}^{n}e^{-2c_{1}^{j}\eta_{l}v}\frac{\sigma (\sum _{i=1}^{k}c_{i}^{j}y_i-v)}{\sigma (\sum_{i=1}^{k}c_{i}^{j}y_i)\sigma (v)}\\&
=e^{-2\eta_{l}(1+\sum_{j=k+1}^{n}c_{1}^{j})v}p(y_1,\ldots,y_k)=p(y_1,\ldots ,y_k).
\end{align*}
It implies that the function given by~\eqref{kr-rig} is two-periodic in each variable.

On the other hand formula~\eqref{tg_geo} coming from topology proves that any equivariant Hirzebruch genus $\LLL _{f}^{T^k}$ has no pole at $u=0$ what implies that the function given with~\eqref{kr-rig} has no zero poles, i.~e.~ at $\rw(z_j)=0$, $1\leq j\leq n$.  By Remark~\ref{pointing} the poles for this function  are all of multiplicity $1$ and, therefore, they are all at the lattice $2s\omega _1+2m\omega _2$, i.~e.~ at the  points $\rw (z_j)= 2s\omega _1+2m\omega _2$, $1\leq j\leq n$, $m,s\in \Z$. Being periodic in each variable with the periods $2\omega _1$ and $2\omega _2$ we conclude that the function~\eqref{kr-rig} has  no poles.    

By  the Liouville theorem~\cite{Rudin} we obtain that the function~\eqref{kr-rig} has to be a constant, what together with Proposition~\ref{prop-rigidity} proves that the  Krichever genus is $T^k$-rigid on $G/H$.
\end{proof}

\subsection{Proof based on the representation theory}
The fact that the function~\eqref{Kr-homogeneous} has no poles for a $SU$-structure can be proved directly using representation theory. For that purpose we first  prove the results which ensure that the zero poles of the function~\eqref{Kr-homogeneous}  cancel in pairs for an arbitrary invariant almost complex structure. 
More precisely, for any fixed point and for any weight at that fixed point we prove the existence of an another fixed point containing this weight with an opposite sign such that the poles of the function~\eqref{p-function} at the given weight  at these two fixed points cancel. For the background on the root theory of Lie algebras  we refer to~\cite{VO}.

\begin{prop}\label{weights-pairs}
If there exist at least two canonical coordinates of the group $G$ which are not canonical coordinates of the semisimple part of the subgroup $H$ then there exists complementary root $\alpha$ for $G$ related to $H$ such that for an arbitrary invariant almost complex structure $J$ on $G/H$ the fixed points of the canonical action of the maximal torus can be divided into the pairs $(\rw ,\tilde{\rw })$ such that $-\rw (\alpha)$ is the weight at $\tilde{\rw }$ and the weights at  $\rw$ and $\tilde {\rw}$ differ only by signs and the number of opposite signs is odd. Moreover apart from $\rw (\alpha)$  all the other weights at $\rw$ that have the opposite  signs at $\tilde{\rw }$ can be divided into the pairs whose sum is multiple of $\rw (\alpha)$.  
\end{prop}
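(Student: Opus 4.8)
The plan is to realize the pairing by reflecting in the chosen root: first I would extract a suitable root $\alpha$ from the hypothesis, and then pair each fixed point $\rw\in W_G/W_H$ with $\tilde{\rw}:=s_{\rw(\alpha)}\,\rw$, where $s_\gamma$ denotes the reflection of $\TT^{*}$ in a root $\gamma$.

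The key preliminary step, and the part I expect to be the main obstacle, is to produce a \emph{complementary} root $\alpha$ that is orthogonal to every root of $\hh$. The assumption that at least two canonical coordinates of $G$ fail to be coordinates of the semisimple part of $H$ means that all roots of $\hh$ vanish on an at least two-dimensional coordinate subspace of $\TT$; picking two such coordinates $x_p,x_q$, the root $\alpha=x_p-x_q$ of $G$ vanishes on the maximal torus of the semisimple part of $H$, so that $\langle\alpha,\beta\rangle=0$ for every root $\beta$ of $\hh$. Being orthogonal to, and therefore distinct from, all roots of $\hh$, this $\alpha$ is complementary. Making the phrase ``canonical coordinate'' precise for the group at hand, and checking that such a root genuinely exists, is the delicate point; everything afterwards is formal.

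Granting $\alpha$, the pairing is straightforward to set up. Since $\alpha\perp$ roots of $\hh$, each generating reflection $s_\beta$ of $W_H$ fixes $\alpha$, so $\alpha$ is $W_H$-invariant; hence $s_{\rw(\alpha)}$ depends only on the coset $\rw W_H$ and $\tilde{\rw}$ is well defined on $W_G/W_H$. Using the conjugation identity $s_{\rw(\alpha)}\rw=\rw\,s_\alpha$ one finds $\tilde{\rw}(\alpha)=s_{\rw(\alpha)}(\rw(\alpha))=-\rw(\alpha)$, so $s_{\tilde{\rw}(\alpha)}=s_{\rw(\alpha)}$ and the assignment is an involution. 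It is fixed-point-free, because $\tilde{\rw}=\rw$ in $W_G/W_H$ would force $s_\alpha=\rw^{-1}s_{\rw(\alpha)}\rw\in W_H$, which is impossible for a root that is not a root of $\hh$. Thus the fixed points split into honest pairs $(\rw,\tilde{\rw})$.

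It remains to compare weights. From $s_{\rw(\alpha)}\rw=\rw\,s_\alpha$ the weights at $\tilde{\rw}$ are $\rw\bigl(s_\alpha(\epsilon_i\alpha_i)\bigr)$. Because $\alpha$ is orthogonal to the roots of $\hh$, $s_\alpha$ fixes each such root and therefore carries complementary roots to complementary roots up to sign; hence the weight multiset at $\tilde{\rw}$ agrees with that at $\rw$ up to signs, and $s_\alpha(\alpha)=-\alpha$ exhibits $-\rw(\alpha)$ as a weight at $\tilde{\rw}$. Finally, a structure root $\mu=\epsilon_i\alpha_i$ changes sign under the pairing exactly when $\mu':=-s_\alpha(\mu)$ is again a structure root; in that case $\mu+\mu'=\langle\mu,\alpha^{\vee}\rangle\,\alpha$, so the two flipped weights $\rw(\mu),\rw(\mu')$ sum to a multiple of $\rw(\alpha)$. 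The only self-matched flip is $\mu=\alpha$, since $\mu=\mu'$ forces $\mu\parallel\alpha$; every other sign change is matched with a distinct partner of this form. Consequently the flipped weights other than $\rw(\alpha)$ fall into such pairs, and the total number of sign changes equals $1+2(\#\text{pairs})$, which is odd, giving all three assertions.
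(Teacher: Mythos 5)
Your proof is correct, and the pairing itself is the same one the paper uses: the paper's $\tilde{\rw}$ is obtained from $\rw$ by the transposition of the two distinguished coordinates $x_1,x_2$, which is precisely your $\rw\, s_\alpha = s_{\rw(\alpha)}\rw$ for $\alpha = x_1-x_2$. Where you genuinely differ is in how the properties of this pairing are verified. The paper reduces to $G$ simple and then argues type by type: it lists the possible shapes of the remaining weights ($\pm\rw(x_l)\pm\rw(x_k)$, $\pm 2\rw(x_l)$, half-sum roots, and so on) and checks by inspection that the transposition fixes some of them and flips the others in pairs whose sums are multiples of $\rw(\alpha)$. You instead extract from the hypothesis the single fact that $\alpha$ is orthogonal to every root of $\hh$, and everything follows uniformly: orthogonality gives $W_H$-invariance of $\alpha$, hence well-definedness of the pairing on $W_G/W_H$ and also its fixed-point-freeness, since all of $W_H$ fixes $\alpha$ pointwise while $s_\alpha(\alpha)=-\alpha$ --- this, incidentally, is the clean way to close your step ``$s_\alpha\in W_H$ is impossible,'' which as you state it is a general claim that itself needs proof; orthogonality also gives that $s_\alpha$ fixes the roots of $\hh$ pointwise and therefore permutes the complementary roots up to sign; and the reflection formula $s_\alpha(\mu)=\mu-\langle\mu,\alpha^{\vee}\rangle\alpha$ replaces the paper's enumeration, producing the integer multiple $\langle\mu,\alpha^{\vee}\rangle\rw(\alpha)$ for each flipped pair and, by reducedness of the root system, the fact that the root parallel to $\alpha$ is the only self-paired flip, whence oddness. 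Your route buys uniformity over all types and turns two facts the paper merely asserts (that the transposition represents a nontrivial coset of $W_G/W_H$, and that it carries complementary roots to complementary roots) into proved statements; what it shares with the paper is the unexamined assertion that $x_p-x_q$ is actually a root of $G$ in canonical coordinates (true in the standard realizations of the simple groups, and equally taken for granted in the paper's proof), and the same harmless ambiguity in ``$-\rw(\alpha)$ is the weight at $\tilde{\rw}$,'' which depends on whether $\alpha$ or $-\alpha$ is the structure root and disappears after replacing $\alpha$ by $-\alpha$ if necessary.
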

\begin{proof}
Without loss of generality we may assume that $G$ is a simple compact Lie group, as any compact connected Lie group can be decomposed into a locally direct product of connected simple normal subgroups~\cite{On}. Let $x_1$ and $x_2$ be the canonical coordinates for $G$ which are not the coordinates for the  semisimple part of $H$. For an arbitrary fixed point $\rw$ we consider the fixed point $\tilde{\rw}$ which we obtain by transposition of $x_1$ and $x_2$. Note that such transposition  belongs to $W_{G}/W_{H}$ for any $G$ and $H$.  On the other hand $\alpha = x_1-x_2$ will always be complementary root for $G$ related to $H$ and thus, $\pm (\rw (x_1)-\rw (x_2))$ will be the weight for $J$ at $\rw$, what implies that $\pm (\rw (x_2)-\rw (x_1))$ will be the weight for $J$ at $\tilde{\rw }$. The other weights for $J$ at $\rw$ either do not contain $\rw (x_1)$, $\rw (x_2)$ and on them this permutation  does not reflect, either are of the form $\pm\rw (x_l)\pm \rw (x_k)$, $\pm (2)\rw (x_l)$, $\rw (x_l)+\rw (x_k)+\rw (x_i)\pm \rw (x)$,  $\rw (x_l)+\rw (x_k)+\rw (x_i)+\rw (x_j)$, $\pm(\rw (x_i)+\rw (x_j)+\rw (x_l))$, $\frac{1}{2}(\pm x_{1}\pm x_2\pm x_3\pm x_4)$ depending of the type of the group $G$, where $l=1,2$. We see then that under the action of  the transposition of $\rw (x_1)$ and $\rw (x_2)$ some of the remaining  weights do not change while the other ones can be divided into the pairs (of the same type, each of them containing $\rw (x_1)$ either $\rw (x_2)$) such that at each pair we have that each weight maps to the other or each weight maps to the opposite of the other. Therefore the weights at $\rw$ and $\tilde {\rw}$ differ by odd number of signs. It will also imply that the sum of two weights that change the signs and belong to the same pair will be multiply of the weight $\pm (\rw (x_2)-\rw (x_1))$.
\end{proof}       

Under the assumption of Proposition~\ref{weights-pairs} we deduce further the following.

\begin{cor}
If $J$ is an invariant almost complex $SU$-structure then the number of opposite signs at each pair of fixed points is 
at least three.
\end{cor}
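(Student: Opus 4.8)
The plan is to combine the parity statement of Proposition~\ref{weights-pairs} with the single linear constraint imposed by the $SU$-condition. Write $\alpha_1,\ldots,\alpha_n$ for the roots of $J$, so that the weights at a fixed point $\rw\in W_G/W_H$ are $\rw(\alpha_1),\ldots,\rw(\alpha_n)$. Since $J$ is an $SU$-structure we have $\sum_{j=1}^n\alpha_j=0$, and because $\rw$ acts linearly on the weight lattice (as a permutation of the canonical coordinates) this yields $\sum_{j=1}^n\rw(\alpha_j)=0$ at \emph{every} fixed point, in particular at both members of the pair $(\rw,\tilde{\rw})$ furnished by Proposition~\ref{weights-pairs}.

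Next I would exploit the structure of that pair. By Proposition~\ref{weights-pairs} the weights at $\tilde{\rw}$ are obtained from those at $\rw$ by reversing the signs on some subset of indices $I$, with $|I|$ odd, and the complementary root $\rw(\alpha)$ is one of the weights whose sign is reversed, so its index lies in $I$. Applying the vanishing of the weight sum at both $\rw$ and $\tilde{\rw}$ and subtracting the two identities, the weights with unchanged sign cancel and one is left with $2\sum_{j\in I}\rw(\alpha_j)=0$, hence $\sum_{j\in I}\rw(\alpha_j)=0$. That is, the weights whose signs are flipped must themselves sum to zero.

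Finally I would rule out the case $|I|=1$. If only a single weight changed sign it would have to be $\rw(\alpha)$, since that weight is always among the flipped ones; the relation $\sum_{j\in I}\rw(\alpha_j)=0$ would then read $\rw(\alpha)=0$. But $\alpha=x_1-x_2$ is a genuine complementary root of $G$ and $\rw$ represents a coset of the Weyl group, so $\rw(\alpha)$ is again a nonzero root, a contradiction. Since Proposition~\ref{weights-pairs} already guarantees that $|I|$ is odd, excluding $|I|=1$ forces $|I|\geq 3$, which is precisely the assertion of the corollary.

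I do not expect a genuine obstacle here: the $SU$-relation is linear, and subtracting the two sum-zero constraints isolates exactly the flipped weights. The only point requiring care is to keep track that $\rw(\alpha)$ necessarily belongs to $I$, so that the hypothetical case $|I|=1$ collapses to the impossible equation $\rw(\alpha)=0$; everything else is bookkeeping resting on Proposition~\ref{weights-pairs}.
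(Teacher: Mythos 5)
Your proof is correct and follows essentially the same route as the paper: the paper's one-sentence argument ("since the sum of weights at each fixed point is zero, we may not have two fixed points at which the weights differ only in one sign") is precisely your subtraction of the two vanishing weight sums, which excludes a single sign flip, and the oddness guaranteed by Proposition~\ref{weights-pairs} then forces at least three. Your extra bookkeeping about $\rw(\alpha)$ belonging to the flipped set is harmless but unnecessary, since \emph{any} single flipped weight would already have to vanish, contradicting that it is the image of a root under a Weyl group element.
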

\begin{proof}
Since the sum of weights at each fixed point is zero, we may not have two fixed points at which the weights differ only in one sign.
\end{proof}    

\begin{rem}
By examining  the Dynkin diagrams of the simple compact Lie groups we see that if the group $H$ is the centralizer of the torus $T^k$ such that  $k\geq 3$, or $H=T^2\times H^{'}$, the assumption of Proposition~\ref{weights-pairs} will always be satisfied. 
\end{rem} 
We are able to prove the general statement:
\begin{prop}\label{weights-arbitrary}
If $\alpha$ is a complementary root for $G$ related to $H$  then for an arbitrary invariant almost complex structure $J$ on $G/H$ the fixed points of the canonical action
of the maximal torus can be divided into the pairs $(\rw ,\tilde{\rw })$ such that $-\rw (\alpha)$ is the weight at $\tilde{\rw}$ and each weight at $\tilde{\rw}$ is either,  up to sign, the weight at $\rw$ either it is the sum of some weights at $\rw$ and the multiple of the  weight $\rw (\alpha)$. Moreover apart from $\rw (\alpha)$ all  other weights at $\rw$ that have the opposite  signs at $\tilde{\rw }$ can be divided into the pairs whose sum is multiple of $\alpha$.
\end{prop}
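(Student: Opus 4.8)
The plan is to establish the pairing of fixed points directly from the Weyl group combinatorics, reducing to the case where $G$ is simple exactly as in the proof of Proposition~\ref{weights-pairs}. The key observation is that a complementary root $\alpha$ for $G$ related to $H$ is, by definition, one of the roots $\alpha_1,\dots,\alpha_n$ spanning $T_e(G/H)$, so its associated root reflection $s_\alpha\in W_G$ is available, and I would use $s_\alpha$ (or a suitable coset representative of it) to define the involution $\rw\mapsto\tilde{\rw}=s_\alpha\cdot\rw$ on $W_G/W_H$. First I would check that this is well-defined on cosets and fixed-point-free on the relevant weights, so that it genuinely partitions the fixed points into pairs; this is where I must be careful that $s_\alpha$ does not already lie in $W_H$, which is guaranteed precisely because $\alpha$ is complementary and hence not a root of $\hh$.

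Next I would verify the claim that $-\rw(\alpha)$ is a weight at $\tilde{\rw}$. Since the weights at $\rw$ are $\{\rw(\epsilon_i\alpha_i)\}$ and those at $\tilde{\rw}$ are $\{s_\alpha\rw(\epsilon_i\alpha_i)\}$, the weight $\rw(\alpha)$ gets sent by $s_\alpha$ to $s_\alpha\rw(\alpha)=\rw(s_{\rw^{-1}\alpha}\alpha)$; the plan is to arrange the representatives so that this evaluates to $-\rw(\alpha)$, using the defining property $s_\alpha(\alpha)=-\alpha$ of the reflection. The heart of the argument is then the structural statement about the \emph{other} weights: for each remaining complementary root $\beta$, the reflection formula $s_\alpha(\beta)=\beta-\langle\beta,\alpha^\vee\rangle\alpha$ shows that $s_\alpha(\beta)$ equals $\beta$ up to an integer multiple of $\alpha$. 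I would sort the complementary roots into those fixed by $s_\alpha$ (where $\langle\beta,\alpha^\vee\rangle=0$, giving weights that agree at $\rw$ and $\tilde{\rw}$) and those moved by $s_\alpha$; in the latter case $\beta$ and $s_\alpha(\beta)$ form a pair whose sum is $2\beta-\langle\beta,\alpha^\vee\rangle\alpha$, and the task is to show the nonfixed roots organize into such pairs with the stated multiple-of-$\alpha$ property.

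I expect the main obstacle to be controlling the signs $\epsilon_i$ of the almost complex structure $J$ under the reflection. The reflection $s_\alpha$ permutes the complementary \emph{roots} up to sign, but $J$ is specified by a fixed choice of signs $\epsilon_i\alpha_i$; I must check that when $s_\alpha$ sends a root of $J$ at $\rw$ to the negative of a root, the corresponding weight at $\tilde{\rw}$ indeed appears with the opposite sign (and that these sign flips are exactly what the statement records). This requires tracking how $J$-invariance under the isotropy representation forces consistency of the $\epsilon_i$ within each $s_\alpha$-orbit of roots, so that the pairing of opposite-sign weights is compatible with the fixed choice defining $J$. Once the sign bookkeeping is settled, the decomposition of the opposite-sign weights into pairs summing to a multiple of $\alpha$ follows from the reflection formula, and the proposition reduces to a finite, type-by-type verification whose essential content is already captured by the general reflection identity $s_\alpha(\beta)=\beta-\langle\beta,\alpha^\vee\rangle\alpha$.
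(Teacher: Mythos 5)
Your proposal follows the same route as the paper's proof: reduce to simple $G$, use the reflection in the complementary root $\alpha$ to define the pairing involution on $W_{G}/W_{H}$ (the paper takes $\rw =e$ and pairs it with the coset of $t=s_{\alpha}$, which lies outside $W_{H}$ exactly as you say, since the reflections contained in $W_{H}$ are precisely the $s_{\beta}$ with $\beta$ a root of $H$), and derive everything from the fact that $s_{\alpha}$ moves each root by a multiple of $\alpha$. Where you genuinely differ is in how that fact is established: the paper proves it by enumerating the possible shapes of complementary roots case by case, and only for $\alpha =x_1-x_2$, asserting the other choices of $\alpha$ are analogous; you invoke the uniform identity $s_{\alpha}(\beta)=\beta -\langle \beta ,\alpha ^{\vee}\rangle \alpha$, which handles every $\alpha$ and every simple type at once and is the cleaner argument --- with it no type-by-type verification is needed at all, contrary to your closing remark. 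Two small repairs to your sketch: (i) the involution should be $\tilde{\rw}=\rw s_{\alpha}$, i.~e.~left multiplication by the conjugated reflection $s_{\rw (\alpha)}=\rw s_{\alpha}\rw ^{-1}$, rather than $\tilde{\rw}=s_{\alpha}\rw$, so that the weight $\rw (\alpha)$ reverses on the nose and no ``arranging of representatives'' is required; (ii) in the last claim the relevant pair is a weight $\epsilon _{j}\alpha _{j}$ at $\rw$ together with the weight $\epsilon _{i}\alpha _{i}$ satisfying $s_{\alpha}(\epsilon _{j}\alpha _{j})=-\epsilon _{i}\alpha _{i}$, and the sum to compute is $\epsilon _{j}\alpha _{j}+\epsilon _{i}\alpha _{i}=\epsilon _{j}\alpha _{j}-s_{\alpha}(\epsilon _{j}\alpha _{j})=\epsilon _{j}\langle \alpha _{j},\alpha ^{\vee}\rangle \alpha$, a multiple of $\alpha$ --- not $\beta +s_{\alpha}(\beta)=2\beta -\langle \beta ,\alpha ^{\vee}\rangle \alpha$, which is what you wrote and is not a multiple of $\alpha$. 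This same computation dissolves your worry about the signs $\epsilon _{i}$: the proposition tracks weights only up to sign, and applying the reflection identity directly to the signed roots settles the bookkeeping with no input from the isotropy representation (nor could $W_{H}$-invariance of $J$ constrain $s_{\alpha}$-orbits, since $s_{\alpha}\notin W_{H}$).
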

\begin{proof}
Again we assume that $G$ is a simple Lie group. We have that $\pm \alpha$ is the root for $J$. Consider the element $t$ of the Weyl group $W_{G}$ given by the  reflection related to the root $\alpha$.  As $\alpha$ is a complementary root this reflection belongs to $W_{G}/W_{H}$. For the simplicity take $\rw =e$ and let $\tilde {e}$ be the fixed point which corresponds to the reflection $t$. The reflection $t$ maps the roots for $J$ into the weights at $\tilde{e}$. We provide here the proof for $\alpha = x_1-x_2$, where $x_1$ and $x_2$ are canonical coordinates on $G$ since it is the common root for all simple Lie groups. For the other possibilities for $\alpha$ the proof goes analogously. Here the reflection related to $x_1-x_2$ is transposition $t$ that interchanges $x_1$ and $x_2$.  We differentiate the following cases. If the root $\alpha _j$ does not contain none of $x_1$ and $x_2$, or it contains both of them with the same sign then $t(\alpha _j)=\alpha _{j}$. It for some $\alpha _j$ we have that $t(\alpha _j)=\pm \alpha _i$, where $i\neq j$,  then we  also have $t(\alpha _i) = \pm \alpha _j$. If $\alpha _j$ does not belong to the previous cases it implies that $\alpha _{j}$ contains $x_1$ or $x_2$ or both of them with different signs and  $t(\alpha _j)$ is the root for $H$. Therefore we have for $\alpha _{j}$ the following possibilities: 
$\pm x_l \pm x_i$,  $\pm (2)x_l$, $x_l+x_i+x_j\pm x$, $x_l+x_i+x_j+x_k$, $\pm (x_l+x_j+x_k)$, $\frac{1}{2}(\pm (x_1-x_2)\pm x_3\pm x_4)$ depending of the type of the group $G$, where $l=1,2$. We see that in all these cases $\alpha _{j}-t(\alpha _j)$ is the multiple of the root $\pm (x_1-x_2)$.  Arguing as in the proof of Proposition~\ref{weights-pairs} we prove the second statement.
\end{proof}      

\begin{thm}\label{Kr-no-zero-poles}
If $f$ is the power series which defines the Krichever genus, then for any homogeneous space $G/H$  the function 
\begin{equation}\label{no-eq}
\sum_{\rw \in W_{G}/W_{H}}\prod_{j=1}^{n}\frac{1}{f(\rw (\alpha_{j}\cdot u))} \ ,
\end{equation}
has no zero poles, where $\alpha _j$ are the roots of an arbitrary invariant almost complex structure on $G/H$.
\end{thm}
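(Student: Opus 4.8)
The plan is to reduce the statement to a residue computation along the hyperplanes where the individual summands blow up. Using~\eqref{KR-defn} the reciprocal of the defining series is
\[
\frac{1}{f(u)}=e^{(\zeta(v)-\mu)u}\,\frac{\sigma(v-u)}{\sigma(u)\sigma(v)},
\]
so the summand of~\eqref{no-eq} attached to a fixed point $\rw$ is singular exactly along the loci $\rw(\alpha_j)\cdot u=0$, and the ``zero poles'' in question are those occurring at $\rw(\alpha_j)\cdot u=0$ (as opposed to the lattice poles coming from $\sigma$). First I would record the local behaviour: by Remark~\ref{pointing} every weight $\rw(\alpha_j)$ has multiplicity one, so each such singularity is a simple pole, and since $f(w)=w+\dots$ the factor $1/f(w)$ has residue $+1$ at $w=0$ while $1/f(-w)$ has residue $-1$. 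Thus ``no zero poles'' amounts to showing that along each hyperplane $L=\{\rw(\alpha)\cdot u=0\}$, $\alpha$ a root of $J$, the residues of all summands cancel.

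Next I would fix a root $\alpha$ of $J$ (a complementary root) and apply Proposition~\ref{weights-arbitrary} to split the fixed points into pairs $(\rw,\tilde{\rw})$ in which $-\rw(\alpha)$ is a weight at $\tilde{\rw}$, every other weight at $\tilde{\rw}$ is (up to sign) a weight at $\rw$ or differs from one by a multiple of $\rw(\alpha)$, and the sign-reversed weights occur in pairs $(\nu_1,\nu_2)$ with $\nu_1+\nu_2$ a multiple of $\rw(\alpha)$. Because $\alpha$ is a root of $J$, the summand at $\rw$ is singular along $L=\{\rw(\alpha)\cdot u=0\}$ precisely through its factor at $\rw(\alpha)$, and the summand at $\tilde{\rw}$ is singular along the same $L$ precisely through its factor at $-\rw(\alpha)$; no other factor of either summand degenerates there.

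The heart of the argument is the cancellation on $L$. Writing $w=\rw(\alpha)\cdot u$, the pole-producing factors contribute residue $+1$ at $\rw$ and $-1$ at $\tilde{\rw}$, so it remains to check that the products of the surviving factors agree once $w=0$. This is where the structure of Proposition~\ref{weights-arbitrary} enters: each remaining weight at $\tilde{\rw}$ differs from its counterpart at $\rw$ by a multiple of $\rw(\alpha)$, hence gives the same value of $f(\,\cdot\,u)$ on $L$, while the finitely many sign-reversed weights, grouped into pairs with $\nu_1+\nu_2\in\Z\,\rw(\alpha)$, satisfy $-\nu_1\cdot u=\nu_2\cdot u$ and $-\nu_2\cdot u=\nu_1\cdot u$ on $L$, so that $f(-\nu_1\cdot u)^{-1}f(-\nu_2\cdot u)^{-1}$ equals $f(\nu_1\cdot u)^{-1}f(\nu_2\cdot u)^{-1}$ there. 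The exponential prefactor is likewise unchanged, since $\sum_j\tilde{\rw}(\alpha_j)$ and $\sum_j\rw(\alpha_j)$ differ only by a multiple of $\rw(\alpha)$. Therefore the two residues along $L$ are opposite, and the pair cancels.

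Finally, since every zero pole of~\eqref{no-eq} lies on some hyperplane $\rw(\alpha)\cdot u=0$ with $\alpha$ a root of $J$, and the involution $\rw\mapsto\tilde{\rw}$ of Proposition~\ref{weights-arbitrary} matches the two summands singular there into a canceling pair, summing over the pairs kills the residue along each such hyperplane, proving that~\eqref{no-eq} has no zero poles. I expect the main obstacle to be organizational rather than analytic: the delicate point is to confirm, via Proposition~\ref{weights-arbitrary}, that the sign-reversed weights really do assemble into pairs summing to a multiple of $\rw(\alpha)$ and that every summand singular along a given $L$ is accounted for by the pairing, after which the residue cancellation and the simplicity of the poles (Remark~\ref{pointing}) make the conclusion immediate.
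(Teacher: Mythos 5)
Your proposal is correct and follows essentially the same route as the paper: you pair fixed points $(\rw,\tilde{\rw})$ via Proposition~\ref{weights-arbitrary}, use Remark~\ref{pointing} to see each zero pole is simple with residues $\pm 1$, and cancel the two residues along each hyperplane $\{\rw(\alpha)\cdot u=0\}$ by checking that the surviving $\sigma$-factors agree there (weights differing by multiples of $\rw(\alpha)$, and sign-reversed weights grouped into pairs whose sum is a multiple of $\rw(\alpha)$) and that the exponential prefactors match, exactly as in the paper's computation of $y_1q(y_1,\ldots,y_k)|_{y_1=0}$. The only cosmetic difference is that you invoke Proposition~\ref{weights-arbitrary} uniformly, while the paper first treats the special situation of Proposition~\ref{weights-pairs} and then the general case; this is a harmless streamlining, not a different argument.
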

\begin{proof}
The function~\eqref{no-eq} may have zero poles at the points $\rw (z_i) =0$,
where $z_i$ are the roots for the fixed invariant almost complex structure $J$ and $\rw \in W_{G}/W_{H}$.  Let us fix some root $z_1$  for $J$. We first consider the case when $z_1$ comes from Proposition~\ref{weights-pairs}.  Consider the weight $\rw (z_1)$ at the fixed point $\rw$. 
According to Proposition~\ref{weights-pairs} there exists fixed point $\tilde{\rw}$ such that $-\rw (z_1)$ is the weight at $\tilde{\rw}$,  the weights  at $\rw$ and $\tilde{\rw }$  may differ only in signs and the number of opposite signs is  odd.  Choose an arbitrary linear basis $\rw (z_1),\ldots ,\rw (z_k)$ for the weights at $\rw$ which contains $\rw (z_1)$. Put $y_i=\rw (z_i)$, $1\leq i\leq k$, and consider the function
\begin{equation}\label{kr-poles}
q(y_1,\ldots, y_k) = \hat{p}(y_1,\ldots, y_k) + \hat{p}(\epsilon _1 y_1,\epsilon _2 y_2,....,\epsilon _k y_k)=
\end{equation}
\[ 
p(y_1,\ldots ,y_k)e^{(\zeta (v)-\mu)\sum\limits_{i=1}^{k}(1+\sum\limits_{j=k+1}^{n}c_{i}^jy_i)}+
p(\epsilon _{1}y_1,\ldots ,\epsilon _{k}y_{k})e^{(\zeta (v)-\mu)\sum\limits_{i=1}^{k}(\epsilon _{i}+\sum\limits_{j=k+1}^{n}\epsilon_{j}c_{i}^jy_i)}.
\]    
Here $\epsilon _i = \pm 1$ denote the signs of $y_i$ at the fixed point $\tilde{\rw }$, then $\epsilon _1 = -1$  and   $\prod _{i=2 }^{n}\epsilon _{i}=1$, and the function  $p(y_1,\ldots ,y_k)$ is given with~\eqref{p-function}, where $y_{j}=\sum\limits_{i=1}^{k}c_{i}^jy_k$, $k+1\leq j\leq n$.  

According to Proposition~\ref{weights-pairs} we can divide the indexes $l$, $2\leq l\leq n$,  with respect to the  weights at $\rw $ into three groups as follows: 
\[
I=\{l\; |\;  ({\tilde{\rw}}\circ \rw ^{-1}) (y_l) = y_l \},\;\; II=\{ \{{l,s\}}\; |\; ({\tilde{\rw}}\circ \rw ^{-1}) (y_l) = y_s,\; ({\tilde{\rw}}\circ \rw ^{-1}) (y_s) =y_l \},
\]
\begin{equation}\label{division}
III = \{ \{ l,s\}\; |\; ({\tilde{\rw}}\circ \rw ^{-1}) (y_l)=-y_s,\; ({\tilde{\rw}}\circ \rw ^{-1}) (y_s) = -y_l \}.
\end{equation}

It also follows from Proposition~\ref{weights-pairs} that if the pair of weights change the sign or in the other words if $\{ l, s\}\in III$ then $y_l+y_s=k_{ls}y_1$, where $k_{ls}\in \Z$. It implies that the sum of weights at the fixed point $\rw$ and the sum of weights at the fixed point $\tilde{\rw}$ are given with 
\[
\rho (y_2,\ldots y_k)+y_1+\sum\limits_{\{l,s\}\in III}k_{ls}y_1\; \text{and}\; \rho (y_2,\ldots y_n)-y_1-\sum\limits_{\{l,s\}\in III}k_{ls}y_1, 
\]  
where $\rho (y_2,\ldots y_n)=\sum_{l\in I, II}y_l$. It further gives  that the function~\eqref{kr-poles}
can be written as
\[
q(y_1,\ldots, y_k) = p(y_1,\ldots ,y_k)e^{(\zeta (v)-\mu)(\rho (y_2,\ldots ,y_k)+y_1+\sum\limits_{\{l,s\}\in III}k_{ls}y_1)}
+\]
\[p(\epsilon _{1}y_1,\ldots ,\epsilon _{k}y_{k})e^{(\zeta (v)-\mu)(\rho (y_2,\ldots ,y_k)-y_1-\sum\limits_{\{l,s\}\in III}k_{ls}y_1)}.
\]

We want to prove that $q(y_1,\ldots ,y_k)$  has no pole $y_1=0$. Consider the function
\[
y_1q(y_1,\ldots ,y_k)|_{y_1=0} = \frac{y_1}{\sigma (y_1)}|_{y_1=0}e^{(\zeta (v)-\mu)\rho (y_2,\ldots ,y_k)}(p(y_2,\ldots ,y_{k})-p(\epsilon _{2}y_2\ldots ,\epsilon _{k}y_k)).
\]
In order to prove that $y_1=0$ is not a pole we will prove that the function  $p(y_2,\ldots ,y_{k})- p(\epsilon _{2}y_2,\ldots ,\epsilon _{k}y_k)$ is the zero function.
Using~\eqref{division}, we can write further  this function as
\begin{equation}\label{zero}
\prod\limits_{l\in I,II}\frac{\sigma (v-y_l)}{\sigma (v)\sigma (l)}\prod\limits_{\{ l,s \}\in III}(\frac{\sigma (v-y_l)\sigma (v-y_s)}{(\sigma (v))^2\sigma (y_l)\sigma (y_s)} - \frac{\sigma (v+y_l)\sigma (v+y_s)}{(\sigma (v))^2\sigma (y_l)\sigma (y_s)}).
\end{equation} 
Since for $\{ l,s\}\in III$ we have that $y_l+y_s$ is multiple of $y_1$, it follows that $y_s=-y_l$ when putting $y_1=0$.
Therefore we conclude that
\[
\frac{\sigma (v-y_l)\sigma (v-y_s)}{(\sigma (v))^2\sigma (y_l)\sigma (y_s)} - \frac{\sigma (v+y_l)\sigma (v+y_s)}{(\sigma (v))^2\sigma (y_l)\sigma (y_s)}  = 0,
\]
what implies that~\eqref{zero} is the zero function.

If $z_1$ does not satisfy Proposition~\ref{weights-pairs} then for the fixed point $\rw $ and the weight $y_1=\rw (z_1)$ we can choose the fixed point $\tilde{\rw}$ as in Proposition~\ref{weights-arbitrary}. Then, excluding $y_1$,  we can again  divide the weights at $\rw $ into four groups 
\[
\tilde{I}=\{l\; |\;  ({\tilde{\rw}}\circ \rw ^{-1}) (y_l) = y_l \},\;\; \tilde{II}=\{ \{{l,s\}}\; |\; ({\tilde{\rw}}\circ \rw ^{-1})(y_l) = y_s,\; ({\tilde{\rw}}\circ \rw ^{-1})(y_s) =y_l \},
\] 
\[ \tilde{III} = \{ \{ l,s\}\; |\; ({\tilde{\rw}}\circ \rw ^{-1})(y_l)=-y_s,\; ({\tilde{\rw}}\circ \rw ^{-1})(y_s) = -y_l \},
\]
\[
\tilde{IV} = \{ l \; | ({\tilde{\rw}}\circ \rw ^{-1})(y_l)=y_l+d_ly_1,\; d_l\in \Z \}. 
\]
Now the sum of weights at the fixed point $\rw$ and the sum of weights at the fixed point $\tilde{\rw}$ are given by 
\[
\rho (y_2,\ldots y_k)+y_1+\sum\limits_{\{l,s\}\in \tilde{III}}k_{ls}y_1\; \text{and}\; \rho (y_2,\ldots y_n)+\sum\limits_{l\in \tilde{IV}}d_{l}y_1-y_1-\sum\limits_{\{l,s\}\in \tilde{III}}k_{ls}y_1, 
\]  
where $\rho (y_2,\ldots y_n)=\sum_{l\in \tilde{I}, \tilde{II}, \tilde{IV}}y_l$.
Repeating the procedure from the previous case we end up with the analogous of function~\eqref{zero} given by
\[
\prod\limits_{l\in \tilde{I},\tilde{II}}\frac{\sigma (v-y_l)}{\sigma (v)\sigma (l)}\prod\limits_{\{ l,s \}\in \tilde{III}}\frac{\sigma (v-y_l)\sigma (v-y_s)}{(\sigma (v))^2\sigma (y_l)\sigma (y_s)}\prod\limits_{l\in \tilde{IV}}\frac{\sigma (v-y_l)}{\sigma (v)\sigma (y_l)} - \]
\[
\prod\limits_{l\in \tilde{I},\tilde{II}}\frac{\sigma (v-y_l)}{\sigma (v)\sigma (l)}\prod\limits_{\{ l,s \}\in \tilde{III}}\frac{\sigma (v+y_l)\sigma (v+y_s)}{(\sigma (v))^2\sigma (y_l)\sigma (y_s)})\prod\limits_{l\in \tilde{IV}}\frac{\sigma (v-y_l-d_ly_1)}{\sigma (v)\sigma (y_l+d_ly_1)}.
\]
After putting $y_1=0$ we deduce as above that this function has to be the zero function.
\end{proof}
\begin{thm}\label{SUKr-no-arbitrary-poles}
If $f$ is the power series which defines the Krichever genus, then for any homogeneous space $G/H$  the function 
\begin{equation}\label{no-eq2}
\sum_{\rw \in W_{G}/W_{H}}\prod_{j=1}^{n}\frac{1}{f(\rw (\alpha_{j}\cdot u))} \ ,
\end{equation}
has no poles, where $\alpha _j$ are the roots of an arbitrary invariant almost complex $SU$-structure on $G/H$.
\end{thm}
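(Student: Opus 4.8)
The plan is to treat Theorem~\ref{SUKr-no-arbitrary-poles} as the $SU$-refinement of Theorem~\ref{Kr-no-zero-poles}: the latter already kills the poles at the origin for any invariant almost complex structure, and the extra $SU$-hypothesis supplies the double-periodicity needed to propagate that regularity to every lattice translate. First I would reduce~\eqref{no-eq2} to its simplified form. For an invariant almost complex $SU$-structure one has $\sum_{j=1}^n\alpha_j=0$, hence $\sum_{j=1}^n\rw(\alpha_j\cdot u)=0$ for every $\rw\in W_G/W_H$. Substituting the defining series~\eqref{KR-defn} into~\eqref{no-eq2} produces the expression~\eqref{Kr-homogeneous}, whose exponential prefactors $e^{(\zeta(v)-\mu)\sum_j\rw(\alpha_j\cdot u)}$ collapse to $1$ by the $SU$-condition, so the function reduces to the purely $\sigma$-quotient form~\eqref{kr-rig}, i.e.\ the sum over $\rw$ of the summand $p$ of~\eqref{p-function} evaluated at $y_i=\rw(z_i)$.

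Next I would pin down the possible pole locus. Since the zeros of $\sigma$ form the lattice $\Lambda=\{2s\omega_1+2m\omega_2:s,m\in\Z\}$, each factor $1/\sigma(\rw(\alpha_j\cdot u))$ can contribute a pole only along the hyperplanes $\{\rw(\alpha_j\cdot u)=\ell\}$ with $\ell\in\Lambda$, and by Remark~\ref{pointing} every such pole is simple, because the weights at a fixed point have multiplicity one. Theorem~\ref{Kr-no-zero-poles} disposes of the case $\ell=0$: the function is regular on each hyperplane $\{\rw(\alpha_j\cdot u)=0\}$. It therefore remains only to exclude the poles with $\ell\neq 0$.

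For these I would invoke the double-periodicity established inside the proof of Theorem~\ref{Krichever-rigidity}. Using the quasi-periodicity~\eqref{sigma-period} together with the $SU$-relation $1+\sum_{j=k+1}^n c_i^j=0$, the summand~\eqref{p-function} is invariant under $y_i\mapsto y_i+2\omega_l$ for each $i$ and $l$, the accumulated exponential factors cancelling precisely because the structure is $SU$. Consequently~\eqref{kr-rig} is genuinely $\Lambda$-periodic in each variable, and the local behaviour near a hyperplane $\{\rw(\alpha_j\cdot u)=\ell\}$ is carried by a period translation onto the behaviour near $\{\rw(\alpha_j\cdot u)=0\}$, where regularity is already known. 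Since the poles are simple, this identification of local data forces the residues to vanish on every lattice hyperplane, so the function has no poles at all.

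The step I expect to be the main obstacle is the last one: making precise that period translations actually reach every hyperplane $\{\rw(\alpha_j\cdot u)=\ell\}$. The periodicity above was read off in the basis directions $\alpha_1,\ldots,\alpha_k$, whereas the weight $\rw(\alpha_j\cdot u)$ need not have a unit coefficient in those directions, so one must show that periodicity in fact holds in every root direction $\rw(\alpha_j)$ — equivalently that each root is primitive in the chosen lattice $\Z\langle\alpha_1,\ldots,\alpha_k\rangle$ — so that a shift by $2\omega_l$ along that direction moves a putative pole at $\ell$ onto the origin, where Theorem~\ref{Kr-no-zero-poles} forbids it. If securing this direction-wise periodicity proves awkward, I would instead repeat the fixed-point pairing and cancellation argument of Theorem~\ref{Kr-no-zero-poles} verbatim at an arbitrary lattice point $\ell$, using~\eqref{sigma-period} to reduce the vanishing of the relevant difference function at $y_1=\ell$ to the already-treated case $y_1=0$; the $SU$-condition again guarantees that the exponential correction factors cancel. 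Either route closes the argument once combined with the simplicity of the poles.
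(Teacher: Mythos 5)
Your proposal is correct, and it is interesting to compare it with the paper's text, which actually mentions \emph{both} of your routes: the authors open their proof by asserting, in one sentence and without details, exactly your primary argument (periodicity as in the proof of Theorem~\ref{Krichever-rigidity} combined with Theorem~\ref{Kr-no-zero-poles}), and then deliberately write out ``the direct proof not appealing to the periodicity'' --- which is precisely your fallback. Their printed proof fixes the weight $\rw(z_1)$, pairs the fixed points via Proposition~\ref{weights-pairs} (resp.\ Proposition~\ref{weights-arbitrary} in the general case), and checks regularity at $y_1=2\omega_1$ by hand using the quasi-periodicity~\eqref{sigma-period}: the translated summand acquires the factor $e^{\mp 2\eta_1(1+\sum_{\{l,s\}\in III}k_{ls})v}$, and the $SU$-condition forces $1+\sum_{\{l,s\}\in III}k_{ls}=0$, so the two paired contributions cancel exactly as at the origin; the general lattice point is declared analogous. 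The obstacle you flag in your primary route is genuine and is, plausibly, why the authors preferred the direct computation: period translations in the variables $y_i=\rw(z_i)$ move the pole hyperplane $\{\rw(\alpha_j\cdot u)=\ell\}$ only through the sublattice $2\gcd_i(c_i^j)(\omega_1\Z+\omega_2\Z)$, so reaching $\ell=2\omega_1$ from $\ell=0$ needs $\gcd_i(c_i^j)=1$, i.e.\ primitivity of $\rw(\alpha_j)$ in $\Z\langle\alpha_1,\ldots,\alpha_k\rangle$; even the integrality $c_i^j\in\Z$ asserted in the paper depends on a good choice of the maximal independent system (for $Sp(2)$, choosing $2x_1,2x_2$ as the basis gives half-integer coefficients for $x_1\pm x_2$). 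The paper's one-sentence endorsement of the periodicity route glosses over this same point (as does its proof of Theorem~\ref{Krichever-rigidity}), whereas the direct pairing argument avoids it entirely by cancelling residues hyperplane by hyperplane; since you supply that argument as your fallback, your proposal closes the proof by essentially the paper's own method, while being more candid than the paper about where the shortcut could fail.
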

\begin{proof}
We can proceed as in the proof of Theorem~\ref{Krichever-rigidity} and prove that the function~\eqref{no-eq2} is periodic for a $SU$-structure what together with Theorem~\ref{Kr-no-zero-poles} gives that it has no poles. We provide here the direct proof not appealing to the periodicity but following the proof of  Theorem~\ref{Kr-no-zero-poles}. The function~\eqref{no-eq2} may have poles at the points $\rw (z_i) =2s\omega _1+2m\omega _2$. 
where $z_i$ are the roots for the fixed invariant almost complex $SU$-structure $J$ and $\rw \in W_{G}/W_{H}$. Recall that by Remark~\ref{pointing}, multiplicity of each pole is $1$. We prove here that the function~\eqref{no-eq2} does  not have a pole at $\rw (z_i) =2\omega _1$. The general case goes analogously.  Fix the weight $\rw (z_1)$ and consider first consider the case when it comes from Proposition~\ref{weights-pairs}. Since $J$ is a $SU$-structure it follows that $\sum_{i=1}^{n} \rw (z_i)=0$ for any $\rw \in W_{G}/W_{H}$ what implies  that the function~\eqref{kr-poles} is  given by
\[
q(y_1,\ldots ,y_k) = p(y_1,\ldots ,y_k)+p(\epsilon _1y_1,\ldots ,\epsilon _ky_k).
\]
We want to prove that this function has no pole at $y_1=2\omega _1$ and therefore we consider the function
\[
\frac{1-\frac{y_1}{2\omega _1}}{\sigma (y_1)}\sigma (y_1)(p(y_1,\ldots ,y_k)-p(y_1,\epsilon _2y_2,\ldots \epsilon _ky_k))|_{y_1=2\omega _1}
\]
and prove that its non-fractal part equals  zero. Now we have that
\begin{equation}\label{step}
\sigma (2\omega _1)p(2\omega _1,y_2,\ldots ,y_k) = \frac{\sigma (v-2\omega _1)}{\sigma (v)}\prod\limits_{l\in I,II}\frac{\sigma (v-y_l)}{\sigma (v)\sigma (l)}\prod\limits_{\{ l,s \}\in III}\frac{\sigma (v-y_l)\sigma (v-y_s)}{(\sigma (v))^2\sigma (y_l)\sigma (y_s)}.
\end{equation}
Since for $\{l ,s\}\in III$ we have that $y_l+y_s=2k_{ls}\omega _1$ when $y_1=2\omega _1$, it further implies that~\eqref{step} is equal to 
\[ 
e^{-2\eta _1(1 +\sum\limits_{\{l,s\}\in III}k_{ls})v}\prod\limits_{l\in I,II}\frac{\sigma (v-y_l)}{\sigma (v)\sigma (l)}\prod\limits_{\{ l,s \}\in III}\frac{\sigma (v-y_l)\sigma (v+y_l)}{(\sigma (v))^2(\sigma (y_l))^{2}}.
\]
In the same way we obtain that $\sigma (2\omega _1)p(-2\omega _1,\epsilon _2 y_2,\ldots ,\epsilon _k y_k)$ is equal to
\[
e^{2\eta _1(1 +\sum\limits_{\{l,s\}\in III}k_{ls})v}\prod\limits_{l\in I,II}\frac{\sigma (v-y_l)}{\sigma (v)\sigma (l)}\prod\limits_{\{ l,s \}\in III}\frac{\sigma (v+y_l)\sigma (v-y_l)}{(\sigma (v))^2(\sigma (y_l))^{2}}.
\]
The assumption on the structure $J$ to be a $SU$-structure implies that  $1+\sum _{\{l,s\}\in III}k_{ls}=0$, what proves that $y_1=2\omega _1$ is not the pole.  
In the case when $\rw (z_1)$ does not come from Proposition~\ref{weights-pairs} we  apply Proposition~\ref{weights-arbitrary} and follow the same pattern. 
\end{proof}

\begin{ex}
Theorem~\ref{Krichever-rigidity} is not true without  assumption on invariant almost complex structure to be a $SU$-structure. In order to see that, let us consider $U(3)/T^3$ with the canonical complex structure and the canonical action of the torus $T^2$. This action has six fixed points with the weights $\rw (\alpha _1),\rw (\alpha _2), \rw (\alpha _3)$, where $\alpha _1 = (1,-1,0), \alpha _{2}=(1,0,-1), \alpha _{3}=(0,1,-1)$ and $\rw \in S_3$. Denote $\rw(\alpha _i \cdot u)= \rw ({\bar u_i})$, for $1\leq i\leq 3$.  Then the expression~\eqref{rigidity} becomes
\begin{equation}\label{Krichever-CP}
\sum_{\rw \in S_3}\frac{B(\rw ({\bar u_1}),v)B(\rw ({\bar u_2}),v)B(\rw ({\bar u_3}),v)}{e^{\mu ( \rw ({\bar u_1}) +\rw ({\bar u_2})+\rw ({\bar u_3}))}},
\end{equation}
being for ${\bar u_1}=  {\bar u_3} =v$ equal to
\[
\psi (v)=-\frac{\sigma (2v)\sigma (3v) e^{(\zeta (v)-\mu )4v}}{(\sigma (v))^5},
\]
while for ${\bar u_1}= {\bar u_2}= v+2\omega _k$ it is equal to
\[
\psi (v)\cdot e^{(2\eta _{k}(-6v+3\omega _{k})-6\omega _{k}(\mu -\zeta (v)))}.
\]
Therefore, it follows that~\eqref{Krichever-CP} can not be a constant what implies that the Krichever genus is not $T^2$-rigid for the canonical complex structure on $U(3)/T^3$. Note that at the same time $U(3)/T^3$ admits an invariant almost complex $SU$-structure for which  the Krichever genus will be $T^2$-rigid.
\end{ex}
\begin{ex}
We demonstrate in the case of  $\C P^2$ how the zero poles in the function from Theorem~\ref{Kr-no-zero-poles} cancel.
We take the fixed point $123$. The zero pole at $123$ given by the weight $x_1-x_2$  is canceled with the zero pole given by the weight $x_2-x_1$ at $213$. The zero pole at $123$ given by $x_1-x_3$ may not be canceled using the point $213$ as it is not the weight at this point. It is  canceled with the zero pole given by the weight $x_3-x_1$ at $321$. Now the zero  pole at $213$ given by $x_2-x_3$ is canceled with the zero  pole at $321$ given by $x_3-x_1$. 
\end{ex}  
\begin{ex}
Theorem~\ref{SUKr-no-arbitrary-poles} is not true without assumption on the structure $J$ to be a $SU$-structure.  We show  it by proving that the pole for $\C P^{2}$ in the function~\eqref{no-eq2} at the point $x_1-x_2=2\omega _1$ does not cancel. It  is the pole at the fixed points $123$ and $213$. Therefore we put $z_1=x_1-x_2$, $z_2=x_1-x_3$ and consider the function in $z_2$ given with
\[e^{(\zeta (v)-\mu)(2\omega_1+z_2)}\frac{\sigma (v-2\omega_1)\sigma (v-z_2)}{\sigma (z_2)(\sigma (v))^2}-e^{(\zeta (v)-\mu)(-4\omega _1+z_2)}\frac{\sigma (v+2\omega _1)\sigma (v-z_2+2\omega_1)}{(\sigma (v))^2\sigma (z_2-2\omega _1)},
\]
which transforms to the function
\[
\frac{\sigma (v-z_2)}{\sigma (v)\sigma (z_2)}(-e^{((\zeta (v)-\mu)(2\omega _1+z_2)+2\eta_1(-v+\omega_1))} + e^{((\zeta (v)-\mu)(-4\omega _1+z_2)+2\eta_1(2v+\omega _1))})
\]
not always  being identically equal to zero.
\end{ex}
\begin{rem}
Let  $S^1\subseteq T^k$ be a regular subgroup of the maximal torus $T^k$ of a homogeneous space $G/H$ equipped with an invariant almost complex structure $J$. Recall~\cite{Adams} that regularity of $S^1$ means that $T^k$ is the unique maximal torus in $G$ that contains $S^1$. We can  consider  $S^1$-action  on $G/H$ induced from the canonical action of $T^k$.  The weights for this action and  each fixed point $\rw \in W_{G}/W_{H}$  are given by its rotation numbers $\rho _{i}(\rw )$, see proof of Theorem~\ref{TH} below.  If  $J$ is a $SU$-structure it will follow that $\sum \rho _{i}(\rw )=0$ for any $\rw \in W_{G}/W_{H}$. It means that any such $S^1$ action will be of the zero type as it is defined in~\cite{KR}.
\end{rem}
\begin{rem}
It is proved in~\cite{KR} that the Krichever genus vanishes on $SU$-spaces with the given $S^1$-actions whose type is not zero. We illustrate here that the assumption on the type is essential. For $f$ given by~\eqref{KR-defn},  it follows from Example~\ref{S6} that 
\[
\LLL_{f}^{T^k}(S^6, J) = \frac{\sigma (v+u_1)\sigma (v+u_2)\sigma (v-u_1-u_2)-\sigma (v-u_1)\sigma (v-u_2)\sigma (v+u_1+u_2)}{\sigma (u_1)\sigma (u_2)\sigma(u_1+u_2)}
\]
which checks directly not to be equal to zero. Therefore  Theorem~\ref{Kr-no-zero-poles} gives  that 
$\LLL _{f}(S^6, J)=\LLL _{f}^{T^2}(S^6, J)\neq 0$.  
\end{rem}   
 
\subsection{Rigidity of the elliptic genus of level $N$}
We define the elliptic genus of level $N$ following~\cite{KR}.  On an elliptic curve $\Gamma$ fix the points $v_{mn}$ of order $N$:
\[
v_{sm}=\frac{2s}{N}\omega _{1}+\frac{2m}{N}\omega _{2},\;\; s,m=0,1,\ldots ,N-1,
\]
and for $\eta _{l} = \zeta (\omega _{l})$, $l=1,2$, put 
\[
\mu _{sm} = -\frac{2s}{N}\eta _{1} - \frac{2m}{N}\eta _{2} + \zeta (v_{sm}).
\]
\begin{defn}
The elliptic genus of level $N$ is a Hirzebruch genus defined by the series
\begin{equation}\label{elliptic-level}
f_{sm}(u)= \frac{exp(\mu _{sm}u)}{B(u,v_{sm})}.
\end{equation}
\end{defn}   
\begin{rem}
As it is pointed in~\cite{KR} function $f_{sm}$ generates the elliptic genus of level $N$ defined in~\cite{HLN}.  
\end{rem}
It is proved in~\cite{HLN} that the elliptic genus of level $N$ is $S^1$-rigid on $S^1$-equivariant (stable) complex manifold  whose first Chern class is divisible by $N$. 
\begin{thm}
The elliptic genus of level $N$ is $T^k$-rigid on homogeneous spaces of positive Euler characteristic endowed with the canonical action of the maximal torus $T^k$ and with an  invariant almost complex structure whose sum of roots is divisible by $N$.
\end{thm}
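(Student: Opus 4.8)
The plan is to reduce the statement, through the localization criterion of Proposition~\ref{prop-rigidity}, to showing that the function
\[
\sum_{\rw \in W_{G}/W_{H}}\prod_{j=1}^{n}\frac{1}{f_{sm}(\rw(\alpha_{j})\cdot u)}
\]
is constant, where $\alpha_{1},\ldots,\alpha_{n}$ are the roots of the given invariant almost complex structure $J$ and $f_{sm}$ is the series~\eqref{elliptic-level}. Substituting the Baker--Akhiezer form and using $\zeta(v_{sm})-\mu_{sm}=\frac{2s}{N}\eta_{1}+\frac{2m}{N}\eta_{2}$, this function takes exactly the Krichever shape treated in Theorem~\ref{Krichever-rigidity}, but now evaluated at the distinguished torsion point $v=v_{sm}$ and carrying a nontrivial exponential prefactor, since $J$ need no longer be an $SU$-structure. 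Following the scheme announced at the start of the proof of Theorem~\ref{SUKr-no-arbitrary-poles}, I would establish double periodicity and absence of poles separately, and then invoke Liouville's theorem.

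First I would prove that the function is elliptic in each variable. Choosing a maximal linearly independent subsystem $\alpha_{1},\ldots,\alpha_{k}$ of the roots of $J$ and writing $\alpha_{j}=\sum_{i=1}^{k}c_{i}^{j}\alpha_{i}$ for $j>k$, the sum of roots becomes $\sum_{j}\alpha_{j}=\sum_{i=1}^{k}C_{i}\alpha_{i}$ with $C_{i}=1+\sum_{j>k}c_{i}^{j}\in\Z$, so that \emph{divisible by $N$} means $N\mid C_{i}$ for every $i$. Repeating the quasi-periodicity computation of Theorem~\ref{Krichever-rigidity} with the exponential factor retained and using~\eqref{sigma-period}, the shift $y_{i}\mapsto y_{i}+2\omega_{l}$ of the $i$-th argument multiplies each summand by $\exp\bigl(2C_{i}(\omega_{l}(\zeta(v_{sm})-\mu_{sm})-\eta_{l}v_{sm})\bigr)$, independently of the fixed point $\rw$ because the structural coefficients $c_{i}^{j}$ are the same at all $\rw$. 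The crux is to evaluate this factor: inserting $v_{sm}$ and $\zeta(v_{sm})-\mu_{sm}$ and simplifying by the Legendre relation $\eta_{1}\omega_{2}-\eta_{2}\omega_{1}=\frac{\pi i}{2}$ collapses the exponent to an integer multiple of $2\pi i\,C_{i}/N$ (with the surviving integer being $m$ for $l=1$ and $s$ for $l=2$). The divisibility $N\mid C_{i}$ then forces each such factor to equal $1$, so every summand, hence the whole function, is doubly periodic with periods $2\omega_{1},2\omega_{2}$ in each variable. This replacement of the $SU$-condition $C_{i}=0$ by the congruence $C_{i}\equiv 0\pmod{N}$ is the heart of the argument, and the step I expect to require the most care.

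It remains to control the poles. By Remark~\ref{pointing} all weights have multiplicity one, and exactly as for the Krichever genus the only poles of the summands occur at the lattice points where $\rw(\alpha_{j})\cdot u\in\{\,2s'\omega_{1}+2m'\omega_{2}\,\}$. The proof of Theorem~\ref{Kr-no-zero-poles} shows that the zero poles cancel in pairs $(\rw,\tilde{\rw})$ for an arbitrary invariant almost complex structure on any $G/H$; that argument keeps the exponential prefactor general and relies only on the pairing of weights under a Weyl reflection, so it applies verbatim to $f_{sm}$ with no $SU$-hypothesis needed. Combining this with the double periodicity just established, every pole is equivalent modulo the period lattice to a zero pole and is therefore absent, precisely as in the opening reduction of Theorem~\ref{SUKr-no-arbitrary-poles}. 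Thus the function is holomorphic and doubly periodic in each variable, so by Liouville's theorem it is constant, and Proposition~\ref{prop-rigidity} yields $T^{k}$-rigidity of the elliptic genus of level $N$ on these spaces.
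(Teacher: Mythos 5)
Your proposal is correct and follows essentially the same route as the paper's proof: localization via Proposition~\ref{prop-rigidity}, the quasi-periodicity computation that collapses, via the Legendre relation, to the factor $e^{-2\pi i m C_{i}/N}$ (the paper's integer $c_{11}$ is exactly your $C_{1}/N$), multiplicity-one poles from Remark~\ref{pointing} pushed off the lattice by periodicity, and Liouville's theorem. The only divergence is cosmetic: where you invoke Theorem~\ref{Kr-no-zero-poles} for the absence of zero poles, the paper instead cites the topological fact that no equivariant Hirzebruch genus has a pole at $u=0$; both justifications are established in the paper and equally valid here.
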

\begin{proof}
Let $J$ be an invariant almost complex structure on $G/H$ with  roots $\alpha _{j}$, $1\leq j\leq n$. Then
$c_{1}(G/H,J)=\sum\limits_{j=1}^{n}\alpha _j$. We want to apply Proposition~\ref{prop-rigidity}, thus  put $z_{j}=\alpha _{j}\cdot u$ and  analogously to the proof of Theorem~\ref{Krichever-rigidity} consider the function  
\begin{equation}\label{levelN}
\sum _{\rw \in W_{G}/W_{H}}e^{(\zeta (v_{sm})-\mu _{sm})\rw (c_{1}(G/H, J))}
\prod_{j=1}^{k}\frac{\sigma (v_{sm}-\rw (z_j))}{\sigma (\rw (z_j))\sigma (v_{sm})}\prod_{j=k+1}^{n}
\frac{\sigma (v_{sm}-\sum _{i=1}^{k}c_{i}^{j}\rw (z_i))}{\sigma (\sum_{i=1}^{k}c_{i}^{j}\rw (z_i))\sigma (v)}.
\end{equation}
We put $y_i=\rw (z_i)$, $1\leq i\leq k$ and prove that the function 
\[
{\hat p}(y_1,\ldots ,y_k) = e^{(\zeta (v_{sm})-\mu _{sm})c_{1}(G/H, J)}
p(y_1,\ldots ,y_k)
\]
is two-periodic in each variable, where 
\[
p(y_1,\ldots ,y_k) =\prod_{i=1}^{k} \frac{\sigma (v_{sm}-y_i)}{\sigma (y_i)\sigma (v_{sm})}\prod_{j=k+1}^{n}
\frac{\sigma (v_{sm}-\sum _{i=1}^{k}c_{i}^{j}y_i)}{\sigma (\sum_{i=1}^{k}c_{i}^{j}y_i)\sigma (v_{sm})}.
\]  
If we fix $y_1$ we obtain as in the proof of Theorem~\ref{Krichever-rigidity} that $p(y_1+2\omega _{l},\ldots ,y_k) = e^{-2\eta_{l}(1+\sum_{j=k+1}^{n}c_{1}^{j})v_{sm}}p(y_1,\ldots,y_k)$, what gives that
\[
{\hat p}(y_1+2\omega _1,\ldots ,y_k) = e^{4c_{11}m(\omega _1\eta _2 -\eta _1\omega _2)}{\hat p}(y_1,\ldots ,y_k),
\]
where $c_{11}=\frac{1}{N}(1+\sum\limits_{j=k+1}^{n}c_1^j)$ is an integer number as the first Chern class here is divisible by $N$. 
Since $\eta _2=\frac{\omega _2}{\omega _1}\eta _1 - \frac{\pi i}{2\omega _1}$ we have that $\eta _{2}\omega _1-\eta _1 \omega _2 = -\frac{\pi i}{2}$ what implies that ${\hat p}$ is periodic with the period $2\omega _1$.  
In the same way we prove that it is periodic with the period $2\omega _2$, what implies that the function~\eqref{levelN} is two-periodic. Since  no equivariant Hirzebruch genus has zero pole,  we have that the function~\eqref{levelN} has no zero poles as well. By Remark~\ref{pointing} all the poles for function~\eqref{levelN} are of multpilicity $1$. Therefore it only  may have the other poles at the points of the lattice $2s\omega _{1}+2m\omega _{2}$ for $s,m\in \Z$, what is impossible because of  its periodicity. Thus, by the Liouville theorem,  the function~\eqref{levelN} has to be a constant.
\end{proof}
\begin{rem}
In this context one can consider the problem to describe all invariant almost complex structures on a homogeneous space $G/H$ whose first Chern class is divisible by some $N>1$.  This  can be also looked at as the problem of representation theory as any invariant almost complex structure is determined by its roots. The necessary condition  for the existence of a such structure is the existence of the $T^k$-rigid elliptic genus of level $N$ on $G/H$, what is on the other hand topological problem.
\end{rem} 
\begin{ex}
It is proved in~\cite{KT} that generalized flag manifolds $F_n=U(n+2)/(U(1)\times U(1)\times U(n))$ has two invariant almost complex structures $J_1$ and $J_2$ for which the  first Chern class is divisible by $n-1$ and $n+1$ respectively . Therefore the elliptic genus of level $n-1$ will be $T^{n+2}$-rigid on $(F_n, J_1)$ and the elliptic genus of level $n+1$ is $T^{n+2}$-rigid on $(F_n, J_2)$. 
\end{ex}

\subsection{A Hirzebruch genus of an odd series} 
We prove  here that the  Hirzebruch genus $\LLL _{f}$ defined by an odd power series $f$ is $T^k$-rigid and equal to zero on a large class of homogeneous spaces, being a stronger result than $T^k$-rigidity.   Using Proposition~\ref{prop-rigidity} we first note that genus $\LLL _{f}$  is $T^2$-rigid and  trivial  on the sphere $S^6$. 
\begin{ex}\label{rig-S6}
Consider the canonical action of the torus $T^2$ on $S^6$ endowed with $SU(3)$-invariant almost complex structure. It follows from Example~\ref{S6} that  for an odd series $f$ the left hand side of the functional equation~\eqref{rigidity}
is equal to zero, what gives that the genus $\LLL _{f}$ is $T^2$-rigid on $S^6$. 
\end{ex} 
In the case of the flag manifolds the following consequence of Proposition~\ref{weights-pairs} will be crucial.
\begin{prop}\label{pairs}
Let $J$ be an invariant almost complex structure on the flag manifold $U(n)/T^n$. The fixed points for the canonical  action of the maximal torus on $U(n)/T^n$ can be divided into the pairs such that at each pair the weights for this action related to the structure $J$ differ in odd number of  signs.
\end{prop}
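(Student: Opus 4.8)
The plan is to obtain the statement as an immediate specialization of Proposition~\ref{weights-pairs} to the pair $(G,H)=(U(n),T^n)$, the only points requiring care being the verification that the hypothesis of that proposition holds and that the resulting pairing is a genuine partition of the fixed point set. First I would record the identifications: here $G=U(n)$, with maximal torus $T^n$ and canonical coordinates $x_1,\ldots,x_n$, while $H=T^n$ is this maximal torus itself, so that $G/H=U(n)/T^n$ is the full flag manifold. The case $n=1$ is the one-point space and is degenerate, so I assume $n\geq 2$.

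Next I would check the hypothesis of Proposition~\ref{weights-pairs}, namely that at least two canonical coordinates of $G$ are not coordinates of the semisimple part of $H$. Since $H=T^n$ is abelian, its semisimple part is trivial, and hence none of the coordinates $x_1,\ldots,x_n$ is a coordinate of the semisimple part of $H$; for $n\geq 2$ this supplies the required coordinates (in fact all $n$ of them). Proposition~\ref{weights-pairs} then applies with the complementary root $\alpha=x_1-x_2$, and produces the pairing $\rw\mapsto\tilde{\rw}$ realized by the transposition $t=(1\,2)$ acting on the weights through interchange of $x_1$ and $x_2$.

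It remains to see that this pairing partitions the fixed points into disjoint pairs. The fixed point set is $W_{G}/W_{H}=S_n$, since $W_{H}$ is trivial for $H=T^n$; the assignment $\rw\mapsto\tilde{\rw}$ is multiplication by the nontrivial element $t=(1\,2)$ and is therefore a fixed-point-free involution of $S_n$, so it splits the $n!$ fixed points into $n!/2$ pairs. By the conclusion of Proposition~\ref{weights-pairs}, at each such pair the weights for $J$ differ only by signs, with the number of opposite signs odd, which is exactly the assertion. The only genuine obstacle is the bookkeeping just described---confirming that the transposition has no fixed points on $W_{G}/W_{H}$ and that the hypothesis of Proposition~\ref{weights-pairs} is met---after which the result is a direct corollary; alternatively one could bypass the general proposition and argue straight from the explicit weights $\epsilon_{ij}(x_{\rw(i)}-x_{\rw(j)})$ at the fixed point $\rw$, but invoking Proposition~\ref{weights-pairs} is cleaner.
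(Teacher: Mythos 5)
Your proposal is correct and is essentially the paper's own route: the paper explicitly introduces Proposition~\ref{pairs} as a consequence of Proposition~\ref{weights-pairs}, and the two verifications you single out --- that the semisimple part of $H=T^n$ is trivial so the hypothesis of Proposition~\ref{weights-pairs} holds for $n\geq 2$, and that multiplication by the transposition $(1\,2)$ is a fixed-point-free involution on $W_{U(n)}/W_{T^n}=S_n$ --- are exactly the required bookkeeping. The paper's written proof merely re-executes the same transposition argument explicitly ``for the sake of clearness,'' listing the weights $\epsilon_{jk}(x_{i_j}-x_{i_k})$ at $\rw=i_1i_2\ldots i_n$ and pairing it with $\tilde{\rw}=i_2i_1i_3\ldots i_n$, which is the specialization you describe.
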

\begin{proof}
For the sake of clearness we provide the proof. Let $J$ be an invariant almost complex structure on $U(n)/T^n$. Its roots are  $\epsilon _{ij}\alpha _{ij}$, where $\alpha _{ij}=x_i-x_j$, $1\leq i<j\leq n$ and $\epsilon _{ij}=\pm 1$. The set of fixed points is given by the symmetric group $S_{n}$. Therefore at the fixed point  $\rw =i_1\ldots i_n\in S_n$ the weights $\omega _{j}(\rw)$  for the action of $T^{n}$ related to   $J$ are obtained by the action of the permutation $\rw$ on the roots for $J$:
\[
\epsilon _{12}(x_{i_1}-x_{i_2}),\ldots ,\epsilon _{1n}(x_{i_1}-x_{i_n}),\epsilon _{23}(x_{i_2}-x_{i_3})\ldots ,\epsilon _{2n}(x_{i_2}-x_{i_n}),\ldots ,\epsilon _{n-1n}(x_{i_{n-1}}-x_{i_{n}}).
\]
Thus, the weights at the fixed point $\tilde{\rw}=i_2i_1i_3\ldots i_{n}$ are given with:
\[
\epsilon _{12} (x_{i_2}-x_{i_1}),\ldots,\epsilon _{1n} (x_{i_2}-x_{i_{n}}),\epsilon _{23}(x_{i_1}-x_{i_3}),\ldots \epsilon _{2n} (x_{i_1}-x_{i_{n}}),\ldots ,\epsilon _{n-1n}(x_{i_{n-1}}-x_{i_{n}}).
\]
It follows that the weights at fixed points $\rw$ and $\tilde{\rw}$ differ in the sign for $x_{i_1}-x_{i_2}$, while $\epsilon_{1k}(x_{i_1}-x_{i_k})$ and $\epsilon_{2k}(x_{i_2}-x_{i_k})$ change the sign equally. It implies  that the number of weights with the opposite  signs
is odd.
\end{proof}
\begin{thm}\label{flag-rigidity}
The genus $\LLL _{f}$ is $T^n$-rigid  and equal to zero on  flag manifolds $U(n)/T^n$ endowed with an arbitrary invariant almost complex structure for any odd series $f$.
\end{thm}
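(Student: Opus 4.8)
The plan is to apply Proposition~\ref{prop-rigidity} and to prove the sharper statement that the left-hand side of the functional equation~\eqref{rigidity} vanishes identically in $A[[u_1,\ldots,u_n]]$. This settles both assertions simultaneously: an identically vanishing expression is in particular constant, so $\LLL_f$ is $T^n$-rigid, and the value of that constant is $c=\LLL_f(U(n)/T^n,J)=0$, so the genus vanishes. Since $J$ is a genuine invariant almost complex structure we may take $\xi=\tau(U(n)/T^n)$, $c_\tau=J$ and $l=0$, whence the map defining the sign is the identity and $\sg(\rw)=+1$ at every fixed point; this is already reflected in the absence of signs in formula~\eqref{utg}. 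Thus the left-hand side of~\eqref{rigidity} reduces to $\sum_{\rw\in S_n}\prod_{j=1}^{N}\frac{1}{f(\omega_j(\rw)\cdot u)}$, where $N=\frac{n(n-1)}{2}$ and the $\omega_j(\rw)$ are the weights at $\rw$.

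Next I would invoke Proposition~\ref{pairs} to partition the fixed-point set $S_n$ into pairs $(\rw,\tilde\rw)$. For each such pair there is a bijection between the multiset of weights at $\rw$ and that at $\tilde\rw$ under which corresponding weights agree up to sign, with an \emph{odd} number of sign changes; by Remark~\ref{pointing} every weight has multiplicity one, so this is an honest bijection of the $N$ factors. Because $f$ is odd we have $f(-\omega_j(\rw)\cdot u)=-f(\omega_j(\rw)\cdot u)$, so each individual sign change contributes a factor $-1$ to the product $\prod_j \frac{1}{f(\,\cdot\,)}$. An odd number of sign changes therefore yields
\[
\prod_{j=1}^{N}\frac{1}{f(\omega_j(\tilde\rw)\cdot u)}=-\prod_{j=1}^{N}\frac{1}{f(\omega_j(\rw)\cdot u)}.
\]

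Consequently the two summands indexed by $\rw$ and $\tilde\rw$ are negatives of one another and cancel, their poles at $u=0$ included, so each pair contributes $0$; summing over the $n!/2$ pairs shows that the full expression vanishes identically, as required. The decisive input is the odd-sign pairing of Proposition~\ref{pairs}; granting it, the remainder is routine bookkeeping with the oddness of $f$. The one point that must be handled carefully is that the weights at $\tilde\rw$ be matched to those at $\rw$ through a genuine bijection respecting the up-to-sign identification, so that an odd count of sign flips translates cleanly into a single overall factor $-1$; this is precisely what the multiplicity-one statement of Remark~\ref{pointing} guarantees.
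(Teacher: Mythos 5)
Your proposal is correct and follows essentially the same route as the paper: the paper likewise combines Proposition~\ref{pairs} with the oddness of $f$ to show the contributions of each pair $(\rw,\tilde\rw)$ cancel, notes that all signs are $+1$ for an almost complex structure, and concludes via the rigidity equation~\eqref{rigidity} with $c=0$. Your additional remarks on the weight bijection and multiplicity one are just a more explicit write-up of the same cancellation argument.
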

\begin{proof}
Proposition~\ref{pairs} implies that, for an odd function $f$ and for each pair $(\rw, \tilde{\rw })$ of the fixed points, we have  
\[
\prod _{j=1}^{m}\frac{1}{f(\omega _{j}(\rw )\cdot u)} + \prod _{j=1}^{m}\frac{1}{f(\omega _{j}(\tilde{\rw})\cdot u)} =0.
\]
Since $J$ is an almost complex structure, all fixed points have sign $+1$, what gives that the rigidity equation~\eqref{rigidity} is satisfied for $c=0$.
\end{proof}
Recall the following well known facts.
\begin{rem}
The classical result of Hirzebruch~\cite{HTM} states that the signature of a $2n$-dimensional manifold which is  defined to be the  signature of the intersection form of  its cohomology algebra for even $n$, while it is equal to zero for odd $n$,  coincides with the $L$-genus of a manifold. This is further defined by an odd  series $f(u)=$ tanh$(u)$.  Note that being invariant of an oriented cobordism class, the signature does not depend on the chosen stable complex structure on manifold, up to the orientation the chosen structure induces. Another important example is the $\hat{A}$-genus which is defined by an odd 
series $f(u) = 2$sinh$(\frac{u}{2})$.
\end{rem}
\begin{rem}
The elliptic genus, first appeared in~\cite{OCH}, is a Hirzebruch genus defined by the requirement that it vanishes on the manifolds of the form $\C P(\xi)$, where $\xi$ is an even-dimensional  complex vector bundle over a closed oriented base. It is completely characterized by the condition that its logarithm $g(x)$ is given by an integral $g(x)=\int_{0}^{x}\frac{dt}{\sqrt{1-2\delta t^2+\epsilon t^4}}$. For $A=\C$ and $\delta ^{2}\neq \epsilon \neq 0$ the inverse function $f(u)=g^{-1}(x)$ is an odd  elliptic function. For $\delta =\epsilon =0$ the degenerate elliptic genus gives  the signature, while for $\delta =\frac{-1}{8}$ and $\epsilon=0$ it gives $\hat{A}$-genus.
\end{rem} 
Then Example~\ref{rig-S6} and Proposition~\ref{flag-rigidity} implies:
\begin{ex}
Both thw $\hat{A}$-genus as well as the elliptic genus  are trivial on $S^6$ endowed with the $T^2$-invariant almost complex structure. 
\end{ex}

\begin{cor}
The signature, the $\hat{A}$-genus and the elliptic genus of the flag manifold $U(n)/T^n$ endowed with an arbitrary invariant almost complex structure are equal to zero.
\end{cor}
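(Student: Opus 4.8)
The strategy is to observe that each of the three genera named in the statement is an instance of the Hirzebruch genus $\LLL_f$ for a defining power series $f$ that is \emph{odd}, and then to invoke Theorem~\ref{flag-rigidity} verbatim. Thus the whole corollary reduces to a parity check on the three classical series, after which the vanishing is immediate and requires no further computation specific to $U(n)/T^n$.

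First I would record, exactly as in the two remarks preceding the statement, the defining series of each genus. The signature is the $L$-genus, given by $f(u)=\tanh u$, and the $\hat{A}$-genus is given by $f(u)=2\sinh(u/2)$; since $\tanh$ and $\sinh$ are odd functions, each of these series contains only odd powers of $u$, so in the normalization $f(u)=u+\sum_{k\geq 1}f_ku^{k+1}$ one has $f_k=0$ whenever $k$ is odd, which is precisely the notion of an odd series used in Theorem~\ref{flag-rigidity}. For the elliptic genus I would argue from its characterization by the logarithm $g(x)=\int_{0}^{x}dt/\sqrt{1-2\delta t^2+\epsilon t^4}$: the integrand involves only even powers of $t$ and is therefore even, so $g$ is odd, whence its functional inverse $f(u)=g^{-1}(u)$ is odd as well. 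This reproduces the assertion in the remark that, for $\delta^2\neq\epsilon\neq 0$, the function $f$ is an odd elliptic function.

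Having placed all three genera in the form $\LLL_f$ with $f$ odd, I would apply Theorem~\ref{flag-rigidity}, which states that $\LLL_f$ is $T^n$-rigid and equal to zero on $U(n)/T^n$ equipped with any invariant almost complex structure whenever $f$ is odd; taking $c=0$ in the rigidity equation~\eqref{rigidity} gives $\LLL_f(U(n)/T^n)=0$ for each of the three series simultaneously. For the signature I would add the observation, already noted in the remark, that it is an invariant of the oriented cobordism class and hence independent of the chosen stable complex structure up to the induced orientation, so its vanishing holds for the underlying oriented manifold irrespective of the structure. The argument has essentially no obstacle: the only point demanding care is the bookkeeping that translates ``$f$ is an odd function'' into ``$f_k=0$ for odd $k$'' in the expansion $f(u)=u+\sum_{k\geq 1}f_ku^{k+1}$, and the verification that the normalizations $\tanh u$, $2\sinh(u/2)$ and the elliptic $g^{-1}$ all meet this condition. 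Once that is confirmed, Theorem~\ref{flag-rigidity} carries the entire conclusion.
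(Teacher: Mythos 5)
Your proposal is correct and follows essentially the same route as the paper: the corollary is there an immediate consequence of Theorem~\ref{flag-rigidity} together with the two preceding remarks identifying the signature ($f(u)=\tanh u$), the $\hat{A}$-genus ($f(u)=2\sinh(u/2)$) and the elliptic genus (whose logarithm $g$ is odd, hence $f=g^{-1}$ is odd) as Hirzebruch genera $\LLL_f$ of odd series. Your parity bookkeeping---that oddness of $f$ means $f_k=0$ for odd $k$ in the normalization $f(u)=u+\sum_{k\geq 1}f_ku^{k+1}$---is accurate, and no further argument specific to $U(n)/T^n$ is required.
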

\begin{rem}
Note that in the same time  the cobordism class of the flag manifold $U(n)/T^n$ is non-trivial for an arbitrary invariant almost complex structure.
\end{rem} 

\begin{rem}
It follows from Theorem~\ref{flag-rigidity} and Proposition~\ref{flag-even} that flag manifolds $U(n)/T^n$  for even $n$, provide examples of manifolds for which any Hirzebruch genus $\LLL _{f}$ defined by an odd series is $T^n$-rigid and equal to zero related to any invariant almost complex structure. Note that none of these  structures is a $SU$-structure.
\end{rem}

\begin{ex}
The fact that $f$ is an odd power series is essential. Namely, take $f(u)=\frac{u}{1+u^3}$ and $M=U(3)/T^3$. The direct computation gives that 
\[
\sum_{\rw \in S_3}\prod_{j=1}^{3}\frac{1}{f(\omega _{j}(\rw )\cdot u)} = 2\cdot \frac{(u_1-u_2)^3-(u_1-u_3)^3+(u_2-u_3)^3}{(u_1-u_2)(u_1-u_3)(u_2-u_3)},
\]
what is not identically equal to a constant.  
\end{ex}

In the same way as for the flag manifolds we prove:
\begin{thm}\label{large}
Assume that $G/H$ satisfies the assumptions of Proposition~\ref{weights-pairs}. Then for an odd series $f$ the genus $\LLL _{f}$ is $T^k$-rigid  and equal to zero on $G/H$ endowed with an arbitrary invariant almost complex structure.
\end{thm}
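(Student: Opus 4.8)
The plan is to mirror the proof of Theorem~\ref{flag-rigidity}, with Proposition~\ref{weights-pairs} playing the role that Proposition~\ref{pairs} played for the flag manifolds. First I would invoke Proposition~\ref{weights-pairs}: under its hypotheses the fixed points of the canonical $T^k$-action on $G/H$ split into pairs $(\rw,\tilde{\rw})$ such that the weight systems at $\rw$ and at $\tilde{\rw}$ coincide up to signs, with the number of sign reversals being odd. By Remark~\ref{pointing} all weights have multiplicity one, so this yields a genuine bijection between the weight sets $\{\omega_j(\rw)\}$ and $\{\omega_j(\tilde{\rw})\}$ under which each matched weight is either preserved or negated, an odd number of them being negated.

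Next I would exploit the oddness of $f$. Since $f(-z)=-f(z)$, reversing the sign of a single weight multiplies the corresponding factor $1/f(\omega_j\cdot u)$ by $-1$; reordering the factors is harmless because the full product over $j$ depends only on the multiset of weights. Carrying out the odd number of sign reversals supplied by Proposition~\ref{weights-pairs} therefore gives
\[
\prod_{j=1}^{n}\frac{1}{f(\omega_j(\tilde{\rw})\cdot u)} = -\prod_{j=1}^{n}\frac{1}{f(\omega_j(\rw)\cdot u)},
\]
so the two fixed points of each pair contribute cancelling terms:
\[
\prod_{j=1}^{n}\frac{1}{f(\omega_j(\rw)\cdot u)} + \prod_{j=1}^{n}\frac{1}{f(\omega_j(\tilde{\rw})\cdot u)} = 0.
\]

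Finally I would feed these cancellations into the rigidity criterion of Proposition~\ref{prop-rigidity}. Because $J$ is a genuine invariant almost complex structure (not merely a stable complex one), every fixed point carries sign $\sg(p)=+1$, so the left-hand side of the functional equation~\eqref{rigidity} is exactly the sum over the pairs $(\rw,\tilde{\rw})$ of the vanishing expressions above. Hence~\eqref{rigidity} holds with constant $c=0$, and Proposition~\ref{prop-rigidity} yields that $\LLL_f$ is $T^k$-rigid on $(G/H,J)$ with $\LLL_f(G/H,J)=c=0$.

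I do not anticipate a serious obstacle: the combinatorial core --- that paired fixed points have weight systems differing by an odd number of signs --- is already encapsulated in Proposition~\ref{weights-pairs}, whose proof handles the various root types of the simple compact groups. The only point needing slight care is that the matching of weights be a true bijection, so that the product over $j$ reorganizes correctly and the leftover unreversed weights contribute no extra sign; this is precisely the force of the phrase ``differ only by signs'' in that proposition, and the oddness of the reversal count then forces the global factor $-1$ irrespective of how those remaining weights are paired.
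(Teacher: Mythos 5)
Your proposal is correct and takes essentially the same route as the paper: the authors prove Theorem~\ref{large} precisely ``in the same way as for the flag manifolds,'' i.e.\ by replacing Proposition~\ref{pairs} with Proposition~\ref{weights-pairs} to pair the fixed points $(\rw,\tilde{\rw})$ with an odd number of weight-sign reversals, so that oddness of $f$ makes the paired contributions cancel, and since $J$ is a genuine almost complex structure all signs are $+1$, giving the rigidity equation~\eqref{rigidity} with $c=0$ via Proposition~\ref{prop-rigidity}. Your extra care about the weight matching being a bijection (via Remark~\ref{pointing}) is a harmless refinement that the paper leaves implicit.
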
 
We extend this result to some $k$-symmetric spaces:
\begin{prop}\label{k-symm-rigid}
For an odd $m$ any  Hirzebruch genus $\LLL _{f}$ defined by an odd series $f$ is $T^{km}$-rigid and equal to zero  on the $k$-symmetric space $U(km)/\underbrace{U(m)\times \cdots \times U(m)}_{k}$ endowed with an arbitrary invariant almost complex structure.
\end{prop}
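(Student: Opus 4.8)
The plan is to verify the rigidity criterion of Proposition~\ref{prop-rigidity} with constant $c=0$, following the pattern of the flag case (Proposition~\ref{pairs} and Theorem~\ref{flag-rigidity}). Since $J$ is an honest almost complex structure, every fixed point has $\sg=+1$, so it suffices to produce a fixed-point-free involution $\tau$ on the set of fixed points such that for each pair $(\rw,\tau\rw)$ the weights of $J$ differ in an \emph{odd} number of signs. For an odd series $f$ one has $1/f(-\omega\cdot u)=-1/f(\omega\cdot u)$ for every weight $\omega$, so reversing an odd number of signs in the product $\prod_{j}1/f(\omega_j(\rw)\cdot u)$ multiplies it by $-1$; the two summands attached to $\rw$ and $\tau\rw$ then cancel and the left-hand side of~\eqref{rigidity} vanishes.

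Write $M=U(km)/(U(m))^k$ with $m$ odd. As for the generalized Grassmann manifolds, the fixed points of the canonical $T^{km}$-action are indexed by $W_G/W_H=S_{km}/(S_m)^k$, i.e.\ by ordered partitions $(B_1,\dots,B_k)$ of $\{1,\dots,km\}$ into blocks of size $m$, and an invariant almost complex structure $J$ is given by one sign $\epsilon_{ij}=\pm1$ per pair $i<j$ (the summands $R_{ij}$ of Proposition~\ref{k-SU}), its roots being $\epsilon_{ij}(x_a-x_b)$ with $a$ in block $i$ and $b$ in block $j$. Thus at the fixed point $\rw=(B_1,\dots,B_k)$ the weights are $\epsilon_{ij}(x_a-x_b)$ for $a\in B_i$, $b\in B_j$, $i<j$. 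I take $\tau$ to be the transposition of the blocks $B_1$ and $B_2$ (realized by any permutation of $S_{km}$ carrying $B_1$ onto $B_2$ and $B_2$ onto $B_1$ setwise); it is involutive and, since $B_1\neq B_2$, fixed-point-free, hence pairs up all fixed points.

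It remains to count the sign changes between $\rw$ and $\tau\rw$. The $m^2$ weights $\epsilon_{12}(x_a-x_b)$ with $a\in B_1$, $b\in B_2$ all reverse sign, because at $\tau\rw$ the two blocks are interchanged. For each $j\geq 3$ the $(1,j)$-weights $\epsilon_{1j}(x_a-x_b)$ at $\rw$ are matched, on the same coordinates, with the $(2,j)$-weights $\epsilon_{2j}(x_a-x_b)$ at $\tau\rw$, and symmetrically with the roles of $B_1,B_2$ reversed; each such matching flips all of its $m^2$ signs precisely when $\epsilon_{1j}\neq\epsilon_{2j}$, contributing either $0$ or $2m^2$ sign changes, while the weights supported on blocks of index $\geq 3$ are left unchanged. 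Hence the total number of sign changes is congruent to $m^2\equiv m\pmod 2$, which is odd exactly because $m$ is odd; this produces the cancellation above and, by Proposition~\ref{prop-rigidity}, the $T^{km}$-rigidity together with $\LLL_f(M)=c=0$. The one delicate point is the bijective bookkeeping of the weights at $\rw$ and $\tau\rw$ and the check that all cross-block contributions occur in even-sized packets, so that the parity of the total count is governed solely by the swapped block and equals that of $m$; for even $m$ the block swap alters an even number of signs and this argument breaks down, which is why the oddness of $m$ is essential.
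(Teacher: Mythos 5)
Your proposal is correct and follows essentially the same route as the paper's proof: an involution on the fixed-point set $S_{km}/(S_m)^k$ given by swapping two blocks (you swap $B_1,B_2$, the paper swaps the first and last blocks, which is immaterial), followed by the count that the swapped pair contributes $m^2$ sign reversals while each cross index $j$ contributes $0$ or $2m^2$, so the total $m^2+2lm^2$ is odd precisely when $m$ is, and the summands cancel in pairs for an odd series $f$, verifying the rigidity equation with $c=0$.
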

\begin{proof}
The roots for an invariant almost complex structure $J$ are, following the proof of Proposition~\ref{k-SU}, given by
$\epsilon _{ij}R_{ij}$, $1\leq i<j\leq k$.  The fixed point set for the canonical $T^{km}$-action is  $S_{km}/(S_{m})^k$, where $S_{n}$ denotes the symmetric group. We divide the fixed points into the pairs in the following way. To the fixed point
given by the permutation $\rw =i_1\ldots i_mi_{m+1}\ldots i_{(k-1)m}i_{(k-1)m+1}\ldots i_{km}$ we assign the fixed point given by the permutation 
${\tilde \rw} = i_{(k-1)m+1}\ldots i_{km}i_{m+1}\ldots i_{(k-1)m}i_{1}\ldots i_{m}$.  The weights at the fixed points $\rw$ and $\tilde{\rw }$ differ only by signs and those with opposite signs are $\epsilon _{1k}R_{\rw (1)\rw (k)}$  and, some of   $\epsilon _{1j}R_{\rw (1)\rw (j)}$, $2\leq j\leq k-1$ and  $\epsilon _{jk}R_{\rw (j)\rw (k)}$, $2\leq j\leq k-1$.  Note that the roots from $\epsilon _{1j}R_{\rw (1)\rw (j)}$ and $\epsilon _{jk}R_{\rw (j)\rw (k)}$ change signs equally.  It follows from Proposition~\ref{k-SU} that the weights at the fixed points $\rw$ and ${\tilde \rw}$ differ in $m^2+2lm^2$ signs for $0\leq l\leq k-2$. Since $m$ is odd it implies that
\[
\prod _{j=1}^{n}\frac{1}{f(\omega _{j}(\rw )\cdot u)} + \prod _{j=1}^{n}\frac{1}{f(\omega _{j}(\tilde{\rw})\cdot u)} =0 \ ,
\]
where $n=\frac{k(k-1)m^2}{2}$. Thus the equation~\eqref{rigidity} is satisfied.
\end{proof}
\begin{ex}
The statement on rigidity in Proposition~\ref{k-symm-rigid} is not valid for even $m$. To verify that consider the Grassmannian $G_{4,2}$. It has two invariant almost complex structures, $J$ and its conjugate. The roots for $J$ 
are $x_1-x_3,x_1-x_4,x_2-x_3,x_2-x_4$, the action of $T^4$ on $G_{4,2}$ has $6$ fixed points given by the quotient 
$S_4/S_2\times S_2$. For an odd series given with $f(u)=\frac{u}{1+u^2}$ we have that expression~\eqref{rigidity} in the case of $G_{4,2}$ is given by
\[
2\cdot (\frac{(1+(u_1-u_3)^2)(1+(u_1-u_4)^2)(1+(u_2-u_3)^2)(1+(u_2-u_4)^2)}{(u_1-u_3)(u_1-u_4)(u_2-u_3)(u_2-u_4)}+\]   
\[\frac{(1+(u_1-u_2)^2)(1+(u_1-u_4)^2)(1+(u_2-u_3)^2)(1+(u_3-u_4)^2)}{(u_1-u_2)(u_1-u_4)(u_3-u_2)(u_3-u_4)}+\] \[\frac{(1+(u_1-u_2)^2)(1+(u_1-u_3)^2)(1+(u_2-u_4)^2)(1+(u_3-u_4)^2)}{(u_1-u_2)(u_1-u_3)(u_2-u_4)(u_3-u_4)}).
\]
This can not be a constant since the direct computation shows that for $u_1=3, u_2=2, u_3=1, u_4=0$ it takes the value $80$ while for $u_1=4, u_2=2, u_3=1, u_4=0$ it takes the value $140$. 
\end{ex}
The relations between the weights and the signs for two different stable complex structures, equivariant for the same torus action, are described in~\cite{Buch_Terz}.
As a consequence we are able to establish the relation between the corresponding values of an equivariant Hirzebruch genus given by an odd power series.

\begin{thm}\label{rig-any}
Assume that manifold $M$ with the given action $\theta$ of the torus $T^k$ admits $\theta$-equivariant stable complex structures $c_{\tau}$ and  $c^{'}_{\tau}$. Then
\begin{equation}
\LLL _{f}^{T^k}(\Phi(M,\theta, c_{\tau}))=\LLL _{f}^{T^k}(\Phi (M,\theta ,c^{'}_{\tau})),
\end{equation}
for any odd power series $f$.  
\end{thm}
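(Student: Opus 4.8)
The plan is to evaluate each side by the localization formula and then check that the two fixed-point sums agree term by term, the oddness of $f$ being exactly what compensates the change of orientation signs. First I would unwind the definition $\LLL_f^{T^k}=\LLL_f\circ\Phi$ and apply Theorem~\ref{main}: as recorded in Proposition~\ref{prop-rigidity}, applying $\LLL_f$ to the localized universal toric genus replaces each power-system term $[\Lambda_j(p)]({\bf u})$ by $f(\Lambda_j(p)\cdot u)$, so that
\[
\LLL_f^{T^k}(\Phi(M,\theta,c_\tau))=\sum_{p\in Fix(M)}\sg_{c_\tau}(p)\prod_{j=1}^{n}\frac{1}{f(\Lambda_j^{c_\tau}(p)\cdot u)},
\]
and likewise for $c'_\tau$ with its own signs $\sg_{c'_\tau}(p)$ and weights $\Lambda_j^{c'_\tau}(p)$. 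It therefore suffices to compare these two fixed-point sums.

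The key input is the relation between the local data of the two structures recalled from~\cite{Buch_Terz} just above the statement. At an isolated fixed point $p$ the tangent representation of $T^k$ is determined by $\theta$ alone and splits into $n$ real weight planes; an equivariant stable complex structure amounts to a choice of complex orientation on each plane, so the two structures can differ only plane by plane. I would record this as follows: after a common indexing, $\Lambda_j^{c'_\tau}(p)=\epsilon_j(p)\,\Lambda_j^{c_\tau}(p)$ with $\epsilon_j(p)=\pm1$, and, since reversing the complex orientation of a plane reverses its comparison with the fixed torus-orientation used to define $\sg$, one has $\sg_{c'_\tau}(p)=\bigl(\prod_{j=1}^{n}\epsilon_j(p)\bigr)\sg_{c_\tau}(p)$.

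Then the cancellation is immediate. Since $f$ is odd we have $f(\epsilon\,t)=\epsilon\,f(t)$ for $\epsilon=\pm1$, so
\[
\prod_{j=1}^{n}\frac{1}{f(\Lambda_j^{c'_\tau}(p)\cdot u)}=\prod_{j=1}^{n}\frac{1}{f(\epsilon_j(p)\Lambda_j^{c_\tau}(p)\cdot u)}=\Bigl(\prod_{j=1}^{n}\epsilon_j(p)\Bigr)\prod_{j=1}^{n}\frac{1}{f(\Lambda_j^{c_\tau}(p)\cdot u)}.
\]
Multiplying by $\sg_{c'_\tau}(p)=\bigl(\prod_{j}\epsilon_j(p)\bigr)\sg_{c_\tau}(p)$ and using $\bigl(\prod_{j}\epsilon_j(p)\bigr)^{2}=1$, the factor $\prod_{j}\epsilon_j(p)$ cancels, so each summand coincides with the corresponding summand for $c_\tau$; summing over $p\in Fix(M)$ yields the stated equality, with no residual global sign.

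The main obstacle is the middle step, namely pinning down the parity statement $\sg_{c'_\tau}(p)=\bigl(\prod_{j}\epsilon_j(p)\bigr)\sg_{c_\tau}(p)$ with the correct sign convention relating the $c$-orientation to the torus-orientation; everything else is formal manipulation. As this relation is precisely the content taken from~\cite{Buch_Terz}, the theorem follows as an immediate consequence, and the oddness of $f$ is exactly the hypothesis that converts the weight sign flips into the factor cancelling the change in the orientation signs.
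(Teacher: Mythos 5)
Your route is the paper's own: localize both sides via Theorem~\ref{main} (as packaged in Proposition~\ref{prop-rigidity}), import the comparison of local data of the two structures from~\cite{Buch_Terz}, and let the oddness of $f$ absorb the weight-sign flips. The one substantive discrepancy sits exactly at the step you yourself flag as the main obstacle. The relation the paper quotes from~\cite{Buch_Terz} is not $\sg_{c'_\tau}(x)=\bigl(\prod_{j}\epsilon_{j}(x)\bigr)\sg_{c_\tau}(x)$ but
\[
\sg_{c'_\tau}(x)=\epsilon\cdot\prod_{j=1}^{n}\epsilon_{j}(x)\cdot\sg_{c_\tau}(x),
\]
with an additional \emph{global} sign $\epsilon=\pm1$ recording whether $c_\tau$ and $c'_\tau$ induce the same orientation of $\tau(M)$ (the paper makes this meaning of $\epsilon$ explicit in Section~5). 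Your plane-by-plane justification overlooks that $\sg$ compares the torus-induced orientation of the weight planes against the orientation of $\tau_x(M)$ coming from the stable complex structure itself, and the latter also changes when you pass from $c_\tau$ to $c'_\tau$; a stable complex structure is more than a choice of complex orientation on each weight plane. A concrete check: on $\C P^1$ with the standard structure $J$ and its conjugate $\bar J$, both are almost complex structures, so all fixed-point signs equal $+1$ for both; yet $\epsilon_1(x)=-1$ at each fixed point, so your formula would force $\sg_{\bar J}(x)=-1$. The relation is saved precisely because $\epsilon=-1$ there.

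With the corrected sign relation, your manipulation (which is then verbatim the paper's) yields
\[
\LLL_f^{T^k}(\Phi(M,\theta,c'_\tau))=\epsilon\cdot\LLL_f^{T^k}(\Phi(M,\theta,c_\tau)),
\]
that is, equality up to the global sign $\epsilon$ --- which is exactly how the introduction of the paper advertises this result (``up to sign, independent of the given equivariant stable complex structure''). The paper's own proof also drops $\epsilon$ silently in its last displayed equality, so your conclusion ``with no residual global sign'' reproduces that overstatement rather than repairing it: exact equality is guaranteed only when $c_\tau$ and $c'_\tau$ induce the same orientation ($\epsilon=+1$), or of course when the common value is zero. To make the argument airtight, either add the same-orientation hypothesis, or state and prove the conclusion in the form $\LLL_f^{T^k}(\Phi(M,\theta,c'_\tau))=\epsilon\,\LLL_f^{T^k}(\Phi(M,\theta,c_\tau))$.
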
  
\begin{proof}
Assume that the weights for the action $\theta$ related to the structure $c_{\tau}$  at the point $x\in Fix(M)$ are given by the integer vector  $\Lambda _{i}(x)$,
$1\leq i\leq n$, $\dim M=2n$. It follows from~\cite{Buch_Terz} that the weights and the signs for $\theta$ at $x\in Fix(M)$  related to $c_{\tau}^{'}$ are $\Lambda_{i}^{'}(x)=\epsilon _{i}(x)\Lambda _{i}(x)$ and $\sg (x)^{'}=\epsilon \cdot \prod\limits_{i=1}^{n}\epsilon _{i}(x)\cdot \sg (x)$ where $\epsilon, \epsilon _{i}(x)=\pm 1$ and $\sg (x)$ is the sign at $x$ related to $c_{\tau}$.  Therefore, since $f$ is an odd power series we obtain from Theorem~\ref{main} that:
\begin{align*}
&\LLL _{f}^{T^k}(\Phi (M, \theta ,c^{'}_{\tau})) = \epsilon \cdot \sum\limits_{x\in Fix (M)}\prod\limits_{i=1}^{n}\epsilon _{i}(x)\cdot \sg (x)\prod _{i=1}^{n}\frac{1}{f(\epsilon _{i}(x)\Lambda _{i}(x)\cdot u)}=\\
&\epsilon \sum\limits_{x\in Fix (M)}\prod\limits_{i=1}^{n}\epsilon _{i}(x)\cdot \sg (x)\prod _{i=1}^{n}\epsilon _{i}(x)\frac{1}{f(\Lambda _{i}(x)\cdot u)}=\\
&\epsilon \sum\limits_{x\in Fix (M)}\prod\limits_{i=1}^{n}\epsilon _{i}(x)\cdot \sg (x)\prod _{i=1}^{n}\epsilon _{i}(x)\prod_{i=1}^{n}\frac{1}{f(\Lambda _{i}(x)\cdot u)}=\\
&\epsilon \sum\limits_{x\in Fix (M)}\sg (x)\prod _{i=1}^{n}\frac{1}{f(\Lambda _{i}(x)\cdot u)} = \LLL _{f}^{T^k}(\Phi (M,\theta ,c_{\tau})).
\end{align*}
\end{proof} 
\begin{cor}
The Hirzebruch genus $\LLL _{f}$ given by an odd power series $f$ is  $T^k$-rigid on $(M,\theta ,c_{\tau})$ if and only if it is $T^k$-rigid  on $(M, \theta ,c_{\tau}{'})$, where $c_{\tau}$ and $c^{'}_{\tau}$ are $\theta$ -equivariant stable complex structures.
\end{cor}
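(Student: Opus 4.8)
The plan is to obtain this corollary as an immediate consequence of Theorem~\ref{rig-any}, since all the substantive work is already contained there. First I would rewrite the definition of $T^k$-rigidity in a coordinate-free way. Using the expansion
\[
\LLL_f^{T^k}(M, c_\tau) = \LLL_f(M) + \sum_{|\omega|>0} \LLL_f(G_\omega(M)){\bf u}^\omega,
\]
the constant term (the value at ${\bf u} = 0$) of the equivariant genus is always the ordinary genus $\LLL_f(M)$. Hence the assertion that $\LLL_f$ is $T^k$-rigid on $(M, \theta, c_\tau)$---namely $\LLL_f^{T^k}(\Phi(M, c_\tau, \theta)) = \LLL_f(M)$---is exactly the assertion that the power series $\LLL_f^{T^k}(\Phi(M, c_\tau, \theta)) \in A[[u_1, \ldots, u_k]]$ has no terms of positive degree in the variables $u_i$, i.e.\ that it is constant.

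With this reformulation in hand, I would simply invoke Theorem~\ref{rig-any}: because $f$ is odd, the two equivariant genera coincide as elements of $A[[u_1, \ldots, u_k]]$,
\[
\LLL_f^{T^k}(\Phi(M, \theta, c_\tau)) = \LLL_f^{T^k}(\Phi(M, \theta, c'_\tau)).
\]
Two power series that are equal have identical coefficients, so the coefficient of every monomial ${\bf u}^\omega$ with $|\omega| > 0$ vanishes for the structure $c_\tau$ precisely when it vanishes for $c'_\tau$. By the previous paragraph this says that $\LLL_f^{T^k}(\Phi(M, \theta, c_\tau))$ is constant if and only if $\LLL_f^{T^k}(\Phi(M, \theta, c'_\tau))$ is, which is exactly the stated equivalence of $T^k$-rigidity.

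I do not expect any genuine obstacle: the entire force of the argument is borrowed from Theorem~\ref{rig-any}, whose proof exploits the oddness of $f$ to cancel the relative signs $\epsilon_i(x)$ between the two equivariant structures. The only point worth checking is robustness to a possible global scalar---the relation $\sg(x)' = \epsilon\prod_i \epsilon_i(x)\,\sg(x)$ a priori introduces a fixed $\epsilon = \pm 1$---but since $\epsilon$ is independent of ${\bf u}$ and a series is constant exactly when $\epsilon$ times it is, rigidity is manifestly insensitive to such a factor. Thus the corollary follows directly.
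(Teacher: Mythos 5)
Your proof is correct and follows exactly the route the paper intends: the corollary is stated as an immediate consequence of Theorem~\ref{rig-any}, with rigidity reformulated as constancy of the power series $\LLL_f^{T^k}(\Phi(M,\theta,c_\tau))\in A[[u_1,\ldots,u_k]]$, whose constant term is automatically $\LLL_f(M)$. Your closing remark about the global sign $\epsilon$ is in fact a worthwhile refinement rather than a redundancy, since the proof of Theorem~\ref{rig-any} in the paper actually yields equality only up to the factor $\epsilon=\pm 1$ (as the introduction's ``up to sign'' phrasing concedes), and your observation that constancy is insensitive to such a scalar is precisely what makes the corollary hold regardless.
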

\begin{cor}
Any Hirzebruch genus $\LLL _{f}$ defined by an odd series $f$ is
\begin{itemize}
\item $T^n$-rigid and equal to zero  on $U(n)/T^n$ endowed with an arbitrary $T^n$-equivariant stable complex structure.
\item $T^{km}$-rigid and equal to zero on $U(km)/(U(m))^{k}$ endowed with an arbitrary $T^{km}$-equivariant stable complex structure for odd $m$.
\end{itemize}
\end{cor}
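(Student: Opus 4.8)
The plan is to deduce both statements from the corresponding results already established for \emph{invariant almost complex} structures, using the structure-independence of Theorem~\ref{rig-any} to pass to an arbitrary equivariant stable complex structure. The guiding observation is that rigidity together with vanishing is the same as the single assertion that the equivariant genus $\LLL_f^{T^k}(\Phi(M,\theta,c_\tau))$ is identically zero in $A[[u_1,\ldots,u_k]]$: indeed, by the expansion following from~\eqref{tg_geo} one has $\LLL_f^{T^k}(\Phi(M,\theta,c_\tau)) = \LLL_f(M,c_\tau) + \sum_{|\omega|>0}\LLL_f(G_\omega(M)){\bf u}^\omega$, so vanishing of the whole series forces the constant term $\LLL_f(M,c_\tau)=0$ (vanishing of the ordinary genus) and simultaneously forces every higher coefficient to vanish (rigidity in the sense of Proposition~\ref{prop-rigidity}).

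For the first statement I would fix the canonical action $\theta$ of $T^n$ on $U(n)/T^n$ and choose any one of its $2^{m}$ invariant almost complex structures $J$, where $m=\tfrac{n(n-1)}{2}$. Through~\eqref{scd} with $\xi=\tau(U(n)/T^n)$ and $c_\tau=J$ this $J$ is in particular a $\theta$-equivariant stable complex structure, and Theorem~\ref{flag-rigidity} asserts exactly that $\LLL_f^{T^n}(\Phi(U(n)/T^n,\theta,J))=0$ for any odd $f$. If now $c_\tau$ is an arbitrary $\theta$-equivariant stable complex structure on $U(n)/T^n$, then Theorem~\ref{rig-any}, applied with $c'_\tau=J$, gives $\LLL_f^{T^n}(\Phi(U(n)/T^n,\theta,c_\tau))=\LLL_f^{T^n}(\Phi(U(n)/T^n,\theta,J))=0$, and by the reduction of the first paragraph this yields both rigidity and vanishing.

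The second statement is entirely parallel. Here $\theta$ is the canonical $T^{km}$-action on $U(km)/(U(m))^{k}$, which carries $2^{k(k-1)/2}$ invariant almost complex structures; fix one such $J$, again an admissible $\theta$-equivariant stable complex structure. For odd $m$ Proposition~\ref{k-symm-rigid} gives $\LLL_f^{T^{km}}(\Phi(M,\theta,J))=0$, and Theorem~\ref{rig-any} transports this to an arbitrary $\theta$-equivariant stable complex structure $c_\tau$, so $\LLL_f^{T^{km}}(\Phi(M,\theta,c_\tau))=0$ as well.

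I do not expect a substantive obstacle, since the hard combinatorial work is already packaged into Theorem~\ref{rig-any} (whose proof exploits that an odd series $f$ absorbs the sign twists $\epsilon_i(x)$ and $\epsilon$ relating the weights and signs of two equivariant structures at each fixed point, via Theorem~\ref{main}). The only points needing care are bookkeeping ones: verifying that the homogeneous space genuinely admits an invariant almost complex structure to serve as the reference $c'_\tau=J$, that this $J$ is equivariant for the \emph{same} torus action $\theta$ used for $c_\tau$, and that the parity hypothesis ``$m$ odd'' is retained throughout the second case so that Proposition~\ref{k-symm-rigid} may be invoked.
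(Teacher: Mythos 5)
Your proposal is correct and is exactly the derivation the paper intends: the corollary is stated immediately after Theorem~\ref{rig-any} precisely so that the invariant-almost-complex cases (Theorem~\ref{flag-rigidity} for $U(n)/T^n$ and Proposition~\ref{k-symm-rigid} for $U(km)/(U(m))^k$ with $m$ odd) can be transported to an arbitrary equivariant stable complex structure via the structure-independence of the odd-genus equivariant value. Your preliminary reduction (rigid and zero $\Leftrightarrow$ the whole series $\LLL_f^{T^k}(\Phi)$ vanishes) and the bookkeeping about using the same canonical torus action for both structures match the paper's implicit argument.
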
   
\begin{cor}
The elliptic genus and the $\hat{A}$-genus are equal to zero on;
\begin{itemize}
\item $U(n)/T^n$ related to an arbitrary $T^n$-equivariant stable complex structure.
\item $U(km)/(U(m))^{k}$ related to an arbitrary $T^{km}$-equivariant stable complex structure
for odd $m$.
\end{itemize}
\end{cor}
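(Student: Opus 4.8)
The plan is to reduce this statement to the immediately preceding corollary by observing that both the elliptic genus and the $\hat{A}$-genus are Hirzebruch genera $\LLL_f$ defined by an \emph{odd} power series $f$. Once this identification is made, essentially no further work is required: the previous corollary already asserts that any such $\LLL_f$ is $T^n$-rigid and equal to zero on $U(n)/T^n$, and $T^{km}$-rigid and equal to zero on $U(km)/(U(m))^k$ for odd $m$, in each case for an arbitrary equivariant stable complex structure.

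First I would recall, as recorded in the remarks above, the explicit form of the two defining series. The $\hat{A}$-genus is the Hirzebruch genus attached to $f(u)=2\sinh(u/2)$, which is manifestly odd. For the elliptic genus over $A=\C$, its logarithm is characterized by $g(x)=\int_0^x dt/\sqrt{1-2\delta t^2+\epsilon t^4}$; since the integrand is even in $t$, the antiderivative $g$ is odd, and hence its functional inverse $f=g^{-1}$ is an odd series (an odd elliptic function when $\delta^2\neq\epsilon\neq 0$). The degenerate specializations $\delta=\epsilon=0$ and $\delta=-1/8,\ \epsilon=0$, giving the signature and the $\hat{A}$-genus respectively, are odd for the same reason. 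Thus in every case the defining series is odd, so both genera lie in the class covered by the previous corollary.

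Applying that corollary then yields that the equivariant extensions of both genera are $T^k$-rigid and vanish on the two families of spaces; since $T^k$-rigidity means the equivariant extension $\LLL_f^{T^k}$ coincides with the constant $\LLL_f(M)$, the vanishing of that constant is precisely the assertion that the (non-equivariant) elliptic genus and $\hat{A}$-genus are zero. I would note that this corollary is purely formal given the preceding one, so there is no genuine obstacle; the only point deserving explicit verification is the oddness of the defining series, and for the elliptic genus this is settled directly by the integral characterization of its logarithm as above.
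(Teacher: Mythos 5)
Your proposal is correct and follows exactly the paper's (implicit) route: the corollary is stated there as an immediate consequence of the preceding corollary for odd-series genera, with the oddness of the defining series recorded in the remarks — $f(u)=2\sinh(u/2)$ for the $\hat{A}$-genus, and for the elliptic genus the odd inverse of the odd logarithm $g(x)=\int_0^x dt/\sqrt{1-2\delta t^2+\epsilon t^4}$. Your explicit verification that $g$ is odd because its integrand is even is the only detail the paper leaves tacit, and it matches the intended argument.
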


\section{The Hirzebruch $\chi _{y}$ - genus of homogeneous spaces}
\numberwithin{thm}{section}

The Hirzebruch $\chi _{y}$ - genus is defined~\cite{HBJ} by the series 
\[
f_{y}(u)=\frac{u(1+ye^{-u(1+y)})}{1-e^{-u(1+y)}}.
\]
It is well known that the $\chi _{y}$ - genus for $y=0$ gives the Todd genus, while for $y=1$ it gives the signature.

In this Section we deduce the formula for the $\chi _{y}$ - genus of a homogeneous space $G/H$ of positive Euler characteristic  endowed with an arbitrary stable complex structure $c_{\tau}$ equivariant under the canonical action of the maximal torus assuming that $G/H$ admits an invariant almost complex structure $J$. 
  
Let $\alpha _1,\ldots \alpha _n$ be the roots that define an invariant almost complex structure $J$. 
The weights at the fixed point $\rw$ for the canonical action of the maximal torus $T^k$  related to $c_{\tau}$ are given with $\epsilon_{1}(\rw)\rw(\alpha _1),\ldots , \epsilon_{n}(\rw)\rw(\alpha _n)$, where $\epsilon_{i}(\rw)=\pm 1$ for $1\leq i\leq n$, while the signs are given with $\sg (\rw)=\epsilon \cdot \prod\limits_{i=1}^{n}\epsilon_{i}(\rw)$, where $\epsilon = \pm 1$ depending on whether or not $J$ and $c_{\tau}$ define the same orientation on $\tau (M^{2n})$.  

Let $x_1,\ldots ,x_k$ be the canonical coordinates on the Lie algebra $\TT$ for $T^k$ that correspond to the group $G$ and let $\mu$ be an arbitrary ordering on these coordinates. Denote by $ind _{\mu}(\rw)$ the number of negative roots among
$\epsilon_{1}(\rw)\rw(\alpha _{1}),\ldots ,\epsilon_{n}(\rw)\rw(\alpha _{n})$ related to the ordering $\mu$. Following terminology of~\cite{BP} we  call this number the index of the fixed point $\rw$ related to the stable complex structure $c_{\tau}$ and the ordering $\mu$.

Appealing to the results from~\cite{AH} and the Lie theory we deduce the  formula for the Hirzebruch $\chi _{y}$ - genus of homogeneous spaces under consideration.
\begin{thm}\label{TH}
Let $G/H$ be a compact homogeneous space of positive Euler characteristic endowed with the stable complex structure $c_{\tau}$ that is equivariant under the canonical action of the maximal torus $T^k$. Then the Hirzebruch $\chi _{y}$ - genus for $(G/H, c_{\tau})$ is given by the following formula:
\begin{equation}\label{H}
\chi _{y}(G/H, c_{\tau})=\epsilon\cdot \sum _{\rw\in W_G/W_H}(-y)^{ind_{\mu}\rw}\prod\limits_{i=1}^{n}\epsilon_{i}(\rw).
\end{equation}
\end{thm}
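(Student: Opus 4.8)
The plan is to read off the $\chi_y$--genus from the localization formula \eqref{TG-weights} and then extract its numerical value by a limiting argument in which the ordering $\mu$ plays the role of a generic circle direction. First I would specialize \thmref{main} to the Hirzebruch genus attached to the characteristic series $f_y$. Since $f_y$ is the characteristic power series ($f_y(0)=1$), the series $f$ with $f(0)=0$ that governs $\LLL_f$ satisfies $u/f(u)=f_y(u)$, so that $\tfrac{1}{f(\xi)}=\tfrac{f_y(\xi)}{\xi}$. Combining Proposition~\ref{prop-rigidity} with the description of the local data from \cite{Buch_Terz}, namely that the weights of $c_\tau$ at $\rw$ are $\epsilon_i(\rw)\rw(\alpha_i)$ and $\sg(\rw)=\epsilon\prod_{i=1}^n\epsilon_i(\rw)$, gives
\begin{equation*}
\chi_y^{T^k}(G/H,c_\tau)=\sum_{\rw\in W_G/W_H}\sg(\rw)\prod_{i=1}^n\frac{f_y(\epsilon_i(\rw)\rw(\alpha_i)\cdot u)}{\epsilon_i(\rw)\rw(\alpha_i)\cdot u}.
\end{equation*}

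The second step is to use rigidity of the $\chi_y$--genus, which is the input I would take from \cite{AH}: the equivariant genus above is constant in $u$ and equals the ordinary invariant $\chi_y(G/H,c_\tau)$. Granting this, I may compute that constant along any convenient ray. I would substitute $u=t\ell$, where $\ell$ is the generic covector determining the ordering $\mu$ (so that $\langle\omega,\ell\rangle\neq 0$ for every weight $\omega$, and $\langle\omega,\ell\rangle<0$ exactly when $\omega$ is $\mu$--negative), and let $t\to+\infty$. Writing $\xi_i=t\langle\epsilon_i(\rw)\rw(\alpha_i),\ell\rangle$ and $\sigma=e^{-\xi_i(1+y)}$, the $i$--th factor is $\tfrac{f_y(\xi_i)}{\xi_i}=\tfrac{1+y\sigma}{1-\sigma}$, which tends to $1$ when $\xi_i\to+\infty$ (a $\mu$--positive weight) and to $-y$ when $\xi_i\to-\infty$ (a $\mu$--negative weight). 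Hence the product over $i$ tends to $(-y)^{\ind_\mu(\rw)}$, and summing gives $\chi_y(G/H,c_\tau)=\sum_{\rw}\sg(\rw)(-y)^{\ind_\mu(\rw)}$. Inserting $\sg(\rw)=\epsilon\prod_i\epsilon_i(\rw)$ then yields \eqref{H}.

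The main obstacle is the passage in the second step from the constant term at $u=0$ (which is the genus) to the value at $t\to+\infty$: the two agree only because the genus is rigid, so the honest work is to justify that rigidity in the present, merely stable complex rather than almost complex, setting. I would do this either by citing the holomorphic Lefschetz fixed point theorem of \cite{AH} after restricting to the generic circle cut out by $\mu$, for which the same fixed points $W_G/W_H$ occur with integer weights $\langle\epsilon_i(\rw)\rw(\alpha_i),\ell\rangle$, or by proving directly that the displayed sum is independent of $u$. A secondary point to check is the sign bookkeeping: although each index $\ind_\mu(\rw)$, and hence each summand $(-y)^{\ind_\mu(\rw)}$, depends on the chosen ordering $\mu$, the total must not, and this $\mu$--independence is precisely what rigidity guarantees. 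Finally one confirms that the asymptotics $\tfrac{f_y(\xi)}{\xi}\to 1,-y$, derived for real $y>-1$, propagate to the required polynomial identity in $y$, which is immediate since both sides are polynomials in $y$.
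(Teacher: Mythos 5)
Your proposal is correct and lands on the paper's formula through the same underlying mechanism --- reducing the ordering $\mu$ to a generic circle direction --- but with a genuinely different division of labour. The paper takes the Atiyah--Hirzebruch fixed-point formula $\chi_y(M)=\sum_i(-y)^{n(F_i)}\chi_y(F_i)$ as a black box (stated for stable complex $S^1$-manifolds), chooses a regular $S^1\subset T^k$ generated by a vector $v$ in the Weyl chamber determined by $\mu$, notes that its fixed set is exactly $W_G/W_H$ with $\chi_y(\rw)=\sg(\rw)$, and identifies the number $n(\rw)$ of negative rotation numbers $\langle\epsilon_i(\rw)\rw(\alpha_i),v\rangle$ with $\ind_\mu(\rw)$; no limit is ever taken. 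You instead re-derive that fixed-point formula: localization of the universal toric genus plus rigidity of $\chi_y$ gives constancy in $u$, and your $t\to+\infty$ asymptotics of $f_y(\xi)/\xi$ along $u=t\ell$ (your $\ell$ is the paper's $v$) produce the factors $1$ and $-y$; your checks on genericity of $\ell$ and on polynomiality in $y$ extending the identity from $y>-1$ are sound, and the evaluation at infinity is legitimate because the finite localization sum is analytic along the generic ray and agrees with the constant power series near $u=0$. The trade-off: your argument is self-contained modulo rigidity, whereas the paper's is self-contained modulo the fixed-point formula --- and these are essentially the same input, since for isolated fixed points the Atiyah--Hirzebruch formula is exactly rigidity evaluated at infinity, so what you buy is an explicit derivation of the combinatorics from the analytic limit rather than an appeal to it. One correction to your fallback plan: \cite{AH} covers holomorphic (and, via Atiyah--Singer, almost complex) actions, but Theorem~\ref{TH} concerns a merely stable complex $c_\tau$, and the extension you actually need is Krichever's theorem from \cite{Krichever-74} (already in the paper's bibliography), which establishes rigidity of the $\chi_y$-genus for tangentially stable complex $S^1$-manifolds; citing that reference, rather than \cite{AH} or a direct proof, is the clean way to close the gap you correctly flagged --- a gap the paper itself glosses over by stating the Atiyah--Hirzebruch formula in the stable complex generality.
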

\begin{proof}
We first recall that the Atiyah-Hirzebruch formula~\cite{AH} states that the Hirzebruch $\chi _{y}$ - genus of a stable complex manifold $M^{2n}$ with an action of  $S^1$ can be computed as 
\[
\chi _{y}(M^{2n}) = \sum_{i}(-y)^{n(F_i)}\chi _{y}(F_i),
\]
where the sum goes over all $S^1$-fixed submanifolds $F_i\subset M^{2n}$, and $n(F_i)$ denotes the number of negative weights of the representation for $S^1$ in the normal bundle for $F_i$ in $M^{2n}$. 

Let  $S^1$ be a regular one-parameter subgroup in $T^k$. It is known, see for example~\cite{HS}, that the fixed point set for the canonical action of $S^1$ on $G/H$ coincide with the fixed point set for the canonical action of $T^k$. Therefore all fixed points for the given $S^1$-action  are isolated and given by the elements from $W_G/W_H$. Therefore the fixed points $\rw\in W_G/W_H$ correspond in our case to the manifolds $F_i$ from Atiyah-Hirzebruch formula. It implies that $\chi _{y}(F_i)=\chi _{y}(\rw)=\sg (\rw)$. 

We have  further that $T_{\rw}(G/H)=\gg^{\C} _{a_{1}(\rw)\rw(\alpha _{1})}\oplus\ldots \oplus\gg^{\C}_{a_{n}(\rw)\rw(\alpha _{n})}$, where $\rw\in W_G/W_H$. The inclusion  $S^1\subset T^k$ is given by the vector $v\in \Z^{k}$  and the induced representation  of $S^1$ in $T_{\rw}(G/H)$ is given by the rotation in each root subspace $\gg _{\rw(\alpha _{i})}$ with rotation numbers equal to  $\rho _{i}(\rw)=\left\langle \epsilon_{i}(\rw )\rw(\alpha _{i}), v\right\rangle$,  $1\leq i\leq n$. Note that, since we assume $S^1$ to be regular, the  numbers $\rho _{i}(\rw)$ are non zero for $\rw\in W_{G}/W_{H}$ and $1\leqslant i\leqslant n$. Therefore the number $n(\rw)$ of negative weights of the representation for $S^1$ in $T_{\rw}(G/H)$ is equal to the number of negative rotation numbers $\rho _{i}(\rw)$, $1\leqslant i\leqslant n$.
In this way the Atiyah-Hirzebruch formula gives that
\[
\chi _{y}(G/H, c_{\tau})=\sum _{\rw\in W_G/W_H}(-y)^{n(\rw)}\sg(\rw).
\]
Taking into account the expression for the $\sg (\rw)$ we can write this formula as
\begin{equation}\label{first}
\chi _{y}(G/H, c_{\tau})=\epsilon\cdot\sum _{\rw\in W_G/W_H}(-y)^{n(\rw)}\prod\limits_{i=1}^{n}\epsilon_{i}(\rw).
\end{equation}

Choose an arbitrary Weyl chamber $B$ on the Lie algebra $\TT$ for $T^k$. It is a non empty subset of $\TT$ defined with 
$B=\{ v\in \TT | \left\langle \epsilon _{\alpha}\alpha, v\right\rangle >0 \; \text{for}\; \alpha\in\Sigma\; \text{and}\; \epsilon_{\alpha} =\pm 1\},$ where $\Sigma$ denotes the root system for $G$ related to $T^k$.
The positive roots of the group $G$ related to this Weyl chamber uniquely define an ordering $\mu$ on the canonical coordinates $x_1,\ldots ,x_k$ on $\TT$.  The vice versa is obviously also trough: each ordering $\mu$ on $x_1,\ldots, x_k$ uniquely define the Weyl chamber and the corresponding system of positive roots. 

Let us consider one parameter subgroup $S^{1}\subset T^k$ given by the vector  $v\in B$ such that $v\neq 0$. This subgroup is regular as $\left\langle \epsilon_{\alpha}\alpha, v\right\rangle >0$ for any $\alpha \in \Sigma$ and some $\epsilon_{\alpha}=\pm 1$. It also implies that the number $n(\rw)$ of negative weights $\rho _{i}(\rw)$, $1\leqslant i\leqslant n$ of the representation for $S^1$ in $T_{\rw}(G/H)$ will be in this case equal to the number  of negative roots among $\epsilon_{1}(\rw)\rw(\alpha _1),\ldots ,\epsilon_{n}(\rw)\rw(\alpha _n)$ related to the Weyl chamber $B$. In other words it is the number of negative roots between $\epsilon_{1}(\rw)\rw(\alpha _1),\ldots ,\epsilon_{n}(\rw)\rw(\alpha _n)$ related to the ordering $\mu$ defined by the Weyl chamber $B$. We denote this number further by $\ind_{\mu}(\rw)$.
Using~\eqref{first} we obtain the desired formula for the Hirzebruch $\chi _{y}$ - genus for $(G/H, c_{\tau})$:
\begin{equation}\label{final}
\chi_{y}(G/H, c_{\tau})=\epsilon\cdot \sum _{\rw\in W_G/W_H}(-y)^{ind_{\mu}\rw}\prod\limits_{i=1}^{n}\epsilon_{i}(\rw).
\end{equation}
\end{proof}

\begin{rem} 
Theorem~\ref{TH} proves that the Hirzebruch $\chi _{y}$ - genus of a homogeneous space of consideration can be described only in terms of the representation theory of Lie groups.  We want to note that the motivation for our theorem comes from~\cite{P} where it is proved that the  Hirzebruch $\chi _{y}$ - genus of a quasitoric manifold can be expressed in terms of combinatorial data~\cite{BP} of a manifold. More precisely, in~\cite{P} it is obtained the formula which computes the Hirzebruch $\chi _{y}$ - genus of a quasitoric manifold in terms of signs and indexes of the vertices for the simple polytope, which corresponds to the orbit space of the torus action on a manifold. 
\end{rem}

We can rewrite the expression for $\chi(G/H, c_{\tau})$ in the following way.  Denote by $s_{i}(\rw)$ the sign for  $\rw (\alpha_i)$, $1\leqslant i\leqslant n$ regarding to the ordering $\mu$.  Then the sign for the root $\epsilon_{i}(\rw)\rw(\alpha_i)$ is $\epsilon_{i}(\rw)s_{i}(\rw)$. It gives  that
\[
ind _{\mu}\rw=\frac{1}{2}\sum_{i=1}^{n}(1-\epsilon_{i}(\rw)s_{i}(\rw)),
\]
what implies the following statement.
\begin{prop}
The Hirzebruch $\chi _{y}$ - genus for $(G/H, c_{\tau})$ is given by
\begin{equation}\label{H1}
\chi _{y}(G/H, c_{\tau}) = \epsilon\cdot \sum _{\rw\in W_G/W_H}(-y)^{\frac{1}{2}\sum\limits_{i=1}^{n}(1-\epsilon_{i}(\rw)s_{i}(\rw))}\cdot \prod_{i=1}^{n}\epsilon_{i}(\rw).
\end{equation}
\end{prop}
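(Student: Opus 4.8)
The plan is to derive \eqref{H1} directly from formula \eqref{H} established in \thmref{TH}, the only new ingredient being a combinatorial reexpression of the index $\ind_\mu(\rw)$ in terms of the signs $\epsilon_i(\rw)$ and $s_i(\rw)$. First I would recall that, by definition, $\ind_\mu(\rw)$ counts how many of the weights $\epsilon_1(\rw)\rw(\alpha_1),\ldots,\epsilon_n(\rw)\rw(\alpha_n)$ are negative with respect to the ordering $\mu$ determined by the chosen Weyl chamber $B$. Since $\mu$ is induced by a regular one-parameter subgroup, every root $\rw(\alpha_i)$ has a well-defined nonzero sign $s_i(\rw)=\pm1$ under $\mu$, so each weight $\epsilon_i(\rw)\rw(\alpha_i)$ is either strictly positive or strictly negative, and $\ind_\mu(\rw)$ is genuinely a sum of per-root contributions.

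The key observation is that the sign of the $i$-th weight $\epsilon_i(\rw)\rw(\alpha_i)$ is precisely the product $\epsilon_i(\rw)s_i(\rw)$: multiplying $\rw(\alpha_i)$, whose sign is $s_i(\rw)$, by the factor $\epsilon_i(\rw)=\pm1$ multiplies its sign by $\epsilon_i(\rw)$. Consequently the weight is negative exactly when $\epsilon_i(\rw)s_i(\rw)=-1$, and the indicator of this event is
\[
\frac{1-\epsilon_i(\rw)s_i(\rw)}{2},
\]
which equals $1$ when $\epsilon_i(\rw)s_i(\rw)=-1$ and $0$ when $\epsilon_i(\rw)s_i(\rw)=+1$. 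Summing over $1\leq i\leq n$ then yields
\[
\ind_\mu(\rw)=\sum_{i=1}^{n}\frac{1-\epsilon_i(\rw)s_i(\rw)}{2}=\frac{1}{2}\sum_{i=1}^{n}\bigl(1-\epsilon_i(\rw)s_i(\rw)\bigr).
\]

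Finally I would substitute this expression for $\ind_\mu(\rw)$ into the exponent of $(-y)$ in \eqref{H}, leaving the factor $\epsilon\cdot\prod_{i=1}^{n}\epsilon_i(\rw)$ untouched, which gives \eqref{H1} at once. There is no real obstacle here: the statement is a formal rewriting of \thmref{TH}, and the only point requiring a little care is the sign bookkeeping in the second paragraph, namely correctly identifying the sign of the product weight as $\epsilon_i(\rw)s_i(\rw)$ and verifying that the half-sum counts exactly the negative weights. Everything else is immediate substitution.
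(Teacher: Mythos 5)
Your proposal is correct and follows exactly the paper's own argument: the paper likewise observes that the sign of the weight $\epsilon_i(\rw)\rw(\alpha_i)$ under the ordering $\mu$ is the product $\epsilon_i(\rw)s_i(\rw)$, deduces $\ind_{\mu}(\rw)=\frac{1}{2}\sum_{i=1}^{n}\bigl(1-\epsilon_i(\rw)s_i(\rw)\bigr)$, and substitutes this into formula \eqref{H} of Theorem~\ref{TH}. Your indicator-function bookkeeping is just a slightly more explicit write-up of the same one-line substitution.
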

Using this expression for the  Hirzebruch $\chi _{y}$ - genus we obtain that the Todd genus can be described as follows.
\begin{cor}
The Todd genus for $(G/H, c_{\tau})$ is given by
\begin{equation}\label{T1}
Td(G/H, c_{\tau})=\epsilon \cdot \sum _{\stackrel{\rw\in W_G/W_H}{\epsilon_{i}(\rw)=s_{i}(\rw), 1\leq i\leq n}}\prod _{i=1}^{n}s_{i}(\rw).
\end{equation}
if the set $\{\rw\in W_G/W_H; \epsilon_{i}(\rw)=s_{i}(\rw)\; \text{for all}\; 1\leq i\leq n\}$ is nonempty. 
If this set is empty  then
\[
Td(G/H, c_{\tau}) = 0.
\]
\end{cor}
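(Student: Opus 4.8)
The plan is to specialize the $\chi_y$-genus formula \eqref{H1} to $y=0$, since the Todd genus is precisely $\chi_0(G/H, c_\tau)$. First I would observe that the exponent appearing in \eqref{H1}, namely $\frac{1}{2}\sum_{i=1}^{n}(1-\epsilon_i(\rw) s_i(\rw))$, is exactly the index $\ind_\mu(\rw)$, and in particular a nonnegative integer: each factor $\epsilon_i(\rw) s_i(\rw)$ equals $\pm 1$, so each summand $1 - \epsilon_i(\rw) s_i(\rw)$ lies in $\{0,2\}$, and the total is a nonnegative even integer divided by two.

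Next I would set $y = 0$ and use the convention $0^0 = 1$, which is consistent with the normalization $f_0(u) = u/(1-e^{-u})$ that produces the Todd genus. Under this specialization the factor $(-y)^{\ind_\mu(\rw)}$ evaluates to $1$ when $\ind_\mu(\rw) = 0$ and to $0$ otherwise, so only the fixed points of index zero contribute. Because every summand $1 - \epsilon_i(\rw) s_i(\rw)$ is itself nonnegative, the condition $\ind_\mu(\rw) = 0$ is equivalent to $\epsilon_i(\rw) s_i(\rw) = 1$ for every $i$, i.e.\ to $\epsilon_i(\rw) = s_i(\rw)$ for all $1 \le i \le n$. For such a surviving fixed point one has $\prod_{i=1}^{n} \epsilon_i(\rw) = \prod_{i=1}^{n} s_i(\rw)$, and substituting this into the surviving part of \eqref{H1} yields precisely \eqref{T1}.

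Finally, if no $\rw \in W_G/W_H$ satisfies $\epsilon_i(\rw) = s_i(\rw)$ for all $i$, then every fixed point has strictly positive index, so every term in \eqref{H1} carries a positive power of $(-y)$ and therefore vanishes at $y=0$; the resulting empty sum gives $Td(G/H, c_\tau) = 0$. The only point requiring any care is the $0^0 = 1$ convention at $y=0$, which amounts exactly to the assertion that an index-zero fixed point contributes $+\sg(\rw)$ to the Todd genus; beyond verifying this normalization, the argument is a direct substitution with no genuine obstacle.
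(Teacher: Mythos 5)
Your proposal is correct and follows essentially the same route as the paper: specialize the $\chi_y$-genus formula at $y=0$ with the convention $0^0=1$, observe that only fixed points with $\ind_{\mu}(\rw)=0$ survive, and note that $\ind_{\mu}(\rw)=0$ is equivalent to $\epsilon_{i}(\rw)=s_{i}(\rw)$ for all $i$, whence $\prod_{i}\epsilon_{i}(\rw)=\prod_{i}s_{i}(\rw)$ for the surviving terms. The paper's own proof is exactly this substitution, including the treatment of the empty case.
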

\begin{proof}
The Hirzebruch $\chi _{y}$ - genus for $y=0$ gives the Todd genus. Therefore the formula~\eqref{H} implies that
\begin{equation}\label{T}
Td(G/H, c_{\tau})=\epsilon \cdot \sum_{\stackrel{\rw\in W_G/W_H}{ind_{\mu}\rw=0}}\prod_{i=1}^{n}\epsilon_{i}(\rw).
\end{equation}
Note that we take into account that an indeterminacy $0^0$ in~\eqref{H} is resolved by setting $0^0=1$.
The description for $ind _{\mu}\rw$ implies that it is equal to zero if and only if
\[
1-\epsilon_{i}(\rw)s_{i}(\rw)=0\; \text{for}\; 1\leq i\leq n,
\]
what is equivalent to
\[
\epsilon_{i}(\rw)=s_{i}(\rw)\; \text{for}\; 1\leq i\leq n.
\] 
This proves the statement.
\end{proof}

The Hirzebruch $\chi _{y}$ - genus for $y=1$ gives the signature or the L-genus for $G/H$.  Then formula~\eqref{H1} gives the following expression for the signature for $G/H$.
\begin{cor}
\begin{equation}\label{S}
sign(G/H) = \epsilon\cdot \sum _{\rw\in W_G/W_H}(-1)^{ind _{\mu}(\rw)}\cdot \prod\limits_{i=1}^{n}\epsilon_{i}(\rw),
\end{equation}
where $\mu (\rw)=\frac{1}{2}\sum_{i=1}^{n}(1-\epsilon_{i}(\rw)s_{i}(\rw))$.
\end{cor}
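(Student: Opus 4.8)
The plan is to obtain \eqref{S} as the specialization at $y=1$ of the Hirzebruch $\chi_y$-genus. The essential input is the classical theorem of Hirzebruch, already recalled at the opening of this section, that the $\chi_y$-genus evaluated at $y=1$ coincides with the signature. Since Theorem~\ref{TH} already expresses $\chi_y(G/H, c_\tau)$ entirely through the signs and indices at the fixed points, the corollary will follow by a direct substitution rather than by any new geometric argument.

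First I would substitute $y=1$ into formula~\eqref{H},
\[
\chi_y(G/H, c_\tau) = \epsilon\cdot \sum_{\rw\in W_G/W_H}(-y)^{ind_\mu\rw}\prod_{i=1}^{n}\epsilon_i(\rw),
\]
which immediately gives
\[
sign(G/H) = \chi_1(G/H, c_\tau) = \epsilon\cdot \sum_{\rw\in W_G/W_H}(-1)^{ind_\mu\rw}\prod_{i=1}^{n}\epsilon_i(\rw).
\]
It then remains only to rewrite the exponent in the form claimed in the statement. Using the identity $ind_\mu\rw = \tfrac{1}{2}\sum_{i=1}^{n}(1-\epsilon_i(\rw)s_i(\rw))$ established just before equation~\eqref{H1}, I identify the quantity $\mu(\rw)$ appearing in the corollary with $ind_\mu\rw$, and this produces exactly \eqref{S}.

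There is essentially no analytic obstacle here; the single point worth remarking on is internal consistency. The individual summands on the right of \eqref{S} depend on the chosen $T^k$-equivariant stable complex structure $c_\tau$ (through the global sign $\epsilon$ and the local signs $\epsilon_i(\rw)$) and on the auxiliary ordering $\mu$ of the coordinates, whereas the signature is an invariant of the oriented cobordism class alone and, up to the orientation induced by $c_\tau$, does not see the stable complex structure at all. The equality $\chi_1 = sign$ guarantees that the total sum is nonetheless independent of these choices, so the formula is well posed; in particular the right-hand side of \eqref{S} reproduces the same value as the known formulas of~\cite{BHII} and~\cite{HS} for an invariant almost complex structure. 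Thus the whole proof reduces to recording the specialization $y=1$ and substituting the expression for $ind_\mu\rw$.
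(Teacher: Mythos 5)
Your proof is correct and follows essentially the same route as the paper: the paper likewise obtains \eqref{S} by setting $y=1$ in the $\chi_y$-genus formula \eqref{H1} (which is just \eqref{H} rewritten via the identity $ind_{\mu}\rw=\tfrac{1}{2}\sum_{i=1}^{n}(1-\epsilon_{i}(\rw)s_{i}(\rw))$), exactly as you do. Your reading of the statement's $\mu(\rw)$ as $ind_{\mu}(\rw)$, and your remark on well-posedness, are both consistent with the paper.
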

\begin{rem}
Formula~\eqref{S} can be also deduced from the  general formula for  the signature of a homogeneous space of positive Euler characteristic which is obtained in~\cite{HS}.It can be done applying  the same argument as in the second part of the proof of Theorem~\ref{TH}.
\end{rem}
\subsection{The case of an invariant almost complex structure.}
\numberwithin{thm}{subsection}
For an invariant almost complex structure $J$ we have that $\epsilon=1$ and $\epsilon_{i}(\rw)=1$ for all $1\leq i\leq n$. Therefore ~\eqref{H1} implies  
\begin{cor}
\begin{equation}\label{Hinv}
\chi _{y}(G/H, J)= \sum_{\rw\in W_G/W_H}(-y)^{\frac{1}{2}\sum_{i=1}^{n}(1-s_{i}(\rw))}.
\end{equation}
\end{cor}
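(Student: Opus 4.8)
The plan is to specialize the general formula~\eqref{H1} to the case $c_{\tau} = J$. First I would recall how the sign data $\epsilon$ and $\epsilon_i(\rw)$ entering~\eqref{H1} are defined: each $\epsilon_i(\rw) = \pm 1$ records whether the weight of the stable complex structure $c_{\tau}$ at the fixed point $\rw$ agrees with $\rw(\alpha_i)$ or with its negative, while $\epsilon = \pm 1$ records whether $J$ and $c_{\tau}$ induce the same orientation on $\tau(G/H)$. These are exactly the two pieces of information that distinguish an arbitrary equivariant stable complex structure from the fixed reference structure $J$.

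The key observation is that when the chosen equivariant stable complex structure is the invariant almost complex structure $J$ itself, both sign comparisons become trivial. Indeed, by the root description recalled in Subsection~\ref{Generalities} the $\alpha_1,\ldots,\alpha_n$ are by construction the roots of $J$, so the weights at the fixed point $\rw$ relative to $J$ are precisely $\rw(\alpha_1),\ldots,\rw(\alpha_n)$ with no sign correction; hence $\epsilon_i(\rw) = 1$ for every $i$ and every $\rw \in W_G/W_H$. Likewise $J$ and $J$ define the same orientation on $\tau(G/H)$, so $\epsilon = 1$.

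It then remains only to substitute. With $\epsilon = 1$ and $\epsilon_i(\rw) = 1$ the product $\prod_{i=1}^{n}\epsilon_i(\rw)$ equals $1$, and the exponent $\frac{1}{2}\sum_{i=1}^{n}(1 - \epsilon_i(\rw) s_i(\rw))$ in~\eqref{H1} collapses to $\frac{1}{2}\sum_{i=1}^{n}(1 - s_i(\rw))$, which is exactly~\eqref{Hinv}. There is no genuine obstacle here; the only point needing a word of care is to confirm that the identification $\epsilon_i(\rw) \equiv 1$ holds uniformly across all fixed points, and this is immediate from the fact that the weights relative to $J$ are obtained by simply applying $\rw$ to the roots of $J$, independently of which coset $\rw \in W_G/W_H$ one chooses.
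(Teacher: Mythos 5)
Your proposal is correct and follows exactly the paper's argument: the paper likewise derives~\eqref{Hinv} by noting that for $c_{\tau}=J$ one has $\epsilon=1$ and $\epsilon_{i}(\rw)=1$ for all $i$ and all $\rw\in W_G/W_H$, and then substituting into~\eqref{H1}. Your additional remark that the identification $\epsilon_i(\rw)\equiv 1$ holds uniformly over the fixed points is the right (and only) point of care, and it is justified precisely as you say, since the weights at $\rw$ relative to $J$ are $\rw(\alpha_1),\ldots,\rw(\alpha_n)$.
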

Using~\eqref{S} we obtain  the formula for the signature for $G/H$.
\begin{cor}\label{sign_almcom}
\[
sign(G/H)=\sum _{\rw\in W_G/W_H}(-1)^{ind_{\mu}(\rw)},
\]
where $ind_{\mu}(\rw)=\frac{1}{2}\sum_{i=1}^{n}(1-s_{i}(\rw))$.
\end{cor}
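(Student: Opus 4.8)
The plan is to obtain Corollary~\ref{sign_almcom} as a one-line specialization of the general signature formula~\eqref{S} (equivalently, of formula~\eqref{Hinv} at $y=1$). First I would invoke the observation recorded at the opening of this subsection: when the equivariant stable complex structure $c_{\tau}$ is the invariant almost complex structure $J$ itself, the orientation $J$ induces on $\tau(G/H)$ coincides with the one coming from the torus action, so $\epsilon=1$, and the weights at each fixed point $\rw\in W_{G}/W_{H}$ are exactly $\rw(\alpha_1),\dots,\rw(\alpha_n)$ with no extra sign ambiguity, so that $\epsilon_{i}(\rw)=1$ for all $i$ and all $\rw$.

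Next I would substitute $\epsilon=1$ and $\epsilon_{i}(\rw)=1$ directly into~\eqref{S}. The product $\prod_{i=1}^{n}\epsilon_{i}(\rw)$ collapses to $1$ for every $\rw$, and the overall factor $\epsilon$ drops out. Simultaneously the exponent simplifies: starting from $ind_{\mu}(\rw)=\frac{1}{2}\sum_{i=1}^{n}(1-\epsilon_{i}(\rw)s_{i}(\rw))$ and putting $\epsilon_{i}(\rw)=1$ gives $ind_{\mu}(\rw)=\frac{1}{2}\sum_{i=1}^{n}(1-s_{i}(\rw))$. Combining these, \eqref{S} reduces to $sign(G/H)=\sum_{\rw\in W_{G}/W_{H}}(-1)^{ind_{\mu}(\rw)}$, which is precisely the claimed identity.

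There is no real obstacle here, since this is a pure specialization; the only points worth checking are bookkeeping. One should confirm that $ind_{\mu}(\rw)$ is genuinely a nonnegative integer, namely the number of indices $i$ with $s_{i}(\rw)=-1$ (the count of roots among $\rw(\alpha_1),\dots,\rw(\alpha_n)$ that are negative for the ordering $\mu$); this is automatic because each summand $\frac{1}{2}(1-s_{i}(\rw))$ equals $0$ or $1$, and the nonvanishing of all the relevant rotation numbers is guaranteed by the regularity of the chosen one-parameter subgroup exactly as in the proof of Theorem~\ref{TH}. As an alternative derivation I would instead set $y=1$ in the already-established formula~\eqref{Hinv} for $\chi_{y}(G/H,J)$ and use the classical fact that the Hirzebruch $\chi_{y}$-genus specializes to the signature at $y=1$; this reproduces the same sum immediately and yields the stated expression for $ind_{\mu}(\rw)$.
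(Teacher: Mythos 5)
Your proposal is correct and follows essentially the same route as the paper: the paper obtains Corollary~\ref{sign_almcom} precisely by substituting $\epsilon=1$ and $\epsilon_{i}(\rw)=1$ (valid since $c_{\tau}=J$) into the general signature formula~\eqref{S}, which is also your primary derivation, and your alternative of setting $y=1$ in~\eqref{Hinv} is the same specialization seen from the $\chi_{y}$-genus side. Your bookkeeping remarks (that $ind_{\mu}(\rw)$ counts the negative roots among $\rw(\alpha_1),\dots,\rw(\alpha_n)$ and that regularity guarantees nonvanishing rotation numbers) match what the paper established in the proof of Theorem~\ref{TH}.
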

\begin{ex}
It follows from Proposition~\ref{pairs} that for flag manifolds $U(n)/T^n$ the fixed points for the canonical action of $T^n$ can be divided into the pairs $(\rw ,{\tilde \rw})$ such that for an arbitrary invariant almost complex structure $\ind_{\mu}({\tilde \rw}) = \ind_{\mu}(\rw )\pm 2l\pm 1$, for $l\geq 0$. Together with Corollary~\ref{sign_almcom} this provides one more proof that the  signature of the flag manifolds vanishes.
\end{ex}
\begin{ex}\label{Gr-sign}
Consider the Grassmann manifold $G_{4,2}$. Take the ordering $x_1>x_2>x_3>x_4$ on the canonical coordinates for the Lie algebra of the maximal torus $T^4$. The roots for the invariant complex structure are: $x_1-x_3,x_1-x_4,x_2-x_3,x_2-x_4$ and the weights at the  fixed point are given by the action of $S_4/S_2\times S_2$ on these roots. It implies that the indexes of fixeds point are: $ind(1234)=0,ind(3214)=2,ind(4213)=3, ind(1324)=1, ind(1432)=2, ind(3412)=4$. Then Corollary~\ref{sign_almcom} implies that $sign(G_{4,2})=2$.  In the same way we obtain that $sign(G_{6,2,2})=6$. Recall that the signature of Grassmannians $G_{n,k}$ is explicitly computed in~\cite{SH}, while in~\cite{HS} and~\cite{SL}, among the other examples, the signature of compact symmetric spaces is computed . 
\end{ex}

For  the Todd genus for $(G/H, J)$ we deduce from~\eqref{Hinv} that it is equal to the number of those fixed points $\rw$ for which all roots $\rw (\alpha _{i})$, $1\leqslant i\leqslant n$  are positive:
\begin{equation}\label{Toddinv}
Td(G/H, J) = \| \{\rw\in W_G/W_H\; |\; s_{i}(\rw)= 1\; \text{for}\;\text{all}\; 1\leq i\leq k \}\|.
\end{equation}
We can elaborate this further. We use the following criterion  for integrability of an invariant almost complex structure on $G/H$ proved in~\cite{BH}.  
\begin{prop}\label{PBH}
Let $\theta$ be a system of positive roots for $H$. The roots of an invariant complex structure form a closed system $\psi$ such that $\theta \cup \psi$ is a positive system of roots for $G$. Conversely, a closed set $\psi$ of complementary roots such that $\theta \cup \psi$ is the set of positive roots of $G$ for a suitable ordering, is the system of roots of an invariant complex structure of $G/H$.
\end{prop}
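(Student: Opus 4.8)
The plan is to translate integrability of the invariant almost complex structure $J$ into a root-combinatorial statement and then match that statement with the requirement that $\theta\cup\psi$ be a positive system. First I would invoke the standard criterion for a $G$-invariant almost complex structure on $G/H$: since $J$ is invariant, its Nijenhuis tensor is determined by its value at the base point, and by the Newlander--Nirenberg theorem $J$ is integrable precisely when the $(1,0)$-eigenspace $\mathfrak{m}^{1,0}\subset\gg^{\C}$ together with $\hh^{\C}$ forms a complex subalgebra $\mathfrak{q}=\hh^{\C}\oplus\mathfrak{m}^{1,0}$ of $\gg^{\C}$. Using the root description recalled in Subsection~\ref{Generalities}, the space $\mathfrak{m}^{1,0}$ is the sum of the one-dimensional root spaces attached to the roots of $J$, so $\mathfrak{m}^{1,0}=\bigoplus_{\gamma\in\psi}\gg^{\C}_{\gamma}$ with $\psi=\{\varepsilon_1\alpha_1,\dots,\varepsilon_n\alpha_n\}$, and hence $\mathfrak{q}=\TT^{\C}\oplus\bigoplus_{\gamma\in R}\gg^{\C}_{\gamma}$ with $R=(\pm\theta)\cup\psi$.

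Next I would reduce ``$\mathfrak{q}$ is a subalgebra'' to ``$R$ is a closed set of roots'', via $[\gg^{\C}_{\gamma},\gg^{\C}_{\delta}]\subseteq\gg^{\C}_{\gamma+\delta}$. Two of the three bracket types are automatic: $[\hh^{\C},\hh^{\C}]\subseteq\hh^{\C}$ because $\hh$ is a subalgebra, and $[\hh^{\C},\mathfrak{m}^{1,0}]\subseteq\mathfrak{m}^{1,0}$ because $J$ is invariant under the isotropy representation of $H$, so $\mathfrak{m}^{1,0}$ is $\ad(\hh)$-stable. The only substantive condition is $[\mathfrak{m}^{1,0},\mathfrak{m}^{1,0}]\subseteq\mathfrak{q}$, i.e. that whenever $\gamma,\delta\in\psi$ and $\gamma+\delta$ is a root one has $\gamma+\delta\in R$; concretely, a sum of two roots of $J$ that is again a complementary root must itself be a root of $J$. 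This is exactly the assertion that $J$ is integrable if and only if $R$ is closed.

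It then remains to identify closedness of $R$ with $P:=\theta\cup\psi$ being a positive system. Since $\theta$ selects one of $\{\pm\beta\}$ for each root $\beta$ of $H$ and $\psi$ selects one of $\{\pm\alpha_i\}$ for each complementary root, $P$ already contains exactly one root of every pair $\{\gamma,-\gamma\}$; thus $P$ is a positive system precisely when it is closed, and the proposition reduces to the equivalence ``$R$ closed $\iff$ $P$ closed''. For the converse (the ``Conversely'' clause), if $P$ is a positive system then $R=P\cup(-\theta)$ is the root set of the parabolic subalgebra cut out by $P$ with Levi factor $\hh$, hence closed; one checks the mixed brackets $[\gg^{\C}_{-\beta},\gg^{\C}_{\gamma}]$ ($\beta\in\theta$, $\gamma\in\psi$) land back in $R$ again using $\ad(\hh)$-invariance of $\mathfrak{m}^{1,0}$. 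For the forward direction, the step I expect to be the main obstacle is ruling out, under the hypothesis that $R$ is closed, that a sum $\gamma+\delta$ of two roots of $J$ falls into $-\theta$: were $\gamma+\delta=-\beta$ with $\beta\in\theta$, then $\beta+\delta=-\gamma$ would be a root lying \emph{outside} $R$ (as $\gamma\in\psi$ forces $-\gamma\notin\psi$ and $-\gamma$ is complementary), contradicting closedness of $R$. This short cancellation argument, combined with the invariance-driven bracket computations, forces $\gamma+\delta\in\theta$ whenever it is a root of $H$, completing the identification of integrability of $J$ with $\theta\cup\psi$ being a positive system.
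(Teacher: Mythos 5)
The paper itself offers no proof of Proposition~\ref{PBH}: it is imported from Borel--Hirzebruch~\cite{BH} (``We use the following criterion \dots proved in~\cite{BH}''), so your argument has to stand on its own. Its skeleton --- integrability of an invariant $J$ is equivalent, via Newlander--Nirenberg, to $\mathfrak{q}=\hh^{\C}\oplus\mathfrak{m}^{1,0}$ being a complex subalgebra of $\gg^{\C}$, after which everything is root combinatorics --- is indeed the classical route, and your cancellation argument ruling out $\gamma+\delta\in-\theta$ is correct. But the forward direction is incomplete: the proposition asserts that $\psi$ \emph{itself} is a closed system, while you stop at closedness of $P=\theta\cup\psi$ and explicitly allow a sum of two roots of $J$ to land in $\theta$. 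In fact that cannot happen, and your own trick rules it out: if $\gamma+\delta=\beta\in\theta$, then $(-\beta)+\gamma=-\delta$ is a root with $-\beta\in R$ and $\gamma\in R$, so closedness of $R$ forces $-\delta\in R$, hence $-\delta\in\psi$ (it is complementary), contradicting $\delta\in\psi$. Without this step, closedness of $\psi$ --- part of the conclusion, and the hypothesis the converse relies on --- is never established.

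The converse direction contains the serious gap. First, it is circular: you justify the mixed brackets ``using $\ad(\hh)$-invariance of $\mathfrak{m}^{1,0}$'', but in the converse $\psi$ is merely a set of complementary roots, and proving that $J_\psi$ commutes with the isotropy representation of $H$ --- i.e.\ exactly that $[\hh^{\C},\mathfrak{m}^{1,0}]\subseteq\mathfrak{m}^{1,0}$ --- is part of showing that $\psi$ defines an \emph{invariant} structure at all. Second, the claim that positivity of $P$ alone makes $R=\Delta_H\cup\psi$ ``the root set of the parabolic subalgebra cut out by $P$ with Levi factor $\hh$, hence closed'' is false: $\hh^{\C}$ is a Levi factor of a parabolic containing the Borel of $P$ only when $\theta$ is generated by simple roots of $P$, and this fails for equal-rank subgroups such as $A_1\times A_1\subset G_2$ (a pair the paper itself lists in Section 6). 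Concretely, with $\alpha$ (short) and $\beta$ (long) the simple roots of $G_2$ and $\Delta_H=\{\pm\alpha,\pm(3\alpha+2\beta)\}$, the standard positive system gives $\psi=\{\beta,\alpha+\beta,2\alpha+\beta,3\alpha+\beta\}$, and $R$ is \emph{not} closed, since $-(3\alpha+2\beta)+\beta=-(3\alpha+\beta)\notin R$; in fact $G_2/SO(4)$ admits no invariant almost complex structure at all, yet positive systems $P$ certainly exist, so no argument using only positivity of $P$ can work. The repair must use the hypothesis you never invoke, closedness of $\psi$: for $\beta\in\theta$ and $\gamma\in\psi$ with $\gamma-\beta$ a root, if $\gamma-\beta\notin\psi$ then $\beta-\gamma\in\psi$, and $(\beta-\gamma)+\gamma=\beta$ would lie in $\psi$ by closedness of $\psi$, which is absurd since $\beta$ is not complementary; together with the easy case $\beta+\gamma\in P$ this yields $\ad(\hh^{\C})$-invariance (hence $H$-invariance, $H$ being connected), and only then do closedness of $R$ and integrability follow.
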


Let $J$ be an  integrable invariant almost complex structure on $G/H$ and $\theta$ an arbitrary system of positive roots for $H$. One can  choose an ordering $\mu$ on the canonical coordinates for $\TT$ related to $G$,  such that the roots from  $\theta$ and the roots  $\alpha _1,\ldots ,\alpha _n$ that define $J$ give the system of positive roots for $G$ related  to the ordering $\mu$. We will show that the identity element $e$ is the only fixed point $\rw \in W_{G}/W_{H}$ for which all the weights $\rw (\alpha _{i})$, $1\leqslant i\leqslant n$ are positive related to the ordering  $\mu$.
\begin{lem}\label{UN}
Let $\mu$ be an ordering on the canonical coordinates for $\TT$ related to $G$ such that $\theta \cup \psi$ is a system of positive roots for $G$, where $\theta$ is a system of  positive  roots for $H$ and $\psi$ is some system of  
complementary roots for $G$ related to $H$. Then for any $\rw \in W_{G}/W_{H}$ such that $\rw \neq e$ the system $\rw(\psi)$ is not positive related to the ordering $\mu$.
\end{lem}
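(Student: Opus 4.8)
The plan is to pass to representatives in $W_G$ and rephrase the positivity of $\rw(\psi)$ in terms of inversion sets. First I would observe that, since $\theta\cup\psi=\Sigma^{+}$ is a positive system for $G$ and $\theta$ is a positive system for $H$ contained in $\Sigma^{+}$, necessarily $\theta=\Sigma_{H}\cap\Sigma^{+}$ is the standard positive system of $H$ and $\psi=\Sigma^{+}\setminus\theta$ is exactly the set of positive complementary roots. Because $\psi$ is the root system of an invariant complex structure it is stable under $W_H$ (the condition that $\theta\cup\psi$ be a positive system forces $\theta+\psi\subseteq\psi$ whenever the sum is a root, which in turn makes $\psi$ invariant under $W_H$), so for a coset $\rw\in W_G/W_H$ the set $\rw(\psi)$ is well defined independently of the chosen representative $w\in W_G$. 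For such a representative introduce the inversion set $N(w)=\{\beta\in\Sigma^{+}:w(\beta)<0\}$. Then $\rw(\psi)$ is positive relative to $\mu$ precisely when $w(\psi)\subseteq\Sigma^{+}$, i.e. when $N(w)\cap\psi=\varnothing$; since $N(w)\subseteq\Sigma^{+}=\theta\sqcup\psi$, this is equivalent to $N(w)\subseteq\theta$. Thus the lemma reduces to the purely combinatorial claim that $N(w)\subseteq\theta$ forces $w\in W_H$.

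I would prove this claim by induction on the length $\ell(w)=|N(w)|$. If $\ell(w)=0$ then $w=e\in W_H$. If $\ell(w)>0$, choose a simple root $\alpha$ of $G$ with $w(\alpha)<0$; then $\alpha\in N(w)\subseteq\theta\subseteq\Sigma_{H}$. A simple root of $G$ lying in $\Sigma_{H}^{+}=\theta$ cannot be written as a sum of two elements of $\theta\subseteq\Sigma^{+}$, so it is a simple root of $H$ and $s_{\alpha}\in W_{H}$. Setting $w'=ws_{\alpha}$ we have $\ell(w')=\ell(w)-1$, and the standard inversion-set recursion gives $N(w')=s_{\alpha}\bigl(N(w)\setminus\{\alpha\}\bigr)$. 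Since $\alpha$ is simple in $H$, $s_{\alpha}$ permutes $\theta\setminus\{\alpha\}$, whence $N(w')\subseteq\theta$. By the induction hypothesis $w'\in W_{H}$, and therefore $w=w's_{\alpha}\in W_{H}$.

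Combining the two steps: if $\rw\neq e$ in $W_G/W_H$ then no representative $w$ lies in $W_H$, so $N(w)\not\subseteq\theta$, i.e. some positive complementary root of $\psi$ is sent to a negative root; hence $\rw(\psi)$ is not positive with respect to $\mu$, which is the assertion of the lemma. The only delicate points are the two facts about inversion sets that drive the induction, namely that a simple root of $G$ contained in $\theta$ is automatically simple in $H$, and that right multiplication by such a reflection $s_{\alpha}$ both drops the length by one and keeps the inversion set inside $\theta$. Once these are in place the argument is a clean descending induction, and I expect the careful verification of the recursion $N(ws_{\alpha})=s_{\alpha}\bigl(N(w)\setminus\{\alpha\}\bigr)$ to be the main technical step to record.
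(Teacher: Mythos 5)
Your main argument is correct, and it takes a genuinely different route from the paper's. The paper argues globally with chambers: assuming $\rw(\psi)$ positive, it uses transitivity of $W_{H}$ on the positive systems of the roots of $H$ to produce $\ru\in W_{H}$ making $\ru(\rw(\theta))$ positive, invokes the $W_{H}$-invariance of $\psi$ to conclude that $\ru\circ\rw$ carries the positive system $\theta\cup\psi$ to itself, and then uses the simply transitive action of $W_{G}$ on positive systems to force $\ru\circ\rw=e$, i.e.\ $\rw=e$ in $W_{G}/W_{H}$. Your inversion-set induction --- the claim that $N(w)\subseteq\theta$ forces $w\in W_{H}$, run via the recursion $N(ws_{\alpha})=s_{\alpha}\bigl(N(w)\setminus\{\alpha\}\bigr)$ and the transfer of simplicity from $G$ to $H$ --- is instead the standard minimal-length-coset-representative argument, and all of its ingredients check out: $\theta=\Sigma_{H}\cap\Sigma^{+}$, a $G$-simple root in $\theta$ is indecomposable in $\theta$ hence $H$-simple, $s_{\alpha}$ permutes $\theta\setminus\{\alpha\}$, and the length recursion is as you state. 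What your route buys is that it is more elementary (no appeal to transitivity on chambers) and that it actually proves the stronger assertion that $w(\psi)$ fails to be positive for \emph{every} representative $w\notin W_{H}$ of a coset $\rw\neq e$; in particular the lemma follows with no discussion of representative-independence at all.

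That matters, because the one genuinely false step in your write-up is the parenthetical justification of $W_{H}$-invariance of $\psi$. The implication ``$\theta\cup\psi$ a positive system $\Rightarrow$ $\theta+\psi\subseteq\psi$ whenever the sum is a root $\Rightarrow$ $\psi$ is $W_{H}$-invariant'' breaks at the second arrow: a reflection $s_{\beta}$ with $\beta\in\theta$ can move $\gamma\in\psi$ \emph{down} the $\beta$-string to $\gamma-k\beta$, and $\psi-\theta\subseteq\psi$ is not forced. Concretely, take $G=Sp(2)$, $H=Sp(1)\times Sp(1)$, $\theta=\{2x_1,2x_2\}$, $\psi=\{x_1-x_2,\,x_1+x_2\}$: then $\theta\cup\psi$ is the standard positive system of $C_2$ and one checks $\theta+\psi\subseteq\psi$ whenever the sum is a root, yet $s_{2x_1}(x_1-x_2)=-x_1-x_2\notin\psi$. (This $\psi$ is not closed, since $(x_1-x_2)+(x_1+x_2)=2x_1\notin\psi$, so it does not come from an invariant complex structure.) In the paper's application the invariance of $\psi$ is not combinatorial at all: there $\psi$ is the root set of an integrable invariant almost complex structure, via Proposition~\ref{PBH}, and invariance under $W_{H}$ follows from invariance of $J$ under the isotropy representation of $H$. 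Since your induction never uses this invariance --- it only needs that every representative of a coset $\rw\neq e$ lies outside $W_{H}$ --- your proof stands once the parenthetical is deleted or replaced by that geometric reference.
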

\begin{proof}
Let us assume that $\rw (\psi)$  is a positive root system related  to the ordering $\mu$ for some $\rw \in W_{G}/W_{H}$, or in other word related to the Weyl chamber $B$ defined by $\mu$.  It is well known~\cite{Adams} that the Weyl group $W_{G}$ permutes the systems  of positive roots for $G$ what implies  that, for any $\rw \in W_{G}$, the system $\rw (\theta \cup \psi)$ is positive related to the Weyl chamber $\rw(B)$, but it may not be positive related to the chamber $B$. It follows that the  system  $\rw (\theta)$ is positive related to  the Weyl chamber $\rw(B)$, but it is not positive related to the Weyl chamber $B$. The positive root system $\theta ^{'}$ for $H$ related to $B$, in the new canonical coordinates given by the action of $\rw$,  we obtain by the action of an element $\ru \in W_{H}$ on $\rw (\theta)$, i.~e.~ $\theta ^{'}=\ru (\rw (\theta))$.  Note that, as $\psi$ is the system of complementary roots, it is invariant under the action of $W_{H}$, what implies that $\rw (\psi)$ will be invariant under the action of $W_{H}$ given in the new canonical coordinates determined by the action of $\rw$. Therefore we obtain that the root system $\ru (\rw (\psi \cup \theta )) = \rw (\psi)\cup \theta^{'}$ for $G$ is  positive related to $B$. It implies that it coincides with $\psi \cup \theta$, what means that $\ru \circ \rw$ belongs to $W_{H}$, i.~e.~$\rw \in W_{H}$. Therefore we obtain that $\rw=e$ in $W_{G}/W_{H}$.        
\end{proof}
Using Proposition~\ref{PBH} and Lemma~\ref{UN} we deduce from~\eqref{Toddinv} explicit values for the Todd genus of an invariant almost complex structure on $G/H$. 
\begin{cor}
Let $J$ be   an invariant almost complex structure on $G/H$. 
\begin{itemize}
\item If $J$ is integrable then 
\[
Td(G/H, J) =1.
\]
\item If $J$ is not integrable then
\[
Td(G/H, J)=0.
\]
\end{itemize}
\end{cor}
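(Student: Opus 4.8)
The plan is to extract everything from the counting formula~\eqref{Toddinv}: it expresses $Td(G/H,J)$ as the number of fixed points $\rw\in W_G/W_H$ at which all weights $\rw(\alpha_1),\dots,\rw(\alpha_n)$ are positive for the chosen ordering $\mu$ (that is, $s_i(\rw)=1$ for every $i$). Since the Todd genus is a cobordism invariant, Theorem~\ref{TH} yields the same number for every ordering, so I may use whichever $\mu$ is convenient. Writing $\psi=\{\alpha_1,\dots,\alpha_n\}$ for the roots of $J$, the point $\rw$ contributes exactly when $\rw(\psi)\subseteq\Sigma^{+}_{\mu}$; transporting by $\rw$, this says that $\psi$ is the complementary part of the positive system $\rw^{-1}(\Sigma^{+}_{\mu})$. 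Hence $Td(G/H,J)$ counts the $W_H$-orbits of positive systems of $G$ whose complementary part is $\psi$, and the whole corollary reduces to showing that such a positive system exists (and is then unique up to $W_H$) precisely when $J$ is integrable.

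For the integrable case I would invoke Proposition~\ref{PBH} at once: integrability provides a positive system $\theta$ for $H$ and an ordering $\mu$ with $\theta\cup\psi=\Sigma^{+}_{\mu}$, so $\rw=e$ contributes because every $\alpha_i$ is $\mu$-positive. Lemma~\ref{UN} then supplies uniqueness: for each $\rw\neq e$ in $W_G/W_H$ the system $\rw(\psi)$ is not $\mu$-positive, so no other fixed point contributes. Exactly one fixed point survives and $Td(G/H,J)=1$.

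The non-integrable case carries the real content, and it amounts to the converse half of the dictionary above: if $\psi$ occurs as the complementary part of some positive system, then $\psi$ is closed. I argue by contradiction. If the count were nonzero, the reduction above furnishes a $v\in\TT$ with $\langle\alpha,v\rangle>0$ for all $\alpha\in\psi$. Because $J$ is \emph{invariant}, its roots $\psi$ are stable under the isotropy action, hence under $W_H$, so averaging $v$ over $W_H$ produces a $W_H$-fixed vector $\bar v$ still satisfying $\langle\alpha,\bar v\rangle>0$ on $\psi$. Being $W_H$-fixed, $\bar v$ is orthogonal to every root of $H$ while pairing nontrivially (positively on $\psi$, negatively on $-\psi$) with every complementary root; thus $H=Z_G(S)$ for the subtorus $S$ generated by $\bar v$, and $\psi$ is precisely the set of complementary roots on which $\langle\,\cdot\,,\bar v\rangle$ is positive. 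This positive part of a torus grading is automatically closed: if $\alpha,\beta\in\psi$ and $\alpha+\beta$ is a root, then $\langle\alpha+\beta,\bar v\rangle>0$ forces $\alpha+\beta$ to be a positive complementary root, hence to lie in $\psi$. Since $\psi$ is closed and, together with a positive system $\theta$ of $H$ refining the $\bar v$-grading, forms a positive system of $G$, Proposition~\ref{PBH} exhibits $\psi$ as the root system of an invariant \emph{complex} structure, contradicting non-integrability. Therefore no fixed point has all weights positive and $Td(G/H,J)=0$.

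I expect the last implication to be the main obstacle: upgrading ``$\psi$ lies in a positive system'' to ``$\psi$ is closed''. An arbitrary subset of positive roots need not be closed, and what rescues the argument is the $W_H$-invariance forced by the invariance of $J$; the averaging step converts the ordering direction into one adapted to $H$, exhibiting $H$ as a torus centralizer and $\psi$ as a genuine grading. Granting this, the integrable case is immediate from Proposition~\ref{PBH} and Lemma~\ref{UN}, and the two cases combine to give the stated values $1$ and $0$ of the Todd genus.
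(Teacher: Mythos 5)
Your proof is correct, and its skeleton is the same as the paper's: the fixed-point count \eqref{Toddinv}, Proposition~\ref{PBH} to produce an ordering for which the identity coset contributes, and Lemma~\ref{UN} to exclude every other coset, giving $Td(G/H,J)=1$ when $J$ is integrable. Where you genuinely go beyond the paper is the non-integrable case. The paper dispatches it with the single sentence that the corollary follows from Proposition~\ref{PBH} and Lemma~\ref{UN}; but read literally, the converse half of Proposition~\ref{PBH} requires the system $\psi$ of roots of $J$ to be \emph{closed}, and ``$\psi$ is the complementary part of some positive system'' does not by itself imply closedness. For instance, for $SU(3)/S(U(1)\times U(2))$ the complementary part $\{x_3-x_1,\ x_1-x_2\}$ of the positive system attached to the ordering $x_3>x_1>x_2$ is not closed, since $(x_3-x_1)+(x_1-x_2)=x_3-x_2$ is a root of $H$ (of course this set is not $W_H$-invariant, so it defines no invariant structure). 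The step that rescues the deduction is exactly the one you isolate and prove: the roots of an \emph{invariant} structure form a $W_H$-invariant set, so averaging a regular vector positive on $\psi$ over $W_H$ gives a $W_H$-fixed vector $\bar v$ which is still positive on $\psi$ and automatically vanishes on the roots of $H$; then for $\alpha,\beta\in\psi$ with $\alpha+\beta$ a root, $\left\langle \alpha+\beta,\bar v\right\rangle>0$ excludes $\alpha+\beta$ both from $\Sigma_H$ and from $-\psi$, so $\psi$ is closed, Proposition~\ref{PBH} yields an invariant complex structure with roots $\psi$, and that structure must be $J$ itself since an invariant almost complex structure is determined by its roots --- contradicting non-integrability. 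So your route coincides with the paper's where the paper is explicit, and in addition supplies a correct proof of the one implication (some fixed point has all weights positive $\Rightarrow$ $J$ is integrable) that the paper leaves implicit, presumably deferring it to Borel--Hirzebruch.
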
   
\begin{ex}
We provide computation of the Hirzebruch $\chi _{y}$ genus and the Todd genus  for $\C P^{3}$ endowed with equivariant stable complex structures $c_{\tau}$. Because of dimension reasons the signature of $\C P^{3}$ is trivial. The roots of the standard complex structure  are $\alpha _{1}=x_1-x_4, \alpha _{2}=x_2-x_4, \alpha_{3}=x_3-x_4$, where $x_1,x_2,x_3,x_4$ are the canonical coordinates for $U(4)$. By the result of~\cite{Buch_Terz} we have on $\C P^{3}$, up to conjugation, eight equivariant stable complex structures whose weights at the fixed points are $\epsilon_{i}(\rw)\rw(\alpha_{i})$, where $\rw \in \Z _{3}$ and the coefficients $\epsilon_{i}(\rw)$ satisfy the relations $\epsilon_{1}(0)=\epsilon_{1}(3)=\epsilon_{1}(2)$, $\epsilon_{2}(0)=\epsilon_{2}(1)=\epsilon_{2}(3)$, $\epsilon_{3}(0)=\epsilon_{3}(1)=\epsilon_{3}(2)$ and $\epsilon_{1}(1)=\epsilon_{2}(2)=\epsilon_{3}(3)$, so they are determined by $\epsilon_{1}(0), \epsilon_{2}(0), \epsilon_{3}(0)$ and $\epsilon_{1}(1)$.
Let us fix an ordering $\mu$ given by $x_1>x_2>x_3>x_4$. 
\begin{itemize}
\item For $\epsilon_{1}(0)=\epsilon_{2}(0)=\epsilon_{3}(0)=\epsilon_{1}$ we obtain the standard complex structure $J$. Using Corollary~\ref{Hinv} we obtain that
\[
\chi _{y}(\C P^{3}, J)=1-y+y^2-y^3,
\]
and therefore $Td (\C P^{3}, J)=1$.
\item Let $c_{\tau}$  be determined by the values $\epsilon_{1}(0)=\epsilon_{2}(0)=\epsilon_{3}(0)=1$ and $\epsilon_{1}(1)=-1$. We have in this case that $\epsilon =-1$ and  Theorem~\ref{TH} gives that
\[
\chi _{y}(\C P^{3}, c_{\tau}) = y^2 -y,
\]
what further implies that $Td (\C P^{3}, c_{\tau})= 0$.
The same result we obtain for the stable complex structures determined by the values $\epsilon_{1}(0)=\epsilon_{2}(0)=\epsilon_{1}(1)=1$ and $\epsilon_{3}(0)=-1$, by the values $\epsilon_{1}(0)=\epsilon_{3}(0)=\epsilon_{1}(1)=1$ and $\epsilon_{2}(0)=-1$, and by the values $\epsilon_{2}(0)=\epsilon_{3}(0)=\epsilon_{1}(1)=1$ and $\epsilon_{1}(0)=-1$.
\item  For the stable complex structure $c_{\tau}$ determined by $\epsilon_{1}(0)=\epsilon_{2}(0)=1$ and $\epsilon_{3}(0)=\epsilon_{1}(1)=-1$ we obtain that
\[    
\chi _{y}(\C P^{3}, c_{\tau})= 0.
\]
The same Hirzebruch $\chi _{y}$ - genus correspond to the stable complex structures determined by the values $\epsilon_{1}(0)=\epsilon_{3}(0)=1$ and $\epsilon_{2}(0)=\epsilon_{1}(1)=-1$, and by the values $\epsilon_{1}(0)=\epsilon_{1}(1)=1$ and $\epsilon_{2}(0)=\epsilon_{3}(0)=-1$. Proceeding as in~\cite{Buch_Terz} we can show that $[(\C P^{3}, c_{\tau})]=0$ for every stable complex structure of this case.
\end{itemize}
\end{ex}      
\section{Compatible almost complex homogeneous fibrations}
\numberwithin{thm}{section}
Let $G$ be a compact connected Lie group and $H$ and $K$ its closed connected subgroup such that $K\subset H$ and $\rk G=\rk H=\rk K$. We look at the homogeneous fibration
\[
H/K\longrightarrow G/K\longrightarrow G/H\ .
\]
Assume that we are given invariant almost complex structures $J_1$ and $J_2$ on $H/K$ and $G/H$ respectively. This means that $J_1$ is invariant under the canonical action of $H$ on $H/K$ and $J_2$ is invariant under the canonical action of $G$ on $G/H$. Let $\alpha _1,\ldots ,\alpha _l$ be the roots for the structure $J_1$ on $T_{e}(H/K)$ and $\alpha _{l+1},\ldots \alpha _{n}$ be the roots for the structure $J_2$ on $T_{e}(G/H)$. Here $\dim H/K =2l$, $\dim G/H= 2(n-l)$ and $\dim G/K=2n$.  
Then $\alpha_1,\ldots ,\alpha_{n}$ will be complementary roots for $G$ related to $K$. We define the complex structure 
$J$ on $T_{e}(G/K)\cong T_{e}(H/K)\oplus T_{e}(G/H)$ to be  $J_1$  on $T_{e}(H/K)$ and $J_2$ on $T_{e}(G/H)$. 
The structure $J$ is invariant under the isotropy representation for
$K$ at $T_{e}(G/K)$ and therefore it defines an invariant almost complex structure on $G/K$.
\begin{rem} 
The integrability criterion for an invariant almost complex structure on a homogeneous space given in~\cite{BH} implies that the  structure $J$ is integrable if and only if $J_1$ and $J_2$ are integrable.
\end{rem}
\begin{rem}\label{homcomp}
Related to the fibration we consider, it is useful to note the following facts. 
\begin{enumerate}
\item The common maximal torus $T^k$ for $K$, $H$ and $G$ acts canonically on  homogeneous spaces $G/K$ and $G/H$ and regarding to this action the projection $\pi : G/K \to G/H$ is an invariant map. 
\item The structure $J$ induces the almost complex structure on the tangent bundle along the fibers $\tau _{H/K}(G/K)$. To see this we  note that $\tau _{H/K}(G/K)$ can be obtained by the action  of the Lie group $G$ on $T_{e}(H/K)$. As the structure $J$ on  $T_{e}(G/K)$ induces the structure $J_1$ on $T_{e}(H/K)$ it implies that the structure $J$ on $\tau (G/H)$ induces the almost complex structure on $\tau _{H/K}(G/K)$ given by the action of the group $G$ on $J_1$.  
\item Consider an invariant connection $\CC$ on this fibration whose horizontal subbundle $\Ha$ is obtained  by the action of the group  $G$ on the subspace $T_{e}(G/H)$ of the space $T_{e}(G/K)$. Then $J$ induces on $\Ha$ an  invariant almost complex structure $J_{\Ha}$ which can be obtained by the action of the group $G$ on the complex structure $J_2$ in $T_{e}(G/H)$. The real isomorphism $d\pi : \Ha \to \tau (G/H)$ is an almost complex map related to the structures $J_{\Ha}$ and $J_2$, meaning that $d\pi \circ J_{\Ha}=J_2 \circ d\pi$.
\item It follows from (2) and (3) that $d\pi : \tau (G/K)\to \tau (G/H)$ is also an almost complex map related to the structures $J$ and $J_2$ as well. 
\end{enumerate}
\end{rem}
The universal toric genus for $(G/K, J)$ is given by
\[
\Phi (G/K, J) = \sum\limits_{\rw \in W_{G}/W_{K}}\prod_{i=1}^{n}\frac{1}{[\rw (\alpha_{i}](u)} \ ,
\]

Note that any $\rw \in W_{G}/W_{K}$ can be uniquely written as $\rw = \rw _{1}\circ \rw _{2}$ where $\rw _{1}\in W_{G}/W_{H}$ and
$\rw _{2}\in W_{H}/W_{K}$. It implies that
\[
\Phi (G/H, J)=\sum\limits_{\rw_{1}\in W_{G}/W_{H}}\sum\limits_{\rw _{2}\in W_{H}/W_{K}}\prod_{i=1}^{n}\frac{1}{[\rw _{1}(\rw _{2}(\alpha _{i})](u)} \ .
\]
Since the structure $J_1$ is invariant under the isotropy representation for $H$ it  implies that the system of roots $\{ \alpha _{l+1},\ldots ,\alpha _{n}\}$ which define $J_1$ is invariant under the action of $W_{H}/W_{K}$. It further implies
\[
\Phi (G/H, J)= \sum\limits_{\rw _{1}\in W_{G}/W_{H}}\prod _{i=l+1}^{n}\frac{1}{[\rw _{1}(\alpha _{i})](u)}\cdot \sum\limits_{\rw _{2}\in W_{H}/W_{K}}\prod_{i=1}^{l}\frac{1}{[\rw _{1}(\rw _{2}(\alpha_{i}))](u)}.
\] 
In this way we have proved the following statement.
\begin{thm}\label{genus_fibration}
Let $G$ be a compact, connected Lie group and $K\subset H$ its closed connected subgroups such that
$\rk G=\rk H=\rk K$. Assume we are given on $H/K$ an invariant complex structure $J_1$ and on $G/H$ an invariant complex structure $J_2$ defined by the roots $\alpha _{l+1},\ldots ,\alpha _{n}$, where $\dim H/K=2l$ and $\dim G/K=2n$. The structures $J_1$ and $J_2$ define on the total space $G/K$ of the fibration $H/K\longrightarrow G/K\longrightarrow G/H$ the invariant almost complex structure $J$ whose universal toric genus is given by
\begin{equation}\label{genus_fibration_formula}
\Phi (G/K, J)=\sum\limits_{\rw _{1}\in W_{G}/W_{H}}\prod_{i=l+1}^{n}\frac{1}{[\rw _{1}(\alpha _{i})](u)}\cdot \rw _{1}(\Phi (H/K, J_2)) \ .
\end{equation}
\end{thm}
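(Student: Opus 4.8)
The plan is to begin from the localization formula~\eqref{utg} for the toric genus of an invariant almost complex structure, applied to the total space $G/K$. By construction the roots of $J$ are exactly the complementary roots $\alpha_1,\ldots,\alpha_n$ for $G$ related to $K$ (the fiber roots $\alpha_1,\ldots,\alpha_l$ coming from $J_1$ together with the base roots $\alpha_{l+1},\ldots,\alpha_n$ coming from $J_2$), and the fixed points of the canonical $T^k$-action on $G/K$ are indexed by $W_G/W_K$. Since $J$ is an almost complex structure every fixed point has sign $+1$, so~\eqref{utg} applies with no sign bookkeeping and gives
\[
\Phi(G/K,J)=\sum_{\rw\in W_G/W_K}\prod_{i=1}^{n}\frac{1}{[\rw(\alpha_i)](u)}.
\]

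Next I would use the set-theoretic factorization of the coset spaces: since $W_K\subseteq W_H\subseteq W_G$, a choice of coset representatives exhibits every $\rw\in W_G/W_K$ uniquely as $\rw=\rw_1\circ\rw_2$ with $\rw_1\in W_G/W_H$ and $\rw_2\in W_H/W_K$. This converts the single sum into an iterated sum over $\rw_1$ and $\rw_2$. I would then break the product $\prod_{i=1}^n$ into the fiber factor $\prod_{i=1}^l$, running over the roots of $J_1$, and the base factor $\prod_{i=l+1}^n$, running over the roots of $J_2$.

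The decisive observation is that the base root system $\{\alpha_{l+1},\ldots,\alpha_n\}$ is invariant under $W_H$, because $J_2$ is invariant under the isotropy representation of $H$ on $T_e(G/H)$. Consequently each representative $\rw_2\in W_H/W_K$ merely permutes this set, so after reindexing
\[
\prod_{i=l+1}^{n}\frac{1}{[\rw_1(\rw_2(\alpha_i))](u)}=\prod_{i=l+1}^{n}\frac{1}{[\rw_1(\alpha_i)](u)},
\]
which is independent of $\rw_2$ and may be pulled outside the inner sum. What remains inside is $\sum_{\rw_2\in W_H/W_K}\prod_{i=1}^{l}\frac{1}{[\rw_1(\rw_2(\alpha_i))](u)}$. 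Applying~\eqref{utg} to the fiber $H/K$ with the structure $J_1$ identifies $\sum_{\rw_2}\prod_{i=1}^{l}\frac{1}{[\rw_2(\alpha_i)](u)}$ as the toric genus $\Phi(H/K,J_1)$ of the fiber, and the outer Weyl element $\rw_1$ acts on it, so the inner sum equals $\rw_1(\Phi(H/K,J_1))$. Collecting the two factors yields~\eqref{genus_fibration_formula}.

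I expect the main obstacle to be justifying that the base factor is genuinely independent of $\rw_2$: one must verify that $\rw_2$ permutes the set $\{\alpha_{l+1},\ldots,\alpha_n\}$ exactly, and not merely up to sign, so that the reindexing of the reciprocal factors $1/[\cdot](u)$ is legitimate; this rests squarely on the $W_H$-invariance carried by the invariance of $J_2$. A secondary technical point, standard but worth recording, is the well-definedness of the factorization $\rw=\rw_1\circ\rw_2$ and of the Weyl-group action on $U^*(BT^k)=\Omega_U^*[[u_1,\ldots,u_k]]$ that makes the symbol $\rw_1(\Phi(H/K,J_1))$ meaningful.
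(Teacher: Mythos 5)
Your proposal is correct and follows essentially the same route as the paper: the localization formula \eqref{utg} for $\Phi(G/K,J)$ over $W_G/W_K$, the unique factorization $\rw=\rw_1\circ\rw_2$ with $\rw_1\in W_G/W_H$, $\rw_2\in W_H/W_K$, and the $W_H$-invariance of the base root system $\{\alpha_{l+1},\ldots,\alpha_n\}$ (coming from the invariance of the structure on $G/H$ under the isotropy representation of $H$) to pull the base factor out of the inner sum and recognize what remains as $\rw_1(\Phi(H/K,J_1))$. You even correctly untangle the paper's own $J_1$/$J_2$ labeling slip in the theorem statement and flag the one point the paper leaves tacit — that $\rw_2$ permutes the signed base roots exactly, not merely up to sign — which is precisely what invariance of the almost complex structure (as opposed to mere invariance of the unsigned root set) guarantees.
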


\begin{rem}\label{twisted}
Theorem~\ref{genus_fibration} proves that  universal toric genus of $(G/K, J)$ is the {\it twisted product} of the universal toric genera of the base $(G/H, J_1)$ and the fiber $(H/K, J_2)$, where the twist is done by the elements from $W_{G}/W_{H}$.   
\end{rem}

We say that the universal toric genus of the fibration
$(H/K, J_1)\longrightarrow (G/K, J)\longrightarrow (G/H, J_2)$
is {\it multiplicative} if
\[
\Phi (G/K, J)=\Phi (H/K, J_1)\cdot \Phi (G/H, J_2) \ 
\]
in $U^{*}(BT^k)$. Note that the action of the torus $T^k$ on $G/K$ and $G/H$ we obtain as the restriction of the action of the group $G$ which also commutes with the structures $J$ and $J_2$. Therefore, by Lemma~\ref{G-invariance} we see that  $\Phi (G/K, J)$, $\Phi (G/H, J_2)$ are invariant under the action of the Weyl group  
$W_{G}$ in $U^{*}(BT^k)=\Omega _{U} ^{*}[[u_1,\ldots ,u_k]]$.
The invariance of $\Phi (H/K, J_1)$ under the action of $W_{G}$ gives by  Theorem~\ref{genus_fibration} the sufficient condition for the universal toric genus of a homogeneous fibration to be multiplicative.
\begin{cor}\label{mult}
If the universal toric genus of the fiber $(H/K, J_1)$ is invariant under the action of the Weyl group $W_{G}$ in $U^{*}(BT^k)$ then the universal toric genus of the fibration $(H/K, J_1)\longrightarrow (G/K, J)\longrightarrow (G/H, J_2)$ is multiplicative.
\end{cor}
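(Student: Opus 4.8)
The plan is to read the factorization off directly from the twisted product formula \eqref{genus_fibration_formula} of Theorem~\ref{genus_fibration}, in which the fibre genus enters only through the twist by the coset representatives $\rw _1\in W_G/W_H$. Written out, that formula expresses
\[
\Phi (G/K, J)=\sum\limits_{\rw _1\in W_G/W_H}\prod_{i=l+1}^{n}\frac{1}{[\rw _1(\alpha _i)](u)}\cdot \rw _1\bigl(\Phi (H/K, J_1)\bigr),
\]
so the entire question reduces to whether the twist $\rw _1(\cdot)$ can be stripped off the fibre factor.

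First I would invoke the hypothesis. By assumption $\Phi (H/K, J_1)$ is invariant under the action of the full Weyl group $W_G$ on $U^{*}(BT^k)=\Omega _U^{*}[[u_1,\ldots ,u_k]]$. Since each class in $W_G/W_H$ is represented by an element of $W_G$, this invariance yields $\rw _1(\Phi (H/K, J_1))=\Phi (H/K, J_1)$ for every index $\rw _1$ occurring in the sum. The fibre genus is then a common factor independent of $\rw _1$, which I would pull outside the summation to obtain
\[
\Phi (G/K, J)=\Phi (H/K, J_1)\cdot \sum\limits_{\rw _1\in W_G/W_H}\prod_{i=l+1}^{n}\frac{1}{[\rw _1(\alpha _i)](u)}.
\]

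It then remains only to recognize the leftover sum. Applying the localization formula \eqref{utg} to the base $G/H$ endowed with $J_2$, whose defining roots are precisely $\alpha _{l+1},\ldots ,\alpha _n$ and whose fixed point set is $W_G/W_H$, identifies $\sum_{\rw _1\in W_G/W_H}\prod_{i=l+1}^{n}1/[\rw _1(\alpha _i)](u)$ with $\Phi (G/H, J_2)$. This gives $\Phi (G/K, J)=\Phi (H/K, J_1)\cdot \Phi (G/H, J_2)$, which is exactly multiplicativity. There is no analytic difficulty here; the only point requiring care is the distinction between an element of $W_G$ and its class in $W_G/W_H$. The twist is indexed by cosets, and it is genuinely the hypothesis of $W_G$-invariance, which is stronger than the automatic $W_H$-invariance of the fibre genus, that forces each coset twist to act trivially and thereby produces the factorization.
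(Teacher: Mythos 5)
Your proposal is correct and matches the paper's own reasoning: the paper deduces Corollary~\ref{mult} directly from the twisted product formula of Theorem~\ref{genus_fibration}, using $W_G$-invariance to replace each twist $\rw_1(\Phi(H/K,J_1))$ by $\Phi(H/K,J_1)$, pulling it out of the sum, and identifying the remaining sum with $\Phi(G/H,J_2)$ via the localization formula~\eqref{utg}. Your added remark that the $W_H$-invariance of the fibre genus is what makes the coset twist well defined, while the stronger $W_G$-invariance is what kills it, is a correct and welcome clarification of a point the paper leaves implicit.
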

If apply the Chern-Dold character to the formula~\eqref{genus_fibration_formula} we obtain:
\begin{cor}
\begin{equation}\label{cdf}
ch_{U}(G/K, J) = \sum\limits_{\rw _{1}\in W_{G}/W_{H}}\prod_{i=l+1}^{n}\frac{f(\rw _{1}(\alpha_{i}(x)))}{\rw _{1}(\alpha_{i}(x))}\cdot \rw _{1}(ch_{U}(H/K, J_2)) \ ,
\end{equation}
where $x=(x_1,\ldots ,x_k)$.
\end{cor}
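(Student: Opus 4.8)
The plan is to obtain~\eqref{cdf} by applying the Chern--Dold character $ch_U$ directly to the twisted product formula~\eqref{genus_fibration_formula} of Theorem~\ref{genus_fibration}, exploiting that $ch_U$ is a ring homomorphism together with the explicit behaviour of $ch_U$ recalled in the Remark following~\eqref{Ch-Do}.

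First I would record the local identity that converts a cobordism weight factor into its cohomological counterpart. For an integer weight vector $\Lambda$ the element $[\Lambda]({\bf u})$ is the cobordism first Chern class $c_1^U$ of the line bundle $\eta_1^{\Lambda^1}\otimes\cdots\otimes\eta_k^{\Lambda^k}$, whose cohomology first Chern class is $\langle\Lambda,{\bf x}\rangle$. Since $\hat{ch}_U(u)=g^{-1}(x)=\frac{x}{f(x)}$, naturality of $ch_U$ applied to the classifying map of this line bundle gives $ch_U([\Lambda]({\bf u}))=g^{-1}(\langle\Lambda,{\bf x}\rangle)=\frac{\langle\Lambda,{\bf x}\rangle}{f(\langle\Lambda,{\bf x}\rangle)}$, and hence
\[
ch_U\!\left(\frac{1}{[\Lambda]({\bf u})}\right)=\frac{f(\langle\Lambda,{\bf x}\rangle)}{\langle\Lambda,{\bf x}\rangle}.
\]
This is precisely the factor appearing in~\eqref{Ch-Do}, specialized here to $\Lambda=\rw_1(\alpha_i)$, where $\langle\rw_1(\alpha_i),{\bf x}\rangle=\rw_1(\alpha_i(x))$.

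Next I would apply $ch_U$ termwise to~\eqref{genus_fibration_formula}. As $ch_U$ is a ring homomorphism it carries the finite sum over $W_G/W_H$ and each product over $l+1\le i\le n$ to the corresponding sum and product of Chern--Dold images, turning $\prod_i 1/[\rw_1(\alpha_i)]({\bf u})$ into $\prod_i f(\rw_1(\alpha_i(x)))/\rw_1(\alpha_i(x))$ by the identity above. The only point that needs care is the twist: I must check that $ch_U$ commutes with the action of $\rw_1\in W_G/W_H$, i.e. $ch_U(\rw_1(\Phi(H/K,J_2)))=\rw_1(ch_U\Phi(H/K,J_2))$. The Weyl group acts on $U^{*}(BT^k)=\Omega_U^{*}[[u_1,\dots,u_k]]$ through an automorphism of $T^k$, which induces the very same coordinate substitution on $H^{*}(BT^k)$ in the variables $x_i$; since $ch_U$ is natural with respect to maps of classifying spaces, it intertwines these two actions, which settles the point.

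Assembling these pieces, the right-hand side of~\eqref{genus_fibration_formula} maps under $ch_U$ to
\[
\sum_{\rw_1\in W_G/W_H}\prod_{i=l+1}^{n}\frac{f(\rw_1(\alpha_i(x)))}{\rw_1(\alpha_i(x))}\cdot \rw_1\bigl(ch_U(H/K,J_2)\bigr),
\]
which is exactly~\eqref{cdf}. The argument involves no genuine obstacle; the only substantive verification is the naturality of $ch_U$ under the Weyl twist, and everything else is the ring-homomorphism property applied to the formula already established in Theorem~\ref{genus_fibration}.
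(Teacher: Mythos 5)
Your proof is correct and takes essentially the same route as the paper: the corollary is obtained there precisely by applying the Chern--Dold character to the twisted product formula~\eqref{genus_fibration_formula} of Theorem~\ref{genus_fibration}. The details you supply --- the identity $ch_U\bigl(1/[\Lambda]({\bf u})\bigr)=f(\langle\Lambda,{\bf x}\rangle)/\langle\Lambda,{\bf x}\rangle$ coming from $\hat{ch}_U(u)=g^{-1}(x)=x/f(x)$, and the naturality of $ch_U$ with respect to the Weyl twist --- are exactly the verifications the paper leaves implicit, and they are sound.
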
 
We elaborate further  the formula~\eqref{cdf} related to the multiplicativity question. It can be written as
\[
ch_{U}\Phi (G/K, J) = \sum\limits_{\rw _{1}\in W_{G}/W_{H}}\prod_{i=l+1}^{n}\frac{f(\rw _{1}(\alpha_{i}(x)))}{\rw _{1}(\alpha_{i}(x))}\cdot \rw _{1}([(H/K, J_1)]+ ch_{U}(H/K, J_1)^{\geq l+1}) \ ,
\]
where $\Big ( ch_{U}\Phi (H/K, J_1)\Big )^{\geq l+1} = ch_{U}\Phi (H/K, J_1)-[(H/K, J_1)]$. As $W_{G}/W_{H}$ acts on the coordinates $x_1,\ldots ,x_k$ it follows that $[(H/K, J_1)]$ is invariant under this action and therefore
\[
ch_{U}\Phi (G/K,J)=ch_{U}(G/H, J_2)\cdot [(H/K, J_1)] +\]
\[+ \sum\limits_{\rw _{1}\in W_{G}/W_{H}}\prod_{i=k+1}^{n}\frac{f(\rw _{1}(\alpha _{i}(x))}{\rw _{1}(\alpha _{i}(x))}\cdot \rw _{1}(ch_{U}\Phi(H/K, J_1)^{\geq l+1}) \ .
\]
In particular it follows that
\[
[(G/K, J)]=[(H/K, J_1)]\cdot [(G/H, J_2)] + 
\]
\[+\Big (\sum\limits_{\rw _{1}\in W_{G}/W_{H}}\prod_{i=l+1}^{n}\Big ( \frac{f(\rw _{1}(\alpha _{i}(x))}{\rw _{1}(\alpha _{i}(x))}\Big )\cdot \rw _{1}\Big ( ch_{U}\Phi(H/K, J_1)\Big )^{\geq l+1}\Big ) _{n} \ ,
\]
where the subscript $n$ denotes that we have chosen the coefficient in $t^n$ in the corresponding polynomial in $t$ according to~\eqref{cob_class}.

We will say that the complex cobordism class for this fibration is {\it decomposable} if $[(G/K, J)]=[(H/K, J_1)]\cdot [(G/H, J_2)]$. 

We immediately obtain:
\begin{cor}
If the complex cobordism class for the homogeneous fibration $(H/K, J_1)\longrightarrow (G/K, J)\longrightarrow (G/H, J_2)$ is decomposable then
\begin{equation}
\Big( \sum\limits_{\rw _{1}\in W_{G}/W_{H}}\prod_{i=l+1}^{n}\frac{f(\rw _{1}(\alpha _{i}(x))}{\rw _{1}(\alpha _{i}(x))} \cdot \rw _{1}(ch_{U}\Phi(H/K, J_1)^{\geq _{l+1}})\Big ) _{n} = 0 \ .
\end{equation}
\end{cor}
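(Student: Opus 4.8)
The plan is to read the Corollary off directly from the cobordism-class formula established in the two displays immediately preceding its statement, so that essentially no new work is required beyond a single subtraction.

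First I would recall how that formula arises, since I am entitled to use everything derived above. Applying the Chern--Dold character to the twisted product formula~\eqref{genus_fibration_formula} of \thmref{genus_fibration} yields~\eqref{cdf}. I then split the fibre contribution as $ch_{U}\Phi(H/K,J_1)=[(H/K,J_1)]+\big(ch_{U}\Phi(H/K,J_1)\big)^{\geq l+1}$. The first summand $[(H/K,J_1)]$ is a complex cobordism class, hence a scalar in the variables $x_1,\ldots,x_k$ and in particular $W_{G}$-invariant, so each $\rw_1\in W_{G}/W_{H}$ acts trivially on it. Consequently the $\rw_1$-sum of $\prod_{i=l+1}^{n} f(\rw_1(\alpha_i(x)))/\rw_1(\alpha_i(x))$ against $[(H/K,J_1)]$ factors as $ch_{U}\Phi(G/H,J_2)\cdot[(H/K,J_1)]$, whose coefficient in $t^{n}$ in the sense of~\eqref{cob_class} is exactly the product $[(H/K,J_1)]\cdot[(G/H,J_2)]$. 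This reproduces precisely the first display preceding the Corollary.

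With that formula in hand the Corollary is immediate. By definition, decomposability of the cobordism class of the fibration means $[(G/K,J)]=[(H/K,J_1)]\cdot[(G/H,J_2)]$. Subtracting this identity from
\[
[(G/K,J)]=[(H/K,J_1)]\cdot[(G/H,J_2)]+\Big(\sum\limits_{\rw_1\in W_{G}/W_{H}}\prod_{i=l+1}^{n}\frac{f(\rw_1(\alpha_i(x)))}{\rw_1(\alpha_i(x))}\cdot \rw_1\big(ch_{U}\Phi(H/K,J_1)^{\geq l+1}\big)\Big)_{n}
\]
leaves exactly the vanishing of the correction term asserted in the statement.

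There is no genuine obstacle here: all of the content lives in the preceding derivation, and the only point requiring care is that extracting the coefficient in $t^{n}$ commutes with splitting off the scalar, $W_{G}$-invariant class $[(H/K,J_1)]$. This is guaranteed because that class contributes only the degree-matched product term, while the remaining higher part $\big(ch_{U}\Phi(H/K,J_1)\big)^{\geq l+1}$ carries all of the nontrivial twisting under $W_{G}/W_{H}$. Once this bookkeeping is noted, the Corollary follows by the single subtraction above.
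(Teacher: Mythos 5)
Your proposal is correct and follows exactly the paper's own route: the Corollary is obtained, just as in the text preceding it, by splitting $ch_{U}\Phi(H/K,J_1)$ into $[(H/K,J_1)]$ plus its higher part, using the $W_{G}$-invariance of the constant class $[(H/K,J_1)]$ to factor out $ch_{U}\Phi(G/H,J_2)\cdot[(H/K,J_1)]$, extracting the coefficient at $t^{n}$ as in~\eqref{cob_class}, and subtracting the decomposability identity $[(G/K,J)]=[(H/K,J_1)]\cdot[(G/H,J_2)]$. No gaps; this is precisely the paper's "we immediately obtain" argument made explicit.
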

This further gives:
\begin{cor}\label{multcob}
If the coefficient in $t^k$ in the formula~\eqref{cob_class} for $ch_{U}\Phi (H/K, J_1)$  is equal to zero for $l+1\leq k\leq n$, then the complex cobordism class is decomposable for any homogeneous fibration $(H/K, J_1)\longrightarrow (G/K, J)\longrightarrow (G/H, J_2)$ such that $\dim G/K=2n$.
\end{cor}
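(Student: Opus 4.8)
The plan is to obtain the statement by a direct $t$-order estimate on the correction term in the cobordism decomposition formula derived immediately above, so that no new geometric input is needed. Recall that that formula reads
\[
[(G/K, J)] = [(H/K, J_1)]\cdot [(G/H, J_2)] + R_n,
\]
where $R_n$ is the coefficient in $t^n$ of
\[
R(t) = \sum_{\rw _1 \in W_G/W_H} \prod_{i=l+1}^{n} \frac{f(t\,\rw _1(\alpha_i(x)))}{\rw _1(\alpha_i(x))}\cdot \rw _1\Bigl( \bigl(ch_{U}\Phi(H/K, J_1)\bigr)^{\geq l+1}\Bigr).
\]
Since \emph{decomposable} means exactly $R_n = 0$, the whole problem reduces to showing that, under the hypothesis, $R(t)$ carries no monomial of $t$-degree $n$.

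First I would fix the bookkeeping for the two factors. Writing $ch_{U}\Phi(H/K, J_1) = \sum_{k \geq l} C_k t^k$, where $C_k$ is the coefficient in $t^k$ in~\eqref{cob_class} (of cohomological degree $2(k-l)$), the superscript $\geq l+1$ by definition removes the term $C_l t^l = [(H/K, J_1)]\,t^l$, so $\bigl(ch_{U}\Phi(H/K, J_1)\bigr)^{\geq l+1} = \sum_{k \geq l+1} C_k t^k$. The hypothesis is precisely $C_{l+1} = \cdots = C_n = 0$, whence this series equals $\sum_{k \geq n+1} C_k t^k$ and is therefore divisible by $t^{n+1}$. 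Because each $\rw _1 \in W_G/W_H$ only permutes the coordinates $x_1, \ldots, x_k$ and fixes $t$, the twisted series $\rw _1\bigl((ch_{U}\Phi(H/K, J_1))^{\geq l+1}\bigr)$ is again divisible by $t^{n+1}$.

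Next I would check the prefactor: each factor $\frac{f(t\beta)}{\beta} = \frac{1}{\beta} + a_1 t + a_2 t^2 \beta + \cdots$ with $\beta = \rw _1(\alpha_i(x))$ is a power series in $t$ whose lowest power is $t^0$, so the product $\prod_{i=l+1}^{n} \frac{f(t\,\rw _1(\alpha_i(x)))}{\rw _1(\alpha_i(x))}$ involves only non-negative powers of $t$. Multiplying a series divisible by $t^{n+1}$ by one supported in non-negative $t$-degrees keeps divisibility by $t^{n+1}$, and this is preserved under the finite sum over $\rw _1$, since summation cannot lower the $t$-adic order. Hence $R(t)$ is divisible by $t^{n+1}$, so $R_n = 0$ and the decomposition collapses to $[(G/K, J)] = [(H/K, J_1)]\cdot [(G/H, J_2)]$, as required.

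I do not anticipate a substantive obstacle; the argument is pure $t$-degree bookkeeping once the decomposition formula is in place. The one point I would treat carefully is that the rational prefactor, although each of its factors has the pole $\frac{1}{\beta}$, still contributes only non-negative powers of $t$, so it can never pull the $t$-order of a product below $n+1$; and that the convention $\bigl(ch_{U}\Phi(H/K, J_1)\bigr)^{\geq l+1} = ch_{U}\Phi(H/K, J_1) - [(H/K, J_1)]$ indeed amounts to discarding exactly the lowest ($t^l$) term.
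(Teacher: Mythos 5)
Your proof is correct and takes essentially the same route as the paper: the paper derives exactly your decomposition $[(G/K,J)]=[(H/K,J_1)]\cdot[(G/H,J_2)]+R_n$ with $R_n$ the coefficient of $t^n$ in the twisted correction term, and states the corollary as an immediate consequence, the implicit justification being precisely your $t$-degree bookkeeping. Your write-up merely makes explicit what the paper leaves unstated, namely that under the hypothesis the truncated series $\bigl(ch_{U}\Phi(H/K,J_1)\bigr)^{\geq l+1}$ is divisible by $t^{n+1}$ while the prefactor $\prod_{i=l+1}^{n}f(t\,\mathrm{w}_1(\alpha_i(x)))/\mathrm{w}_1(\alpha_i(x))$ contributes only non-negative powers of $t$, so $R_n=0$.
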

\begin{rem}
Note that in  Corollary~\ref{multcob} we obtain $n-1$ equations in $x_1,\ldots x_k$ and $a_1,\ldots a_n$. If we consider these equations as being in variables $a_1,\ldots ,a_n$ 
then they give the constraints on the almost complex homogeneous space $(H/K, J_1)$. If we consider these equations as the equations in variables $x_1,\ldots ,x_k$ they produce the system of relations $R$ in
$\Z [a_1,\ldots ,a_n]$ such that the decomposability of the complex cobordism class  of our fibration is satisfied in the quotient $\Z [a_1,\ldots a_n]/\left\langle R\right\rangle$.
\end{rem}    

\begin{ex}
Take $G=SU(4)$ and $H=S(U(1)\times U(1)\times U(2))$ and look at the fibration
\[
\C P^1\to SU(4)/T^3\to SU(4)/S(U(1)\times U(1)\times U(2)).
\]
The torus $T^3$ acts on these fibrations. Let as consider on $SU(4)/S(U(1)\times U(1)\times U(2))$ an invariant almost complex structure $J_1$  defined by the roots $\alpha_1=x_1-x_3, \alpha_2=x_2-x_3, 
\alpha_3=x_4-x_1, \alpha_4=x_4-x_2, \alpha_5=x_3-x_4$
and the canonical complex structure $J_2$ on $\C P^{1}$ by the root $x_1-x_2$. The structures $J_1$ and $J_2$ define the invariant almost complex structure on $SU(4)/T^3$ which is not integrable, as $J_1$ is not integrable. Formula~\eqref{utg} gives that
\[
\Phi (SU(4)/T^3,J)=\sum_{\rw\in W_{SU(4)}/W_{SU(2)}}\rw\Big (\Phi (\C P^1, J_2)\cdot \prod_{i=1}^{5}\frac{1}{[\alpha_{i}](\bf{u})}\Big).
\]
\end{ex}
\begin{ex}
Consider now fibration
\[
SU(3)/T^2\to G_2/T^2\to S^6=G_2/SU(3).
\]
The torus $T^2$ acts on this fibration canonically. Consider, unique up to conjugation, invariant almost complex structure $J_1$ on $S^6=G_2/SU(3)$. Its roots are  $x_1$, $x_2$ and $x_3$. Let further $J_2$ be the canonical complex structure on $SU(3)/T^2$. The structures $J_1$ and $J_2$ define an invariant almost complex structure on $G_2/T^2$. Then  formula~\eqref{utg} implies that
\begin{equation}\label{utg6}
\Phi (G_2/T^2, J) = \sum _{\rw\in W_{G_2}/W_{SU(3)}}\rw\Big( \Phi (SU(3)/T^2, J_2)\cdot\frac{1}{[(1,0,0)](\bf{u})}\cdot\frac{1}{[(0,1,0)](\bf{u})}\cdot\frac{1}{[(0,0,1)](\bf{u})}\Big ).
\end{equation}

Note that $W_{G_2}/W_{SU(3)}$ consists of two element: $\rw_1$ which is identity and $w_2$ which acts as $w_2(x_i)=-x_i$, $1\leqslant i\leqslant 3$.  The universal toric genus for $SU(3)/T^2$ is
\[
\Phi (SU(3)/T^2,J_2)=\sum _{\rw\in S_3}\frac{1}{[\rw (1,-1,0)](\bf{u})}\cdot \frac{1}{[\rw( 1,0,-1)](\bf{u})}\cdot \frac{1}{[\rw (0,1,-1)](\bf{u})}.
\]
It checks directly from this expression that $\rw_{2}\Big(\Phi (SU(3)/T^2,J_2)\Big)=\Phi (SU(3)/T^2, J_2)$. Using this 
we obtain from~\eqref{utg6} that
\begin{equation}\label{prod}
\Phi (G_2/T^2, J)=\Phi(SU(3)/T^2, J_2)\cdot \Phi (S^6, J_1).
\end{equation}
Formula~\eqref{prod} is valid for an arbitrary invariant almost complex structure on $SU(3)/T^2$ since  its roots, up to signs, coincide with those for $J_2$. 

Specially we can  take an almost complex structure $J_2$ on $SU(3)/T^2$ to be defined with the roots $x_1-x_2$, $x_3-x_1$ and $x_2-x_3$. We obtain that $(S^6, J_1)$, $(SU(3)/T^2, J_2)$ and $(G_2/T^2, J)$ are all $SU$-manifolds,
where the structure $J$ is defined using $J_1$ and $J_2$.  
\end{ex}
\begin{ex}\label{nonmult}
The fibration $U(3)/T^3\to U(4)/T^4 \to \C P^3$ can be obtained as the associated 
$T^4$-bundle over $\C P^3$ with the fiber $U(3)/T^3$. Namely, if  take $E$ to be the principal $T^4$-bundle over 
$\C P^3$ and the canonical action of $T^4$ on $(U(1)\times U(3))/T^4$, it is a classical result that the associated bundle $E\times _{T^4}U(3)/T^3$ will give us the considered fibration. Assume that the  space $U(4)/T^4$ is endowed with an invariant almost complex structure coming from the invariant almost complex structures on the base  and the fiber. Then the  universal toric genus for this fibration is not multiplicative. This follows from Example~\ref{nondecomposable} where it is proved that the cobordism class for $U(4)/T^4$ is not decomposable in $\Omega _{U}^{*}$. 
\end{ex} 
\begin{rem}
It is known~\cite{BT} that the elliptic genus $\varphi$ is multiplicative for the associated $G$-bundles $E\times _{G}F$ over the closed oriented base $B$ with the $G$-fiber $F$ whose first Chern class vanishes. Therefore if in  Example~\ref{nonmult} we take on $U(4)/T^4$  the invariant almost complex structure obtained from the invariant almost complex $SU$-structure $J_1$ on  $U(3)/T^3$  and the unique, up to conjugation, invariant almost complex structure $J_2$ on 
$\C P^3$, we have that the elliptic genus will be multiplicative for such fibration, i.~e.~$\varphi (U(4)/T^4, J)=\varphi (U(3)/T^3, J_1)\cdot \varphi (\C P^3, J_2)$.
\end{rem} 
\subsection{Invariant almost complex structures on homogeneous fibrations.} 
\numberwithin{thm}{subsection}
We now summarize the different cases describing the existence of an invariant almost complex structure on homogeneous fibrations $H/K\to G/K\to G/H$.

{\bf 1.} If $G/H$ and $H/K$ admit  invariant almost complex structures then, as it is described in the previous section, they define the invariant almost complex structure on $G/K$. 

{\bf 2.} If $G/K$ admits an invariant almost complex structure then it naturally induces the invariant almost complex structure on the fiber $H/K$, but in  general case the base $G/H$ does not admit any even stable complex structure.
It can be seen from  the homogeneous fibration:
\[
\C P^{1}\times Sp(n-1)/T^{n-1}\lra \dr{Sp(n)/T^{n}}{\pi}{Sp(n)/Sp(1)\times Sp(n-1) = \HH P^{n-1}} \ .
\]
Being totally reducible $Sp(n)/T^n$ admits $2^{n^2}$ invariant almost complex structures, while  $\HH P^{n-1}$ does not admit any almost complex structure.
\begin{rem}
As we already remarked the fact that $\HH P^{n}$ does not admit any almost complex structure was proved in~\cite{H} for $n\neq 2,3$, while in~\cite{M} it is proved that for $n>1$ it does not admit any stable complex structure making use of the ring $K(X)$ of a complex vector bundles over a space $X$. 
The fact that $\HH P^n$ does not admit any $Sp(n)$-invariant almost complex structure can be proved using toric genus. We illustrate it for $n=2$.

The canonical action of $T^{n+1}$ on $\HH P^{n}$ has $n+1$ fixed points. 
The complementary roots for $Sp(n+1)$ related to $Sp(1)\times Sp(n)$ are $x_1+x_2,\ldots,x_1+x_{n+1}$,$x_1-x_2,\ldots,x_1-x_{n+1}$. The action of the group $W_{Sp(n+1)}/W_{Sp(1)}\times W_{Sp(n)}$ is given by the permutation between $x_1$ and $x_2,\ldots,x_{n+1}$.
If $\HH P^{n}$ would admit an invariant almost complex structure its roots would be given by $\alpha_1=\epsilon_2(x_1+x_2),\ldots,\alpha_n=\epsilon_{n+1}(x_1+x_{n+1})$,$\alpha_{n+1}=\delta_2(x_1-x_2),\ldots,\alpha_{2n}=\delta_{n+1}(x_1-x_{n+1})$, where $\epsilon_{i},\delta_{i}=\pm 1$.
 
In this case by~\cite{Buch_Terz} the coefficient in $t^{l}$, $0\leq l\leq 4n-1$ in the series
\[
\sum_{\rw \in W_{Sp(n+1)}/W_{Sp(1)}\times W_{Sp(n)}}\frac{\prod\limits_{j=1}^{2n}(1+a_1t\rw(\alpha_j)+a_2t^2(\rw(\alpha_j))^2+\ldots)}{\prod\limits_{j=1}^{2n}\rw(\alpha_j)}
\]
has to vanish.

For the coefficient in  $t$ this implies that 
\[
a_1 \sum\limits_{i=1}^{n+1}\frac{(\sum\limits_{j=2}^{n+1}(\epsilon_j+\delta_j))x_i+\sum\limits_{j=2,j\neq i}^{n+1}(\epsilon_j-\delta_j)x_j+(\epsilon_i-\delta_i)x_1}{\prod\limits_{j=1,j\neq i}^{n+1}(x_i^2-x_j^2)} = 0.
\]

For $n=2$ we obtain that polynomial
\[
(-\epsilon_1+\epsilon_2+\delta_1-\delta_2)x_1^3-(\epsilon_2+2\delta_1+\delta_2)(x_1^2x_2+x_1x_3^2)+(\epsilon_1+\delta_1+2\delta_2)(x_1x_2^2+x_1^2x_3)\]
\[ +(\epsilon_2-\delta_1)(x_3^3-x_2^3)-(\epsilon_1+\epsilon_2+2\delta_2)x_2^2x_3+(2\epsilon_2+\delta_1+\delta_2)x_2x_3^2
\]
has to vanish for $x_1\neq \pm x_2,\pm x_3$ and $x_2\neq \pm x_3$.

It implies that $\epsilon_2=\delta_1$ what would further give $\epsilon_1=5\epsilon_2$. This is impossible since
$\epsilon_1,\epsilon_2=\pm 1$. 
\end{rem}
\begin{rem}
We want to point that, the property of a  fibration $F\to E\to B$ that $E$ admits  a stable complex structure  while
$B$ does not admit any  stable complex structure as the above one has, geometrically means that there is no connection on this fibration such that the stable  complex structure on the total space induces the stable complex structure on the horizontal subbundle.
\end{rem}

{\bf 3.} Not every invariant almost complex structure on the $G/K$ can be obtained from invariant almost complex structures of the base and the fiber, even in the case when the base $G/H$ admits an invariant almost complex structure. 
In order to verify this one can consider the fibration
\[
\C P^{1}\to U(3)/T^3 \to \C P^{2} \ .
\]
The base and the fiber, being totally irreducible, admit $2$ invariant almost complex structures, meaning that $4$ invariant almost complex structures on $U(3)/T^3$ come from the structures on the fiber and base in the given fibration. Since on $U(3)/T^3$ there are $2^3=8$ invariant almost complex structures, it follows that four of them do not come from the structures of the fibration.
 
Asking the question what are  the conditions for a invariant almost complex structure of $G/K$ to be obtained from  invariant almost complex structures on $G/H$ and $H/K$ we deduce the following.

\begin{prop}\label{SF}
Let $J$ be an invariant almost complex structure on $G/K$ defined by the roots $\alpha _1,\ldots,\alpha _n$ such that $\alpha _{l+1},\ldots ,\alpha _{n}$ are the complementary roots for $G$ related to $H$. The structure $J$ comes from invariant almost complex structures on the fiber and base of the fibration
$H/K\to G/K\to G/H$ if and only if the root system  $\alpha _{l+1},\ldots ,\alpha _{n}$ is invariant under the action of the Weyl group $W_{H}$. 
\end{prop}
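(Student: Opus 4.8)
The plan is to reduce the statement to the Borel--Hirzebruch description of invariant almost complex structures recalled from \cite{BH}: for an equal-rank pair $N\subset N'$, an invariant almost complex structure on $N'/N$ is nothing but a choice of signs $\epsilon_i=\pm1$ on the complementary roots of $N'$ related to $N$ whose resulting signed root system is invariant under the Weyl group $W_N$ of the isotropy subgroup. This is precisely the bookkeeping behind the count of $2^s$ invariant structures quoted earlier: compatibility with $T^k$ is automatic once the structure is described through root subspaces, and invariance under the \emph{connected} isotropy group is then equivalent to invariance under $N(T^k)/T^k=W_N$. I would therefore carry out the entire argument at the level of $W$-invariance of signed root systems, taking care to use the correct isotropy Weyl group for each of the three spaces: $W_K$ for $H/K$ and for $G/K$, and $W_H$ for $G/H$.

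First I would record how $W_K$ interacts with the ambient root data. The roots of $G$ split into the roots of $K$, the complementary roots $\pm\alpha_1,\dots,\pm\alpha_l$ of $H$ related to $K$, and the complementary roots $\pm\alpha_{l+1},\dots,\pm\alpha_n$ of $G$ related to $H$. Since $W_K\subseteq W_H\subseteq W_G$ and each Weyl group is generated by reflections in roots of the corresponding subgroup, $W_K$ permutes the roots of $G$ while preserving both the subsystem of roots of $H$ and that of roots of $K$. Consequently $W_K$ preserves the fibre block of $H/K$-roots and, inside the roots of $G$, the base block of $G/H$-roots.

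The direction $(\Rightarrow)$ is then immediate: if $J$ arises from invariant structures $J_1$ on $H/K$ and $J_2$ on $G/H$ as in the construction preceding \thmref{genus_fibration}, then by definition $\alpha_{l+1},\dots,\alpha_n$ are the roots of the invariant almost complex structure $J_2$ on $G/H$, so by the characterization above this signed system must be $W_H$-invariant.

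The content lies in $(\Leftarrow)$, and the step I expect to be the main obstacle is the splitting of the ambient $W_K$-invariance onto the two blocks. Assuming $\{\alpha_{l+1},\dots,\alpha_n\}$ is $W_H$-invariant, it already defines an invariant structure $J_2$ on $G/H$. Because $J$ is an invariant almost complex structure on $G/K$, the full signed system $\{\alpha_1,\dots,\alpha_n\}$ is $W_K$-invariant. Now for $w\in W_K$ and $i\le l$, the element $w(\alpha_i)$ is again an $H/K$-root by the block-preservation established above, yet it must also lie in $\{\alpha_1,\dots,\alpha_n\}$; since the only members of this list lying in the fibre block are $\alpha_1,\dots,\alpha_l$, we conclude $w(\alpha_i)\in\{\alpha_1,\dots,\alpha_l\}$. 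Hence $\{\alpha_1,\dots,\alpha_l\}$ is $W_K$-invariant and defines an invariant almost complex structure $J_1$ on $H/K$. By construction $J$ restricts to $J_1$ on $T_{e}(H/K)$ and to $J_2$ on $T_{e}(G/H)$, so $J$ indeed comes from $J_1$ and $J_2$. The only delicate point throughout is making sure the equivalence ``invariant structure $\Leftrightarrow$ $W$-invariant signed root system'' is invoked with the right isotropy Weyl group, and that the block-preservation of $W_K$ is genuinely used to transfer $W_K$-invariance from the whole system to the fibre part.
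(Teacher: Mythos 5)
Your proof is correct and follows essentially the same route as the paper's: both reduce the statement to the Borel--Hirzebruch fact that the signed root system $\{\alpha_{l+1},\ldots,\alpha_n\}$ defines an invariant almost complex structure on $G/H$ precisely when it is invariant under the isotropy representation of $H$, equivalently under the Weyl group $W_H$. The paper records only this base-direction equivalence and leaves the fiber part implicit, whereas you additionally check that $W_K$-invariance of the full signed system descends to the fiber block $\{\alpha_1,\ldots,\alpha_l\}$ --- a harmless elaboration of the same argument.
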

\begin{proof}
The root system $\alpha _{l+1},\ldots ,\alpha _{n}$ defines the complex structure $J$ on $T_{e}(G/H)$. 
This structure  will define an invariant almost complex structure on $G/H$ if and only if it is invariant under the isotropy representation for $H$ at $T_{e}(G/H)$. This is further equivalent to the request  that its root system is invariant under the action of the Weyl group $W_{H}$.
\end{proof}

Regarding the universal toric genus  Proposition~\ref{SF} directly implies.

\begin{cor}
Let $J$ be an invariant almost complex structure on $G/K$ defined by the roots $\alpha _1,\ldots,\alpha _n$ such that $\alpha _{l+1},\ldots ,\alpha _{n}$ are the complementary roots for $G$ related to $H$. Let $J_1$ be the induced invariant almost complex structure on the fiber of the fibration $H/K\to G/K\to G/H$. If the root system $\alpha _{l+1},\ldots ,\alpha _{n}$ is invariant under the action of the Weyl group $W_{H}$ then for the universal toric genus of $(G/K, J)$ twisted product formula holds.  
\end{cor}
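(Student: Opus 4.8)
The plan is to recognize that the hypothesis of this corollary is precisely the decomposability criterion already established in Proposition~\ref{SF}, and then to invoke the twisted product formula of Theorem~\ref{genus_fibration}. Concretely, I would first appeal to Proposition~\ref{SF}: since the complementary root system $\alpha_{l+1},\ldots,\alpha_n$ is assumed invariant under the action of $W_H$, that proposition guarantees that the structure $J$ on $G/K$ arises from a pair of invariant almost complex structures, namely the induced structure $J_1$ on the fiber $H/K$ and the structure on the base $G/H$ determined by the roots $\alpha_{l+1},\ldots,\alpha_n$. This is the only place the invariance hypothesis is used.

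Once this decomposition of $J$ is in place, the fibration $H/K \to G/K \to G/H$ falls exactly under the hypotheses of Theorem~\ref{genus_fibration}, with $\dim H/K = 2l$ and $\dim G/K = 2n$. Applying that theorem directly yields the twisted product formula~\eqref{genus_fibration_formula} for $\Phi(G/K, J)$, expressing it as the $W_G/W_H$-twisted product of the universal toric genus of the fiber with the weight factors $\prod_{i=l+1}^{n}[\rw_1(\alpha_i)](u)^{-1}$ coming from the base. By Remark~\ref{twisted} this is exactly the assertion that the twisted product formula holds, which is the content of the corollary.

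The argument is essentially a bookkeeping combination of the two earlier results, so I do not expect any genuine analytic or algebraic obstacle. The one point demanding care is to verify that the roles of the fiber and base structures match those in Theorem~\ref{genus_fibration}: one must check that the structure named $J_1$ in the corollary (the induced fiber structure of Proposition~\ref{SF}, part (2) of Remark~\ref{homcomp}) coincides with the fiber structure used in the theorem, and that the invariance of $\alpha_{l+1},\ldots,\alpha_n$ under $W_H$ is exactly what allows the sum over $W_G/W_K$ to be factored, via the decomposition $\rw=\rw_1\circ\rw_2$ with $\rw_1\in W_G/W_H$ and $\rw_2\in W_H/W_K$, into the twisted product. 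Both checks are immediate from the hypothesis, so the proof reduces to citing Proposition~\ref{SF} followed by Theorem~\ref{genus_fibration}.
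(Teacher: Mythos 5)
Your proposal is correct and matches the paper's own treatment: the paper states this corollary as a direct consequence of Proposition~\ref{SF} (which, under the $W_H$-invariance hypothesis, identifies $J$ as coming from invariant almost complex structures on the fiber and on the base) combined with Theorem~\ref{genus_fibration}, exactly as you argue. Your closing check that the $W_H$-invariance is what permits factoring the sum over $W_G/W_K$ via $\rw=\rw_1\circ\rw_2$ is precisely the mechanism inside the paper's proof of Theorem~\ref{genus_fibration}, so nothing further is needed.
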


We  characterize now almost complex homogeneous fibrations whose  universal toric genus is multiplicative.

\begin{prop}\label{H-semisimple}
Let $H$ be a subgroup of $G$ such that $\rk H=\rk G$. The root system for $H$ is invariant under the action of the Weyl group $W_{G}$ if and only if $H$ is a semisimple Lie group.
\end{prop}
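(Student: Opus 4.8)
The plan is to translate both conditions into linear algebra on the common Cartan subalgebra and then exploit the representation-theoretic properties of the Weyl group. Since $\rk H=\rk G$, the groups $H$ and $G$ share a maximal torus $T^k$ with Lie algebra $\TT$, and the roots $\Sigma_H$ of $H$ with respect to $\TT$ form a subset of the roots $\Sigma_G$ of $G$, namely those $\alpha\in\Sigma_G$ whose root subspace $\gg_\alpha$ lies in $\hh^{\C}$. As in the proof of Proposition~\ref{weights-pairs}, I would first reduce to the case when $G$ is a simple compact Lie group by decomposing a general $G$ into a locally direct product of simple factors; this reduction is essential here, because the argument rests on the irreducibility of the reflection representation, which holds factor by factor. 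The structural fact I would record at the outset is that a compact connected $H$ of maximal rank is semisimple if and only if the centre $\mathfrak{z}(\hh)$ of its Lie algebra is trivial, which in turn is equivalent to $\Sigma_H$ spanning $\TT^{*}$: indeed $\mathfrak{z}(\hh)=\{X\in\TT:\alpha(X)=0\ \text{for all}\ \alpha\in\Sigma_H\}$ is exactly the annihilator of $\operatorname{span}(\Sigma_H)$.

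For the forward implication, that invariance forces semisimplicity, I would argue as follows. If $\Sigma_H$ is stable under $W_G$, then the subspace $V=\operatorname{span}_{\R}(\Sigma_H)\subseteq\TT^{*}$ is a $W_G$-invariant subspace, and hence so is its annihilator $\mathfrak{z}(\hh)\subseteq\TT$. For $G$ simple the reflection representation of $W_G$ on $\TT^{*}$ is irreducible, so $V$ is either $0$ or all of $\TT^{*}$. The case $V=0$ means $\Sigma_H=\varnothing$, i.e.\ $H=T^k$, which I exclude as the degenerate non-semisimple case; otherwise $V=\TT^{*}$, so $\Sigma_H$ spans and $\mathfrak{z}(\hh)=0$, giving that $H$ is semisimple. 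This is the clean half of the statement.

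For the converse I would start from the equivalence just recorded, that semisimplicity of $H$ means $\operatorname{span}(\Sigma_H)=\TT^{*}$, and then must upgrade spanning to full $W_G$-invariance. The natural route is to show that $\Sigma_H$ is necessarily a union of $W_G$-orbits in $\Sigma_G$: since $\Sigma_H$ is itself a root system it is closed under its own reflections $s_\alpha$ with $\alpha\in\Sigma_H$, and the remaining generators of $W_G$ are the reflections $s_\beta$ with $\beta\in\Sigma_G\setminus\Sigma_H$. Controlling the action of these extra reflections, via the Borel--de Siebenthal classification of maximal-rank subgroups and the fact that $W_G$ preserves root lengths and acts transitively within each length class for a simple group, is precisely where the content lies.

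The hard part will be this converse step, and it is also where the greatest care is required. For simply-laced $G$, transitivity of $W_G$ on $\Sigma_G$ makes any nonempty invariant subsystem equal to all of $\Sigma_G$; but for the non-simply-laced groups one must track the long and short root strata separately, since a spanning semisimple subsystem need not a priori be closed under reflections in roots of the complementary length. I would therefore go through the possible root lengths in each Cartan type, using that each length class is a single $W_G$-orbit, so that any $W_G$-stable spanning subsystem is a union of such classes, and then verify directly that the maximal-rank semisimple subgroups arising in the fibrations under consideration have root systems of exactly this form. This case analysis, rather than any single clever observation, is the real obstacle.
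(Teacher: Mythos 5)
Your first half --- $W_G$-invariance of $\Sigma_H$ forces semisimplicity --- is complete and correct (with the degenerate case $\Sigma_H=\varnothing$, $H=T^k$, rightly excluded by hand), and it is a genuinely different and more robust argument than the paper's: the paper proves this direction contrapositively, observing that a positive-dimensional centre leaves some canonical coordinate out of the roots of $H$ and that $W_G$ contains transpositions of canonical coordinates, an argument tied to a coordinate realization of each type; your route via $W_G$-invariance of $\operatorname{span}_{\R}(\Sigma_H)$ and irreducibility of the reflection representation is coordinate-free and uniform over all simple $G$.

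The genuine gap is the converse, which you never prove: you reduce ``semisimple $\Rightarrow$ invariant'' to showing that a spanning semisimple subsystem is a union of $W_G$-orbits and defer this to a type-by-type verification. That verification cannot succeed, because the converse is false, and your own reductions show it. Since each length class of a simple $G$ is a single $W_G$-orbit, a $W_G$-invariant subsystem must be a union of length classes. Now take $G=G_2$ and $H$ the maximal-rank subgroup of type $A_1\oplus A_1$ (one long root $\gamma$, e.g. the highest root, and one short root $\delta\perp\gamma$, so $\Sigma_H=\{\pm\gamma,\pm\delta\}$): $H$ is semisimple of full rank, but $\Sigma_H$ contains only two of the six long roots, hence is not $W_{G_2}$-invariant. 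Worse, your own remark about the simply-laced case (transitivity of $W_G$ on all of $\Sigma_G$) already rules out \emph{every} proper maximal-rank semisimple subgroup of a simply-laced group, e.g. $D_k\oplus D_{l-k}\subset D_l$ or $A_8\subset E_8$. The paper's proof of this direction fails at exactly the same point: it invokes Borel--de Siebenthal and asserts that ``direct checking'' gives invariance, which is false, and indeed several of the pairs listed right after Corollary~\ref{semisimple_mult} --- $(G_2,A_1\oplus A_1)$, $(F_4,B_4)$, $(F_4,A_2\oplus A_2)$, $(D_l,D_k\oplus D_{l-k})$ --- are counterexamples; among the listed pairs only those whose roots exhaust entire length classes, namely $(G_2,A_2)$ and $(B_l,D_l)$, actually have $W_G$-invariant root systems. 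So the statement your method genuinely establishes is: for simple $G$, $\Sigma_H$ is $W_G$-invariant if and only if it is a union of length classes of $\Sigma_G$; nonempty invariance implies semisimplicity, but the implication does not reverse, and no case analysis will make it do so.
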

\begin{proof}
For the background on the Lie theory we use see~\cite{VO}. If $H$ is a semisimple Lied group, the root system for $H$ can be obtained by removing one root from the extended root system for $G$. The direct checking gives  that any such root system will be invariant under the action of $W_{G}$. To prove the opposite direction assume that $H$ is not semisimple. Then the Lie algebra $\eta$  for $H$ decomposes as $\eta = \xi (\eta )\oplus \eta ^{'}$ where the dimension of the center $\xi (\eta )$ is  positive and the root system for $\eta$ is that of $\eta ^{'}$. The Cartan algebra $\TT ^{'}$ for $\eta ^{'}$ is strictly contained in the Cartan algebra $\TT$ for $\gg$, what implies that there exists at least one canonical coordinate on $\TT$ related to  $\gg$ which does not appear in the root system for $\eta ^{'}$. Since for any two canonical coordinates for $\gg$ there always exists an element in the  Weyl group $W_{G}$ which interchanges them, it implies that the root system for $H$ is not  invariant under the action of $W_{G}$. 
\end{proof}
Corollary~\ref{mult} together with Proposition~\ref{H-semisimple} implies:
\begin{cor}\label{semisimple_mult}
If $H$ is a semisimple Lie group such that $K<H<G$ and $rk K=\rk H=\rk G$ then the universal toric genus of the almost complex homogeneous fibration $(H/K, J_1)\to (G/K, J)\to (G/H, J_2)$ is multiplicative.
\end{cor}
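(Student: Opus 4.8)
The plan is to read off multiplicativity directly from the twisted product formula~\eqref{genus_fibration_formula} once the Weyl-group action on the fibre genus is understood. Writing out Theorem~\ref{genus_fibration} gives
\[
\Phi(G/K, J)=\sum_{\rw_{1}\in W_{G}/W_{H}}\prod_{i=l+1}^{n}\frac{1}{[\rw_{1}(\alpha_{i})]({\bf u})}\cdot \rw_{1}\big(\Phi(H/K, J_{1})\big),
\]
whereas the product $\Phi(H/K, J_{1})\cdot\Phi(G/H, J_{2})$ is the same expression with each twisted factor $\rw_{1}(\Phi(H/K,J_{1}))$ replaced by the untwisted $\Phi(H/K, J_{1})$. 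Hence the two agree as soon as $\rw_{1}(\Phi(H/K, J_{1}))=\Phi(H/K, J_{1})$ for every coset $\rw_{1}\in W_{G}/W_{H}$, that is, as soon as the toric genus of the fibre is invariant under the whole group $W_{G}$. This is exactly the hypothesis of Corollary~\ref{mult}, so the first step is simply this reduction: multiplicativity of the fibration is implied by $W_{G}$-invariance of $\Phi(H/K, J_{1})$ in $U^{*}(BT^{k})$.

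The second step produces that invariance from the representation theory. Since $\rk H=\rk G$ and $H$ is semisimple, Proposition~\ref{H-semisimple} gives that the root system $R_{H}$ is stable under $W_{G}$. I would first record the equivalent group-theoretic statement that $W_{H}$ is normal in $W_{G}$: for $g\in W_{G}$ and a root $\alpha\in R_{H}$ one has $g\,s_{\alpha}\,g^{-1}=s_{g\alpha}$ with $g\alpha\in R_{H}$, so conjugation by $g$ carries the reflection generators of $W_{H}$ back into $W_{H}$. Starting from the manifestly $W_{H}$-invariant presentation~\eqref{utg},
\[
\Phi(H/K, J_{1})=\sum_{\rw_{2}\in W_{H}/W_{K}}\prod_{i=1}^{l}\frac{1}{[\rw_{2}(\alpha_{i})]({\bf u})},
\]
I would then try to promote this to $W_{G}$-invariance by showing that each $g\in W_{G}$ only re-indexes the summation over $W_{H}/W_{K}$. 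Concretely, one passes to the sum over the whole group, $\Phi(H/K, J_{1})=|W_{K}|^{-1}\sum_{\rw\in W_{H}}\prod_{i=1}^{l}[\rw(\alpha_{i})]({\bf u})^{-1}$ (legitimate because the complementary root set of $J_{1}$ is $W_{K}$-invariant, by Lemma~\ref{G-invariance}), and uses the normality $W_{H}\triangleleft W_{G}$ to rewrite $g\rw$ in the form $\rw''g$ with $\rw''\in W_{H}$, absorbing the residual action of $g$ on the complementary roots.

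The main obstacle is precisely this last bridge. Invariance of $R_{H}$ under $W_{G}$ does not by itself force the set of complementary roots $\{\pm\alpha_{1},\dots,\pm\alpha_{l}\}=R_{H}\setminus R_{K}$ to be $W_{G}$-stable, because an element of $W_{G}$ need not preserve $R_{K}$; so one cannot naively claim that $g$ permutes the $\alpha_{i}$ among themselves. The argument must instead show that, after the $l$ factors attached to a fixed point are grouped together and summed over all of $W_{H}/W_{K}$, the resulting element of $\Omega_{U}^{*}[[u_{1},\dots,u_{k}]]$ depends only on the $W_{G}$-invariant datum $R_{H}$. I expect the verification of this cancellation to be the genuinely delicate point, and to require---exactly as in the proofs of Proposition~\ref{weights-pairs} and Proposition~\ref{weights-arbitrary}---a pairing of the fixed points compatible with the $W_{G}$-action, after which the contributions reorganize into the untwisted sum. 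Once this is in place, Corollary~\ref{mult} closes the argument and yields $\Phi(G/K,J)=\Phi(H/K,J_{1})\cdot\Phi(G/H,J_{2})$.
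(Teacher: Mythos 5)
Your proposal reproduces the paper's proof in full, because the paper's entire proof of this corollary is the one-line deduction you carry out: Corollary~\ref{mult} (your first step, reducing multiplicativity to $W_{G}$-invariance of $\Phi(H/K,J_{1})$ via the twisted product formula of Theorem~\ref{genus_fibration}) combined with Proposition~\ref{H-semisimple} (your second step). Everything you add beyond those two citations --- the normality $W_{H}\triangleleft W_{G}$ from $g\,s_{\alpha}\,g^{-1}=s_{g\alpha}$ with $g\alpha\in R_{H}$, the substitution $g\rw=\rw''g$, the averaging of~\eqref{utg} over all of $W_{H}$ --- is a correct fleshing-out that the paper does not supply. (One mis-citation: the $W_{K}$-invariance of the root set $\{\epsilon_{i}\alpha_{i}\}$ of $J_{1}$, which legitimizes the averaging, comes from invariance of $J_{1}$ under the isotropy representation of $K$, as in Subsection~\ref{Generalities} and Proposition~\ref{SF}, not from Lemma~\ref{G-invariance}, which concerns invariance of the genus itself.)

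The ``delicate point'' at which you stop is, however, a genuine gap in your proposal --- and it is exactly the point the paper passes over in silence: nothing in the paper bridges ``$R_{H}$ is $W_{G}$-stable'' to ``$\Phi(H/K,J_{1})$ is $W_{G}$-invariant'', and no fixed-point pairing in the style of Propositions~\ref{weights-pairs} and~\ref{weights-arbitrary} is offered either. What remains after your reindexing can be isolated as follows: since $H$ is semisimple with $\rk H=\rk G$, the roots $R_{H}$ span the dual of $\TT$, so any $g\in W_{G}$ is determined by $g|_{R_{H}}\in\operatorname{Aut}(R_{H})=W_{H}\rtimes D$, with $D$ the group of automorphisms of the Dynkin diagram of $R_{H}$; writing $g=wd$ and using the $W_{H}$-invariance of $\Phi(H/K,J_{1})$ gives $g\,\Phi(H/K,J_{1})=\Phi(H/K',J_{1}')$, where $R_{K'}=d(R_{K})$ and $J_{1}'$ has roots $d(\epsilon_{i}\alpha_{i})$. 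The missing assertion is that $(K',J_{1}')$ is $W_{H}$-equivalent to $(K,J_{1})$ --- precisely your unresolved cancellation --- and it is not automatic: for $R_{H}=A_{2}\subset G_{2}$ the nontrivial coset of $W_{G_{2}}/W_{SU(3)}$ acts by $-\mathrm{id}$, whose outer part sends a structure to its conjugate, and the paper's own example in Section~6 verifies the resulting invariance of $\Phi(SU(3)/T^{2},J_{2})$ by direct computation, not by a general principle. Note, moreover, that the input you take on trust also deserves scrutiny: for $H=Sp(1)\times Sp(2)\subset Sp(3)$, a semisimple full-rank subgroup from the paper's own list, the transposition of the first two coordinates lies in $W_{Sp(3)}$ and sends $x_{2}-x_{3}\in R_{H}$ to $x_{1}-x_{3}\notin R_{H}$, so the ``direct checking'' claimed in Proposition~\ref{H-semisimple} is itself problematic. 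In short: your reduction and your diagnosis of where the real content lies are accurate and sharper than the source, but the proposal, exactly like the paper, does not actually close the argument at that last step.
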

The pairs $(G,H)$ where $H$ is a semisimple subgroup of $G$ of equal rank are classified for the simple Lie groups $G$. in terms of the corresponding Lie algebras
such classification can be, for example, found in~\cite{On}. We list such pairs for the classical simple Lie algebras and for the  exceptional Lie algebras $G_2$ and $F_4$. They are given with:  $(B_{l}, D_{k}\oplus B_{l-k})$, $l\geq 2$, $2\leq k\leq l$, $(C_{l}, C_{k}\oplus C_{l-k})$, $l\geq 3$, $1\leq k\leq \left[ \frac{l}{2}\right]$, $(D_{l}, D_{k}\oplus D_{l-k})$, $l\geq 4$, $2\leq k\leq \left[ \frac{l+1}{2}\right]$, $(F_{4}, A_{1}\oplus C_{3})$, $(F_{4}, A_{2}\oplus A_{2})$, $(F_4, B_4)$, $(G_2, A_2)$, $(G_2, A_1\oplus A_1)$.

\subsection{On rigidity and multiplicativity of a Hirzebruch genus.}
\numberwithin{thm}{subsection}
We analyse now the relation between mulptiplicativity of a Hirzebruch genus for an almost complex homogeneous fibration and its equivariant rigidity on the fiber. It is clear that if the universal toric genus is multiplicative for some fibration, then any Hirzebruch genus will be multiplicative for that fibration. In the case of homogeneous fibrations we prove:

\begin{prop}\label{H-rig-mult}
Let us given an almost complex homogeneous fibration $H/K\to G/K\to G/H$. 
Assume that the Hirzebruch genus $\LLL _{f}$ is $T^k$-rigid on $H/K$. Then it will be  multiplicative for this fibration. Moreover
\begin{enumerate}
\item  if $\LLL _{f}$ vanishes on $H/K$ then $\LLL _{f}^{T^k}$ will vanish on $\Phi (G/K)$;
\item  if $\LLL _{f}$ does not vanish on $H/K$ then $\LLL _{f}$ will be $T^k$-rigid on $G/K$ if and only if it is $T^k$-rigid on $G/H$.
\end{enumerate}  
\end{prop}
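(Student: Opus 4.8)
The plan is to apply the equivariant genus $\LLL_f^{T^k}=\LLL_f\circ\Phi$ to the twisted product formula of Theorem~\ref{genus_fibration} and to use that $T^k$-rigidity on the fiber forces the fiber contribution to collapse to a constant which can then be pulled out of the twisted sum. The equivariant genus is the coefficient-wise extension of $\LLL_f$, and, as recorded just before Proposition~\ref{prop-rigidity}, it carries $\frac{1}{[\Lambda]({\bf u})}$ to $\frac{1}{f(\Lambda\cdot u)}$. Hence by Theorem~\ref{main} and formula~\eqref{utg} I would record
\[
\LLL_f^{T^k}(H/K, J_1)=\sum_{\rw_2\in W_H/W_K}\prod_{i=1}^{l}\frac{1}{f(\rw_2(\alpha_i)\cdot u)},
\]
\[
\LLL_f^{T^k}(G/H, J_2)=\sum_{\rw_1\in W_G/W_H}\prod_{i=l+1}^{n}\frac{1}{f(\rw_1(\alpha_i)\cdot u)}.
\]
By Proposition~\ref{prop-rigidity}, the hypothesis that $\LLL_f$ is $T^k$-rigid on $H/K$ says exactly that the first expression equals the constant $c:=\LLL_f(H/K)\in A$.

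The key step is the observation that $\LLL_f^{T^k}$ intertwines the action of $W_G$ on $U^{*}(BT^k)$ with the linear substitution action of $W_G$ on the variables $u_1,\ldots,u_k$; this is immediate from $\LLL_f^{T^k}\big(\tfrac{1}{[\rw(\Lambda)]({\bf u})}\big)=\tfrac{1}{f(\rw(\Lambda)\cdot u)}$, since $\rw$ acts linearly on the weight lattice. I would then argue that for every $\rw_1\in W_G/W_H$,
\[
\LLL_f\big(\rw_1(\Phi(H/K, J_1))\big)=\rw_1\big(\LLL_f(\Phi(H/K, J_1))\big)=\rw_1(c)=c,
\]
the last equality holding because $c$ is a constant and is therefore unchanged by any substitution of the $u$-variables. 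Feeding this into $\LLL_f$ applied to the formula of Theorem~\ref{genus_fibration} yields
\[
\LLL_f^{T^k}(G/K, J)=\sum_{\rw_1\in W_G/W_H}\prod_{i=l+1}^{n}\frac{1}{f(\rw_1(\alpha_i)\cdot u)}\cdot c
= c\cdot \LLL_f^{T^k}(G/H, J_2).
\]

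From this single identity all assertions follow. Since $\LLL_f^{T^k}(H/K, J_1)=c$ by rigidity, it reads $\LLL_f^{T^k}(G/K)=\LLL_f^{T^k}(H/K)\cdot\LLL_f^{T^k}(G/H)$, which is \emph{multiplicativity} of $\LLL_f$ for the fibration. For (1), if $\LLL_f$ vanishes on $H/K$ then $c=0$ and hence $\LLL_f^{T^k}(G/K)=0$. For (2), if $c\neq0$ then $\LLL_f^{T^k}(G/K)=c\cdot\LLL_f^{T^k}(G/H)$ is independent of $u$ if and only if $\LLL_f^{T^k}(G/H)$ is, that is, $\LLL_f$ is $T^k$-rigid on $G/K$ precisely when it is $T^k$-rigid on $G/H$.

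The only genuine obstacle is the equivariance step $\LLL_f^{T^k}\circ\rw_1=\rw_1\circ\LLL_f^{T^k}$ together with the Weyl-invariance of the rigidity constant; everything after it is formal. The conceptual content is that once $T^k$-rigidity on the fiber is read as ``the fiber genus equals a constant'', that constant is automatically fixed by the substitution $\rw_1$ and hence factors cleanly out of the twisted product of Theorem~\ref{genus_fibration}, leaving exactly the genus of the base. I expect verifying the equivariance carefully, by tracking the $W_G$-action through the formal group power system and the Hirzebruch linearization, to be the one place where care is needed.
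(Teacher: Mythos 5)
Your proof is correct and is essentially the paper's own argument: apply $\LLL_f^{T^k}=\LLL_f\circ\Phi$ to the twisted product formula of Theorem~\ref{genus_fibration}, use $T^k$-rigidity on the fiber to replace each twisted factor $\rw_1(\Phi(H/K,J_1))$ by the constant $\LLL_f(H/K)$, and read off multiplicativity together with (1) and (2) from the resulting identity $\LLL_f^{T^k}(\Phi(G/K))=\LLL_f(H/K)\cdot\LLL_f^{T^k}(\Phi(G/H))$. The only difference is presentational: the paper leaves the Weyl-equivariance of the rigidity constant implicit, while you spell it out via the substitution action on the linearized weights.
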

\begin{proof}
Since $\LLL _{f}$ is $T^k$-rigid on $H/K$ it follows from Theorem~\ref{genus_fibration} that  $\LLL _{f}^{T^k}(\Phi (G/K))= \LLL _{f}(H/K)\cdot \LLL _{f}^{T^k}(\Phi (G/H))$ what implies that $\LLL _{f}(G/K) = \LLL _{f}(H/K)\cdot \LLL _{f}(G/H)$.

Also if $\LLL _{f}(H/K)=0$ it follows that $\LLL _{f}^{T^k}(\Phi (G/K))=0$ and, thus, $\LLL _{f}$ is $T^{k}$-rigid on $G/K$ and vanishes on it.

If $\LLL _{f}$ is $T^k$-rigid on $G/H$ it follows $\LLL _{f}^{T^k}(\Phi (G/K))= \LLL _{f}(H/K)\cdot \LLL _{f}(G/H)=\LLL _{f}(G/K)$.
If $\LLL _{f}$ is $T^k$-rigid on $G/K$ then  $\LLL _{f}(G/K) = \LLL _{f}(H/K)\cdot \LLL _{f}^{T^k}(\Phi (G/H))$, what implies that $\LLL _{f}^{T^k}(\Phi (G/H))=\LLL _{f}(G/H)$. 
\end{proof} 
Together with Theorem~\ref{flag-rigidity} we deduce the following.
\begin{cor}\label{flag-fibr-rig}
Any Hirzebruch genus $\LLL _{f}$ given by an odd  series $f$ will be multiplicative for the almost complex homogeneous fibrations with a fiber being the flag manifold. Moreover, it will vanish on the total space of a fibration.
\end{cor}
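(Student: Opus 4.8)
The plan is to derive this statement as a purely formal consequence of the two results just established, without any further computation. In the fibration $H/K\to G/K\to G/H$ the fiber $H/K$ is assumed to be a flag manifold, so I take $H=U(n)$ and $K=T^n$, giving $H/K=U(n)/T^n$. Because the very construction of the fibration requires $\rk K=\rk H=\rk G$, the maximal torus $T^n$ of the fiber coincides with the common maximal torus $T^k$ of $K$, $H$ and $G$, and the canonical $T^k$-action on $H/K$ is exactly its canonical action viewed as a flag manifold. This identification is what lets the rigidity result for the flag manifold feed directly into the fibration machinery.

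The key input is Theorem~\ref{flag-rigidity}: for any odd series $f$ the genus $\LLL_f$ is $T^k$-rigid on $U(n)/T^n$ endowed with the induced invariant almost complex structure $J_1$, and moreover $\LLL_f(H/K)=0$. This supplies precisely the hypothesis of Proposition~\ref{H-rig-mult}, namely that $\LLL_f$ is $T^k$-rigid on the fiber.

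Applying Proposition~\ref{H-rig-mult}, the $T^k$-rigidity of $\LLL_f$ on the fiber immediately yields that $\LLL_f$ is multiplicative for the fibration, so that $\LLL_f(G/K)=\LLL_f(H/K)\cdot\LLL_f(G/H)$. Since in addition $\LLL_f(H/K)=0$, this product is zero and the genus vanishes on the total space; equivalently, conclusion (1) of that proposition gives the stronger equivariant statement that $\LLL_f^{T^k}$ vanishes on $\Phi(G/K)$, whose constant term recovers $\LLL_f(G/K)=0$.

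The only point demanding attention is the compatibility of the tori: Theorem~\ref{flag-rigidity} is phrased for the maximal torus $T^n$ of the flag manifold itself, whereas Proposition~\ref{H-rig-mult} concerns the common torus $T^k$ of the fibration. As observed above, the equal-rank condition forces $T^n=T^k$, so the two notions of rigidity literally coincide and the combination is automatic. I therefore expect no genuine obstacle: the corollary is a direct assembly of Theorem~\ref{flag-rigidity} and Proposition~\ref{H-rig-mult}, with the equal-rank bookkeeping being the single detail worth spelling out.
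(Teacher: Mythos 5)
Your overall strategy is exactly the paper's: the corollary appears there with the one-line justification ``Together with Theorem~\ref{flag-rigidity} we deduce the following,'' i.e.\ one feeds the $T$-rigidity and the vanishing of $\LLL_f$ on the flag-manifold fiber (Theorem~\ref{flag-rigidity}) into Proposition~\ref{H-rig-mult}, whose conclusion (1) yields both multiplicativity and the vanishing of $\LLL_f^{T^k}$ on $\Phi(G/K)$. In the case you actually treat, $H=U(n)$, $K=T^n$, your argument is complete.

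The one genuine issue is your torus bookkeeping, which over-restricts the class of fibrations covered. You assert that the equal-rank condition forces the fiber's torus $T^n$ to coincide with the common maximal torus $T^k$ of $K\subset H\subset G$; that holds only under your extra assumption that $H$ is literally $U(n)$ and $K$ literally $T^n$, which forces $\rk G=n$. The corollary, however, is meant for any almost complex homogeneous fibration whose fiber is a flag manifold, and the paper's own application immediately afterwards takes $H=U(k)\times U(n_1)\times\cdots\times U(n_l)$ and $K=T^k\times U(n_1)\times\cdots\times U(n_l)$, so that the fiber is $U(k)/T^k$ while the common torus has rank $k+n_1+\cdots+n_l>k$; your proof does not literally apply there. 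The repair is short: the canonical action of the big torus on the fiber $H/K$ factors through the maximal torus of the flag factor, since the torus coordinates lying in the $U(n_j)$ factors belong to $K$ and act trivially on $H/K$. Equivalently, the weights at the fixed points $W_H/W_K$ are $\rw(\epsilon_{ij}(x_i-x_j))$ and involve only the coordinates of the flag factor, so the fixed-point pairing of Proposition~\ref{pairs} and the argument of Theorem~\ref{flag-rigidity} apply verbatim and give the rigidity equation~\eqref{rigidity} with constant $c=0$ in $A[[u_1,\ldots,u_k]]$ for the full torus $T^k$. With that one remark added, your assembly of Theorem~\ref{flag-rigidity} and Proposition~\ref{H-rig-mult} proves the corollary in the generality in which the paper uses it.
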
 
This is another  way, compare to Theorem~\ref{large}, to obtain a large class of homogeneous spaces for which any Hirzebruch genus given by an odd  series vanishes.
\begin{ex}
For any Hirzebruch genus $\LLL _{f}$ given by an odd power series $f$ and $M=U(k+n_1+\ldots +n_l)/(T^k \times U(n_1)\times \cdots \times U(n_l))$, $k\geq 2$, we  obtain that $M$ is $T^{k+n_1+\ldots n_l}$-rigid and  $\LLL _{f}(M) =0$. In particular the elliptic genus and the $\hat {A}$-genus are trivial on $M$. This follows from Corollary~\ref{flag-fibr-rig}  
if look at the fibration $U(k)/T^{k}\to U(k+n_1+\ldots +n_l)/(T^k\times U(n_1)\times \cdots \times U(n_l))\to U(k+n_1+\ldots+n_l)/(U(k)\times U(n_1)\times \cdots U(n_l))$, where $k\geq 2$.
\end{ex}

Recall that the Hirzebruch genus $\LLL _{f}$ is said to be multiplicative with respect to the stable complex closed manifold $M$ if  $\LLL _{f}(M\times _{G}E)=\LLL _{f}(M)\cdot \LLL _{f}(G)$, where  $G$ is a compact Lie group whose action on $M$ preserves the stable complex structure and $E$ is the principal $G$-bundle over an arbitrary closed stable complex base $B$.   It is proved in~\cite{BPR} that a genus multiplicative with the respect to the stable complex $T^k$-manifold $M$  is $T^k$-rigid on $M$ as well, while a $T^k$- rigid genus on $M$ will be also multiplicative for such Lie groups groups for which $\Omega _{U}^{*}(BG)$ has no torsion. 

\begin{rem}
Note that it is a classical result that the signature is multiplicative for an arbitrary fibration for which the base, fiber and the total space are coherently oriented compact connected manifolds and   the fundamental group of the base acts trivially on the real cohomology of the fiber~\cite{CHS},~\cite{HBJ}. It follows that the signature will be a $T^k$-rigid genus for an arbitrary stable complex $T^k$-manifold. We want to point that,  using recurrently multiplicativity of the signature related to the fibration $U(n-1)/T^{n-1}\to U(n)/T^n\to \C P^{n-1}$,  it can be  also deduced that the signature of  flag manifolds $U(n)/T^n$ vanishes. 
\end{rem} 

In the case of  associate homogeneous fibrations using Proposition~\ref{H-rig-mult} we obtain slightly different result.

\begin{cor}
Assume that the genus $\LLL _{f}$ is $T^k$-rigid on a homogeneous space $H/K$ of positive Euler characteristic endowed with an invariant  almost  complex structure. Let $G$ be  a compact Lie group such that $H<G$, $\rk H=\rk G$  and $G/H$ admits an invariant almost complex structure. Then the genus $\LLL _{f}$ will be multiplicative for the fibration
$H/K \to H/K\times _{K}E\to G/H$, where $E$ is the principal $K$-bundle over $G/H$.  
\end{cor}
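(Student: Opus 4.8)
The plan is to recognize the bundle $H/K \to H/K\times_K E\to G/H$ as an associated fibre bundle with structure group $K$, fibre the $K$-manifold $(H/K, J_1)$ and base the closed stable complex manifold $(G/H, J_2)$, and then to read off multiplicativity from the $T^k$-rigidity hypothesis on the fibre. First I would record that, since $H/K$ has positive Euler characteristic and $\rk H=\rk G$, the three groups $K\subset H\subset G$ share the common maximal torus $T^k$; in particular $T^k$ is the maximal torus of the structure group $K$. The left action of $K$ on $H/K$ preserves $J_1$, so $H/K$ is a $K$-equivariant stable complex manifold, and the invariant almost complex structures $J_1$ on the fibre and $J_2$ on the base combine, through an invariant connection, into a stable complex structure on the total space $H/K\times_K E$, exactly as in the compatible almost complex constructions developed in the preceding sections. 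This makes $\LLL_f(H/K\times_K E)$ and the product $\LLL_f(H/K)\cdot\LLL_f(G/H)$ well defined, so that the multiplicativity statement is meaningful.

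The main step is then to invoke the result of~\cite{BPR} recalled above: a genus that is $T^k$-rigid on a stable complex $T^k$-manifold $M$ is multiplicative with respect to $M$ for every structure group whose classifying space has torsion-free complex cobordism. Concretely, $T^k$-rigidity of $\LLL_f$ on $H/K$ says that the equivariant genus $\LLL_f^{T^k}(\Phi(H/K))$ is the constant $\LLL_f(H/K)$; via the homomorphism $Bj^*$ of Lemma~\ref{G-genus} this lifts to the statement that the $K$-equivariant genus of the fibre equals the constant $\LLL_f(H/K)$ in $U^*(BK)$, carrying no characteristic classes of $E$. Pulling this constant back along the classifying map $G/H\to BK$ of $E$ and integrating over the base fundamental class yields $\LLL_f(H/K\times_K E)=\LLL_f(H/K)\cdot\LLL_f(G/H)$, which is precisely the asserted multiplicativity.

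The delicate point — and the main obstacle — is the passage from rigidity over $U^*(BT^k)$ to the corresponding identity over $U^*(BK)$, since this requires $Bj^*\colon U^*(BK)\to U^*(BT^k)$ to be injective, equivalently $\Omega_U^*(BK)$ to be torsion-free. For the homogeneous fibrations at hand this is not a genuine restriction: as in Example~\ref{nonmult} the associated bundle can be taken with the structure group reduced to the maximal torus $T^k$, and $U^*(BT^k)=\Omega_U^*[[u_1,\ldots,u_k]]$ is always torsion-free, so the argument closes unconditionally. Finally, in the special case in which $E$ is the principal bundle realizing the homogeneous fibration, so that $H/K\times_K E\cong G/K$, the conclusion is exactly Proposition~\ref{H-rig-mult}; the present statement extends that multiplicativity to an arbitrary principal $K$-bundle $E$ over $G/H$.
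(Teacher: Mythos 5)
Your main line coincides with the paper's: the corollary appears there without a separate proof, as an immediate application of Proposition~\ref{H-rig-mult} together with the result recalled from~\cite{BPR} that a genus which is $T^k$-rigid on a tangentially stable complex $T^k$-manifold $M$ is multiplicative for bundles $M\times_{G'}E$ whenever $\Omega_U^*(BG')$ has no torsion. Your unwinding of that citation — $K$ acts on $(H/K,J_1)$ preserving the structure, Lemma~\ref{G-genus} relates the $T^k$-equivariant and $K$-equivariant genera, rigidity says the equivariant genus is the constant $\LLL_f(H/K)$, and pulling back along the classifying map of $E$ and applying Gysin over $G/H$ gives the product formula — is faithful to the intended argument, and you correctly locate the one delicate point in the injectivity of $Bj^*\colon U^*(BK)\to U^*(BT^k)$, i.e.\ in $\Omega_U^*(BK)$ being torsion-free.

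However, your discharge of that hypothesis is a genuine gap. You claim that ``as in Example~\ref{nonmult} the associated bundle can be taken with the structure group reduced to the maximal torus $T^k$''. In Example~\ref{nonmult} the structure group is $T^4$, already a torus, so that example says nothing about reductions; and for an arbitrary principal $K$-bundle $E$ over $G/H$ no such reduction exists in general. Concretely, take $K=T^2\times U(2)\subset H=U(1)\times U(3)\subset G=U(4)$, which satisfies every hypothesis of the corollary ($H/K=\C P^2$, $G/H=\C P^3$, common maximal torus $T^4$): a principal $K$-bundle over $\C P^3$ whose $U(2)$-factor has $c_1=0$, $c_2=1$ (e.g.\ the frame bundle of the null correlation bundle) admits no reduction to $T^4$, since the associated rank-two bundle would have to split into line bundles, forcing $c_1^2-4c_2$ to be a perfect square. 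Note moreover that a reduction to $T^k$ would be needed even to invoke the localization machinery of Section~\ref{TSCFB} for such bundles, because the structure group there is required to commute with the torus action on the fibre, and left translations by a nonabelian $K$ on $H/K$ do not commute with the left $T^k$-action. So your argument does not close ``unconditionally'': what you have actually proved is the corollary under the additional hypothesis that $\Omega_U^*(BK)$ is torsion-free (or that $E$ happens to reduce to $T^k$), which is precisely the caveat that the paper's own appeal to~\cite{BPR} tacitly carries; your counter-free claim that it can always be removed is false, even though in the displayed example the corollary itself still holds because $\Omega_U^*(B(T^2\times U(2)))$ is torsion-free.
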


\section{Compatible tangentially stable complex $T^k$-fibrations}\label{TSCFB} 
\numberwithin{thm}{section}

We want to generalize the notion of an almost complex homogeneous $T^k$-fibration to an arbitrary fibration related to its property that for its universal toric genus the twisted product formula holds. Consider the smooth fibration $\dr{F}{i} \dr{E}{\pi}{X}$ where the base $X$ and the total space $E$ are endowed with tangentially stable complex $T^k$-structures $(c_{\tau (E)},\theta _{E})$ and $(c_{\tau (X)},\theta _{X})$. 
We set the following notions.
\begin{enumerate}
\item The  map $\pi$ is said to be $T^k$-invariant if it commutes with the given $T^k$-actions on $E$ and $X$.
\item The map $d\pi : \tau (E)\to \tau (X)$ is said to be stable complex  related to the stable complex structures $c_{\tau (E)}$ and $c_{\tau (X)}$ if the composition
\begin{equation}
\xi _{1}\stackrel{c_{\tau (E)}^{-1}}{\longrightarrow}\tau (E)\oplus
\R ^{2k}\stackrel{d\pi \oplus I_{2l}}{\longrightarrow}
\tau (X)\oplus \R ^{2l}\stackrel{c_{\tau (X)}}{\longrightarrow}\xi _{2} 
\end{equation}
is a complex transformation, where $k\geq l$ and $I_{2l}$ denotes the restriction from $\R ^{2k}$ to $\R ^{2l}$ on its first $2l$ coordinates. Here $\xi _{1}$ and $\xi _{2}$ are the complex vector bundles over $E$ and $X$ which define $c_{\tau (E)}$ and $c_{\tau (X)}$ respectively.
\end{enumerate}
\begin{defn}\label{comp}
We say that the structures $(c_{\tau (E)},\theta _{E})$ and $(c_{\tau (X)},\theta _{X})$ are compatible with the fibration $\dr{F}{i} \dr{E}{\pi}{X}$ if the following conditions are satisfied:
\begin{enumerate}
\item $\pi $ is a $T^k$-invariant map.
\item The  stable complex structure $c_{\tau (E)}$ induces the stable complex structure $c_{\tau _{F}(E)}$ on the tangent bundle along the fibers $\tau _{F}(E)$.
\item There exists a connection  $\CC$ on this bundle such that $c_{\tau (E)}$ induces the stable complex structure $c_{\Ha}$ on the horizontal bundle $\Ha$ related to $\CC$ and the real isomorphism  $d\pi : \Ha \to \tau (X)$ is a stable complex isomorphism related to the stable complex structures $c_{\Ha}$ and $c_{\tau (X)}$.
\item The stable complex structure $c_{\tau (E)}$ splits into the sum of the stable complex structures $c_{\tau _{F}(E)}$ and $c_{\Ha}$, i.~e.~ $c_{\tau (E)}=c_{\tau _{F}(E)}\oplus c_{\Ha}$. 
\end{enumerate}
\end{defn}
It follows that the existence of  compatible structures implies the  decomposition $\tau (E) \oplus \R ^{2k}= (\tau _{F}(E)\oplus \R ^{2(l-k)})\oplus (\Ha \oplus \R ^{2l})$ such that on each summand $c_{\tau (E)}$ induces the stable complex structure and the map $d\pi : \tau (E)\to \tau (X)$ is going to be stable complex as well. The vice versa is also true in the sense that if the map $d\pi :\tau (E)\to \tau (X)$ is  stable complex  we can omit the second condition in  Definition~\ref{comp} as the following Lemma shows.
\begin{lem}
Assume we are given a smooth fibration $\dr{F}{i} \dr{E}{\pi}{X}$ where the base $X$ and the total space $E$ are endowed with tangentially stable complex structures $c_{\tau (E)}$ and $c_{\tau (X)}$ such that  the map $d\pi :\tau (E)\to \tau (X)$ is stable complex   related to these structures. Then the stable complex structure $c_{\tau (E)}$ induces the stable complex structure $c_{\tau _{F}(E)}$ on the tangent bundle along the fibers $\tau _{F}(E)$.
\end{lem}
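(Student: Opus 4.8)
The plan is to exhibit the induced structure as the image, under $c_{\tau(E)}$, of the vertical subbundle stabilized by a trivial summand. Write $\tau_F(E)=\Ker(d\pi)$ for the tangent bundle along the fibers, so that over $E$ there is a short exact sequence of real bundles $0\to\tau_F(E)\to\tau(E)\xrightarrow{d\pi}\pi^{*}\tau(X)\to 0$. The stabilizing map $d\pi\oplus I_{2l}\colon \tau(E)\oplus\R^{2k}\to\tau(X)\oplus\R^{2l}$ sends $(v,w)$ to $(d\pi(v),w')$, where $w'$ collects the first $2l$ coordinates of $w$; hence its kernel is $\Ker(d\pi\oplus I_{2l})=\tau_F(E)\oplus\R^{2(k-l)}$, the trivial summand being spanned by the last $2(k-l)$ coordinates of $\R^{2k}$. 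The idea is that $c_{\tau(E)}$ carries this kernel isomorphically onto a complex subbundle of $\xi_1$, and that isomorphism is the sought stable complex structure on $\tau_F(E)$.

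To make this precise, I would consider the composite bundle map over $E$ provided by the hypothesis,
\[
\Psi\colon \xi_1\xrightarrow{c_{\tau(E)}^{-1}}\tau(E)\oplus\R^{2k}\xrightarrow{d\pi\oplus I_{2l}}\tau(X)\oplus\R^{2l}\xrightarrow{c_{\tau(X)}}\xi_2 ,
\]
which by assumption is complex linear. Since $\pi$ is a submersion, $d\pi$ is fiberwise surjective, and the projection onto the first $2l$ coordinates is surjective, so $d\pi\oplus I_{2l}$ is surjective; as $c_{\tau(E)}$ and $c_{\tau(X)}$ are isomorphisms, $\Psi$ is a fiberwise surjective morphism of complex bundles. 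A fiberwise surjective bundle morphism has locally constant rank, whence $\Ker\Psi$ is a complex subbundle of $\xi_1$ of rank $\rk_{\C}\xi_1-\rk_{\C}\xi_2$. Because $\Psi=c_{\tau(X)}\circ(d\pi\oplus I_{2l})\circ c_{\tau(E)}^{-1}$ and $c_{\tau(X)}$ is injective, one has $\Ker\Psi=c_{\tau(E)}\big(\Ker(d\pi\oplus I_{2l})\big)=c_{\tau(E)}\big(\tau_F(E)\oplus\R^{2(k-l)}\big)$. Thus the restriction
\[
c_{\tau_F(E)}\colon \tau_F(E)\oplus\R^{2(k-l)}\xrightarrow{\;\sim\;}\Ker\Psi
\]
is a real isomorphism onto the complex vector bundle $\Ker\Psi$, which is exactly a stable complex structure on $\tau_F(E)$ induced by $c_{\tau(E)}$. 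As a consistency check, $\rk_{\C}\Ker\Psi=(\dim E+k)-(\dim X+l)=\dim F+(k-l)$, matching the complex rank of $\tau_F(E)\oplus\R^{2(k-l)}$.

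The one step deserving care is the assertion that $\Ker\Psi$ is an honest complex subbundle rather than merely a fiberwise family of complex subspaces. This is where surjectivity of $\Psi$ enters: it forces the rank of $\Psi$ to be locally constant, so that $\Ker\Psi$ is locally trivial and inherits a complex structure from $\xi_1$. Everything else is bookkeeping — identifying $\Ker(d\pi\oplus I_{2l})$ with $\tau_F(E)\oplus\R^{2(k-l)}$ and transporting it through the isomorphism $c_{\tau(E)}$. I expect no genuine obstacle beyond recording this constant-rank point, which is immediate once the submersion hypothesis on $\pi$, hence the surjectivity of $d\pi$, is used.
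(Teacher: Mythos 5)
Your proposal is correct and takes essentially the same route as the paper's proof: both identify $\Ker (d\pi \oplus I_{2l})=\Ker d\pi\oplus\R ^{2(k-l)}$ and transport it through $c_{\tau (E)}$ to the same complex subbundle of $\xi _{1}$, the paper verifying complexness via the intertwining relation $(d\pi \oplus I_{2l})\circ (c_{\tau (E)}^{-1}\circ J_1\circ c_{\tau (E)})=(c_{\tau (X)}^{-1}\circ J_2\circ c_{\tau (X)})\circ (d\pi \oplus I_{2l})$, which shows the kernel is invariant under $c_{\tau (E)}^{-1}\circ J_1\circ c_{\tau (E)}$, while you phrase the very same fact as the kernel of the complex-linear surjection $\Psi$. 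Your constant-rank remark merely makes explicit the local-triviality point the paper leaves implicit (it also follows since $\Ker d\pi =\tau _{F}(E)$ is a smooth subbundle for a fibration and $c_{\tau (E)}$ is a bundle isomorphism), and the only blemish is a harmless factor-of-two slip in your parenthetical rank count, where half the real dimensions are intended.
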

\begin{proof}
By the assumption we have that the composition
\begin{equation}
\xi _{1}\stackrel{c_{\tau (E)}^{-1}}{\longrightarrow}\tau (E)\oplus
\R ^{2k}\stackrel{d\pi \oplus I_{2l}}{\longrightarrow}
\tau (X)\oplus \R ^{2l}\stackrel{c_{\tau (X)}}{\longrightarrow}\xi _{2} 
\end{equation}
is a complex transformation. It means that
$(d\pi \oplus I_{2l})(c_{\tau (E)}^{-1}\circ J_1\circ c_{\tau (E)})=(c_{\tau (X)}^{-1}\circ J_2\circ c_{\tau (X)})(d\pi \oplus I_{2l})$, where $J_1$ and $J_2$ are complex structures on $\xi _{1}$ and $\xi _{2}$ respectively. Therefore for $V\in \Ker d\pi \oplus \R ^{2(k-l)}$ we have  $(c_{\tau (E)}^{-1}\circ J_1\circ c_{\tau (E)})V \in \Ker (d\pi \oplus I_{2l})=\Ker d\pi\oplus \R ^{2(k-l)}$. It implies that ${\bar{\xi _{1}}} = c_{\tau (E)}^{-1}(\Ker d\pi \oplus \R ^{2(k-l)})$ is the complex subbundle of $\xi _{1}$ giving the stable complex structure on $\Ker d\pi$. 
\end{proof}
\begin{ex}
It follows from Remark~\ref{homcomp} that any almost complex homogeneous $T^k$-fibration is an example of compatible stable complex $T^k$-fibration, where the projection is a stable complex map.
\end{ex}
The condition for the projection $\pi$ to be a $T^k$-invariant map has an important consequence.
\begin{lem}\label{str}
If a fibration is endowed with the compatible tangentially stable complex $T^k$-structures, 
the action $\theta _{E}$ induces $T^k$-actions $\theta _{F_x}$ on the fibers $F_x$ over the fixed points $x\in X$  for $\theta _{X}$, such that $(F_x, c_{\tau _{F_x}(E)}, \theta _{F_x})$ are stable complex $T^k$-manifolds.
\end{lem}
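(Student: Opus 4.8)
The plan is to check, for each fixed point $x\in X$ of $\theta_X$, the two defining requirements of a tangentially stable complex $T^k$-manifold for the triple $(F_x,c_{\tau_{F_x}(E)},\theta_{F_x})$: that $\theta_E$ restricts to a genuine $T^k$-action on $F_x$, and that its differential along the fibers is complex-linear for the induced structure.

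First I would establish invariance of the fiber. By condition (1) of Definition~\ref{comp} the projection is $T^k$-invariant, so $\pi\circ\theta_E(t)=\theta_X(t)\circ\pi$. For $e\in F_x=\pi^{-1}(x)$ this gives $\pi(\theta_E(t)e)=\theta_X(t)x=x$, the last equality because $x$ is $\theta_X$-fixed; hence $\theta_E(t)$ carries $F_x$ into itself, and $\theta_{F_x}:=\theta_E|_{F_x}$ is a smooth $T^k$-action on the compact manifold $F_x\cong F$. The same identity $\pi\circ\theta_E(t)=\theta_X(t)\circ\pi$ shows that $\theta_E(t)$ maps every fiber to a fiber, so $d\theta_E(t)$ preserves the vertical subbundle $\tau_F(E)=\Ker d\pi$; restricting over $F_x$, where $\tau_F(E)|_{F_x}=\tau(F_x)$, yields $d\theta_{F_x}(t)=d\theta_E(t)|_{\tau(F_x)}$.

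Next I would identify the stable complex structure and transfer complexity from $E$ to $F_x$. By condition (2), and exactly as in the Lemma proved just above which extracts $c_{\tau_F(E)}$ from $c_{\tau(E)}$, there is a complex subbundle $\bar\xi_1\subseteq\xi_1$ such that $c_{\tau(E)}$ carries $\tau_F(E)\oplus\R^{2(k-l)}$ isomorphically onto $\bar\xi_1$; its restriction over $F_x$ is the desired structure $c_{\tau_{F_x}(E)}$. Since $(E,c_{\tau(E)},\theta_E)$ is tangentially stable complex, the transformation $r_E(t)=c_{\tau(E)}\circ(d\theta_E(t)\oplus I)\circ c_{\tau(E)}^{-1}$ is complex on $\xi_1$. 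Because $d\theta_E(t)$ preserves $\tau_F(E)$ and $I$ preserves the trivial summand $\R^{2(k-l)}$, the map $r_E(t)$ preserves $\bar\xi_1$, so $r_E(t)|_{\bar\xi_1}$ is again a complex transformation. Restricting over the invariant fiber $F_x$ identifies this with $r_{F_x}(t)=c_{\tau_{F_x}(E)}\circ(d\theta_{F_x}(t)\oplus I)\circ c_{\tau_{F_x}(E)}^{-1}$, which is therefore complex for every $t\in T^k$; this is precisely the compatibility~\eqref{scd} for the fiber.

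I expect the main obstacle to be the bookkeeping of the trivial summands: one must split $\R^{2k}=\R^{2(k-l)}\oplus\R^{2l}$ so that $(d\theta_E(t)\oplus I)$ genuinely stabilizes the vertical complex subbundle $\bar\xi_1$, and then verify that $r_E(t)|_{\bar\xi_1}$ restricted to $F_x$ equals the fiberwise transformation $r_{F_x}(t)$ rather than merely an isomorphic copy of it. It is worth noting that only conditions (1) and (2) of Definition~\ref{comp} enter here; the connection and the splitting in (3)--(4) are reserved for the twisted product formula.
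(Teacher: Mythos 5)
Your proof is correct and takes essentially the same route as the paper: the $T^k$-invariance of $\pi$ combined with $\theta _{X}(t)x=x$ to show that $\theta _{E}(t)$ preserves $F_x$ is exactly the paper's argument. For the compatibility of $\theta _{F_x}$ with $c_{\tau _{F_x}(E)}$ the paper contents itself with the one-line remark that both are induced by $\theta _{E}$ and $c_{\tau (E)}$, so your explicit check --- that $r_{E}(t)$ stabilizes the complex subbundle $\bar{\xi} _{1}=c_{\tau (E)}(\Ker d\pi \oplus \R ^{2(k-l)})$ because $d\theta _{E}(t)$ preserves $\Ker d\pi$, and that its restriction over the invariant fiber is the fiberwise transformation required by~\eqref{scd} --- merely fills in the details of that assertion rather than departing from it.
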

\begin{proof}
The projection $\pi$ is an invariant map related to $T^k$-action meaning that $\pi (\theta _{E}(f_x))=\theta _{X}(\pi (f_x))=\theta _{X}(x)$
for any $f_x \in F_x$, where $F_x$ is the fiber over $x\in X$. For $x$ being a fixed point for $\theta _{X}$ we obtain that $\pi (\theta _{E}(f_x)) =x$ for any $f_x\in F_x$. This further means that $\theta _{E} : F_{x}\to F_{x}$ and therefore it defines the action of the torus $T^{k}$ on $F_x$. Being both induced by $\theta _{E}$ and $c_{\tau (E)}$, it follows that $(\theta _{F_x}, c_{\tau _{F_x}(E)})$ is tangentially stable complex $T^k$-structure on $F_x$.
\end{proof}
Due to Lemma~\ref{str} we obtain that $(F_{x}, \theta _{E}|F_{x}, c_{\tau (E)}|F_{x})$ is tangentially stable complex $T^k$-manifold for a fixed point $x$ related to the action $\theta _{X}$ and, therefore, the universal toric genus $\Phi (F_{x}, c_{\tau _{F_x}(E)},\theta _{F_x})$ is defined. 

\begin{thm}\label{utgcomp}
Assume we are given a smooth fibration  $\dr{F}{i} \dr{E}{\pi}{X}$ such that $E$ and $X$ are endowed with tangentially stable complex $T^k$-structures $(c_{\tau (E)},\theta _{E})$ and $(c_{\tau (X)},\theta _{X})$ which are compatible with the given fibration. Then
\begin{equation}
\Phi (E, c_{\tau (E)},\theta _{E}) = \sum_{x\in Fix(X)}\sg (x)\prod _{j=1}^{n}\frac{1}{[\Lambda _{j}(x)]{\bf (u)}}\Phi (F_x,\Theta _{E}|F_{x}, c_{\tau (E)}|F_{x}),
\end{equation}
where $\Lambda _{j}(x)$, $1\leq j\leq n$ are the weights for the action $\theta _{X}$ at a fixed point $x\in X$ and $2n=\dim X$.
\end{thm}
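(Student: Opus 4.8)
The plan is to factor the Borel fibration of $E$ through that of $X$ and then combine functoriality of the Gysin homomorphism with localization on the base. First I would use the $T^k$-invariance of $\pi$ (condition (1) of Definition~\ref{comp}) to form the commutative diagram of Borel constructions
\[
\xymatrix{ ET^{k}\times _{T^k}E\ar[r]^{\bar\pi}\ar[dr]_{p} & ET^{k}\times _{T^k}X\ar[d]^{p_X} \\ & BT^k \ ,}
\]
where $\bar\pi=\mathrm{id}\times_{T^k}\pi$ is a fibre bundle with fibre $F$ and $p=p_X\circ\bar\pi$. The point of conditions (2)--(4) is precisely that they turn $\bar\pi$ into a $T^k$-equivariant stable complex fibre bundle: the subbundle $\Ker d\pi=\tau_F(E)$ is $\theta_E$-invariant and, with the structure $c_{\tau_F(E)}$ induced by $c_{\tau(E)}$, the tangent bundle along the fibres of $\bar\pi$ acquires a stable complex structure, while the splitting $c_{\tau(E)}=c_{\tau_F(E)}\oplus c_{\Ha}$ together with the stable complex isomorphism $d\pi:\Ha\to\tau(X)$ identifies the base directions with the pullback of $c_{\tau(X)}$. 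Hence both $\bar\pi$ and $p_X$ carry Gysin homomorphisms in $U^{*}$, and functoriality of the Gysin map along the composition (as already used in the proof of Lemma~\ref{G-genus}) gives
\[
\Phi(E,c_{\tau(E)},\theta_E)=p_{!}(1)=(p_X)_{!}\bigl(\bar\pi_{!}(1)\bigr).
\]

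The second step is to identify the class $\Psi=\bar\pi_{!}(1)\in U^{*}(ET^{k}\times_{T^k}X)$, which is the fibrewise (family) toric genus of $F\to E\to X$, over each fixed point $x\in Fix(X)$ of $\theta_X$. Such a fixed point yields an inclusion $i_x\colon BT^k\hookrightarrow ET^{k}\times_{T^k}X$, $[e]\mapsto[e,x]$, and the pullback of $\bar\pi$ along $i_x$ is exactly the Borel fibration $ET^{k}\times_{T^k}F_x\to BT^k$ of the fibre $F_x$. By Lemma~\ref{str} the triple $(F_x,c_{\tau_{F_x}(E)},\theta_{F_x})$ is a tangentially stable complex $T^k$-manifold, so its universal toric genus is defined, and base change for the Gysin homomorphism along this pullback square gives
\[
\Psi|_{x}=i_x^{*}\bar\pi_{!}(1)=(p_{F_x})_{!}(1)=\Phi(F_x,c_{\tau_{F_x}(E)},\theta_{F_x}).
\]

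Finally I would apply localization on the base $X$. Since $\theta_X$ has isolated fixed points, $(p_X)_{!}$ localizes, and the localization formula of~\cite{BPR},~\cite{BR} in the form valid for an arbitrary cohomology class --- which specializes to Theorem~\ref{main} when applied to the unit --- yields
\[
(p_X)_{!}(\Psi)=\sum_{x\in Fix(X)}\sg(x)\,\frac{\Psi|_{x}}{\prod_{j=1}^{n}[\Lambda_j(x)]({\bf u})},
\]
with $\Lambda_1(x),\dots,\Lambda_n(x)$ the weights of $\theta_X$ at $x$ and $2n=\dim X$. Substituting the identification $\Psi|_{x}=\Phi(F_x,c_{\tau_{F_x}(E)},\theta_{F_x})$ of the previous step gives the asserted formula. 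I expect the main obstacle to be exactly the two steps that are not merely formal: justifying the localization of $(p_X)_{!}$ applied to the nonconstant class $\Psi$, and establishing the base-change identity $\Psi|_{x}=\Phi(F_x)$. The former is supplied by the localization theorem for families in~\cite{BPR}, while the latter rests on the splitting in Definition~\ref{comp}, which is what makes $\bar\pi$ a stable complex fibre bundle restricting over each fixed point to the Borel fibration of the fibre.
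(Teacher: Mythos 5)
Your proof is correct in outline but takes a genuinely different route from the paper's. The paper argues entirely on the total space: since $\pi$ is $T^k$-invariant and the structures split, $Fix(E)=\bigcup_{x\in Fix(X)}Fix(F_x)$; the weights at a fixed point $e=f_x\in F_x$ are the fiber weights together with the base weights $\Lambda_j(x)$ (transported through the stable complex isomorphism $d\pi\colon\Ha\to\tau(X)$), and the signs satisfy $\sg(e)=\sg(x)\cdot\sg(f_x)$ because $c_{\tau(E)}=c_{\tau_F(E)}\oplus c_{\Ha}$; then Theorem~\ref{main} applied to $E$, with the sum regrouped over $x\in Fix(X)$ and the inner sums over $Fix(F_x)$ recognised as $\Phi(F_x)$ by Theorem~\ref{main} applied to each fiber, gives the formula. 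That argument is elementary given Theorem~\ref{main}, but it silently requires the fixed points of $\theta_E$ --- hence of every fiber $F_x$ over $Fix(X)$ --- to be isolated. Your factorization $\Phi(E)=(p_X)_{!}\bigl(\bar\pi_{!}(1)\bigr)$ together with the base-change identity $\bar\pi_{!}(1)|_{x}=\Phi(F_x)$ needs no hypothesis on the fibers at all, only isolated fixed points on $X$, which is a real gain in generality; moreover those two steps use only the Gysin functorialities (composition and pullback) of the kind the paper already invokes in Lemma~\ref{G-genus}. The cost is concentrated in your third step: you need the localization formula for $(p_X)_{!}$ evaluated on the arbitrary class $\Psi=\bar\pi_{!}(1)$, not just on $1$. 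That statement is strictly stronger than Theorem~\ref{main}, and it is also not the families theorem of~\cite{BPR} quoted in Section 8 of the paper --- that theorem computes the restriction $\Phi_X(\pi)|_{Fix(X)}$, which is essentially your second step, not a pushforward. So the step you yourself flag is indeed the crux: the Atiyah--Bott type integration formula does hold in this setting (one works in the module obtained by inverting the Euler classes $[\Lambda_j(x)]({\bf u})$, and the sum of fractions lies in $U^{*}(BT^k)$ because the left-hand side does), but it must either be cited in exactly that form or proved; the paper avoids this issue entirely by localizing on $E$, at the price of the extra isolatedness assumption on the fibers.
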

\begin{proof}
Let $e\in E$ be a fixed point for the action $\theta _{E}$. Since the projection $\pi $ is a $T^k$-invariant map we have that $x=\pi (e)$ is the fixed point for $\theta _{X}$ and, thus, by Lemma~\ref{str} we obtain that
$e=f_{x}\in F_{x}$ is the fixed point for $\theta _{F_x}$. Therefore the set of fixed points $Fix(E)$ for $\theta _{E}$
is given by $Fix(E)=\bigcup\limits_{x\in Fix(X)}Fix(F_x)$, where $Fix(X)$ and $Fix(F_x)$ are the fixed point sets for $\theta _{X}$ and $\theta _{F_x}$. Lemma~\ref{str} gives that $\theta _{E}|F_{x}=\theta _{F_x}$  and by definition $c_{\tau (E)}$ induces the stable complex structure on $\tau _{F}(E)$. It implies that the weights for $\theta _{E}$ related to $c_{\tau (E)}$ at the fixed point $e=f_x$ contain the weights for $\theta _{F_x}$ related to $c_{\tau _{F_x}(E)}$ at the fixed point $f_x$. On the other side, $T_{e}(E)= T_{f_x}(F_x)\oplus H_{e}$, where $H_e$ is the subspace determined by the horizontal subbundle $\Ha$ of the connection $\CC$, then $d\pi : H_e \to \T_{x}(X)$ is a stable complex isomorphism  related to the stable complex structures $c_{\Ha}$ and $c_{\tau (X)}$ and the projection $\pi$ is an invariant map related to the torus actions on $E$ and $X$.  It implies that the weights for $\theta _{E}$ related to $c_{\tau (E)}$ at the fixed point $e$, complementary to those of $\theta _{F_x}$ related to $c_{\tau _{F_x}(E)}$ at the fixed point $e=f_x$, are given by the weights for $\theta _{X}$ related to $c_{\tau (X)}$ at the fixed point $x=\pi (e)$. Since $c_{\tau (E)} = c_{\tau _{F}(E)}\oplus c_{\Ha}$ it follows that  $\sg (e) = \sg (x)\cdot \sg (f_x)$. Therefore we deduce that
\[
\Phi (E, \theta _{E}, c_{\tau (E)}) = \sum _{x\in Fix(X)}\sg (x)\cdot\prod_{j=1}^{n}\frac{1}{[\Lambda _{j}(x)]{\bf (u)}}\Phi (F_x,\Theta _{E}|F_{x}, c_{\tau (E)}|F_{x}).
\]
\end{proof}  

\begin{rem}
Note that Theorem~\ref{utgcomp} shows that the universal toric genus for $(E,\theta _{E}, c_{\tau (E)})$ does not depend
on a connection $\CC$ satisfying Definition~\ref{comp}.
\end{rem}

\subsection{Construction.} We provide now the construction of a compatible tangentially stable complex $T^k$-fibration assuming we are given tangentially stable complex $T^k$-structures on the base  $X$ and the fiber $F$. We denote by $c_{\tau (F)}$ and $c_{\tau (X)}$ the stable complex structures and by $\theta _{F}$ and $\theta _{X}$ the torus actions on $F$ and $X$ respectively.  We want to define the fiber bundle $\dr{F}{i} \dr{E}{\pi}{X}$ with $T^k$-stable complex structure $(c_{\tau (E)}, \theta _{E})$  which naturally arises from the actions $\theta _{F}$ and $\theta _{X}$ and the stable complex structures $c_{\tau (F)}$ and $c_{\tau (X)}$.

\subsubsection{The fiber bundle.} 
\numberwithin{thm}{subsubsection}
Let $G$ be some subgroup of the diffemorphism group Diff$(F)$ for $F$. We assume that $(F, c_{\tau (F)}, \Gamma)$ is a tangentially stable complex $G$ manifold, where $\Gamma$ denotes the action of the group $G$ on $F$. We also assume that the action $\Gamma$ commutes with the action $\theta _{F}$. Consider the principal $G$-bundle $Y$ over $X$ and the associated to it fiber bundle $\dr{F}{i} \dr{E=Y\times _{G}F}{\pi}{X}$.  
The structure group for the  bundle $(E,X,F)$ is $G$.
\subsubsection{The actions $\theta _{F_x}$.} 
Using the action $\theta _F$ we can, in a natural way,  define the action $\theta _{F_x}$ of the torus $T^k$ on the fiber $F_x$ at $x\in X$. We do it as follows. Let us fix a $G$-atlas for $(E, X, F)$ and $\phi : \pi ^{-1}(U) \to U\times F$ be some chart from this atlas, where $U$ is an  open neighborhood for $x\in X$. We define $\theta _{F_x}$ by setting
\[
\theta _{F}(\phi _{x} (f_x)) = \phi _{x}(\theta _{F_x}(f_x)) \ 
\]
for $x\in X$ and $f_x \in F_x$, or, equivalently, $\theta _{F_x} = \phi _{x}^{-1}\circ \theta _{F} \circ \phi _{x}$, where $\phi _{x}=\phi |_{F_x}$.
This definition is good in a sense that it does not depend on the choice of the chart $(\phi , U)$  in the neighborhood of $x$ from the fixed $G$-atlas. Namely, if $(\psi , V)$ is another such chart then $\psi _{x} \circ \phi ^{-1} _{x} \in G$ and, by assumption it commutes with $\theta _{F}$. Therefore we obtain
\[
\theta _{F}\circ \psi _{x}=\theta _{F}\circ \psi _{x}  \circ \phi ^{-1} _{x}\circ \phi _{x}= 
\psi _{x} \circ \phi ^{-1}_{x}\circ \theta _{F}\circ \phi _{x} \ 
\]
what implies $\psi _{x}^{-1}\circ \theta _{F} \circ \psi _{x} = \phi _{x}^{-1}\circ \theta _{F} \circ \phi _{x}$.

\subsubsection{The stable complex structure $c_{\tau (F_{x})}$.} 
\numberwithin{thm}{subsubsection}
Using the stable complex structure $c_{\tau (F)}$ we define the stable complex structure $c_{\tau (F_x)}$ on $F_x$ for any $x\in X$ as follows. By definition $c_{\tau (F)} : \tau (F)\oplus \R ^{2l}\to \xi$ is a real isomorphism for some trivial bundle $\R ^{2l}$ and some complex vector bundle $p:\xi \to F$. Let $\phi$ be an arbitrary chart from the fixed $G$-atlas for $(E,X,F)$. Consider the complex vector bundle $\tilde{\xi _{x}}$ over $F_x$ obtained as the pullback of the complex vector bundle $\xi$ by the diffeomorphism  $\phi _{x} : F_x\to F$, i.~ e.~ $\tilde{\xi _{x}} = \{ (f_x, z)|\; \phi _{x}(f_x)=p(z),\; f_x\in F_x,\; z\in \xi \}$. Denote by $\tilde{\phi _{x}} : \tilde{\xi _{x}}\to \xi$ the projection on the second coordinate. We define the stable complex structure $c_{\tau (F_{x})}$ on $\tau (F_{x})$ by the following diagram
\begin{equation*}\begin{CD}
\tilde {\xi _{x}} @>{\tilde{\phi _{x}}}>> \xi \\
@VV{c_{\tau (F_{x})}}V                   @V{c_{\tau (F)}}VV \\ 
\tau (F_x)\oplus \R ^{2l} @>>{d\phi _{x}\oplus I}> \tau (F)\oplus \R ^{2l}.
\end{CD}\end{equation*}
From the definition of $c_{\tau (F_{x})}$ it follows that $\phi _{x} : F_x \to F$  is a stable complex transformation related to the structures $c_{\tau (F_{x})}$ and $c_{\tau (F)}$. This implies that the structure $c_{\tau (F_{x})}$ is well defined. Namely, for any two charts $\phi$ and $\psi$ in the neighborhood of $x$ and the stable complex structures $c_{\tau (F_{x})}$ and $c_{\tau (F_{x})}^{'}$ these charts define, we obtain that $\psi _{x}^{-1}\circ \phi _{x} : F_{x}\to F_{x}$ is a stable complex transformation related to $c_{\tau (F_x)}$ and $c_{\tau (F_x)}^{'}$. It implies that these stable complex structures are isomorphic as the diagram shows:
\begin{equation*}\begin{CD}
\tilde {\xi _{x}} @>>> \tilde{\xi _{x}^{'}} \\
@VV{c_{\tau (F_{x})}}V            @V{c_{\tau (F_{x})}^{'}}VV\\
\tau (F_x)\oplus \R ^{2l} @>>{d(\psi _{x}^{-1}\circ \phi _{x})}> \tau (F_x)\oplus \R ^{2l}.
\end{CD}\end{equation*}
Because of the commutative diagram
\[
\xymatrix{ \tilde {\xi _{x}} \ar[r]\ar[d] & \tau (F_x)\oplus \R ^{2l} \ar[r]\ar[d] & \tau (F_x)\oplus \R ^{2l} \ar[d]\ar[r] & \xi _{x}\ar[d] \\
\xi \ar[r] & \tau (F)\oplus \R ^{2l} \ar[r] & \tau (F)\oplus \R ^{2l} \ar[r] & 
\xi \ ,}
\]
the stable complex structure $c_{\tau (F_{x})}$ is equivariant under the $T^k$-action $\theta _{F_{x}}$ on $F_{x}$.

The way $c_{\tau (F_x})$ is defined implies that it will be an almost complex structure in the case $c_{\tau (F)}$ is a such structure.

\subsubsection{The stable complex structure $c_{\tau (E)}$.}\label{stablecs} 
\numberwithin{thm}{subsubsection}
In order to define the stable complex structure $c_{\tau (E)}$ on $E$ using the structures $c_{\tau (F)}$ and $c_{\tau (X)}$ we need some additional, geometric, structure on  $(E,X,F)$. Let $\CC$ be a (Ehresmann) connection on the fiber bundle $(E, X, F)$ defined by the smooth horizontal subbundle $\Ha$ of $\tau (E)$ which is complementary to the vertical bundle $\tau _{F}(E) = Ker d\pi$ in the sense that $\tau (E)= \Ha\oplus \tau _{F}(E)$. 

The vertical bundle $\tau _{F}(E)$ consists of vectors tangent to the fibers and, therefore, it inherits the stable complex structure $c_{\tau _{F}(E)}$ from the structures $c_{\tau (F_{x})}$.  

The real isomorphism $d \pi : \Ha \to \tau (X)$ defines the stable complex structure
$c_{\Ha}$ on the horizontal bundle $\Ha$ if we put $c_{\Ha}=(d\pi )^{-1}\circ c_{\tau (X)}^{-1}$.  We  define the stable complex structure $c_{\tau (E)} : \tau (E)\oplus \R^{2l}\oplus \R^{2m}\to \xi \oplus \eta$ by 
\begin{equation}\label{defstable}
c_{\tau (E)}|_{\Ha\oplus \R ^{2m}}\equiv c_{\Ha},\;\;\;
c_{\tau (E)}|_{\tau _{F}(E) \oplus \R ^{2l}}\equiv c_{\tau _{F}(E)}.
\end{equation}

\begin{rem}
If $c_{\tau (F)}$ and $c_{\tau (X)}$ are almost complex structures we obtain that $c_{\tau (E)}$ is an almost complex structure as well.
\end{rem}

\subsubsection{The action $\theta _{E}$.}\label{action} 
\numberwithin{thm}{subsubsection}
Denote by $S_{t}$ the one parameter subgroup of $T^k$ generated by an element $t\in T^k$. For $x\in X$ let $\alpha _{(x,t)}$ be the curve in $X$ which is the orbit of the element $x$ by the action of $S_{t}$ or in other words $\alpha _{(x,t)}=\theta _{X}(S_{t})x$. 
For any $f_x\in F_x$ there exists, regarding to the connection $\CC$, the unique horizontal lift $\tilde{\alpha } _{(f_x,t)}$ of the curve $\alpha _{(x,t)}$ such that $\tilde{\alpha}  _{(f_x,t)}(0)=f_x$. Using this we obtain the family of diffeomorphisms $\Psi _{(x,t)} : F_x \to F_{\theta _{X}(t)(x)}$ defined by $\Psi _{(x,t)}(f_x)=\tilde{\alpha}_{(f_x,t)}(1)$. 
It further defines the family of smooth maps  $\Omega _{t} : E\to E$ by $\Omega _{t}(e)=\Psi _{(x,t)}(f_x)$, where $x=\pi (e)$ and $e=f_x\in F_x$. 

The way the map $\Omega _{t}$ is defined implies that $d\Omega _{t}$ commutes with the stable complex structure $c_{\tau (E)}$ in the sense that the composition
\begin{equation}\label{com}
\dr{\xi \oplus \eta}{c_{\tau (E)}^{-1}}\dr{\tau(E)\oplus \R ^{2l}\oplus \R ^{2m}}{d\Omega _{t}\oplus I\oplus I}\dr{\tau (E)\oplus \R ^{2l}\oplus \R ^{2m}}{c_{\tau (E)}}\xi \oplus \eta 
\end{equation}
is a complex transformation for any $t\in T^k$.

We define the action $\theta _{E}$ of the torus $T^k$ on $E$ by
\begin{equation}\label{defaction}
\theta _{E}(t)(e) = (\theta _{F_{\theta _{X}(t)(x)}}\circ \Psi _{(x,t)})(f_x) \ ,
\end{equation}
where $x=\pi (e)$ and $e=f_x\in F_x$.  Note that from this definition it follows that $\pi (\theta _{E}(t)(e)) = \theta _{X}(t)(x)$. 
The way  $(E,\theta _{E}, c_{\tau (E)})$ is constructed implies that it will will be a tangentially stable complex $T^k$-manifold. 

\begin{thm}
The structures $(\theta _{E},c_{\tau (E)})$ and $(\theta _{X},c_{\tau (X)})$ are compatible with the fibration  $\dr{F}{i} \dr{E}{\pi}{X}$.
\end{thm}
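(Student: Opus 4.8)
The plan is to verify, one by one, the four conditions of Definition~\ref{comp} for the pair $(c_{\tau (E)},\theta _{E})$ and $(c_{\tau (X)},\theta _{X})$ produced by the preceding construction, taking for the connection required by the definition the very Ehresmann connection $\CC$ used in Section~\ref{stablecs} to build $c_{\tau (E)}$. Since the base structure $(c_{\tau (X)},\theta _{X})$ is given and $(E,c_{\tau (E)},\theta _{E})$ has already been arranged to be a tangentially stable complex $T^k$-manifold, what remains is essentially to read off the compatibility from the defining formulas.

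First I would dispatch condition~(1). By \eqref{defaction} one has $\pi (\theta _{E}(t)(e))=\theta _{X}(t)(\pi (e))$ for every $t\in T^k$ and $e\in E$, because the parallel transport $\Psi _{(x,t)}$ covers the orbit curve $\alpha _{(x,t)}=\theta _{X}(S_{t})x$ and each $\theta _{F_y}$ preserves its fiber; hence $\pi$ is $T^k$-invariant. Conditions~(2) and~(4) are then immediate from \eqref{defstable}: with respect to the splitting $\tau (E)=\Ha\oplus\tau _{F}(E)$ determined by $\CC$, the structure $c_{\tau (E)}$ restricts to $c_{\tau _{F}(E)}$ on the vertical bundle (condition~(2)) and is by definition the direct sum $c_{\tau _{F}(E)}\oplus c_{\Ha}$ (condition~(4)). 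For condition~(3) I would invoke the definition $c_{\Ha}=(d\pi )^{-1}\circ c_{\tau (X)}^{-1}$ from Section~\ref{stablecs}, which by construction makes $d\pi :\Ha\to\tau (X)$ a stable complex isomorphism relating $c_{\Ha}$ and $c_{\tau (X)}$.

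The one point that genuinely needs argument is that $(c_{\tau (E)},\theta _{E})$ is a tangentially stable complex $T^k$-structure at all, i.e.\ that $c_{\tau (E)}$ is $\theta _{E}$-equivariant; this is precisely the statement that the composition \eqref{com} is a complex transformation. Here I would split the differential $d\Omega _{t}$ of the lift map along the $\CC$-splitting. On the horizontal part, the relation $d\pi\circ d\Omega _{t}=d\theta _{X}(t)\circ d\pi$ together with the $\theta _{X}$-equivariance of $c_{\tau (X)}$ shows that $d\Omega _{t}$ intertwines $c_{\Ha}$ with itself; on the vertical part, $\Psi _{(x,t)}$ is realized through transition elements of the structure group $G$, which by hypothesis commute with $\theta _{F}$ and preserve $c_{\tau (F)}$, so $d\Omega _{t}$ intertwines $c_{\tau _{F}(E)}$ with itself. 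Combining the two summands and using $c_{\tau (E)}=c_{\tau _{F}(E)}\oplus c_{\Ha}$ yields \eqref{com}.

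I expect the main obstacle to be exactly this last equivariance step, since it is where the two global hypotheses on the construction must be used together: that the structure-group action $\Gamma$ commutes with the fiber torus action $\theta _{F}$ and preserves $c_{\tau (F)}$. These guarantee simultaneously that $\theta _{E}$ is a well-defined $T^k$-action (through commutation of parallel transport with the fibrewise actions $\theta _{F_x}$ and the flow property $\Psi _{(\theta _{X}(t_2)x,t_1)}\circ\Psi _{(x,t_2)}=\Psi _{(x,t_1 t_2)}$) and that the resulting $c_{\tau (E)}$ is $\theta _{E}$-invariant. Once this equivariance is secured, the four conditions of Definition~\ref{comp} follow formally as described above, and the theorem is proved.
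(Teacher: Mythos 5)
Your proposal is correct and follows essentially the same route as the paper: condition~(1) is verified by the same computation from~\eqref{defaction} showing $\pi(\theta_E(t)(e))=\theta_X(t)(\pi(e))$, and conditions~(2)--(4) are read off directly from the defining formula~\eqref{defstable} together with the definition $c_{\Ha}=(d\pi)^{-1}\circ c_{\tau(X)}^{-1}$, exactly as the paper does. The only difference is that you additionally argue that $c_{\tau(E)}$ is $\theta_E$-equivariant (that~\eqref{com} is a complex transformation); the paper does not treat this inside the proof of the theorem but asserts it as part of the construction in the paragraph preceding the statement, so your extra step is a supplement rather than a divergence, and your sketch of it (splitting $d\Omega_t$ along the $\CC$-decomposition and using that the structure group commutes with $\theta_F$ and preserves $c_{\tau(F)}$) is the natural way to justify what the paper leaves implicit.
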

\begin{proof}
It follows from~\eqref{defaction} that $\pi (\theta _{E}(t)(e))= \pi (\theta _{F_{\theta _{X}(t)(x)}}(\Psi _{(x,t)}(f_x))) =$\\$ \pi (\theta _{F_{\theta _{X}(t)(x)}}(\tilde{\alpha}_{(f_x,t)}(1))) = \pi (\tilde{\alpha}_{(f_x,t)}(1))=
\alpha _{(x,t)}(1) =\theta _{X}(t)(x)$, where $x=\pi (e)$ and $e=f_x$. Thus, the projection $\pi$ is a $T^k$-invariant map. The formula~\eqref{defstable} which defines $c_{\tau (E)}$ directly verifies that the  last two conditions in  Definition~\ref{comp} are also satisfied.   
\end{proof}

\begin{cor}\label{UTGF}
If the action $\theta _{E}$ has finite number of  isolated fixed points then the universal toric genus for
$(E,\theta _{E}, c_{\tau (E)})$ is given by
\begin{equation}\label{utgfb}
\Phi (E,\theta _{E},c_{\tau (E)}) = \sum\limits_{x\in Fix(X)}\sg (x)\prod _{j=1}^{n}\frac{1}{[\Lambda_{j}(x)]{\bf (u)}}\Phi (F_x,\theta _{F_x}, c_{\tau (F_x)}) \ ,
\end{equation}
where $\sg (x)$ and $\Lambda _{j}(x)$, $1\leq j\leq n$  are the sign and the weights for the action $\theta _{X}$ at the fixed point $x$.
\end{cor}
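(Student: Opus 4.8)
The plan is to obtain this as a direct application of Theorem~\ref{utgcomp} to the fibration constructed in this section, once the two structures are known to be compatible. By the theorem just established, the structures $(\theta_E, c_{\tau(E)})$ and $(\theta_X, c_{\tau(X)})$ produced by the construction are compatible with $\dr{F}{i}\dr{E}{\pi}{X}$, so Theorem~\ref{utgcomp} applies verbatim and yields
\[
\Phi(E, \theta_E, c_{\tau(E)}) = \sum_{x \in Fix(X)} \sg(x) \prod_{j=1}^n \frac{1}{[\Lambda_j(x)]({\bf u})}\, \Phi(F_x, \theta_E|F_x, c_{\tau(E)}|F_x).
\]
Thus the entire task reduces to identifying, for each fixed point $x \in Fix(X)$, the restricted triple $(F_x, \theta_E|F_x, c_{\tau(E)}|F_x)$ with the triple $(F_x, \theta_{F_x}, c_{\tau(F_x)})$ manufactured directly in the construction; once the summands coincide, the corollary follows. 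The hypothesis that $\theta_E$ has isolated fixed points guarantees, since $\pi$ is $T^k$-invariant, that each $\theta_{F_x}$ likewise has isolated fixed points, so that all the genera appearing are the localized ones and the formula~\eqref{utgfb} is meaningful.

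First I would show $\theta_E|F_x = \theta_{F_x}$ whenever $x$ is fixed by $\theta_X$. Since $\theta_X(t)(x) = x$ for all $t$, the orbit curve $\alpha_{(x,t)} = \theta_X(S_t)x$ is constant at $x$; its unique horizontal lift through $f_x \in F_x$ with respect to the connection $\CC$ is therefore the constant curve at $f_x$, so that $\Psi_{(x,t)}(f_x) = \tilde{\alpha}_{(f_x,t)}(1) = f_x$ and $\Psi_{(x,t)}$ is the identity on $F_x$. Substituting this into the defining formula~\eqref{defaction} gives $\theta_E(t)(f_x) = \theta_{F_{\theta_X(t)(x)}}(\Psi_{(x,t)}(f_x)) = \theta_{F_x}(f_x)$, which is precisely the claimed equality of actions on the fiber over $x$.

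Next I would match the stable complex structures. By the construction of $c_{\tau(E)}$ in \secref{stablecs}, namely~\eqref{defstable}, one has $c_{\tau(E)}|_{\tau_F(E) \oplus \R^{2l}} \equiv c_{\tau_F(E)}$, and $c_{\tau_F(E)}$ is by definition the structure inherited fibrewise from the $c_{\tau(F_x)}$. Restricting to the fiber over $x$ therefore returns $c_{\tau(F_x)}$ on the nose, so $c_{\tau(E)}|F_x = c_{\tau(F_x)}$. Combined with the previous paragraph, the two triples agree, and hence so do their universal toric genera, $\Phi(F_x, \theta_E|F_x, c_{\tau(E)}|F_x) = \Phi(F_x, \theta_{F_x}, c_{\tau(F_x)})$; inserting this into the displayed localization formula produces~\eqref{utgfb}.

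The only genuinely delicate point is the triviality of the horizontal lift over a fixed point used in the second paragraph: I would invoke that horizontal lifts of constant paths are constant, which is immediate from the definition of an Ehresmann connection, so that $\Psi_{(x,t)} = \mathrm{id}_{F_x}$ exactly over $Fix(X)$. Everything else is bookkeeping, checking that the sign $\sg(x)$ and the weights $\Lambda_j(x)$ transported from Theorem~\ref{utgcomp} are indeed those of $\theta_X$ at $x$, as already recorded there, and that the fibre sign $\sg(f_x)$ is the one absorbed into $\Phi(F_x, \theta_{F_x}, c_{\tau(F_x)})$.
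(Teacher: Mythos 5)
Your proposal is correct and follows the paper's intended route exactly: the corollary is presented there as an immediate consequence of the compatibility theorem just proved together with Theorem~\ref{utgcomp}, which is precisely how you derive it. The supplementary checks you supply --- that the horizontal lift of the constant orbit curve over a fixed point $x\in Fix(X)$ is constant, whence $\Psi_{(x,t)}=\mathrm{id}_{F_x}$ and $\theta_E|F_x=\theta_{F_x}$ by~\eqref{defaction}, and that $c_{\tau(E)}|F_x=c_{\tau(F_x)}$ by~\eqref{defstable} --- are exactly the routine identifications of the restricted triple with $(F_x,\theta_{F_x},c_{\tau(F_x)})$ that the paper leaves implicit.
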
 

\begin{rem}
Note that the universal toric genus for $(E, \theta _{E}, c_{\tau (E)})$ depends on the triples $(X, \theta _{X}, c_{\tau (X)})$ and $(F, \theta _{F}, c_{\tau (F)})$ and the differentiable $G$- structure of the fibration.
\end{rem}
\begin{cor}
If $\Phi (F_{x_1}, \theta _{F_{x_1}}, c_{\tau (F_{x_1})})=\Phi (F_{x_2}, \theta _{F_{x_2}}, c_{\tau (F_{x_2})})$ in $U^{*}(BT^k)$  for any $x_1,x_2\in Fix(X)$ then
\begin{equation}
\Phi (E,\theta _{E},c_{\tau (E)}) = \Phi (X, \theta _{X}, c_{\tau (X)})\cdot \Phi (F, \theta _{F}, c_{\tau (F)})\ . 
\end{equation}
\end{cor}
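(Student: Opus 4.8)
The plan is to obtain the product formula directly from the localization formula~\eqref{utgfb} of Corollary~\ref{UTGF}, using the hypothesis only to pull the common fiber genus out of the sum over the fixed points of the base.

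First I would abbreviate the common value by writing $\Phi_{0}:=\Phi(F_{x},\theta_{F_{x}},c_{\tau(F_{x})})$, which by assumption is one and the same element of $U^{*}(BT^{k})=\Omega_{U}^{*}[[u_{1},\ldots,u_{k}]]$ for every $x\in Fix(X)$. Substituting this constant value into~\eqref{utgfb} and factoring it out of the summation, which is legitimate because $U^{*}(BT^{k})$ is a commutative ring and $\Phi_{0}$ is independent of the summation index, gives
\[
\Phi(E,\theta_{E},c_{\tau(E)})=\Phi_{0}\cdot\sum_{x\in Fix(X)}\sg(x)\prod_{j=1}^{n}\frac{1}{[\Lambda_{j}(x)]({\bf u})}.
\]

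Next I would recognise the remaining sum. The points $x\in Fix(X)$ are exactly the isolated fixed points of $\theta_{X}$ on $X$, while $\sg(x)$ and the weights $\Lambda_{j}(x)$, $1\le j\le n$ with $2n=\dim X$, are precisely the signs and weight vectors determined by the structure $c_{\tau(X)}$. Hence Theorem~\ref{main}, applied to the triple $(X,c_{\tau(X)},\theta_{X})$, identifies the sum with $\Phi(X,\theta_{X},c_{\tau(X)})$, so that
\[
\Phi(E,\theta_{E},c_{\tau(E)})=\Phi_{0}\cdot\Phi(X,\theta_{X},c_{\tau(X)}).
\]

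Finally I would identify $\Phi_{0}$ with $\Phi(F,\theta_{F},c_{\tau(F)})$, and this last step is the only one needing care. In the construction of Section~\ref{TSCFB} the chart diffeomorphism $\phi_{x}:F_{x}\to F$ was arranged, by the very definitions of $\theta_{F_{x}}$ and $c_{\tau(F_{x})}$, to be a $T^{k}$-equivariant stable complex isomorphism of $(F_{x},\theta_{F_{x}},c_{\tau(F_{x})})$ onto $(F,\theta_{F},c_{\tau(F)})$. Since the universal toric genus is an invariant of a tangentially stable complex $T^{k}$-manifold up to such isomorphism, being built from the Borel construction and, by Theorem~\ref{main}, determined by the signs and weights at the fixed points which $\phi_{x}$ carries bijectively, each $\Phi_{0}$ already equals $\Phi(F,\theta_{F},c_{\tau(F)})$. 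Combining the displays then yields $\Phi(E,\theta_{E},c_{\tau(E)})=\Phi(X,\theta_{X},c_{\tau(X)})\cdot\Phi(F,\theta_{F},c_{\tau(F)})$. I expect the main obstacle to be precisely this invariance claim for the fibers: everything else is the commutative-ring manipulation above combined with the already-established localization formula.
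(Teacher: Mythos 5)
Your proof is correct and is essentially the argument the paper intends: the statement follows directly from the localization formula~\eqref{utgfb} of Corollary~\ref{UTGF} by factoring the common fiber genus out of the sum and recognizing what remains as $\Phi (X,\theta _{X},c_{\tau (X)})$ via Theorem~\ref{main}. The identification of the common value with $\Phi (F,\theta _{F},c_{\tau (F)})$ --- the step you rightly single out as the delicate one --- is exactly what the paper formalizes in the Lemma immediately following this corollary, and your observation that the chart $\phi _{x}$ is a strictly $T^k$-equivariant stable complex isomorphism shows that the homomorphism $(\bar{\phi} _{x}^{-1})^{*}$ appearing there is the identity on $U^{*}(BT^k)$.
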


Recall that we have the set  of diffeomophisms $\phi _{x} : F_{x}\to F$ induced by the charts $\phi$ in the neighborhood of $x$  of the $G$-atlas for $(E, F, X)$ where $x\in X$. For any such $\phi _{x}$ we obtain the map 
$\bar{\phi} _{x}^{-1} : BT^k \to BT^k$ defined by the diagram  
\begin{equation*}\begin{CD}
F @>>> ET^{k}\times _{T^k}F @>>{p}> BT^k \\
@VV{\phi_{x}^{-1}}V            @V{\tilde{\phi} _{x}^{-1}}VV   @V{\bar{\phi} _{x}^{-1}}VV \\
\tau (F_x) @>>> ET^{k}\times _{T^k}F_{x} @>>{\tilde{p}}> BT^k,
\end{CD}\end{equation*}
where $\tilde{\phi} _{x}^{-1}$ is induced by $\phi _{x}^{-1}$. The following observation directly follows from the properties of the Gysin homomorphism.
\begin{lem}
The map $\bar{\phi} _{x}^{-1}$ induces the homomorphism $(\bar{\phi} _{x}^{-1})^{*} : U^{*}(BT^k) \to U^{*}(BT^k)$ which satisfies
\begin{equation}
(\bar{\phi} _{x}^{-1})^{*}(\Phi (F, c_{\tau (F)}, \theta _{F})) = \Phi (F_{x}, c_{\tau (F_{x})},\theta _{F_{x}}).
\end{equation}
\end{lem}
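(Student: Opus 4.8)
The plan is to deduce the identity from the functoriality of the Gysin homomorphism, in exactly the manner used in the proof of \lemref{G-genus}, once it is established that $\phi_x$ is simultaneously $T^k$-equivariant and a stable complex transformation. The whole content of the statement is that the two Borel fibrations $p\colon ET^k\times_{T^k}F\to BT^k$ and $\tilde p\colon ET^k\times_{T^k}F_x\to BT^k$ are isomorphic as stable complex fibrations along their fibers, so that the pushforwards $p_!(1)=\Phi(F,c_{\tau(F)},\theta_F)$ and $\tilde p_!(1)=\Phi(F_x,c_{\tau(F_x)},\theta_{F_x})$ must agree.

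First I would verify the two compatibilities of $\phi_x$, both of which are built into the construction. Equivariance is immediate from the definition of the induced action, $\theta_{F_x}=\phi_x^{-1}\circ\theta_F\circ\phi_x$, which rearranges to $\phi_x\circ\theta_{F_x}(t)=\theta_F(t)\circ\phi_x$ for every $t\in T^k$; thus $\phi_x^{-1}$ is strictly $T^k$-equivariant. That $\phi_x$ (and hence $\phi_x^{-1}$) intertwines the stable complex structures $c_{\tau(F_x)}$ and $c_{\tau(F)}$ is precisely the assertion read off from the defining pullback diagram for $c_{\tau(F_x)}$ in the construction. Consequently $\phi_x^{-1}\colon(F,c_{\tau(F)},\theta_F)\to(F_x,c_{\tau(F_x)},\theta_{F_x})$ is an isomorphism of tangentially stable complex $T^k$-manifolds. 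Next I would pass to Borel constructions: since $\phi_x^{-1}$ is $T^k$-equivariant it induces $\tilde\phi_x^{-1}=\mathrm{id}\times_{T^k}\phi_x^{-1}$, which fits into the commutative diagram defining $\bar\phi_x^{-1}$ and, being equivariant, covers the identity of $BT^k$, so that $\bar\phi_x^{-1}=\mathrm{id}$ and $(\bar\phi_x^{-1})^{*}$ is the identity homomorphism. Because $\phi_x^{-1}$ respects the stable complex structures, $\tilde\phi_x^{-1}$ is moreover a stable complex isomorphism of the two fibrations along their tangent bundles along the fibers. Applying the functoriality of the Gysin homomorphism for bundles connected by this commutative diagram (the property invoked in \lemref{G-genus}) and using $(\tilde\phi_x^{-1})^{*}(1)=1$ then yields $p_!(1)=\tilde p_!(1)$; since $(\bar\phi_x^{-1})^{*}=\mathrm{id}$, this is exactly the asserted equality $(\bar\phi_x^{-1})^{*}(\Phi(F,c_{\tau(F)},\theta_F))=\Phi(F_x,c_{\tau(F_x)},\theta_{F_x})$, the direction of the formula being immaterial here because the base map is the identity.

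The main obstacle is not the homological step, which is formal once the diagram is in place, but the bookkeeping verification that $\tilde\phi_x^{-1}$ carries the induced stable complex structure on the tangent bundle along the fibers of $p$ to that of $\tilde p$; this is what makes the Gysin homomorphisms $p_!$ and $\tilde p_!$ genuinely comparable. Everything needed for this is already encoded in the definitions of $\theta_{F_x}$ and $c_{\tau(F_x)}$, so the work is to trace those identifications through the Borel construction and confirm that $\phi_x^{-1}$ induces an isomorphism of stable complex fibrations over $BT^k$. After that, the lemma is a direct consequence of Gysin functoriality.
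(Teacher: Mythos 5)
Your proof is correct and takes essentially the same route as the paper: the paper offers no more than the observation that the lemma "directly follows from the properties of the Gysin homomorphism" applied to the displayed commutative diagram of Borel constructions, which is exactly your final step. Your additional verifications — that $\phi_x^{-1}$ is strictly $T^k$-equivariant and stable-complex-structure preserving by construction, hence $\bar{\phi}_x^{-1}$ is the identity on $BT^k$ and the direction of the stated formula is immaterial — simply make explicit what the paper leaves implicit.
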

Then as  the direct consequence of Theorem~\ref{utgfb} we obtain the following.
\begin{cor}\label{genus-fiber}
Let $\phi _{x} : F\to F_x$ be the diffeomorphism given by an arbitrary chart $\phi$ in the neighborhood of $x$ from the given $G$-atlas for $(E, F, X)$ and $(\bar{\phi} _{x}^{-1})^{*} : U^{*}(BT^k)\to U^{*}(BT^k)$ is the induced homomorphism, where $x\in Fix(X)$. Then
\begin{equation}
\Phi (E,\theta _{E},c_{\tau (E)}) = \sum\limits_{x\in Fix(X)}\sg (x)\prod _{j=1}^{n}\frac{1}{[\Lambda_{j}(x)]{\bf (u)}}(\bar{\phi} _{x}^{-1})^{*}(\Phi (F, \theta _{F}, c_{\tau (F)})) \ . 
\end{equation}
\end{cor}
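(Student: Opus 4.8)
The plan is to obtain the statement directly by composing two results already at hand: the localization formula of Corollary~\ref{UTGF} and the preceding Lemma identifying the fiber genera. Corollary~\ref{UTGF} already writes $\Phi(E,\theta_E,c_{\tau(E)})$ as the sum over $Fix(X)$ in which the summand indexed by $x$ carries the factor $\sg(x)\prod_{j=1}^{n}\frac{1}{[\Lambda_j(x)]({\bf u})}$ coming from the base data and is multiplied by the genus $\Phi(F_x,\theta_{F_x},c_{\tau(F_x)})$ of the fiber sitting over $x$. Thus the entire content that remains is to rewrite each fiber genus $\Phi(F_x,\theta_{F_x},c_{\tau(F_x)})$ in terms of the genus of the single model fiber $(F,\theta_F,c_{\tau(F)})$.

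First I would recall how the fiber over a fixed point is related to $F$. For $x\in Fix(X)$, an arbitrary chart $\phi$ of the fixed $G$-atlas near $x$ restricts to a diffeomorphism $\phi_x:F_x\to F$ which, by the construction of the structures $\theta_{F_x}$ and $c_{\tau(F_x)}$ in the preceding subsubsections, is $T^k$-equivariant and stable complex with respect to $c_{\tau(F_x)}$ and $c_{\tau(F)}$. Consequently it induces the self-map $\bar{\phi}_x^{-1}:BT^k\to BT^k$ of the Borel constructions displayed above, and functoriality of the Gysin homomorphism along that diagram gives precisely the identity of the preceding Lemma, namely $(\bar{\phi}_x^{-1})^{*}(\Phi(F,c_{\tau(F)},\theta_F))=\Phi(F_x,c_{\tau(F_x)},\theta_{F_x})$.

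Then I would substitute this identity into formula~\eqref{utgfb} term by term. The base factor $\sg(x)\prod_{j=1}^{n}\frac{1}{[\Lambda_j(x)]({\bf u})}$ involves only the action $\theta_X$ at $x$ and is therefore left unchanged, so replacing the fiber genus $\Phi(F_x,\theta_{F_x},c_{\tau(F_x)})$ by $(\bar{\phi}_x^{-1})^{*}(\Phi(F,\theta_F,c_{\tau(F)}))$ in each summand yields exactly the asserted formula. There is essentially no genuine obstacle here: the only point that truly relies on the earlier work is the $T^k$-equivariance and the stable complex compatibility of $\phi_x$, which is precisely what guarantees that the preceding Lemma applies and hence that the corollary follows immediately from Corollary~\ref{UTGF}.
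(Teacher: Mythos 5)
Your proposal is correct and matches the paper's own argument exactly: the paper derives this corollary as a direct consequence of Corollary~\ref{UTGF} combined with the preceding Lemma identifying $\Phi (F_{x}, c_{\tau (F_{x})},\theta _{F_{x}})$ with $(\bar{\phi} _{x}^{-1})^{*}(\Phi (F, c_{\tau (F)}, \theta _{F}))$, substituting the latter into each summand of~\eqref{utgfb} just as you do.
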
 

\begin{ex}
Let $G$ a compact connected Lie group, $H$ its closed connected subgroup such that $\rk H=\rk G=k$. Assume $G/H$ to be endowed with an invariant almost complex structure and with the canonical action of the maximal torus $T^k$. Consider the  
subgroup $T$ of  Diff$(G/H)$ given by the canonical action of the torus $T^k$. Let further $X$ be an arbitrary tangentially stable complex $T^k$-manifold. Consider the principal $T$-bundle over $X$ and the associated to it fiber bundle $\dr{G/H}{i} \dr{E}{\pi}{X}$ and fix some smooth $T$-structure on this bundle. Following above construction we obtain a tangentially stable complex $T^k$-manifold $(E,\theta _{E}, c_{\tau (E)})$. Theorem~\ref{UTGF} gives that   
\[
\Phi (E,\theta _{E}, c_{\tau (E)})=\sum\limits_{x\in Fix(X)}\prod_{j=1}^{n}\frac{1}{[\Lambda_{j}(x)]({\bf u})}(\Phi ((G/H)_{x}, J_{x})) \ .
\] 
In particular we can take $X$ to be a homogeneous space $G_1/H_1$ where $\rk G_1=\rk H_1=k$.
\end{ex}
\begin{ex}
Let $G$ be compact connected Lie group, $H$ its closed connected subgroup such $\rk G =\rk H$. Assume that $G/H$ admits an invariant almost complex structure $J_1$ and fix some invariant almost complex structure $J_2$ on $H/T^k$, where $T^k$ is the common maximal torus for $H$ and $G$. Following the notations from (\ref{TSCFB}) we take $X= G/H$ and $F=H/T^k$ endowed with the structures $J_1$ and $J_2$ and canonical actions $\theta _{X}$ and $\theta _{F}$  of the torus $T^k$.   Let us fix the diffeomorphism group $T$ for $F$ given by the action of $T^k$. The associated fiber bundle to the principal $T$-bundle over $X$ with the fiber $F$ is homogeneous fibration $F=H/T^k\rightarrow E=G/T^k\rightarrow X=G/H$. Let $J$ be the almost complex structure on $E=G/T^k$ obtained from $J_1$ and $J_2$ by the construction from (~\ref{stablecs}) related to the canonical invariant connection $\CC$ on this homogeneous fibration. Using the given actions $\theta _{X}$ and $\theta _{F}$ we may define, following (~\ref{action}), the action $\theta$ of the torus $T^k$ on the total space $E=G/T^k$. Since 
$Fix(G/H)=W_{G}/W_{H}$,  Corollary~\ref{UTGF} implies that
\[
\Phi (G/T^k,\theta , J) = \sum\limits_{\rw \in W_{G}/W_{H}}\prod _{j=1}^{n}\frac{1}{[\Lambda_{j}(\rw )]{\bf (u)}}\Phi ((H/T^k)_{\rw},\theta _{(H/T)_{\rw }},(J_2)_{\rw }),  
\]
where the weights $\Lambda _{j}(\rw )$ for the action $\theta _{G/H}$ related to the almost complex structure $J_1$ are established in~\cite{Buch_Terz}. 
\end{ex}
\begin{rem}
Note that the action $\theta$ on $G/T^k$ from this  example is different from the canonical action of $T^k$ on $G/T^k$, since it induces 
$T^k$-action on each fiber, while the canonical action does not do the same. We also see from the construction of the structure $J$ that it is not necessarily invariant under the action of the group $G$ on $G/T^k$, although it's horizontal component, related to the canonical invariant connection $\CC$, is invariant.
\end{rem} 

\subsubsection{Rigidity and multiplicativity of a Hirzebruch genus.}
Regarding this question for  fibrations we have constructed in this Section it is valid the statement analogous to that for compatible almost complex homogeneous fibrations. Note that $T^k$-equivariant extension  
$\LLL _{f}^{T^k} = \LLL _{f}\circ \Phi$ of any Hirzebruch genus $\LLL _{f}$  commutes with any homomorphism $h: U^{*}(BT^k)\to U^{*}(BT^k)$, meaning that $h\circ (\LLL _{f}\circ \Phi )=\LLL _{f} \circ (h\circ \Phi)$. Then Corollary~\ref{genus-fiber} implies the following.

\begin{prop}
Assume we are given stable complex $T^k$-manifolds $X$ and $F$ and let $E$ be the stable complex manifold obtained as the total space of the fibration over $X$ with the fiber $F$ in the way we described. If the Hirzebruch genus $\LLL _{f}$ is $T^k$-rigid on $F$ then it will be multiplicative for this fibration. Moreover
\begin{enumerate}
\item if $\LLL _{f}$ vanishes on $F$, then $\LLL _{f}^{T^k}$  will vanish on $\Phi (E,\theta _{E},c_{\tau (E)})$;
\item if $\LLL _{f}$ does not vanish on $F$ then $\LLL _{f}$ it $T^k$-rigid on $E$ if and only if it is $T^k$-rigid on X.
\end{enumerate}
\end{prop}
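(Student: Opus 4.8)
The plan is to follow the proof of Proposition~\ref{H-rig-mult} in spirit, replacing the homogeneous twisted product formula of Theorem~\ref{genus_fibration} by its general counterpart, the localization formula of Corollary~\ref{genus-fiber}. First I would apply the equivariant genus $\LLL_f^{T^k}=\LLL_f\circ\Phi$ to the expression for $\Phi(E,\theta_E,c_{\tau(E)})$ furnished by Corollary~\ref{genus-fiber}. Since $\LLL_f$ is a ring homomorphism it distributes over the finite sum over $\mathrm{Fix}(X)$ and over the products, sending each factor $1/[\Lambda_j(x)](\mathbf{u})$ to $1/f(\Lambda_j(x)\cdot u)$ — this is precisely the substitution that turns the localization formula of Theorem~\ref{main} into the rigidity expression of Proposition~\ref{prop-rigidity} — while the fiber is carried along in its twisted form $(\bar\phi_x^{-1})^*(\Phi(F,\theta_F,c_{\tau(F)}))$.

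The decisive step is to neutralize the twist $(\bar\phi_x^{-1})^*$. Invoking the commutativity observation recorded immediately before the statement, that $\LLL_f^{T^k}$ commutes with any homomorphism of $U^*(BT^k)$ induced by a self-map of $BT^k$, I would rewrite $\LLL_f((\bar\phi_x^{-1})^*(\Phi(F))) = (\bar\phi_x^{-1})^*(\LLL_f^{T^k}(\Phi(F)))$. Here the hypothesis that $\LLL_f$ is $T^k$-rigid on $F$ is used crucially: it gives $\LLL_f^{T^k}(\Phi(F))=\LLL_f(F)$, a scalar in $A$ independent of $u_1,\dots,u_k$. As $(\bar\phi_x^{-1})^*$ is $\Omega_U^*$-linear and merely rearranges the power-series variables, it fixes this scalar, so every twisted fiber term collapses to the single value $\LLL_f(F)$.

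Substituting back I would obtain
\[
\LLL_f^{T^k}(\Phi(E))=\LLL_f(F)\cdot\sum_{x\in\mathrm{Fix}(X)}\sg(x)\prod_{j=1}^n\frac{1}{f(\Lambda_j(x)\cdot u)}=\LLL_f(F)\cdot\LLL_f^{T^k}(\Phi(X)),
\]
the last equality being the localization formula (Theorem~\ref{main}) for $X$ after applying $\LLL_f$. Comparing constant terms ($u=0$) gives $\LLL_f(E)=\LLL_f(F)\cdot\LLL_f(X)$, which is the asserted multiplicativity. The two refinements then follow formally, exactly as in Proposition~\ref{H-rig-mult}: if $\LLL_f(F)=0$ the product above vanishes, so $\LLL_f^{T^k}$ vanishes on $\Phi(E)$, proving (1); if $\LLL_f(F)\neq 0$ then $\LLL_f^{T^k}(\Phi(E))$ is a constant (equal to its value at $u=0$) if and only if $\LLL_f^{T^k}(\Phi(X))$ is, which is exactly the equivalence of $T^k$-rigidity on $E$ and on $X$ claimed in (2).

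I expect the only real obstacle to be the rigorous justification that the twist $(\bar\phi_x^{-1})^*$ fixes the rigid fiber genus, i.e. that it is a scalar-preserving ($\Omega_U^*$-linear) operator; this is the place where the construction of Section~\ref{TSCFB} departs from the homogeneous case, where the twist was by Weyl-group elements whose effect on the relevant constants was already transparent. A minor point in the converse direction of (2) is the cancellation of the nonzero factor $\LLL_f(F)$, which is legitimate since $A$ is a $\Q$-algebra.
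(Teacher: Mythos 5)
Your proposal is correct and is essentially the paper's own argument: the paper gives no detailed proof, asserting only that the commutativity of $\LLL_f^{T^k}=\LLL_f\circ\Phi$ with homomorphisms of $U^*(BT^k)$ together with Corollary~\ref{genus-fiber} implies the statement, and your write-up is exactly the intended fleshing-out of that remark (collapse each twisted fiber term $(\bar{\phi}_x^{-1})^*(\Phi(F))$ to the constant $\LLL_f(F)$ via rigidity, then conclude as in Proposition~\ref{H-rig-mult}). Your closing caveat about cancelling $\LLL_f(F)$ in part (2) applies equally to the paper's own proof of Proposition~\ref{H-rig-mult}, so it is not a departure from the paper.
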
 

\section{Tangentially stable complex $T^{n}$-bundles over $\HH P^{n-1}$}
\numberwithin{thm}{section}
We start by recalling some results on toric genera for fibrations from~\cite{BPR}. Assume we are given tangentially stable complex $T^k$-bundle $F\lra \dr{E}{\pi}{X}$. This means that  $E$ and $X$ are smooth manifolds with the actions of $T^k$,  $\pi$ is a smooth map which is invariant under the action of $T^k$ and the tangent bundle $\tau _{F}(E)$  along the fibers is equipped with  $T^{k}$-equivariant stable complex structure $c_{\pi}$. 
\begin{rem}
Note that when $X$ is a point both $E$ and $F$ can be identified with some smooth $T^k$-manifold $M^{2n}$ and 
$\tau _{F}(E)$ can be identified  with its tangent bundle $\tau (M)$, while $c_{\tau}(\pi)$ gives $T^k$-equivariant tangentially stable complex structure $c_{\tau}$ on $M^{2n}$.  
\end{rem}
Denote by $U_{T^k}^{*}(X)$ the set of cobordism classes of tangentially stable complex $T^k$-fibrations over $X$.
We use this notation instead of $\Omega ^{*}_{U:T^k}(X)$ as it is in~\cite{BPR} because of consistency of the notations in the paper.  For any smooth manifold $X$ in~\cite{BPR} it is constructed the homomorphism 
\[
\Phi _{X} : U_{T^k}^{*}(X)\to U^{*}(ET^k\times _{T^k}X),
\]
called {\it the universal toric genus}.  When $X=pt$ we obtain $\Phi _{pt}$: 
\[
\Phi _{pt} : \Omega_{U:T^k}^{*}\to \Omega _{U}^{*}[[u_1,\ldots ,u_k]]
\]
and  it is satisfied $\Phi _{pt}(M^{2n}, c_{\tau}, \theta)=\Phi (M^{2n}, c_{\tau}, \theta)$.
\begin{rem}
In the same way  can be constructed  $G$-genus $\Phi _{X: G} : U_{G}^{*}(X) \to U^{*}(EG\times _{G}X)$, where $U_{G}^{*}(X)$ denotes the set of cobordism classes of tangentially stable complex $G$-fibrations over $X$. Analogously, when $X=pt$ one obtains that $\Phi _{pt:G}(M^{2n}, c_{\tau}, \gamma)=\Phi_{G} (M^{2n}, c_{\tau}, \gamma)$.
\end{rem}

Assume that all fixed points for the action of $T^k$ on $E$ are isolated. Denote by $Fix(E)$ and $Fix(X)$ the sets of  fixed points on the base $X$ and the total space $E$  and by $Fix(F_{x})$ the set of  fixed points in the fiber $F_{x}={\pi}^{-1}(x)$ for some fixed point $x\in Fix(X)$. Note that any $e\in Fix (E)$ can be looked as $e=(x, f_x)$ for  $x\in Fix(X)$ and  $f_x\in Fix(F_{x})$. Let $\omega _{j}(f_x)$ be the weights at the fixed point $f_x\in Fix(F_{x})$ that are determined by $c_{\pi}$ where $1\leq j\leq n$. The following result is proved in~\cite{BPR}, Theorem 4.6.
\begin{thm}
For any tangentially stable complex $T^k$-bundle $F\lra \dr{E}{\pi}{X}$ with isolated fixed points, the equation
\begin{equation}
\Phi _{X}(\pi )|_{Fix(X)} = \sum\limits_{x\in Fix(X)}\sum\limits_{f_{x}\in Fix(F_{x})}\sg (f_x)\prod_{i=1}^{n}\frac{1}{[\omega _{j}(f_x)]({\bf u})}
\end{equation}
is satisfied in $\oplus_{Fix(X)}U^{-2n}(BT_{+}^{k}\times x)$, where $x$ and $f_x$ range over $Fix(X)$ and $Fix(F_{x})$ respectively. 
\end{thm}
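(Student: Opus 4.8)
The plan is to reduce the family statement to the single--manifold localization formula \thmref{main} by restricting everything to the fixed locus of the base and invoking naturality of the Gysin construction. Let $\theta$ denote the given $T^k$--action on $E$. Since $\pi$ is $T^k$--invariant, the inclusion $i\colon Fix(X)\hra X$ is $T^k$--equivariant, and pulling the bundle $\pi$ back along $i$ yields the restricted bundle $\pi'\colon \pi^{-1}(Fix(X))\to Fix(X)$. As $\pi^{-1}(Fix(X))=Fix(X)\times_X E$ and the Borel construction $ET^k\times_{T^k}(-)$ preserves such fibre products over the base $BT^k$, passing to Borel constructions produces the Cartesian square
\[
\xymatrix{ ET^k\times_{T^k}\pi^{-1}(Fix(X)) \ar[r]\ar[d]_{\tilde\pi'} & ET^k\times_{T^k}E \ar[d]^{\tilde\pi} \\ ET^k\times_{T^k}Fix(X) \ar[r]^-{Bi} & ET^k\times_{T^k}X, }
\]
where $\tilde\pi$ is the fibre bundle with fibre $F$ whose fibrewise tangent bundle carries the equivariant stable complex structure $c_\pi$ defining $\Phi_X(\pi)=\tilde\pi_!(1)$. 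Because the Gysin homomorphism is functorial for bundles connected by such a commutative (pullback) diagram, exactly as used in the proof of \lemref{G-genus}, I obtain $\Phi_X(\pi)|_{Fix(X)}=(Bi)^*\tilde\pi_!(1)=\tilde\pi'_!(1)$.

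Next I would decompose the fixed locus. Since $T^k$ acts trivially on each $x\in Fix(X)$, the Borel construction splits as a disjoint union $ET^k\times_{T^k}Fix(X)=\bigsqcup_{x\in Fix(X)}BT^k$, and correspondingly $\pi^{-1}(Fix(X))=\bigsqcup_{x}F_x$ with $ET^k\times_{T^k}\pi^{-1}(Fix(X))=\bigsqcup_x ET^k\times_{T^k}F_x$. Each fibre $F_x$ over a base fixed point is a $T^k$--invariant submanifold of $E$, and $c_\pi$ restricts to a $T^k$--equivariant tangentially stable complex structure on it; hence $(F_x,c_\pi|_{F_x},\theta|_{F_x})$ is a tangentially stable complex $T^k$--manifold in the sense of the opening subsection, with $\dim F_x=2n$. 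The Gysin map $\tilde\pi'_!(1)$ therefore splits componentwise, and on the component over $x$ it equals, by the very definition of the universal toric genus, $\Phi(F_x,c_\pi|_{F_x},\theta|_{F_x})\in U^*(BT^k)$.

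Finally I would apply the single--manifold localization formula. The hypothesis that the fixed points of $E$ are isolated, together with the equivariance of $\pi$, gives $Fix(E)=\bigsqcup_{x\in Fix(X)}Fix(F_x)$, so each $F_x$ has only isolated $T^k$--fixed points, namely $Fix(F_x)$. Applying \thmref{main} to each $(F_x,c_\pi|_{F_x},\theta|_{F_x})$ yields
\[
\Phi(F_x,c_\pi|_{F_x},\theta|_{F_x})=\sum_{f_x\in Fix(F_x)}\sg(f_x)\prod_{j=1}^{n}\frac{1}{[\omega_j(f_x)]({\bf u})},
\]
where the $\omega_j(f_x)$ are the weights at $f_x$ determined by $c_\pi$. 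Summing over the components $x\in Fix(X)$ assembles these into the asserted double sum in $\oplus_{Fix(X)}U^{-2n}(BT^k_{+}\times x)$.

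I expect the main obstacle to be the rigorous justification of the base--change identity $(Bi)^*\tilde\pi_!(1)=\tilde\pi'_!(1)$: one must verify that the square above is a genuine (transverse) pullback, so that the umkehr map commutes with restriction along $Bi$, and that the fibrewise stable complex data $c_\pi$ restrict correctly to define $\tilde\pi'_!$ on the components $ET^k\times_{T^k}F_x$. Once this naturality of the Gysin homomorphism is in place, the remaining steps---the splitting of the fixed locus and the componentwise appeal to \thmref{main}---are purely bookkeeping.
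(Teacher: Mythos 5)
Your proposal is correct, and it coincides with the intended proof: the paper states this result without proof, citing \cite{BPR} (Theorem 4.6), and the argument there is precisely your three steps --- base-change naturality of the Gysin map along the inclusion of $Fix(X)$, the componentwise identification of $\tilde\pi'_!(1)$ over each $x\in Fix(X)$ with $\Phi(F_x,c_\pi|_{F_x},\theta|_{F_x})$, and the application of \thmref{main} to each fiber $(F_x,c_\pi|_{F_x},\theta|_{F_x})$ with its isolated fixed set $Fix(F_x)$. The technical point you flag (that the square is a genuine pullback, so the umkehr map commutes with restriction along $Bi$) is unproblematic here, since $\pi$ is a fiber bundle and hence a submersion, transversality is automatic, and the Borel construction commutes with fiber products over the base, exactly as in the functoriality used in the proof of \lemref{G-genus}.
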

\begin{rem}
According to the previous theorem we can assign to  $\Phi (\pi )|_{Fix(X)}$  the vector $\Big( \sum\limits_{f_{x}\in Fix(F_{x})}\sg (f_x)\prod\limits_{i=1}^{n}\frac{1}{[\omega _{j}(f_x)](u)}\Big) _{x\in Fix(X)}$ which belongs to  a $Fix(X)$-dimensional module over $\Omega _{U}^{*}$.
\end{rem}

\begin{thm}
The universal toric genus of the fibration
\begin{equation}\label{quatern}
\C P^{1}\times Sp(n-1)/T^{n-1}\lra \dr{Sp(n)/T^{n}}{\pi}{\HH P^{n-1}}
\end{equation}
of  an arbitrary invariant almost complex structure on $Sp(n)/T^n$ can be written as  
\[
\Phi _{\HH P^n}(\pi )= \sum_{\omega}g_{0,\omega}|{\bf u}|^{2\omega} + \sum_{\omega}g_{1,\omega}|{\bf u}|^{2\omega}z + \ldots + \sum_{\omega}g_{n-1,\omega}|{\bf u}|^{2\omega}z^{n-1} \ ,
\]
where $g_{j,\omega}\in \Omega _{U}^{-2(2n+2+|\omega |)}$ for $\omega =(i_1,\ldots ,i_n)$, then $|{\bf u}|^{2\omega}=|u_1|^{2i_1}\cdots |u_{n}|^{2i_n}$ and $z=p_{1}^{Sp}(\rho )$ is the first symplectic Pontrjagin class of the canonical quaternionic line bundle over the projectivisation $\dr{\HH P(\eta_{\HH}^{1}\oplus\cdots \oplus \eta_{\HH}^{n})}{\rho}{BSp(1)^{n}}$.
In particular if $n=2$, for the invariant complex structure we obtain  the  coefficients $g_{0,\omega}$  to be:
\[
g_{0,\omega} = 2^{2(i_1+i_2+1)}a_{2i_1+1}a_{2i_2+1},\;\; \text{for}\;\; \omega = (i_1,i_2).
\]
\end{thm}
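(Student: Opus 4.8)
The plan is to compute $\Phi_{\HH P^{n-1}}(\pi)$ by first identifying the cobordism module in which it lives and then pinning down its components through the fixed-point formula of~\cite{BPR}. First I would observe that the canonical $T^n$-action on $\HH P^{n-1}=Sp(n)/(Sp(1)\times Sp(n-1))$ factors through the subgroup $Sp(1)^n\subset Sp(n)$ acting coordinatewise on $\HH^n$. Consequently the Borel construction is the pullback of the universal quaternionic projectivisation,
\[
ET^n\times_{T^n}\HH P^{n-1}\;\cong\;BT^n\times_{BSp(1)^n}\HH P(\eta_\HH^1\oplus\cdots\oplus\eta_\HH^n),
\]
along $BT^n\to BSp(1)^n$. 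The quaternionic projective bundle theorem in complex cobordism then presents $U^*$ of the projectivisation as a free module over $U^*(BSp(1)^n)=\Omega_U^*[[p_1^{Sp}(\eta_\HH^1),\ldots,p_1^{Sp}(\eta_\HH^n)]]$ with basis $1,z,\ldots,z^{n-1}$, where $z=p_1^{Sp}(\rho)$ satisfies a monic degree-$n$ relation. Pulling back sends $p_1^{Sp}(\eta_\HH^i)$ to $|u_i|^2$, so that $U^*(ET^n\times_{T^n}\HH P^{n-1})=\Omega_U^*[[|u_1|^2,\ldots,|u_n|^2]]\{1,z,\ldots,z^{n-1}\}$. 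This already yields the asserted shape $\Phi_{\HH P^{n-1}}(\pi)=\sum_{j=0}^{n-1}\big(\sum_\omega g_{j,\omega}|{\bf u}|^{2\omega}\big)z^j$; the grading of the $g_{j,\omega}$ is then forced by homogeneity, since $\Phi$ is homogeneous of degree $-2\dim_\C F=-2((n-1)^2+1)$ while $\deg|u_i|^2=\deg z=4$.

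Next I would determine the coefficients by restriction to the $n$ fixed points of the base. By the fixed-point formula of~\cite{BPR} quoted above, the restriction of $\Phi_{\HH P^{n-1}}(\pi)$ to a base fixed point $\rw\in W_{Sp(n)}/W_{Sp(1)\times Sp(n-1)}$ is exactly the $T^n$-equivariant universal toric genus of the fibre $\C P^1\times Sp(n-1)/T^{n-1}$ with weights twisted by $\rw$, computed through Theorem~\ref{main}. On the module side the tautological line $\rho$ restricts at the $k$-th fixed section to $\eta_\HH^k$, so $z|_{p_k}=|u_k|^2$; hence the basis $1,z,\ldots,z^{n-1}$ restricts to a Vandermonde system in the $|u_k|^2$. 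Since restriction to fixed points is injective after localisation, inverting this Vandermonde matrix recovers each $\sum_\omega g_{j,\omega}|{\bf u}|^{2\omega}$ from the $n$ fibre genera.

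For the displayed case $n=2$ this becomes explicit. The base $\HH P^1$ has the two fixed points $e$ and the transposition $\sigma=(12)$, and over each the fibre is $\C P^1\times\C P^1=(Sp(1)/T^1)\times(Sp(1)/T^1)$ carrying the roots $2x_1,2x_2$; applying $\sigma$ merely swaps these roots, so the two fibre genera coincide and the $z$-component $g_{1,\omega}$ vanishes. Thus $\Phi_{\HH P^1}(\pi)=\sum_\omega g_{0,\omega}|{\bf u}|^{2\omega}$ equals the fibre genus, which by Theorem~\ref{main} factors as $\big(\tfrac{1}{[2](u_1)}+\tfrac{1}{[-2](u_1)}\big)\big(\tfrac{1}{[2](u_2)}+\tfrac{1}{[-2](u_2)}\big)$. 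Applying the Chern--Dold character and using $f(2x)-f(-2x)=2\sum_{i\ge0}a_{2i+1}(2x)^{2i+1}$ gives $\hat{ch}_U\big(\tfrac{1}{[2](u)}+\tfrac{1}{[-2](u)}\big)=\sum_{i\ge0}2^{2i+1}a_{2i+1}x^{2i}$, whence, expanded in the symplectic variable $|u|^2$ (normalised so that $\hat{ch}_U(|u|^2)=x^2$), this factor equals $\sum_{i\ge0}2^{2i+1}a_{2i+1}(|u|^2)^i$. Multiplying the two factors reads off $g_{0,(i_1,i_2)}=2^{2i_1+1}a_{2i_1+1}\cdot 2^{2i_2+1}a_{2i_2+1}=2^{2(i_1+i_2+1)}a_{2i_1+1}a_{2i_2+1}$, as claimed; the powers of $2$ and the odd-indexed $a$'s trace directly to the weight-$2$ symplectic roots of the $\C P^1$ factors.

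The main obstacle I expect is twofold. First, establishing the module description rigorously requires the quaternionic projective bundle theorem in complex cobordism together with a careful treatment of the $T^n\!\to\!Sp(1)^n$ reduction and the degree-$n$ relation satisfied by $z$. Second, and more delicate, is the normalisation of the symplectic Pontryagin classes: the naive class $u_i\bar u_i$ has Chern--Dold character $-x_i^2/(f(x_i)f(-x_i))$ rather than $x_i^2$, so one must work with the variable $|u_i|^2$ characterised by $\hat{ch}_U(|u_i|^2)=x_i^2$ in order for the coefficients to come out exactly as $2^{2i+1}a_{2i+1}$. Getting this bookkeeping right --- rather than the localisation itself --- is where the computation must be handled with care.
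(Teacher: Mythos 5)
Your proposal is correct in substance and follows the paper's skeleton for the structural part: factor the $T^n$-action through $Sp(1)^n\subset Sp(n)$, identify the Borel construction with (the pullback of) the quaternionic projectivisation $\HH P(\eta_{\HH}^{1}\oplus\cdots\oplus\eta_{\HH}^{n})\to BSp(1)^n$, apply the quaternionic projective bundle theorem, and pull back along $BT^n\to BSp(1)^n$ with $v_i\mapsto |u_i|^2$. Where you genuinely diverge is in extracting the coefficients, and there your route is sharper than the paper's. The paper does not invert a Vandermonde system: it asserts that $z$ vanishes under restriction to the base fixed points, sums the two fibre genera, and reads off $g_{0,\omega}$ from that sum (carrying a spurious factor of $2$ which silently disappears in its final formula). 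Your component-wise restriction is the correct mechanism: the relation $(z+v_1)\cdots(z+v_n)=0$ forces $z$ to restrict at the $k$-th fixed section to $-v_k$ (or $+v_k$ under the opposite sign convention for $p_1^{Sp}$), never to $0$; and since for $n=2$ the two fibre genera coincide, your argument yields both $\hat g_0=F$ and $\hat g_1=0$, reproducing the paper's coefficients and explaining its factor-of-$2$ slip. Your second point of caution is also a genuine catch: the paper sets $|u_i|^2=u_i\bar u_i$, whose Chern--Dold character is $-x_i^2/(f(x_i)f(-x_i))$, yet matches coefficients of $x_1^{2i_1}x_2^{2i_2}$ as if it were $x_i^2$; taken literally this introduces signs and decomposable corrections (already $c_1=-8a_3$ instead of $8a_3$ at order $x^2$), and the clean formula $g_{0,\omega}=2^{2(i_1+i_2+1)}a_{2i_1+1}a_{2i_2+1}$ holds exactly only with your normalisation, equivalently as a statement about the Chern--Dold expansion.

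Two repairs your write-up needs. First, the identification $U^*(ET^n\times_{T^n}\HH P^{n-1})=\Omega_U^*[[|u_1|^2,\ldots,|u_n|^2]]\{1,z,\ldots,z^{n-1}\}$ is too strong: by the projective bundle theorem this ring is free on $1,z,\ldots,z^{n-1}$ over all of $U^*(BT^n)=\Omega_U^*[[u_1,\ldots,u_n]]$. That the coefficients of $\Phi_{\HH P^{n-1}}(\pi)$ are series in the $|u_i|^2$ alone does not follow from the module structure; it follows from naturality of the Gysin homomorphism under base change along $BT^n\to BSp(1)^n$ — exactly the paper's Lemma~\ref{G-genus}, $Bj^*\Phi_{\HH P^{n-1}:\,Sp(1)^n}=\Phi_{\HH P^{n-1}}$ — which your pullback description sets up but which you must invoke explicitly for the genus itself, not only for the cohomology of the total space. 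Second, your homogeneity count gives $\deg g_{j,\omega}=-2\bigl((n-1)^2+1+2|\omega|+2j\bigr)$; this is consistent with the $n=2$ coefficients, but you present it as recovering the grading $-2(2n+2+|\omega|)$ stated in the theorem, which it contradicts (the printed grading appears to be a misprint, as the theorem's own example $g_{0,(0,0)}=4a_1^2$ shows), so you should flag the discrepancy rather than elide it.
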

\begin{rem}
Note that $z=p_{1}^{Sp}(\rho )$ restricts to the fiber $\HH P^{n-1}$ of the projectivisation $\rho$ to the first symplectic Pontrjagin class of the canonical quaternionic line bundle over $\HH P^{n-1}$.
\end{rem}
\begin{proof}
Let us fix an invariant almost complex structure $J$ on the quaternionic flag manifold $Sp(n)/T^{n}$. 
The structure $J$ on $\tau (Sp(n)/T^n)$, being a $Sp(n)$-invariant almost complex structure, induces $T^n$-invariant
 almost complex structure on the tangent bundle along the fibers $\tau _{\C P^{1}\times Sp(n-1)/T^{n-1}}(Sp(n)/T^n)$  of the fibration~\eqref{quatern}. The action of  $T^n$ on $Sp(n)/T^{n}$ and on $\HH P^{n-1}$ is compatible with  the fibration~\eqref{quatern}, so it is defined the universal toric genus $\Phi _{\HH P^n}(\pi)$ which is an element in $U^{*}(ET^{n}\times _{T^n}\HH P^{n-1})$. In order to describe $\Phi _{\HH P^n}(\pi)$ explicitly, we consider the following situation. 

The structure $J$ also induces the $Sp(1)^{n}$-invariant 
almost complex structure on the tangent bundle $\tau _{\C P^{1}\times Sp(n-1)/T^{n-1}}(Sp(n)/T^n)$  and the action of the group $Sp(1)^n$ on $Sp(n)/T^{n}$ and on $\HH P^{n-1}$ is compatible with the  fibration~\eqref{quatern}, so it is defined $Sp(1)^{n}$-genus $\Phi _{\HH P^n: Sp(1)^{n}}(\pi)$. It is   an element of
$U^{*}(ESp(1)^{n}\times _{Sp(1)^n}\HH P^{n-1})$. Namely recall that fibration~\eqref{quatern} induces the fibration
\[
\C P^{1}\times Sp(n-1)/T^{n-1}\lra \dr{ESp(1)^{n}\times _{Sp(1)^{n}}Sp(n)/T^{n}}{1\times _{Sp(1)^{n}}\pi}{ESp(1)^{n}\times _{Sp(1)^{n}}\HH P^{n-1}} \ .
\]  
Then the $Sp(1)^{n}$ - genus is defined with $\Phi _{\HH P^n: Sp(1)^{n}}(\pi) = (1\times _{Sp(1)^{n}}\pi)_{!}(1)$, where 
\[
(1\times _{Sp(1)^{n}}\pi)_{!} : U^{*}(ESp(1)^{n}\times _{Sp(1)^{n}}Sp(n)/T^{n})\to U^{*}(ESp(1)^{n}\times _{Sp(1)^{n}}\HH P^{n-1})
\]
is the Gysin homomorphism induced by $1\times _{Sp(1)^{n}}\pi$.

The relation between $\Phi _{\HH P^n}$ and $\Phi _{\HH P^n: Sp(1)^n}$ is by Lemma~\ref{G-genus} given by
\begin{equation}\label{rel_gen}
Bj^{*}\Phi _{\HH P^n: Sp(1)^n} = \Phi _{\HH P^n} \ ,
\end{equation}
where $Bj^{*} : U^{*}(ET^{n}\times _{T^{n}}\HH P^{n-1})\to U^{*}(ESp(1)^{n}\times _{Sp(1)^{n}}\HH P^{n-1})$ is induced by the inclusion $j : T^n\to Sp(1)^n$. 

The cobordism ring $U^{*}(ESp(1)^{n}\times _{Sp(1)^{n}}\HH P^{n-1})$ can be described using projectivisation. Let us consider $\HH P^{n-1} = Sp(n)/Sp(1)\times Sp(n-1)$. The group $Sp(1)^{n}$ acts smoothly on $\HH P^{n-1}$  and we can consider the Borel construction for this action
\[
\HH P^{n-1}\lra ESp(1)^{n}\times _{Sp(1)^{n}}\HH P^{n-1}\lra BSp(1)^{n} \ .
\]
This bundle can be obtained as quaternionic projectivisation of the quaternionic vector bundle $V_{\HH}$ over $BSp(1)^{n}$ which is the direct sum of $n$ - copies of the tautological line bundle $\eta _{\HH }$ over $BSp(1)$, i.~e.
\[
ESp(1)^{n}\times _{Sp(1)^{n}}\HH P^{n-1} = \HH P(V_{\HH}) = \HH P(\eta _{\HH }^1\oplus \cdots \oplus \eta _{\HH}^{n}) \ .
\]
It follows that $U^{*}(ESp(1)^{n}\times _{Sp(1)^{n}}\HH P^{n-1})$ is a free $U^{*}(BSp(1)^{n})$-modul:
\begin{equation}\label{cob_ring_quatern}
U^{*}(ESp(1)^{n}\times _{Sp(1)^{n}}\HH P^{n-1}) = U^{*}(BSp(1)^{n})[z]/\langle z^{n}+p_{1}^{Sp}(V_{\HH})z^{n-1}+\ldots +p_{n}^{Sp}(V_{\HH})\rangle \ ,
\end{equation}
where $p_{1}^{Sp}(V_{\HH}),\ldots ,p_{n}^{Sp}(V_{\HH})$ are the symplectic Pontrjagin classes of the bundle $V_{\HH}\to BSp(1)^{n}$ and $z$ denotes the first symplectic Pontrjagin class of the canonical  quaternionic line bundle over the projectivisation $\HH P(V_{\HH})\to BSp(1)^n$.
 
As the bundle $V_{\HH}\lra BSp(1)^{n}$ splits into the sum of $n$ quaternionic line bundles, it follows that
\[
z^{n}+p_{1}^{Sp}(V_{\HH})z^{n-1}+\ldots +p_{n}^{Sp}(V_{\HH}) = (z+v_1)(z+v_2)\cdots (z+v_{n}) \ ,
\]
where $v_{i}=p_{1}^{Sp}(\eta _{\HH }^{i})$ are the first symplectic Pontrjagin classes of the tautological quaternionic line bundles $\eta _{\HH }^{i}$ for $1\leqslant i\leqslant n$. Note that these classes give the generators for $U^{*}(BSp(1)^{n})$ or in other words $U^{*}(BSp(1)^{n})=\Omega _{U}^{*}[[v_1,\ldots ,v_{n}]]$. We obtain that 
\[
U^{*}(ESp(1)^{n}\times _{Sp(1)^{n}}\HH P^{n-1}) = \Omega _{U}^{*}[[v_1,\ldots ,v_{n}]][z]/\langle (z+v_1)\cdots (z+v_{n})\rangle \ .
\]
It follows that $\Phi _{\HH P^n: Sp(1)^{n}}(\pi)$ can be written as
\[
\Phi _{\HH P^n: Sp(1)^{n}}(\pi) = g_{0} + g_{1}z + \ldots + g_{n-1}z^{n-1} \ ,
\]
where the coefficients $g_{j}\in \Omega _{U}^{*}[[v_1,\ldots ,v_n]]$ are of the form
\[
g_{j}=\sum_{\omega}g_{j,\omega}{\bf v}^{\omega} 
\]
for $\omega = (i_1,\ldots ,i_n)$ where  $i_1,\ldots ,i_n$ are nonnegative integers and ${\bf v}^{\omega}=v_1^{i_1}\cdots v_{n}^{i_n}$.  The coefficients $g_{j,\omega}$ lie in $\Omega _{U}^{-2(2n+2+|\omega |)}$, where $|\omega |=\omega _1 +\cdots +\omega _n$.

The inclusion $T^n \subset Sp(1)^n$ induces the inclusion $U^{*}(BSp(1)^n)=\Omega _{U}^{*}[[v_1,\ldots ,v_n]]\subset U^{*}(BT^n)=\Omega _{U}^{*}[[u_1,\ldots ,u_n]]$ given with $v_i\to u_i\bar{u_i}$ for $1\leqslant i\leqslant n$.  One can get it  from the fibre bundle $\dr{\C P^{2n-1}}{p}{\HH P^{n-1}}$ as $p_{1}(\eta _{\HH}^i) = c_2(\eta _{\C}^i\otimes \bar{\eta _{\C}^i})= c_{1}(\eta _{\C}^i)c_{1}(\bar{\eta _{\C}^i})= u_i\bar{u_i}$. Following~\cite{BN} we put $u_i\bar{u_i} = |u_i|^2$ what implies that $U^{*}(BSp(1)^n)$ includes in $U^{*}(BT^n)$ as $\Omega _{U}^{*}[[|u_1|^2,\ldots ,|u_n|^2]]$.     

Because of~\eqref{rel_gen} we obtain
\[
\Phi _{\HH P^n}(\pi )= \hat{g}_{0} + \hat{g}_{1}z + \ldots + \hat{g}_{n-1}z^{n-1} \ ,
\]
where  the coefficients $\hat{g}_{j}$ are 
\[
\hat{g}_{j} = \sum_{\omega}g_{j,\omega}|{\bf u}|^{2\omega} 
\]
for $\omega =(i_1,\ldots ,i_n)$ and $|{\bf u}|^{2\omega}=|u_1|^{2i_1}\cdots |u_{n}|^{2i_n}$.

In particular, for $n=2$ we have the fibration $\C P^{1}\times \C P^{1}\lra \dr{Sp(2)/T^2}{\pi}{\HH P^{1}}$ and
\begin{equation}\label{fiber_quater}
\Phi _{\HH P^1}(\pi) = \sum_{\omega = (i_1,i_2)}g_{0,\omega}|u_1|^{2i_1}|u_2|^{2i_2} + \big ( \sum_{\omega=(i_1,i_2)}g_{1,\omega}|u_1|^{2i_1}|u_2|^{2i_2}\big )\cdot z \ .
\end{equation}
Let us fix the invariant almost complex structure $J$ on $Sp(2)/T^2$ given by the roots $x_1+x_2,x_1-x_2, 2x_1$ and $2x_2$ for $Sp(2)$. For the ordering $x_1>x_2>0$ all these roots are positive what implies that the structure $J$ is integrable. 
Using Theorem~\ref{main} we obtain  the universal toric genus for $(Sp(2), J)$:   
\begin{equation}\label{SP}
\Phi (Sp(2)/T^2, J) = \sum _{\rw \in W_{Sp(2)}}\frac{1}{[\rw(1,1)]({\bf u})}\cdot\frac{1}{[\rw(1,-1)]({\bf u})}\cdot\frac{1}{[\rw(2,0)]({\bf u})}\cdot\frac{1}{[\rw(0,2)]({\bf u})} \ ,
\end{equation}
where ${\bf u}=(u_1,u_2)$. 
On the other hand the base $\HH P^1$ has $2$ fixed points and the weights for the $T^2$-action related to the structure induced by $J$  on the fiber $\C P^1\times \C P^1$ at these two  fixed points are $2x_1$ and $2x_2$. It implies that
\begin{equation}\label{restriction}
\Phi _{\HH P^1}(\pi)|_{Fix(\HH P^1)} = 2\cdot (\frac{1}{[(2,0)]({\bf u})}+\frac{1}{[(-2,0)]({\bf u})})\cdot (\frac{1}{[(0,2)]({\bf u})}+\frac{1}{[(0,-2)]({\bf u})}) \ .
\end{equation}
As the generator $z$ from~\eqref{fiber_quater} vanishes on the base $\HH P^{1}$ it follows that the coefficients $g_{0,\omega}$  from~\eqref{fiber_quater} for these fibrations can be computed from~\eqref{restriction}. Namely if we apply the Chern-Dold character to~\eqref{restriction} we obtain:
\[
ch_{U}\Phi _{\HH P^1}(\pi)|_{Fix(\HH P^1)} = 2\cdot (\frac{f(2x_1)}{2x_1}\cdot \frac{f(2x_2)}{2x_2} + \frac{f(2x_1)}{2x_1}\cdot \frac{f(-2x_2)}{-2x_2}+\]
\[
\frac{f(-2x_1)}{-2x_1}\cdot \frac{f(2x_2)}{2x_2} + \frac{f(-2x_1)}{-2x_1}\cdot \frac{f(-2x_2)}{-2x_2})=\sum _{k,l}2^{2(k+l+1)}x_1^{2l}x_2^{2k}a_{2l+1}a_{2k+1},
\]
what, together with the application  of the Chern-Dold character to the restriction of~\eqref{fiber_quater} to the base $\HH P^{1}$, gives that
\[
g_{0,\omega}(\pi) = 2^{2(i_1+i_2+1)}a_{2i_1+1}a_{2i_2+1},\;\; \text{for}\;\; \omega = (i_1,i_2).
\]
\end{proof}
\begin{rem}
Note that $\Phi _{\HH P^1}(\pi)|_{Fix(\HH P^1)}$ is invariant under the action of $W_{Sp(2)}$.
On the other hand we have that the complementary root system $x_1-x_2, x_1+x_2$ to the fiber is not invariant under the action of the stationary Weyl group $W_{Sp(1)}\times W_{Sp(1)}$ of the base $\HH P^1$ which is given by $\pm x_1, \pm x_2$. By Proposition~\ref{SF} it implies that this structure, and analogously any other invariant almost complex structure  does not come from some invariant almost complex structure on the base. It further provides one more proof that $\HH P^{1}$ does not admit any invariant almost complex structure. 
\end{rem}

\begin{rem}
We can proceed in  the same way to compute $\Phi _{\HH P^{n-1}}(\hat{\pi})$ for the fibration $\C P^{1}\to \dr{\C P^{2n-1}}{\hat{\pi}} \HH P^{n-1}$, where we consider unique up to conjugation  $Sp(n)$-invariant complex structure $J$  on $\C P^{2n-1}$ and the action of the torus $T^n$  on $\C P^{2n-1}$ and $\HH P^{n-1}$,  since 
$\C P^{2n-1} = Sp(n)/U(1)\times Sp(n-1)$. 
For $n=2$, the roots for $J$ are $x_1-x_2, x_1+x_2, 2x_1$, while  $2x_1$ is the root of the induced invariant almost complex structure on the fiber.
Therefore we have
\begin{equation}\label{restriction1}
\Phi _{\HH P^1}({\hat \pi})|_{Fix (\HH P^{1})} = 2\cdot (\frac{1}{[(0,2)]({\bf u})} + \frac{1}{[(0,-2)]({\bf u})}).
\end{equation}
For the coefficients $g_{0,\omega}$  from~\eqref{fiber_quater} for this fibration,  by application of the Chern-Dold character to~\eqref{restriction1}, as in the previous proof,  we obtain:
\[
ch_{U}\Phi _{\HH P^1}({\hat \pi})|_{Fix (\HH P^{1})} = 2\cdot (\frac{f(2x_2)}{2x_2}+\frac{f(-2x_2)}{-2x_2})=\sum _{k=0}^{\infty}2^{2k+2}x_2^{2k}a_{2k+1} .
\]
\[
g_{0,\omega }({\hat \pi}) = 0,\;\text{for}\; \omega = (i_1,i_2),\; i_1\neq 0,\; g_{0,\omega}({\hat \pi})=2^{2i_2}a_{2i_2+1}\; \text{for}\; \omega = (0,i_2).
\] 
\end{rem}
\subsection{On the multiplicativity question of the universal toric genus.} 
\numberwithin{thm}{subsection}
The multiplicativity question for the universal toric genus of a tangentially stable complex $T^k$ manifold $M^{2n}$, and in particular homogeneous spaces we consider, is closely related to the algebraic question of the type of decomposability of $\Phi (M^{2n}, c_{\tau},\theta )$ in $\Omega _{U}^{*}[[u_1,\ldots ,u_k]]$.
We summarize it for homogeneous spaces $G/K$ as follows:
\begin{enumerate}
\item If there is fibration $H/K\to G/K\to G/H$ such that the invariant almost complex structure on $G/K$ induces invariant almost complex structures on both  $H/K$ and $G/H$ then by Theorem~\ref{genus_fibration} the element $\Phi (G/K)$  can be represented as the sum of the elements  from the orbit of  $\Phi (G/H)_{e}\cdot \Phi (H/K)$ by the action of the quotient of groups $W_{G}/W_{H}$  in $\Omega _{U}^{*}[[u_1,\ldots, u_k]]$, where $\Phi (G/H)_{e}$ denotes the universal toric genus of $G/H$ restricted to the identity point.   
\item If $G/K$ is the total space of a homogeneous fibration regarding to which the universal toric genus for $G/K$ is multiplicative it implies that the element $\Phi (G/K)$ decomposes into the product of two elements in $\Omega _{U}^{*}[[u_1,\ldots ,u_k]]$. Examples of such decomposable  elements in $\Omega _{U}^{*}[[u_1,\ldots ,u_k]]$ are given by the universal toric genera of the spaces provided by Corollary~\ref{semisimple_mult}. 
\item If for some homogeneous fibration the invariant almost complex structure on $G/K$ does not induce an invariant almost complex structure on the base $G/H$ then the universal toric genus of $G/K$ can not be obtained as the sum of the elements of the orbit of the multiple of $\Phi (H/K)$ by the action of $W_{G}/W_{H}$, where  $H/K$ is endowed  with the  induced invariant almost complex structure.
\end{enumerate}

\begin{ex}
Let us consider the universal toric genus for $(Sp(2)/T^2, J)$ given by~\eqref{SP}. We know that $Sp(2)/T^2$ fibers over $\HH P^1$ with the fiber $\C P^1\times \C P^1$, but it checks directly that~\eqref{SP} can not be obtained  as the sum  of the elements from the  orbit of $\Phi (\C P^1)\cdot \Phi (\C P^1) = (\frac{1}{[(2,0)]({\bf u})}+\frac{1}{[(-2,0)]({\bf u})})\cdot (\frac{1}{[(0,2)]({\bf u})}+\frac{1}{[(0,-2)]({\bf u})})$ multiplied by $\frac{1}{[(1,1)]({\bf u})}\cdot \frac{1}{[(1,-1)]({\bf u})}$, by the action of the group $W_{Sp(2)}/W_{Sp(1)}\times W_{Sp(1)}=\Z _{2}$.    

On the other hand since $W_{Sp(2)} = S_{2}(\pm x_1,\pm x_2)$ we see that $\Phi (Sp(2)/T^2)$ can be written as 
\[
\sum _{\rw \in S_{2}(\pm x_1, x_2)}\frac{1}{[\rw(1,1)]({\bf u})}\cdot\frac{1}{[\rw(1,-1)]({\bf u})}\cdot\frac{1}{[\rw(2,0)]({\bf u})}\cdot \rw (\frac{1}{[(0,2)]({\bf u})}+\frac{1}{[(0,-2)]({\bf u})}) =
\]
\begin{equation}
\sum _{\rw \in S_{2}(\pm x_1, x_2)}\frac{1}{[\rw(1,1)]({\bf u})}\cdot\frac{1}{[\rw(1,-1)]({\bf u})}\cdot\frac{1}{[\rw(2,0)]({\bf u})}\cdot \rw (\Phi (\C P^1, J_1)) ,
\end{equation}
where $J_1$ is the standard complex structure on $\C P^1$.  It checks directly that this twisted product formula  for $\Phi (Sp(2)/T^2, J)$ can be realized by the fibration $\C P^1 \to Sp(2)/T^2 \to \C P^3$, where the base and the fiber are endowed with the standard complex structures.
\end{ex}

\bibliographystyle{amsplain}

Victor M.~Buchstaber\\
Steklov Mathematical Institute, Russian Academy of Sciences\\ 
Gubkina Street 8, 119991 Moscow, Russia\\
E-mail: buchstab@mi.ras.ru\\ 
School of Mathematics, University of Manchester\\
Oxford Road, Manchester M13~9PL, England\\
E-mail: Victor.Buchstaber@manchester.ac.uk
\\ \\ 

Svjetlana Terzi\'c \\
Faculty of Science, University of Montenegro\\
D\v zord\v za Va\v singtona bb, 81000 Podgorica, Montenegro\\
Email: sterzic@ac.me

\end{document}